\makeatletter\@addtoreset {equation}{section}\makeatother
\newtheorem{thm}{Theorem}
\newtheorem{cor}{Corollary}
\newtheorem{prop}{Proposition}
\newtheorem{lem}{Lemma}
\newtheorem{mydef}{Definition}
\newtheorem{remark}{Remark}
\begin{document}

\title{\bf Existence of global solutions to the derivative NLS equation
with the inverse scattering transform method}

\author{Dmitry E. Pelinovsky and Yusuke Shimabukuro \\
{\small \it Department of Mathematics and Statistics, McMaster
University, Hamilton, Ontario, Canada, L8S 4K1 } }

\date{\today}
\maketitle

\begin{abstract}
We address existence of global solutions to the derivative nonlinear Schr\"{o}dinger (DNLS) equation
without the small-norm assumption. By using the inverse scattering transform method without eigenvalues and resonances,
we construct a unique global solution in $H^2(\mathbb{R}) \cap H^{1,1}(\mathbb{R})$ which is also Lipschitz continuous with
respect to the initial data. Compared to the existing literature on the spectral problem for the DNLS equation,
the corresponding Riemann--Hilbert problem is defined in the complex plane with the jump on the real line.
\end{abstract}

\tableofcontents

\newpage

\section{Introduction}

We consider the Cauchy problem for the derivative nonlinear Schr\"{o}dinger (DNLS) equation
\begin{equation}\label{dnls}
\left\{ \begin{array}{l} iu_t+u_{xx} + i(|u|^2 u)_x = 0, \quad t > 0,\\
u|_{t=0} = u_0, \end{array} \right.
\end{equation}
where the subscripts denote partial derivatives and $u_0$ is defined in a suitable function space,
e.g., in Sobolev space $H^m(\mathbb{R})$ of distributions with square integrable derivatives up to the order $m$.

Local existence of solutions for $u_0 \in H^s(\mathbb{R})$ with $s > \frac{3}{2}$
was established by Tsutsumi \& Fukuda \cite{TF1} by using a parabolic regularization.
Later, the same authors \cite{TF2}
used the first five conserved quantities of the DNLS equation and established
the global existence of solutions for $u_0 \in H^2(\mathbb{R})$
provided the initial data is small in the $H^1(\mathbb{R})$ norm.

Using a gauge transformation of the DNLS equation to a system of two semi-linear NLS equations,
for which a contraction argument can be used in the space $L^2(\mathbb{R})$ with the help of the Strichartz estimates,
Hayashi \cite{Hayashi} proved local and global existence of solutions to the DNLS equation
for $u_0 \in H^1(\mathbb{R})$ provided that the initial data is small in the $L^2(\mathbb{R})$ norm.
More specifically, the initial data $u_0$ is required to satisfy the precise inequality:
\begin{equation}\label{cond}
\|u_0\|_{L^2} < \sqrt{2\pi}.
\end{equation}
The space $H^1(\mathbb{R})$ is referred to as the energy space
for the DNLS equation because its first three conserved quantities
having the meaning of the mass, momentum, and energy are well-defined in
the space $H^1(\mathbb{R})$:
\begin{eqnarray}
\label{I0}
I_0 & = & \int_{\mathbb{R}} |u|^2 dx, \\
\label{I1}
I_1 & = & i \int_{\mathbb{R}} (\bar{u} u_x - u \bar{u}_x ) dx - \int_{\mathbb{R}} |u|^4 dx, \\
\label{I2}
I_2 & = & \int_{\mathbb{R}} |u_x|^2 dx + \frac{3 i}{4} \int_{\mathbb{R}} |u|^2
(u \bar{u}_x - u_x \bar{u} ) dx + \frac{1}{2} \int_{\mathbb{R}} |u|^6 dx.
\end{eqnarray}
Using the gauge transformation $u = v e^{-\frac{3i}{4} \int_{-\infty}^x |v(y)|^2 dy}$ and
the Gagliardo--Nirenberg inequality \cite{Weinstein}
\begin{eqnarray}
\| u \|_{L^6}^6 \leq \frac{4}{\pi^2} \| u \|_{L^2}^4 \| u_x \|_{L^2}^2,
\end{eqnarray}
one can obtain
$$
I_2 = \| v_x \|_{L^2}^2 - \frac{1}{16} \| v \|_{L^6}^6 \geq \left( 1 - \frac{1}{4 \pi^2} \| v \|_{L^2}^4 \right) \| v_x \|_{L^2}^2.
$$
Under the small-norm assumption (\ref{cond}), the $H^1(\mathbb{R})$ norm of
the function $v$ (and hence, the $H^1(\mathbb{R})$ norm of the solution
$u$ to the DNLS equation) is controlled by the conserved quantities $I_0$ and
$I_2$, once the local existence of solutions in $H^1(\mathbb{R})$ is established.

Developing the approach based on the gauge transformation and a priori energy estimates,
Hayashi \& Ozawa \cite{H-O-1,H-O-2,Ozawa} considered global solutions to the DNLS equation
in weighted Sobolev spaces under the same small-norm assumption (\ref{cond}), e.g., for
$u_0 \in H^m(\mathbb{R}) \cap L^{2,m}(\mathbb{R})$, where $m \in \mathbb{N}$.
Here and in what follows, $L^{2,m}(\mathbb{R})$ denotes the weighted $L^2(\mathbb{R})$ space
with the norm
$$
\| u \|_{L^{2,m}} := \left( \int_{\mathbb{R}} (1 + x^2)^m |u|^2 dx \right)^{1/2} =
\left( \int_{\mathbb{R}} \langle x \rangle^{2m} |u|^2 dx \right)^{1/2},
$$
where $\langle x \rangle := (1 + x^2)^{1/2}$.

More recently, local well-posedness of solutions to the DNLS equation was
established in spaces of lower regularity, e.g., for $u_0 \in H^s(\mathbb{R})$
with $s \geq \frac{1}{2}$ by Takaoka \cite{Takaoka-1} who used the Fourier transform
restriction method. This result was shown to
be sharp in the sense that the flow map fails to be uniformly continuous for
$s < \frac{1}{2}$ \cite{B-L}. Global existence under the constraint (\ref{cond})
was established in $H^s(\mathbb{R})$ with subsequent generations of the
Fourier transform restriction method and the so-called I-method, e.g.,
for $s > \frac{32}{33}$ by Takaoka \cite{Takaoka-2},
for $s > \frac{2}{3}$ and $s > \frac{1}{2}$ by Colliander {\em et al.} \cite{C-K-S-T-T-1} and \cite{C-K-S-T-T-2}
respectively, and finally for $s = \frac{1}{2}$ by Miao, Wu and Xu \cite{M-W-X}.

The key question, which goes back to the paper of Hayashi \& Ozawa \cite{H-O-1},
is to find out {\em if the bound (\ref{cond}) is optimal for existence of global
solutions to the DNLS equation}. By analogy with the quintic nonlinear Schr\"{o}dinger (NLS)
and Korteweg--de Vries (KdV) equations, one can ask if solutions with the $L^2(\mathbb{R})$ norm exceeding
the threshold value in the inequality (\ref{cond}) can blow up in a finite time.

The threshold value $\sqrt{2\pi}$ for the $L^2(\mathbb{R})$ norm corresponds to
the constant value of the $L^2(\mathbb{R})$ norm of the stationary solitary wave
solutions to the DNLS equation. These solutions can be written in the explicit form:
\begin{equation}
\label{soliton}
u(x,t) = \phi_{\omega}(x) e^{i \omega^2 t - \frac{3i}{4} \int_{-\infty}^x |\phi_{\omega}(y)|^2 dy},
\quad \phi_{\omega}(x) = \sqrt{4 \omega \; {\rm sech}(2 \omega x)}, \quad \omega \in \mathbb{R}^+,
\end{equation}
from which we have $\| \phi_{\omega} \|_{L^2} = \sqrt{2\pi}$ for every $\omega \in \mathbb{R}^+$. Although the
solitary wave solutions are unstable in the quintic NLS and KdV
equations, it was proved by Colin \& Ohta \cite{CO06} that the solitary wave of the DNLS equation
is orbitally stable with respect to perturbations in $H^1(\mathbb{R})$.
This result indicates that there exist global solutions to the DNLS equation (\ref{dnls})
in $H^1(\mathbb{R})$ with the $L^2(\mathbb{R})$ norm exceeding the threshold value in (\ref{cond}).

Moreover, Colin \& Ohta \cite{CO06} proved that the moving solitary wave solutions
of the DNLS equation are also orbitally stable in $H^1(\mathbb{R})$.
Since the $L^2(\mathbb{R})$ norm of the moving solitary wave solutions
is bounded from above by $2 \sqrt{\pi}$, the orbital stability result
indicates that there exist global solutions to the DNLS equation (\ref{dnls})
if the initial data $u_0$ satisfies the inequality
\begin{equation}
\label{cond-new}
\| u_0 \|_{L^2} < 2 \sqrt{\pi}.
\end{equation}
These orbital stability results suggest that the inequality (\ref{cond})
is not sharp for the global existence in the DNLS equation (\ref{dnls}).
Furthermore, recent numerical explorations of the DNLS equation (\ref{dnls})
indicate no blow-up phenomenon for initial data with any large $L^2(\mathbb{R})$ norm \cite{Sulem1,Sulem2}.
The same conclusion is indicated by the asymptotic analysis in the recent work \cite{Sulem3}.

Towards the same direction, Wu \cite{W} proved that the solution to the DNLS equations with $u_0 \in H^1(\mathbb{R})$ does
not blow up in a finite time if the $L^2(\mathbb{R})$ norm of the initial data $u_0$ slightly exceed
the threshold value in (\ref{cond}). The technique used in \cite{W}
is a combination of a variational argument together with the mass, momentum and energy
conservation in (\ref{I0})--(\ref{I2}). On the other hand, the solution to the DNLS equation restricted on the half line $\mathbb{R}^+$
blows up in a finite time if the initial data $u_0 \in H^2(\mathbb{R}^+) \cap L^{2,1}(\mathbb{R}^+)$
yields the negative energy $I_2 < 0$ given by (\ref{I2}) \cite{W}.
Proceeding further with sharper Gagliardo--Nirenberg-type inequalities, Wu  \cite{W-2} proved very recently
that the global solutions to the DNLS equation exists in $H^1(\mathbb{R})$ if the
initial data $u_0 \in H^1(\mathbb{R})$ satisfies the inequality (\ref{cond-new}), which is
larger than the inequality (\ref{cond}).

Our approach to address the same question concerning global existence in the
Cauchy problem for the DNLS equation \eqref{dnls}
without the small $L^2(\mathbb{R})$-norm assumption relies on a different technique
involving the inverse scattering transform theory \cite{B-C-1,B-C-2}. As was shown
by Kaup \& Newell \cite{KN78}, the DNLS equation appears to be a compatibility
condition for suitable solutions to the linear system given by
\begin{equation} \label{laxeq1}
\partial_x \psi = \left[ -i\lambda^2\sigma_3+\lambda Q(u) \right] \psi
\end{equation}
and
\begin{equation}\label{laxeq2}
\partial_t \psi = \left[ - 2i\lambda^4\sigma_3 + 2 \lambda^3 Q(u) + i \lambda^2 |u|^2 \sigma_3
- \lambda |u|^2 Q(u) + i \lambda \sigma_3 Q(u_x) \right] \psi,
\end{equation}
where $\psi \in \mathbb{C}^2$ is assumed to be a $C^2$ function of $x$ and $t$,
$\lambda \in \mathbb{C}$ is the $(x,t)$-independent spectral parameter, and
$Q(u)$ is the $(x,t)$-dependent matrix potential given by
\begin{equation}
\label{lax-matrices}
Q(u) = \left[\begin{matrix}0&u\\ -\overline{u} & 0\end{matrix}\right].
\end{equation}
The Pauli matrices that include $\sigma_3$ are given by
\begin{equation}
\label{Pauli}
\sigma_1 = \left[ \begin{matrix} 0 & 1 \\ 1 & 0 \end{matrix} \right], \quad
\sigma_2 = \left[\begin{matrix} 0&i\\ -i & 0\end{matrix}\right], \quad
\sigma_3 = \left[\begin{matrix} 1&0\\ 0 & -1\end{matrix}\right].
\end{equation}
A long but standard computation shows that the compatibility condition
$\partial_t\partial_x \psi =\partial_x\partial_t \psi$
for eigenfunctions $\psi \in C^2(\mathbb{R} \times \mathbb{R})$
is equivalent to the DNLS equation
$i u_t + u_{xx} + i (|u|^2 u)_x = 0$ for classical solutions $u$. The linear equation (\ref{laxeq1})
is usually referred to as the Kaup--Newell spectral problem.

In a similar context of the cubic NLS equation, it is well known that
the inverse scattering transform technique applied to the linear system
(associated with the so-called Zakharov--Shabat spectral problem)
provides a rigorous framework to solve the Cauchy problem in weighted $L^2$ spaces,
e.g., for $u_0 \in H^1(\mathbb{R}) \cap L^{2,1}(\mathbb{R})$ \cite{D-J,D-Z-1,Z} or for
$u_0 \in H^1(\mathbb{R}) \cap L^{2,s}(\mathbb{R})$ with $s > \frac{1}{2}$ \cite{C-P}.
In comparison with the spectral problem (\ref{laxeq1}), the Zakharov--Shabat spectral
problem has no multiplication of matrix potential $Q(u)$ by $\lambda$.
As a result, Neumann series solutions for the Jost functions of the Zakharov--Shabat
spectral problem converge if $u$ belongs to the space $L^1(\mathbb{R})$, see, e.g., Chapter 2 in \cite{A-P-T}.
As was shown originally by Deift \& Zhou \cite{D-Z-1,Z}, the inverse scattering
problem based on the Riemann--Hilbert problem of complex analysis with a jump
along the real line can be solved uniquely if $u$ is defined in a subspace of $L^{2,1}(\mathbb{R})$,
which is continuously embedded into the space $L^1(\mathbb{R})$.
The time evolution of the scattering data is well defined
if $u$ is posed in space $H^1(\mathbb{R}) \cap L^{2,1}(\mathbb{R})$ \cite{D-J,D-Z-1}.

For the Kaup--Newell spectral problem (\ref{laxeq1}), the key feature is the presence of the
spectral parameter $\lambda$ that multiplies the matrix potential $Q(u)$. As a result,
Neumann series solutions for the Jost functions do not converge uniformly
if $u$ is only defined in the space $L^1(\mathbb{R})$. Although the Lax system
(\ref{laxeq1})--(\ref{laxeq2}) appeared long ago and was used many times
for formal methods, such as construction of soliton solutions \cite{KN78},
temporal asymptotics \cite{V1,V2}, and long-time asymptotic expansions \cite{X-F,X-F-C},
no rigorous results on the function spaces for the matrix $Q(u)$ have been obtained so far to ensure
bijectivity of the direct and inverse scattering transforms for the Kaup--Newell spectral
problem (\ref{laxeq1}).

In this connection, we mention the works of Lee \cite{Lee1,Lee2} on the local solvability of a generalized
Lax system with $\lambda^n$ dependence for an integer $n \geq 2$ and generic small initial
data $u_0$ in Schwarz class. In the follow-up paper \cite{Lee}, Lee also claimed existence of
a global solution to the Cauchy problem (\ref{dnls}) for large $u_0$ in Schwarz class, but the analysis
of \cite{Lee} relies on a ``Basic Lemma", where the Jost functions are claimed to be defined
for $u_0$ in $L^2(\mathbb{R})$. However, equation (\ref{laxeq1}) shows
that the condition $u_0 \in L^2(\mathbb{R})$ is insufficient
for construction of the Jost functions uniformly in $\lambda$.

We address the bijectivity of the direct and inverse scattering transform
for the Lax system (\ref{laxeq1})--(\ref{laxeq2}) in this work.
We show that the direct scattering transform for
the Jost functions of the Lax system (\ref{laxeq1})--(\ref{lax-jost-2})
can be developed under the requirement $u_0 \in L^1(\mathbb{R}) \cap L^{\infty}(\mathbb{R})$
and $\partial_x u_0 \in L^1(\mathbb{R})$. This requirement is satisfied if
$u_0$ is defined in the weighted Sobolev space $H^{1,1}(\mathbb{R})$ defined by
\begin{equation}
\label{H-1-1}
H^{1,1}(\mathbb{R}) = \left\{ u \in L^{2,1}(\mathbb{R}), \quad \partial_x u \in L^{2,1}(\mathbb{R}) \right\}.
\end{equation}
Note that it is quite common to use notation $H^{1,1}(\mathbb{R})$ to denote
$H^1(\mathbb{R}) \cap L^{2,1}(\mathbb{R})$ \cite{D-Z-1,Z}, which is not what is
used here in (\ref{H-1-1}). Moreover, we show that asymptotic expansions
of the Jost functions are well defined if $u_0 \in H^2(\mathbb{R}) \cap H^{1,1}(\mathbb{R})$,
which also provide a rigorous framework to study the inverse scattering transform based on
the Riemann--Hilbert problem of complex analysis. Finally, the time
evolution of the scattering data is well defined if $u_0 \in H^2(\mathbb{R}) \cap H^{1,1}(\mathbb{R})$.

We shall now define eigenvalues and resonances for the spectral problem (\ref{laxeq1})
and present the global existence result for the DNLS equation (\ref{dnls}).

\begin{mydef}
\label{definition-eigenvalue}
We say that $\lambda \in \mathbb{C}$ is an eigenvalue of the spectral problem (\ref{laxeq1})
if the linear equation (\ref{laxeq1}) with this $\lambda$ admits a solution in $L^2(\mathbb{R})$.
\end{mydef}

\begin{mydef}
\label{definition-resonance}
We say that $\lambda \in \mathbb{R} \cup i \mathbb{R}$ is a resonance of the spectral problem (\ref{laxeq1})
if the linear equation (\ref{laxeq1}) with this $\lambda$ admits a solution in $L^{\infty}(\mathbb{R})$
with the asymptotic behavior
$$
\psi(x) \sim \left\{ \begin{array}{l} a_+ e^{-i \lambda^2 x} e_1, \quad x \to -\infty, \\
a_- e^{+i \lambda^2 x} e_2, \quad x \to +\infty, \end{array} \right.
$$
where $a_+$ and $a_-$ are nonzero constant coefficients, whereas $e_1 = [1,0]^t$ and $e_2 = [0,1]^t$.
\end{mydef}

\begin{thm}\label{main}
For every $u_0 \in H^2(\mathbb{R}) \cap H^{1,1}(\mathbb{R})$ such that the linear equation (\ref{laxeq1}) admits no
eigenvalues or resonances in the sense of Definitions \ref{definition-eigenvalue} and \ref{definition-resonance},
there exists a unique global solution $u(t,\cdot) \in H^2(\mathbb{R}) \cap H^{1,1}(\mathbb{R})$
of the Cauchy problem \eqref{dnls} for every $t \in \mathbb{R}$. Furthermore, the map
$$
H^2(\mathbb{R}) \cap H^{1,1}(\mathbb{R}) \ni u_0 \mapsto u \in C(\mathbb{R};H^2(\mathbb{R}) \cap H^{1,1}(\mathbb{R}))
$$
is Lipschitz continuous.
\end{thm}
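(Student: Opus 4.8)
The plan is to realize the solution map of the Cauchy problem \eqref{dnls} as a composition of three maps, in the spirit of the inverse scattering transform but adapted to the Kaup--Newell spectral problem \eqref{laxeq1}: a \emph{direct scattering transform} $\mathcal{D}\colon u_0 \mapsto r$ sending the initial data to a reflection coefficient $r$, the \emph{time evolution} of the scattering data dictated by \eqref{laxeq2}, and an \emph{inverse scattering transform} $\mathcal{I}\colon r \mapsto u(t,\cdot)$ reconstructing the potential through a Riemann--Hilbert problem with jump on the real line. Because the middle map turns out to be explicit and defined for all $t \in \mathbb{R}$, the whole of global existence reduces to establishing that $\mathcal{D}$ and $\mathcal{I}$ are well-defined and Lipschitz continuous between appropriate function spaces and that they are mutually inverse. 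The two standing hypotheses---that \eqref{laxeq1} admits no eigenvalues (Definition \ref{definition-eigenvalue}) and no resonances (Definition \ref{definition-resonance})---are exactly what guarantee that $r$ is everywhere finite and that the Riemann--Hilbert problem is uniquely solvable with no discrete spectrum or poles to track.

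First I would construct the direct transform. The Jost functions are obtained as solutions of Volterra integral equations associated with \eqref{laxeq1}, normalized at $x \to \pm\infty$ by the free exponentials $e^{\mp i \lambda^2 x}$. The essential difficulty, already emphasized in the introduction, is that the potential enters \eqref{laxeq1} as $\lambda Q(u)$, so that the naive Neumann series does not converge uniformly in $\lambda$ for $u_0 \in L^1(\mathbb{R})$ alone; I would instead set up the iteration so that the relevant kernels close under $u_0 \in L^1(\mathbb{R}) \cap L^\infty(\mathbb{R})$ with $\partial_x u_0 \in L^1(\mathbb{R})$, which is furnished by $H^{1,1}(\mathbb{R})$. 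Passing to the spectral variable $z = \lambda^2$ gives analyticity of the normalized Jost functions in the half-planes $\Imag(z) \gtrless 0$ with continuity up to $z \in \mathbb{R}$. I would then extract the large-$\lambda$ asymptotic expansions---this is the step that genuinely requires the extra regularity $u_0 \in H^2(\mathbb{R}) \cap H^{1,1}(\mathbb{R})$---and read off from them the reconstruction coefficient carrying $u$. Defining the scattering coefficients $a(z), b(z)$ through the linear relation between the Jost functions from $\pm\infty$ and setting $r = b/a$, the no-resonance assumption yields $a(z) \neq 0$ on $\mathbb{R}$ and the no-eigenvalue assumption yields $a(z) \neq 0$ for $\Imag(z) > 0$, so $r$ is well-defined; I would close this step by showing $u_0 \mapsto r$ is Lipschitz into a target space (built from $H^1$ and weighted $L^2$ norms of $r$) matching the asymptotic decay and smoothness just established.

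Next I would treat the inverse transform. The reconstruction is the Riemann--Hilbert problem of finding a sectionally analytic matrix $M(z)$ on $\mathbb{C}\setminus\mathbb{R}$ with jump $M_+ = M_- V$ across $\mathbb{R}$, the jump matrix $V$ assembled from $r$, and normalization $M \to I$ as $z \to \infty$. I would solve it via the associated Beals--Coifman singular integral equation on $\mathbb{R}$; solvability and uniqueness follow from the absence of eigenvalues and resonances together with the decay and regularity of $r$ produced by $\mathcal{D}$. The potential $u(t,\cdot)$ is recovered from the coefficient of $1/z$ in the large-$z$ expansion of $M$, and the crux is to prove that this reconstructed potential lands back in $H^2(\mathbb{R}) \cap H^{1,1}(\mathbb{R})$, with Lipschitz dependence on $r$, rather than in some weaker space. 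Showing $\mathcal{I} \circ \mathcal{D} = \mathrm{id}$ then establishes bijectivity of the two transforms.

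Finally, the time evolution: inserting the Jost functions into \eqref{laxeq2} shows that the scattering data acquires only an explicit oscillatory phase, $r(z,t) = r(z,0)\, e^{4 i z^2 t}$ (with the precise exponent read off from the leading $-2i\lambda^4\sigma_3$ term), which preserves the target space of $\mathcal{D}$ for every $t \in \mathbb{R}$ and depends Lipschitz-continuously on the initial data. Composing the three Lipschitz maps then produces $u \in C(\mathbb{R}; H^2(\mathbb{R}) \cap H^{1,1}(\mathbb{R}))$ with the asserted Lipschitz dependence on $u_0$, and global existence is automatic because nothing in the evolution of $r$ can break down in finite time. Uniqueness follows by noting that $H^2(\mathbb{R})\cap H^{1,1}(\mathbb{R})$ embeds into $H^s(\mathbb{R})$ for $s > \tfrac32$, where a local solution is already unique (Tsutsumi \& Fukuda), so the IST-constructed function, which solves \eqref{dnls} by the Lax-pair compatibility and the reconstruction formulas, must coincide with it and extends it globally. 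I expect the main obstacle to be precisely the step in the inverse transform that recovers the full $H^2(\mathbb{R}) \cap H^{1,1}(\mathbb{R})$ regularity and weighted decay of $u$ from the Riemann--Hilbert solution: because the Kaup--Newell kernel carries the extra factor of $\lambda$, controlling two $x$-derivatives and one weighted moment of the reconstructed potential requires delicate estimates on $M$ and its $z$-derivatives, and closing this loop is what upgrades the minimal-looking direct-transform hypotheses into the sharp bijection between $H^2(\mathbb{R}) \cap H^{1,1}(\mathbb{R})$ and the scattering data.
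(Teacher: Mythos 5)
Your overall architecture---direct map $u_0 \mapsto r$, explicit phase rotation of the scattering data, inverse map via a Riemann--Hilbert problem with jump on the real line, Lipschitz continuity of each map with scattering data in a space built from $H^1$ and weighted $L^2$ norms---is the same as the paper's, and you correctly identify where the real work lies (recovering the full $H^2(\mathbb{R}) \cap H^{1,1}(\mathbb{R})$ regularity of the reconstructed potential). But there is a genuine gap in your final glue. You construct $u(t,\cdot) = \mathcal{I}\bigl(e^{4iz^2 t}\,\mathcal{D}(u_0)\bigr)$ and assert that this function ``solves \eqref{dnls} by the Lax-pair compatibility and the reconstruction formulas.'' That assertion is exactly what your outline never proves: to run it you would need to show that the Riemann--Hilbert solution $M(t,x;z)$ is differentiable in $t$ and $x$, that the dressed matrices satisfy \emph{both} equations \eqref{laxeq1} and \eqref{laxeq2} of the Lax pair, and that the compatibility condition then forces the reconstructed $u$ to solve the PDE---a substantial dressing argument that does not come for free from the spectral assumptions. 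The paper avoids this step entirely by inverting the logic: it takes the \emph{local} solution on $[0,T]$ furnished by Tsutsumi--Fukuda and Hayashi--Ozawa, shows that its scattering data evolve by the explicit law \eqref{time-evolution}, and then applies the Lipschitz bounds of the direct and inverse transforms \emph{to that existing solution} to obtain the a priori bound \eqref{bound-on-u}, which rules out finite-time blow-up and continues the local solution globally; uniqueness is then inherited from the local theory rather than argued through the $H^s$, $s>\tfrac32$, embedding. Your claim that ``global existence is automatic because nothing in the evolution of $r$ can break down'' is only true once either the a priori bound or the dressing argument is actually in place.

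Two smaller points where your plan, as written, would fail. First, a single reflection coefficient $r = b/a$ with a single jump matrix is not adequate for the Kaup--Newell problem: because the potential enters as $\lambda Q(u)$, the paper is forced to introduce two reflection coefficients $r_{\pm}(z)$ tied by $r_-(z) = 4z\, r_+(z)$ as in \eqref{r1-r2}, two equivalent Riemann--Hilbert formulations (via $\tau_1$, $\tau_2$), and two distinct reconstruction formulas---with a scalar $\delta$-factorization to handle the negative half-line; these devices are what control the behavior at $z = 0$ and $z = \infty$ simultaneously, and the coefficient-of-$1/z$ recovery you describe glosses over them. Second, unique solvability of the Riemann--Hilbert problem does \emph{not} follow merely from the absence of eigenvalues and resonances plus decay of $r$: the jump matrix is non-Hermitian for $\lambda \in i\mathbb{R}$, and the vanishing-lemma argument (Zhou's method) requires the structural inequality $1 - |r(\lambda)|^2 \geq c_0^2 > 0$ on $i\mathbb{R}$, which the paper extracts from the second scattering relation in \eqref{scattering-relation-modified}, i.e., $|a|^2 - |b|^2 = 1$ on $i\mathbb{R}$. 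Without exploiting this mixed focusing/defocusing structure, your Beals--Coifman step has no positivity to work with, so the solvability claim in your second paragraph is unsupported as stated.
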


\begin{remark}
A sufficient condition that the spectral problem (\ref{laxeq1}) admits no eigenvalues was found in \cite{Pel-survey}.
This condition is satisfied under the small-norm
assumption on the $H^{1,1}(\mathbb{R})$ norm of the initial data $u_0$. See Remark \ref{eigfunc} below.
Although we believe that there exist initial data $u_0$ with large $H^{1,1}(\mathbb{R})$ norm that yield
no eigenvalues in the spectral problem (\ref{laxeq1}), we have no constructive examples of such initial data. 
Nevertheless, a finite number of eigenvalues $\lambda \in \mathbb{C}$ in the spectral problem (\ref{laxeq1})
can be included by using algebraic methods such as the Backl\"und, Darboux, or dressing transformations \cite{Contrera,C-P}.
\end{remark}

\begin{remark}
The condition that the spectral problem (\ref{laxeq1}) admits no resonance
is used to identify the so-called generic initial data $u_0$.
The non-generic initial data $u_0$ violating this condition are at the threshold case in the sense that
a small perturbation to $u_0$ may change the number of eigenvalues $\lambda$ in the linear equation (\ref{laxeq1}).
\end{remark}

\begin{remark}
Compared to the results of Hayashi \& Ozawa \cite{Hayashi,H-O-1,H-O-2,Ozawa},
where global well-posedness of the Cauchy problem for the DNLS equation (\ref{dnls})
was established in $H^2(\mathbb{R}) \cap H^{1,1}(\mathbb{R})$ under the small $L^2(\mathbb{R})$ norm assumption (\ref{cond}),
the inverse scattering transform theory is developed without the smallness assumption on the initial data $u_0$.
\end{remark}

\begin{remark}
An alternative proof of Theorem \ref{main} is developed in \cite{Perry} by using a different version
of the inverse scattering transform for the Lax system (\ref{laxeq1})--(\ref{laxeq2}). The results
of \cite{Perry} are formulated in space $H^2(\mathbb{R}) \cap L^{2,2}(\mathbb{R})$, which is embedded
into space $H^2(\mathbb{R}) \cap H^{1,1}(\mathbb{R})$.
\end{remark}

The paper is organized as follows. Section 2 reports the solvability results
on the direct scattering transform for the spectral problem (\ref{laxeq1}).
Section 3 gives equivalent formulations of the Riemann--Hilbert problem
associated with the spectral problem (\ref{laxeq1}). Section 4 is devoted to the solvability
results on the inverse scattering transform for the spectral problem (\ref{laxeq1}). Section 5 incorporates
the time evolution of the linear equation (\ref{laxeq2}) and contains the proof of
Theorem \ref{main}.

\section{Direct scattering transform}

The direct scattering transform is developed for the Kaup--Newell spectral problem
(\ref{laxeq1}), which we rewrite here for convenience:
\begin{equation}\label{lax1}
\partial_x \psi  = \left[ -i \lambda^2 \sigma_3 + \lambda Q(u) \right] \psi,
\end{equation}
where $\psi \in \mathbb{C}^2$, $\lambda \in \mathbb{C}$, and
the matrices $Q(u)$ and $\sigma_3$ are given by (\ref{lax-matrices}) and (\ref{Pauli}).

The formal construction of the Jost functions is based on the construction of
the fundamental solution matrices $\Psi_{\pm}(x;\lambda)$ of the linear equation
\eqref{lax1}, which satisfy the same asymptotic behavior at infinity
as the linear equation \eqref{lax1} with $Q(u) \equiv 0$:
\begin{equation}
\label{potential-free}
\Psi_{\pm}(x;\lambda) \to e^{-i\lambda^2x\sigma_3} \quad \mbox{as}\; x \to \pm \infty,
\end{equation}
where parameter $\lambda$ is fixed in an unbounded subset of $\mathbb{C}$. However,
the standard fixed point argument for Volterra's integral equations
associated with the linear equation (\ref{lax1}) is not uniform in $\lambda$ as $|\lambda| \to \infty$
if $Q(u) \in L^1(\mathbb{R})$.
Integrating by parts, it was suggested in \cite{Pel-survey}
that uniform estimates on the Jost functions of the linear equation (\ref{lax1})
can be obtained under the condition
$$
\|u\|_{L^1}(\|u\|_{L^{\infty}}+\|\partial_xu\|_{L^1})<\infty.
$$
Here we explore this idea further and introduce a transformation
of the linear equation (\ref{lax1}) to a spectral problem
of the Zakharov--Shabat type. This will allow us to adopt
the direct and inverse scattering transforms, which were previously used
for the cubic NLS equation \cite{D-Z-1,Z} (see also \cite{C-P,D-J} for review).
Note that the pioneer idea of a transformation
of the linear equation (\ref{lax1}) to a spectral problem
of the Zakharov--Shabat type can be found already in the formal work of Kaup \& Newell \cite{KN78}.

Let us define the transformation matrices for any $u \in L^{\infty}(\mathbb{R})$ and $\lambda \in \mathbb{C}$,
\begin{equation} \label{T}
T_1(x;\lambda) = \left[\begin{matrix} 1 & 0 \\-\overline{u}(x) & 2i\lambda \end{matrix}\right] \quad\mbox{and}
\quad T_2(x;\lambda) = \left[\begin{matrix} 2i\lambda & -u(x) \\0 & 1 \end{matrix}\right],
\end{equation}
If the vector $\psi \in \mathbb{C}^2$ is transformed by
$\psi_{1,2} = T_{1,2} \psi$, then straightforward computations show that
$\psi_{1,2}$ satisfy the linear equations
\begin{equation}\label{lax-jost-1}
\partial_x \psi_1 = \left[ -i \lambda^2 \sigma_3 + Q_1(u) \right] \psi_1, \quad
Q_1(u) = \frac{1}{2i} \left[\begin{matrix} |u|^2 & u \\ -2i \overline{u}_x - \overline{u}|u|^2 & -|u|^2 \end{matrix}\right]
\end{equation}
and
\begin{equation}\label{lax-jost-2}
\partial_x \psi_2 = \left[ -i \lambda^2 \sigma_3 + Q_2(u) \right] \psi_2, \quad
Q_2(u) = \frac{1}{2i} \left[\begin{matrix} |u|^2 & -2 i u_x + u|u|^2 \\  -\overline{u} &   -|u|^2 \end{matrix}\right].
\end{equation}
Note that $Q_{1,2}(u) \in L^1(\mathbb{R})$ if
$u \in L^1(\mathbb{R}) \cap L^3(\mathbb{R})$ and
$\partial_x u \in L^1(\mathbb{R})$. The linear equations (\ref{lax-jost-1}) and
(\ref{lax-jost-2}) are of the Zakharov--Shabat-type, after we introduce
the complex variable $z = \lambda^2$. In what follows, we study
the Jost functions and the scattering coefficients for the linear equations
(\ref{lax-jost-1}) and (\ref{lax-jost-2}).

\subsection{Jost functions}

Let us introduce the normalized Jost functions
from solutions $\psi_{1,2}$ of the linear equations
(\ref{lax-jost-1}) and (\ref{lax-jost-2}) with $z = \lambda^2$ in the form
\begin{equation}\label{lax-jost}
m_{\pm}(x;z) = \psi_1(x;z) e^{i x z}, \quad
n_{\pm}(x;z) = \psi_2(x;z) e^{-i x z},
\end{equation}
according to the asymptotic  behavior
\begin{equation}\label{lax-jost-infinity}
\left. \begin{array}{l}
m_{\pm}(x;z) \to e_1, \\
n_{\pm}(x;z) \to e_2, \end{array} \right\}
\quad \mbox{\rm as} \quad x \to \pm \infty,
\end{equation}
where $e_1 = [1,0]^t$ and $e_2 = [0,1]^t$.
The normalized Jost functions satisfy the following Volterra's
integral equations
\begin{equation}\label{int-jost-1}
m_{\pm}(x;z) = e_1 + \int_{\pm \infty}^{x}\left[\begin{matrix} 1 & 0 \\ 0 & e^{2iz (x-y)}
\end{matrix}\right] Q_1(u(y)) m_{\pm}(y;z) dy
\end{equation}
and
\begin{equation} \label{int-jost-2}
n_{\pm}(x;z) = e_2 + \int_{\pm \infty}^{x}\left[\begin{matrix} e^{-2iz (x-y)} & 0 \\ 0 & 1
\end{matrix}\right] Q_2(u(y)) n_{\pm}(y;z) dy.
\end{equation}
The next two lemmas describe properties of the Jost functions,
which are analogues to similar properties of the Jost functions
in the Zakharov--Shabat spectral problem (see, e.g., Lemma 2.1 in \cite{A-P-T}).

\begin{lem} \label{Jost}
Let $u \in L^1(\mathbb{R}) \cap L^3(\mathbb{R})$ and
$\partial_x u \in L^1(\mathbb{R})$. For every $z \in \mathbb{R}$,
there exist unique solutions $m_{\pm}(\cdot;z) \in L^{\infty}(\mathbb{R})$ and
$n_{\pm}(\cdot;z) \in L^{\infty}(\mathbb{R})$ satisfying the integral
equations (\ref{int-jost-1}) and (\ref{int-jost-2}).
Moreover, for every $x \in \mathbb{R}$,
$m_-(x;\cdot)$ and $n_+(x;\cdot)$ are continued analytically
in $\mathbb{C}^+$, whereas $m_+(x;\cdot)$ and $n_-(x;\cdot)$ are continued analytically
in $\mathbb{C}^-$. Finally, there exists a positive $z$-independent constant $C$ such that
\begin{equation}
\label{bounds-uniform-Jost}
\| m_{\mp}(\cdot;z) \|_{L^{\infty}} + \| n_{\pm}(\cdot;z) \|_{L^{\infty}} \leq C, \quad z \in \mathbb{C}^{\pm}.
\end{equation}
\end{lem}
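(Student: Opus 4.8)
The plan is to solve each of the Volterra integral equations (\ref{int-jost-1})--(\ref{int-jost-2}) by a Neumann series and to read off existence, uniqueness, analyticity, and the uniform bound (\ref{bounds-uniform-Jost}) directly from the structure of that series. I would carry out the argument in full only for $m_-$; the functions $m_+$, $n_+$, $n_-$ are handled identically after changing the sign of the exponent in the kernel and/or the endpoint of integration.

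First I would check that the hypotheses guarantee $Q_1(u), Q_2(u) \in L^1(\mathbb{R})$. The entries of $Q_{1,2}(u)$ in (\ref{lax-jost-1})--(\ref{lax-jost-2}) are linear combinations of $u$, $\partial_x u$, $|u|^2$, and $u|u|^2$. Since $u \in L^1 \cap L^3$ implies $u \in L^2$ by interpolation, we get $|u|^2 \in L^1$ and $|u|^3 \in L^1$ (so $u|u|^2 \in L^1$), while $u \in L^1$ and $\partial_x u \in L^1$ by assumption. Hence $Q := \| Q_1(u) \|_{L^1} < \infty$, and likewise for $Q_2$.

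Next I would set up the iteration $m_-^{(0)} = e_1$,
$$m_-^{(n)}(x;z) = \int_{-\infty}^{x} \left[\begin{matrix} 1 & 0 \\ 0 & e^{2iz(x-y)} \end{matrix}\right] Q_1(u(y)) \, m_-^{(n-1)}(y;z) \, dy.$$
The decisive point is the sign of the exponent: on the range of integration $y < x$, so $x - y > 0$, and for $z \in \overline{\mathbb{C}^+}$ one has $|e^{2iz(x-y)}| = e^{-2\Imag(z)(x-y)} \leq 1$. Thus the matrix kernel is bounded in norm by $\| Q_1(u(y)) \|$ uniformly in $z \in \overline{\mathbb{C}^+}$, and the ordered Volterra structure yields the factorial estimate
$$\| m_-^{(n)}(x;z) \| \leq \frac{1}{n!} \left( \int_{-\infty}^{x} \| Q_1(u(y)) \| \, dy \right)^{n} \leq \frac{Q^n}{n!}.$$
Summing over $n$ produces a solution $m_-(\cdot;z) \in L^\infty(\mathbb{R})$ with $\| m_-(\cdot;z) \|_{L^\infty} \leq e^{Q}$ for all $z \in \overline{\mathbb{C}^+}$, which simultaneously gives existence and the uniform bound (\ref{bounds-uniform-Jost}). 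Uniqueness follows since the difference of two solutions satisfies the homogeneous Volterra equation and obeys the same factorial bound, which forces it to vanish.

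Finally I would establish analyticity. Each iterate $m_-^{(n)}(x;\cdot)$ is analytic on $\mathbb{C}^+$, because the kernel entry $e^{2iz(x-y)}$ is entire in $z$ and the factorial bound holds locally uniformly in $z$, justifying differentiation under the integral sign; since the Neumann series converges uniformly on compact subsets of $\mathbb{C}^+$, the limit is analytic there by the Weierstrass convergence theorem. For $m_+$ and $n_-$ the integration runs to $+\infty$ (resp. the exponent is $e^{-2iz(x-y)}$ on $(-\infty,x)$), which flips the favorable half-plane, so those functions continue analytically to $\mathbb{C}^-$, while $n_+$ continues to $\mathbb{C}^+$. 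I expect the only genuine subtlety to be this uniform-in-$z$ control, which is precisely what the transformation (\ref{T}) to Zakharov--Shabat form provides: in the original problem (\ref{lax1}) the factor $\lambda$ multiplying $Q(u)$ ruins the uniformity of the Neumann series as $|\lambda| \to \infty$, whereas after the substitution $z = \lambda^2$ the kernels above decay (or at worst stay bounded) in the appropriate closed half-plane, making the factorial bound uniform and the analyticity immediate.
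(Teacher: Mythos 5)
Your proof is correct, and its second half (analyticity and the uniform bound) is exactly the paper's argument: the factorial estimate you derive is precisely the bound (\ref{bound-on-K-kernel}), and the paper likewise concludes via uniform convergence of the Neumann series of analytic functions. Where you genuinely diverge is in the existence and uniqueness step. The paper first proves existence and uniqueness by a contraction argument: it estimates the Volterra operator $K$ on $L^{\infty}(-\infty,x_0)$ by a $2\times 2$ matrix of norms of $u$, computes the eigenvalues of that matrix explicitly, chooses $x_0$ so that the spectral condition (\ref{estimate1}) holds, applies the Banach Fixed Point Theorem on $(-\infty,x_0)$, and then glues unique solutions across finitely many subintervals of $\mathbb{R}$; only afterwards does it invoke the Neumann series for analyticity. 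You instead extract existence directly from the summed Neumann series (giving $\| m_-(\cdot;z)\|_{L^{\infty}} \leq e^{Q}$ in one stroke) and uniqueness from iterating the homogeneous equation, $\Delta = K^n \Delta$ with $\|\Delta\|_{L^{\infty}} \leq \frac{Q^n}{n!} \|\Delta\|_{L^{\infty}}$. Your route is more economical, since the single factorial bound carries existence, uniqueness, the uniform estimate (\ref{bounds-uniform-Jost}), and analyticity simultaneously, and it avoids the subinterval-gluing bookkeeping entirely; the paper's contraction step, on the other hand, is not wasted effort in context, because the explicit smallness condition it produces is reused in Remark \ref{eigfunc} to give the sufficient condition (\ref{estimate2}) for absence of eigenvalues and resonances, which your streamlined argument would not furnish. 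One small point to make airtight: when you assert that the kernel is ``bounded in norm by $\| Q_1(u(y))\|$'' you should fix a submultiplicative matrix norm in which the diagonal factor $\mathrm{diag}(1, e^{2iz(x-y)})$ has norm at most $1$ for $z \in \overline{\mathbb{C}^+}$ (e.g., the induced $\ell^{\infty}$ operator norm rather than the entrywise sum), so that the factorial bound follows cleanly from the ordered-simplex computation.
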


\begin{proof}
It suffices to prove the statement for one Jost function, e.g., for $m_-$.
The proof for other Jost functions is analogous. Let us define the integral operator $K$ by
\begin{equation}\label{kernel-Q-1}
(K f)(x;z) := \frac{1}{2i} \int_{-\infty}^{x}
\left[\begin{matrix} 1 & 0 \\ 0 & e^{2iz (x-y)}
\end{matrix}\right] \left[\begin{matrix} |u(y)|^2 & u(y) \\ -2i \partial_y \overline{u}(y) - \overline{u(y)}
|u(y)|^2 & -|u(y)|^2 \end{matrix}\right] f(y) dy.
\end{equation}
For every $z \in \mathbb{C}^+$ and every $x_0 \in \mathbb{R}$, we have
$$
\| (K f)(\cdot;z)\|_{L^{\infty}(-\infty,x_0)} \leq
\frac{1}{2} \left[\begin{matrix} \|u\|_{L^2(-\infty,x_0)}^2 & \|u\|_{L^1(-\infty,x_0)} \\
2 \|\partial_x u\|_{L^1(-\infty,x_0)}+ \|u\|_{L^3(-\infty,x_0)}^3 & \|u\|_{L^2(-\infty,x_0)}^2 \end{matrix}\right]
\| f(\cdot;z) \|_{L^{\infty}(-\infty,x_0)}.
$$
The operator $K$ is a contraction from $L^{\infty}(-\infty,x_0)$ to
$L^{\infty}(-\infty,x_0)$ if the two eigenvalues of the matrix
$$
A = \frac{1}{2} \left[\begin{matrix} \|u\|_{L^2(-\infty,x_0)}^2 & \|u\|_{L^1(-\infty,x_0)} \\
2 \|\partial_x u\|_{L^1(-\infty,x_0)}+ \|u\|_{L^3(-\infty,x_0)}^3 & \|u\|_{L^2(-\infty,x_0)}^2 \end{matrix}\right]
$$
are located inside the unit circle. The two eigenvalues are given by
$$
\lambda_{\pm} = \frac{1}{2} \|u\|_{L^2(-\infty,x_0)}^2 \pm \frac{1}{2}
\sqrt{\|u\|_{L^1(-\infty,x_0)} (2 \|\partial_x u\|_{L^1(-\infty,x_0)} + \|u\|_{L^3(-\infty,x_0)}^3)},
$$
so that $|\lambda_-| < |\lambda_+|$.
Hence, the operator $K$ is a contraction if $x_0 \in \mathbb{R}$ is chosen
so that
\begin{equation}\label{estimate1}
\frac{1}{2} \|u\|_{L^2(-\infty,x_0)}^2 + \frac{1}{2}
\sqrt{\|u\|_{L^1(-\infty,x_0)} (2 \|\partial_x u\|_{L^1(-\infty,x_0)} + \|u\|_{L^3(-\infty,x_0)}^3)} < 1.
\end{equation}
By the Banach Fixed Point Theorem, for this $x_0$ and every $z \in \mathbb{C}^+$,
there exists a unique solution $m_-(\cdot;z) \in L^{\infty}(-\infty,x_0)$ of the integral equation (\ref{int-jost-1}).
To extend this result to $L^{\infty}(\mathbb{R})$, we can split $\mathbb{R}$ into a finite number of subintervals
such that the estimate \eqref{estimate1} is satisfied in each subinterval. Unique solutions in each subinterval
can be glued together to obtain the unique solution $m_-(\cdot;z) \in L^{\infty}(\mathbb{R})$ for every $z \in \mathbb{C}^+$.	

Analyticity of $m_-(x;\cdot)$ in $\mathbb{C}^+$ for every $x \in \mathbb{R}$
follows from the absolute and uniform convergence of the Neumann series of analytic functions in $z$. 	
Indeed, let us denote the $L^1$ matrix norm of the $2$-by-$2$ matrix function $Q$ as
$$
\| Q \|_{L^1} := \sum_{i,j = 1}^2 \| Q_{i,j} \|_{L^1}.
$$
If $u\in H^{1,1}(\mathbb{R})$, then $u \in L^1(\mathbb{R}) \cap L^3(\mathbb{R})$ and $\partial_x u \in L^1(\mathbb{R})$
so that $Q_1(u) \in L^1(\mathbb{R})$, where the matrix $Q_1(u)$ appears
in the integral kernel $K$ given by (\ref{kernel-Q-1}). For every $f(x;z) \in L^{\infty}(\mathbb{R}\times \mathbb{C}^+)$, we have
\begin{equation}
\label{bound-on-K-kernel}
\| (K^n f) \|_{L^{\infty}} \leq \frac{1}{n!} \| Q_1(u) \|_{L^1}^n \|f\|_{L^{\infty}}.
\end{equation}
As a result, the Neumann series for Volterra's integral equation (\ref{int-jost-1}) for $m_-$
converges absolutely and uniformly for every $x \in \mathbb{R}$ and $z \in \mathbb{C}^+$
and contains analytic functions of $z$ for $z \in \mathbb{C}^+$. Therefore, $m_-(x;\cdot)$ is analytic in $\mathbb{C}^+$
for every $x \in \mathbb{R}$ and it satisfies the bound (\ref{bounds-uniform-Jost}).
\end{proof}

\begin{remark} \label{eigfunc}
If $u$ is sufficiently small so that the estimate
\begin{equation}\label{estimate2}
\frac{1}{2} \|u\|_{L^2}^2 + \frac{1}{2}
\sqrt{\|u\|_{L^1} (2 \|\partial_x u\|_{L^1} + \|u\|_{L^3}^3)} < \frac{1}{2}
\end{equation}
holds on $\mathbb{R}$, then Banach Fixed Point Theorem
yields the existence of the unique solution $m_-(\cdot;z) \in L^{\infty}(\mathbb{R})$
of the integral equation (\ref{int-jost-1}) such that $\| m_-(\cdot;z) - e_1 \|_{L^{\infty}} < 1$.
This is in turn equivalent to the conditions that the linear equation (\ref{lax1})
has no $L^2(\mathbb{R})$ solutions for every $\lambda \in \mathbb{C}$
and the linear equation (\ref{lax1}) has no resonances for every $\lambda \in \mathbb{R} \cup i \mathbb{R}$
in the sense of Definitions \ref{definition-eigenvalue} and \ref{definition-resonance}.
Therefore, the small-norm constraint (\ref{estimate2}) is a sufficient condition
that the assumptions of Theorem \ref{main} are satisfied.
\end{remark}

\begin{lem} \label{asympt-mod}
Under the conditions of Lemma \ref{Jost}, for every $x \in \mathbb{R}$,
the Jost functions $m_{\pm}(x;z)$ and $n_{\pm}(x;z)$ satisfy the following limits
as $|{\rm Im}(z)| \to \infty$ along a contour in the domains of their analyticity:
\begin{equation}
\label{asymptotics-1-lim}
\lim_{|z| \to \infty} m_{\pm}(x;z) = m_{\pm}^{\infty}(x) e_1, \quad
m_{\pm}^{\infty}(x) := e^{\frac{1}{2i} \int_{\pm\infty}^x |u(y)|^2 dy}
\end{equation}
and
\begin{equation}
\label{asymptotics-2-lim}
\lim_{|z| \to \infty} n_{\pm}(x;z) = n_{\pm}^{\infty}(x) e_2, \quad
n_{\pm}^{\infty}(x) := e^{-\frac{1}{2i} \int_{\pm\infty}^x |u(y)|^2 dy}.
\end{equation}
If in addition, $u \in C^1(\mathbb{R})$, then for every $x \in \mathbb{R}$,
the Jost functions $m_{\pm}(x;z)$ and $n_{\pm}(x;z)$ satisfy the following limits
as $|{\rm Im}(z)| \to \infty$ along a contour in the domains of their analyticity:
\begin{equation}
\label{asymptotics-1}
\lim_{|z| \to \infty} z \left[ m_{\pm}(x;z) - m_{\pm}^{\infty}(x) e_1 \right] =
q_{\pm}^{(1)}(x) e_1 + q_{\pm}^{(2)}(x) e_2
\end{equation}
and
\begin{equation}
\label{asymptotics-2}
\lim_{|z| \to \infty} z \left[ n_{\pm}(x;z) - n_{\pm}^{\infty}(x) e_2 \right] =
s_{\pm}^{(1)}(x) e_1 + s_{\pm}^{(2)}(x) e_2,
\end{equation}
where
\begin{eqnarray*}
q_{\pm}^{(1)}(x) & := & - \frac{1}{4}  e^{\frac{1}{2i} \int_{\pm\infty}^x |u(y)|^2 dy}
\int_{\pm \infty}^{x} \left[ u(y) \partial_y \bar{u}(y) + \frac{1}{2i} |u(y)|^4 \right]  dy,\\
q_{\pm}^{(2)}(x) & := & \frac{1}{2i} \partial_x \left(\bar{u}(x) e^{\frac{1}{2i} \int_{\pm\infty}^x |u(y)|^2 dy} \right), \\
s_{\pm}^{(1)}(x) & := & - \frac{1}{2i} \partial_x \left( u(x) e^{-\frac{1}{2i} \int_{\pm\infty}^x |u(y)|^2 dy} \right),\\
s_{\pm}^{(2)}(x) & := & \frac{1}{4} e^{-\frac{1}{2i} \int_{\pm\infty}^x |u(y)|^2 dy}
\int_{\pm \infty}^{x} \left[ \bar{u}(y) \partial_y u(y) - \frac{1}{2i} |u(y)|^4 \right]  dy.
\end{eqnarray*}
\end{lem}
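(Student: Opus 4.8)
The plan is to reduce everything to a single Jost function, say $m_-$, and to read the two scalar Volterra equations off (\ref{int-jost-1}):
\begin{align*}
m_1(x;z) &= 1 + \frac{1}{2i}\int_{-\infty}^x \bigl[ |u|^2 m_1 + u\, m_2 \bigr]\,dy, \\
m_2(x;z) &= \frac{1}{2i}\int_{-\infty}^x e^{2iz(x-y)}\bigl[(-2i\bar u_y - \bar u|u|^2)m_1 - |u|^2 m_2\bigr]\,dy,
\end{align*}
where I write $m_-=[m_1,m_2]^t$ and use that $|e^{2iz(x-y)}| = e^{-2\,\mathrm{Im}(z)(x-y)}\le 1$ for $z\in\mathbb{C}^+$ and $y\le x$. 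The remaining three cases ($m_+$ and $n_\pm$, hence the coefficients $s_\pm^{(1)},s_\pm^{(2)}$) follow verbatim with $Q_2$ in place of $Q_1$ and the obvious reflections in $x$ and in the roles of the two components.

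For the leading-order limits (\ref{asymptotics-1-lim}) I would argue as follows. Since $\bar u_y,\ \bar u|u|^2,\ |u|^2\in L^1$ and $m_1,m_2$ are bounded uniformly in $z$ by Lemma \ref{Jost}, the integrand of the $m_2$-equation is dominated by a fixed $L^1$ function while $e^{2iz(x-y)}\to 0$ pointwise on $\{y<x\}$ as $\mathrm{Im}(z)\to\infty$; dominated convergence then forces $m_2(x;z)\to 0$. Passing to the limit in the $m_1$-equation shows that $m_-^\infty(x):=\lim m_1$ solves $m_-^\infty = 1 + \frac{1}{2i}\int_{-\infty}^x |u|^2 m_-^\infty\,dy$, i.e. $\partial_x m_-^\infty = \frac{1}{2i}|u|^2 m_-^\infty$ with $m_-^\infty(-\infty)=1$, whose unique solution is $e^{\frac{1}{2i}\int_{-\infty}^x|u|^2\,dy}$, as claimed.

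For the subleading order (\ref{asymptotics-1}) I would study the scaled remainders $r_1:=z(m_1-m_-^\infty)$ and $r_2:=z\,m_2$. Subtracting the limiting equation from the $m_1$-equation and multiplying by $z$ gives the clean Volterra relation
\[
r_1(x;z) = \frac{1}{2i}\int_{-\infty}^x\bigl[|u|^2 r_1 + u\,r_2\bigr]\,dy,
\]
so that $(r_1,r_2)$ solves a coupled Volterra system of exactly the structure treated in Lemma \ref{Jost}. The only genuinely new ingredient is the limit of $r_2 = \frac{z}{2i}\int_{-\infty}^x e^{2iz(x-y)}F(y;z)\,dy$ with $F:=(-2i\bar u_y-\bar u|u|^2)m_1 - |u|^2 m_2$. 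Writing $F = F_\infty + (F-F_\infty)$, $F_\infty(y):=(-2i\bar u_y - \bar u|u|^2)m_-^\infty(y)$, the residual term equals $\frac{1}{2i}\int_{-\infty}^x e^{2iz(x-y)}\bigl[(-2i\bar u_y-\bar u|u|^2)r_1 - |u|^2 r_2\bigr]\,dy$ and vanishes as $\mathrm{Im}(z)\to\infty$ by dominated convergence, once $r_1,r_2$ are known to be bounded uniformly in $z$. The leading term is handled by the approximate-identity estimate
\[
z\int_{-\infty}^x e^{2iz(x-y)}g(y)\,dy \longrightarrow -\frac{1}{2i}\,g(x),\qquad \mathrm{Im}(z)\to\infty,
\]
valid for any $g\in L^1(-\infty,x)$ that is \emph{continuous} at $x$ (substitute $s=x-y$ and note that $2iz\,e^{2izs}$ concentrates at $s=0$ with total mass $-1$). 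Applied to $\frac{1}{2i}F_\infty$ it yields $r_2\to -\frac{1}{(2i)^2}F_\infty(x)=q_-^{(2)}(x)$ after using $\partial_x m_-^\infty=\frac{1}{2i}|u|^2 m_-^\infty$. Feeding $r_2\to q_-^{(2)}$ into the Volterra equation for $r_1$ and solving by variation of parameters with integrating factor $m_-^\infty$ gives
\[
q_-^{(1)}(x) = m_-^\infty(x)\int_{-\infty}^x \frac{1}{m_-^\infty(y)}\cdot\frac{1}{2i}\,u(y)\,q_-^{(2)}(y)\,dy,
\]
which collapses to the stated formula once $q_-^{(2)}=\frac{1}{2i}\partial_x(\bar u\,m_-^\infty)$ is inserted and $\frac{u}{m_-^\infty}\partial_y(\bar u\,m_-^\infty)=u\bar u_y+\frac{1}{2i}|u|^4$ is used.

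The main obstacle, and the reason the hypothesis is exactly $u\in C^1$ rather than $u\in C^2$, is the simultaneous presence of the prefactor $z$ and the $z$-dependent, derivative-containing integrand $F$ in $r_2$. A naive integration by parts to extract the $O(1/z)$ boundary term would differentiate the entry $\bar u_y$ and therefore demand control of $\bar u_{xx}$, which is unavailable; the approximate-identity estimate circumvents this by reading off the boundary value $F_\infty(x)$ using only continuity of $F_\infty$, i.e. continuity of $\bar u_x$. The remaining technical work is the a priori bound: that $r_1,r_2$ are bounded uniformly in $z$. I would obtain this by running the same subinterval-splitting contraction as in the proof of Lemma \ref{Jost} on the coupled Volterra system for $(r_1,r_2)$, the inhomogeneity being the bounded leading term produced by the approximate-identity estimate. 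Once this uniform bound is in hand, dominated convergence legitimizes every interchange of limit and integral above, and the analogous computation with $Q_2$ delivers (\ref{asymptotics-2-lim}) and (\ref{asymptotics-2}) together with $s_\pm^{(1)},s_\pm^{(2)}$.
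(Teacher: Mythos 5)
Your proposal is correct, and its analytic skeleton is the same as the paper's: the leading-order limits (\ref{asymptotics-1-lim}) come from the identical dominated-convergence argument on the second component; the endpoint limit of $z m_-^{(2)}$ rests on exactly your approximate-identity estimate, which the paper proves inline via the splitting $I + II + III$ in (\ref{int-jost-asymp-4}) with the choice $\delta = [{\rm Im}(z)]^{-1/2}$ (this is its invocation of Watson's lemma, and your remark that continuity of $\bar{u}_x$ is what replaces an integration by parts requiring $\bar{u}_{xx}$ is precisely the paper's rationale for assuming only $u \in C^1$); and the first component is finished with the same integrating factor $m_-^{\infty}$, cf. (\ref{int-jost-asymp-6})--(\ref{int-jost-asymp-7}).

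Where you genuinely deviate is in the bookkeeping of the subleading order. The paper works directly with $m_-^{(2)}$: the middle term $II$ is computed exactly and involves $\phi(x;z)$ at the endpoint, whose limit is already known from the leading order, so the paper never needs uniform-in-$z$ control of the scaled remainders to obtain (\ref{int-jost-asymp-5}). You instead split $F = F_{\infty} + (F - F_{\infty})$ and re-run the contraction of Lemma \ref{Jost} on the coupled Volterra system for $(r_1, r_2)$, whose homogeneous part has the same kernel $Q_1(u)$, to extract an a priori bound uniform in $z$. This extra layer is not wasted: the paper's final passage from (\ref{int-jost-asymp-6}) to (\ref{int-jost-asymp-7}) silently interchanges $\lim_{|z| \to \infty}$ with $\int_{-\infty}^x$, which requires precisely a $z$-uniform integrable dominant for $u(y)\, z m_-^{(2)}(y;z)$ --- i.e. your uniform bound on $r_2$ together with $u \in L^1$ --- so your version makes explicit a justification the paper leaves implicit. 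One caveat on your a priori bound: the contraction needs the inhomogeneity $\frac{z}{2i} \int_{-\infty}^x e^{2iz(x-y)} F_{\infty}(y)\, dy$ to be bounded uniformly over each subinterval (including the unbounded one) and over $z$ on the contour; the near-field part of the usual splitting is controlled by $\sup$ of $|F_{\infty}|$ on a shrinking window, and under the hypotheses of Lemma \ref{Jost} plus $u \in C^1$ alone, $\bar{u}_x$ (hence $F_{\infty}$) is continuous and integrable but need not be globally bounded. This is harmless for fixed $x$ --- a bound on $(-\infty, x]$ with $F_{\infty}$ locally controlled suffices for the dominated-convergence step --- but you should either state the contraction on that half-line with a local bound, or observe that in the paper's application $u \in H^2(\mathbb{R}) \cap H^{1,1}(\mathbb{R})$ gives $u_x \in L^{\infty}(\mathbb{R})$, which removes the issue entirely.
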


\begin{proof}
Again, we prove the statement for the Jost function $m_-$ only. The proof for other Jost functions is analogous.
Let $m_- = [m_-^{(1)},m_-^{(2)}]^t$ and rewrite the integral equation (\ref{int-jost-1}) in the component form:
\begin{eqnarray}\label{int-jost-asymp-1}
m_-^{(1)}(x;z) = 1 + \frac{1}{2i} \int_{-\infty}^{x} u(y) \left[\bar{u}(y) m_-^{(1)}(y;z) + m_-^{(2)}(y;z)\right] dy,
\end{eqnarray}
and
\begin{eqnarray}
\label{int-jost-asymp-2}
m_-^{(2)}(x;z) = \phantom{t} - \frac{1}{2i} \int_{-\infty}^{x} e^{2iz (x-y)}
\left[ (2 i \partial_y \bar{u}(y) + |u(y)|^2 \bar{u}(y)) m_-^{(1)}(y;z) + |u(y)|^2 m_-^{(2)}(y;z) \right] dy.
\end{eqnarray}
Recall that for every $x \in \mathbb{R}$, $m_-(x;\cdot)$ is analytic in $\mathbb{C}^+$.
By bounds (\ref{bounds-uniform-Jost}) in Lemma \ref{Jost}, for every $u \in L^1(\mathbb{R}) \cap L^3(\mathbb{R})$ and
$\partial_x u \in L^1(\mathbb{R})$, the integrand of the second equation (\ref{int-jost-asymp-2})
is bounded for every $z \in \mathbb{C}^+$ by an absolutely integrable $z$-independent function. Also,
the integrand converges to zero
for every $y \in (-\infty,x)$ as $|z| \to \infty$ in $\mathbb{C}^+$. By Lebesgue's Dominated Convergence Theorem,
we obtain $\lim_{|z| \to \infty} m_-^{(2)}(x;z) = 0$, hence $m_-^{\infty}(x) := \lim_{|z| \to \infty} m_-^{(1)}(x;z)$
satisfies the inhomogeneous integral equation
\begin{eqnarray}
m_-^{\infty}(x) = 1 + \frac{1}{2i} \int_{-\infty}^{x} |u(y)|^2 m_-^{\infty}(y) dy,
\label{int-jost-asymp-3}
\end{eqnarray}
with the unique solution $m_-^{\infty}(x) = e^{\frac{1}{2i} \int_{-\infty}^x |u(y)|^2 dy}$.
This proves the limit (\ref{asymptotics-1-lim}) for $m_-$.

We now add the condition $u \in C^1(\mathbb{R})$ and use the technique behind Watson's Lemma
related to the Laplace method of asymptotic analysis \cite{Miller}. For every $x \in \mathbb{R}$ and
every small $\delta > 0$, we split integration in the second equation (\ref{int-jost-asymp-2})
for $(-\infty,x-\delta)$ and $(x-\delta,x)$, rewriting it in the equivalent form:
\begin{eqnarray}
\nonumber
m_-^{(2)}(x;z) & = & \int_{-\infty}^{x-\delta} e^{2iz(x-y)} \phi(y;z) dy +
\phi(x;z) \int_{x-\delta}^x e^{2iz(x-y)}  dy \\
& \phantom{t} & + \int_{x-\delta}^x e^{2iz(x-y)}  \left[ \phi(y;z) - \phi(x;z) \right] dy \equiv I + II + III,
\label{int-jost-asymp-4}
\end{eqnarray}
where
$$
\phi(x;z) := - \frac{1}{2i} \left[ (2 i \partial_x \bar{u}(x) + |u(x)|^2 \bar{u}(x)) m_-^{(1)}(x;z) +
|u(x)|^2 m_-^{(2)}(x;z) \right].
$$
Since $\phi(\cdot;z) \in L^1(\mathbb{R})$, we have
$$
|I| \leq e^{-2 \delta {\rm Im}(z)} \| \phi(\cdot;z) \|_{L^1}.
$$
Since $\phi(\cdot;z) \in C^0(\mathbb{R})$, we have
$$
|III| \leq \frac{1}{2 {\rm Im}(z)} \| \phi(x - \cdot;z) - \phi(x;z) \|_{L^{\infty}(x-\delta,x)}.
$$
On the other hand, we have the exact value
$$
II = -\frac{1}{2iz} \left[ 1 - e^{2 i z \delta} \right] \phi(x;z).
$$
Let us choose $\delta := [{\rm Im}(z)]^{-1/2}$ such that $\delta \to 0$ as ${\rm Im}(z) \to \infty$.
Then, by taking the limit  along the contour in $\mathbb{C}^+$ such that ${\rm Im}(z) \to \infty$,
we obtain
\begin{eqnarray}
\lim_{|z| \to \infty} z m_-^{(2)}(x;z) = -\frac{1}{2i} \lim_{|z| \to \infty} \phi(x;z)
= - \frac{1}{4} (2 i \partial_x \bar{u}(x) + |u(x)|^2 \bar{u}(x)) m_-^{\infty}(x),
\label{int-jost-asymp-5}
\end{eqnarray}
which yields the limit (\ref{asymptotics-1}) for $m_-^{(2)}$. On the other hand,
the first equation \eqref{int-jost-asymp-1} can be rewritten as the differential equation
$$
\partial_x m_-^{(1)}(x;z) = \frac{1}{2i} |u(x)|^2 m_-^{(1)}(x;z) + \frac{1}{2i} u(x) m_-^{(2)}(x;z).
$$
Using $\bar{m}_-^{\infty}$ as the integrating factor,
$$
\partial_x(\overline{m}_-^{\infty}(x) m_-^{(1)}(x;z))=\frac{1}{2i} u(x) \overline{m}_-^{\infty}(x) m_-^{(2)}(x;z),
$$
we obtain another integral equation for $m_-^{(1)}$:
\begin{eqnarray}\label{int-jost-asymp-6}
m_-^{(1)}(x;z) = m_-^{\infty}(x) + \frac{1}{2i} m_-^{\infty}(x) \int_{-\infty}^{x} u(y) \overline{m}_-^{\infty}(y) m_-^{(2)}(y;z) dy,
\end{eqnarray}
Multiplying this equation by $z$ and taking the limit $|z| \to \infty$, we obtain
\begin{eqnarray}
\lim_{|z| \to \infty} z \left[ m_-^{(1)}(x;z) - m_-^{\infty}(x) \right] =
- \frac{1}{4} m_-^{\infty}(x) \int_{-\infty}^{x} \left[ u(y) \partial_y \bar{u}(y) + \frac{1}{2i} |u(y)|^4 \right]  dy,
\label{int-jost-asymp-7}
\end{eqnarray}
which yields the limit (\ref{asymptotics-1}) for $m_-^{(1)}$.
\end{proof}

We shall now study properties of the Jost functions
on the real axis of $z$. First, we note that following elementary
result from the Fourier theory. For notational convenience,
we use sometimes $\| f(z) \|_{L^2_z}$ instead of $\| f(\cdot) \|_{L^2}$.

\begin{prop}
\label{prop-Fourier-1}
If $w \in H^1(\mathbb{R})$, then
\begin{equation}
\label{Fourier-1b}
\sup_{x \in \mathbb{R}} \left\| \int_{- \infty}^x e^{2 i z (x-y)} w(y) dy \right\|_{L^2_z(\mathbb{R})} \leq
\sqrt{\pi} \| w \|_{L^2}.
\end{equation}
and
\begin{equation}
\label{Fourier-1a}
\sup_{x \in \mathbb{R}} \left\| 2 i z \int_{- \infty}^x e^{2 i z (x-y)} w(y) dy + w(x) \right\|_{L^2_z(\mathbb{R})} \leq
\sqrt{\pi} \| \partial_x w \|_{L^2}.
\end{equation}
Moreover, if $w \in L^{2,1}(\mathbb{R})$, then for every $x_0 \in \mathbb{R}^{-}$, we have
\begin{equation}
\label{Fourier-1}
\sup_{x \in (-\infty,x_0)} \left\| \langle x \rangle \int_{- \infty}^x e^{2 i z (x-y)} w(y) dy \right\|_{L^2_z(\mathbb{R})} \leq
\sqrt{\pi} \| w \|_{L^{2,1}(- \infty,x_0)},
\end{equation}
where $\langle x \rangle := (1+x^2)^{1/2}$.
\end{prop}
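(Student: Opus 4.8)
The plan is to recognize each left-hand side as (a weighted version of) a one-sided Fourier transform and then invoke the Plancherel theorem with the correct normalization. For fixed $x$, the substitution $s = x-y$ turns the basic integral into
\begin{equation*}
\int_{-\infty}^x e^{2iz(x-y)} w(y)\, dy = \int_0^\infty e^{2izs}\, w(x-s)\, ds,
\end{equation*}
which is, up to the scaling $z \mapsto 2z$, the Fourier transform in $z$ of the function $s \mapsto w(x-s)$ extended by zero to $s<0$. Applying Plancherel and accounting for both the factor $2$ in the exponent and the one-sided support, I expect the exact identity
\begin{equation*}
\left\| \int_{-\infty}^x e^{2iz(x-y)} w(y)\, dy \right\|_{L^2_z(\mathbb{R})}^2 = \pi \int_{-\infty}^x |w(y)|^2\, dy.
\end{equation*}
Since the right-hand side is bounded by $\pi\|w\|_{L^2}^2$ uniformly in $x$, taking the supremum over $x$ yields \eqref{Fourier-1b} at once.

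For \eqref{Fourier-1a} I would integrate by parts in $y$, using $e^{2iz(x-y)} = -\frac{1}{2iz}\partial_y e^{2iz(x-y)}$. Because $w \in H^1(\mathbb{R})$ is continuous and vanishes as $y \to -\infty$, the only surviving boundary term is at $y=x$, and the computation should give the clean identity
\begin{equation*}
2iz \int_{-\infty}^x e^{2iz(x-y)} w(y)\, dy + w(x) = \int_{-\infty}^x e^{2iz(x-y)}\, \partial_y w(y)\, dy.
\end{equation*}
The right-hand side is precisely the integral already controlled by \eqref{Fourier-1b}, but with $w$ replaced by $\partial_x w$; hence \eqref{Fourier-1a} follows immediately by applying the first estimate to $\partial_x w$.

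For the weighted bound \eqref{Fourier-1}, I would return to the exact Plancherel identity and multiply through by $\langle x\rangle^2$. The key point is an elementary comparison of weights: for $y < x < x_0 < 0$ one has $|x| \le |y|$, hence $\langle x\rangle \le \langle y\rangle$, so that
\begin{equation*}
\langle x\rangle^2 \int_{-\infty}^x |w(y)|^2\, dy \le \int_{-\infty}^x \langle y\rangle^2 |w(y)|^2\, dy \le \|w\|_{L^{2,1}(-\infty,x_0)}^2
\end{equation*}
uniformly over $x \in (-\infty,x_0)$. Combining this with the Plancherel identity gives \eqref{Fourier-1}.

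The calculations are all routine; the only points requiring care are bookkeeping of the Plancherel constant (the $\sqrt{\pi}$ arises precisely from the factor $2$ in the exponent together with the restriction to the half-line $s>0$) and justifying the vanishing of the boundary term at $-\infty$ in the integration by parts, which is exactly where the $H^1$ hypothesis (rather than mere $L^2$) is used. I do not anticipate a genuine obstacle: once the integral is read as a one-sided Fourier transform, the proposition is a clean consequence of the Plancherel theorem.
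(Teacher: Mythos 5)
Your proposal is correct and takes essentially the same route as the paper's own proof: the identical exact Plancherel identity $\bigl\| \int_{-\infty}^x e^{2iz(x-y)} w(y)\,dy \bigr\|_{L^2_z}^2 = \pi \int_{-\infty}^x |w(y)|^2\,dy$, the same integration-by-parts identity $2iz\int_{-\infty}^x e^{2iz(x-y)}w(y)\,dy + w(x) = \int_{-\infty}^x e^{2iz(x-y)}\,\partial_y w(y)\,dy$ (with the boundary term at $-\infty$ killed by the $H^1$ hypothesis, exactly as the paper uses it), and the same weight comparison $\langle x\rangle \leq \langle y\rangle$ for $y \leq x \leq x_0 < 0$. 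There is nothing to correct.
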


\begin{proof}
Here we give a quick proof based on Plancherel's theorem of Fourier analysis.
For every $x \in \mathbb{R}$ and every $z \in \mathbb{R}$, we write
$$
f(x;z) := \int_{- \infty}^x e^{2 i z (x-y)} w(y) dy = \int_{-\infty}^0 e^{-2 i z y} w(y+x) dy,
$$
so that
\begin{eqnarray}
\nonumber
\| f(x;\cdot) \|_{L^2}^2 & = & \int_{-\infty}^{\infty} \int_{-\infty}^0 \int_{-\infty}^0 \bar{w}(y_1+x) w(y_2+x) e^{2 i (y_1-y_2) z} dy_1 dy_2 dz \\
& = &  \pi \int_{-\infty}^0 |w(y+x)|^2 dy = \pi \int_{-\infty}^x |w(y)|^2 dy. \label{plancherel}
\end{eqnarray}
Bound (\ref{Fourier-1b}) holds if $w \in L^2(\mathbb{R})$.

If $y \leq x \leq 0$, we have $1 + y^2 \geq 1 + x^2$, so that
equation (\ref{plancherel}) implies
\begin{eqnarray*}
\| f(x;\cdot) \|_{L^2}^2 \leq \frac{\pi}{1+x^2} \int_{-\infty}^x (1 + y^2) |w(y)|^2 dy
\leq \frac{\pi}{1+x^2} \| w \|_{L^{2,1}(-\infty,x)}^2,
\end{eqnarray*}
which yields the bound (\ref{Fourier-1}) for any fixed $x_0 \in \mathbb{R}^-$.

To get the bound (\ref{Fourier-1a}), we note that if $w \in H^1(\mathbb{R})$, then
$w \in L^{\infty}(\mathbb{R})$ and $w(x) \to 0$ as $|x| \to \infty$. As a result, we have
$$
2 i z f(x;z) + w(x) = \int_{- \infty}^x e^{2 i z (x-y)} \partial_y w(y) dy.
$$
The bound (\ref{Fourier-1a}) follows from the computation similar to (\ref{plancherel}).
\end{proof}

Subtracting the asymptotic limits (\ref{asymptotics-1-lim}) and (\ref{asymptotics-2-lim}) in Lemma \ref{asympt-mod}
from the Jost functions $m_{\pm}$ and $n_{\pm}$ in Lemma \ref{Jost}, we prove that for every fixed $x \in \mathbb{R}^{\pm}$,
the remainder terms belongs to $H^1(\mathbb{R})$ with respect to the variable $z$
if $u$ belongs to the space $H^{1,1}(\mathbb{R})$ defined in (\ref{H-1-1}).
Moreover, subtracting also the $\mathcal{O}(z^{-1})$ terms as defined by (\ref{asymptotics-1}) and (\ref{asymptotics-2})
and multiplying the result by $z$,
we prove that the remainder term belongs to $L^{2}(\mathbb{R})$ if $u \in H^2(\mathbb{R}) \cap H^{1,1}(\mathbb{R})$.
Note that if $u \in H^{1,1}(\mathbb{R})$, then the conditions of Lemma \ref{Jost} are satisfied,
so that $u \in L^1(\mathbb{R}) \cap L^3(\mathbb{R})$ and
$\partial_x u \in L^1(\mathbb{R})$. Also if $u \in H^2(\mathbb{R}) \cap H^{1,1}(\mathbb{R})$,
then the additional condition $u \in C^1(\mathbb{R})$ of Lemma \ref{asympt-mod} is also satisfied.

\begin{lem} \label{regularity-m}
If $u \in H^{1,1}(\mathbb{R})$, then for every $x \in \mathbb{R}^{\pm}$, we have
\begin{equation}
\label{m-n-H-1}
m_{\pm}(x;\cdot) - m_{\pm}^{\infty}(x) e_1 \in H^1(\mathbb{R}),  \quad
n_{\pm}(x;\cdot) - n_{\pm}^{\infty}(x) e_2 \in H^1(\mathbb{R}).
\end{equation}
Moreover, if $u \in H^2(\mathbb{R}) \cap H^{1,1}(\mathbb{R})$, then
for every $x \in \mathbb{R}$, we have
\begin{equation}
\label{m-n-L-2-1a}
z \left[ m_{\pm}(x;z) - m_{\pm}^{\infty}(x) e_1 \right] - (q_{\pm}^{(1)}(x) e_1 + q_{\pm}^{(2)}(x) e_2) \in L^{2}_z(\mathbb{R})
\end{equation}
and
\begin{equation}
\label{m-n-L-2-1b}
z \left[ n_{\pm}(x;z) - n_{\pm}^{\infty}(x) e_2 \right] - (s_{\pm}^{(1)}(x) e_1 + s_{\pm}^{(2)}(x) e_2) \in L^{2}_z(\mathbb{R}).
\end{equation}
\end{lem}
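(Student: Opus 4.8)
The plan is to prove all four inclusions for a single Jost function and to transfer the rest by symmetry: working with $m_-$ and $x \in \mathbb{R}^-$, the statements for $m_+$ (with $x \in \mathbb{R}^+$) follow from the reflection $x \mapsto -x$, and those for $n_\pm$ from the analogous integral equation (\ref{int-jost-2}) with the two components interchanged. Writing $m_- = (m_-^{(1)}, m_-^{(2)})^t$ and setting $p := m_-^{(1)} - m_-^{\infty}$, $q := m_-^{(2)}$, I would organise everything around the oscillatory second component $q$, which solves the Volterra equation (\ref{int-jost-asymp-2}), and then read off the first component from the non-oscillatory representation (\ref{int-jost-asymp-6}), which writes $p$ as a $z$-independent $L^1$ average of $q$; by Minkowski's integral inequality every $L^2_z$-type bound on $q$ passes to $p$, and likewise for the $z$-derivatives and the weighted statements. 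The three inputs are the uniform bounds (\ref{bounds-uniform-Jost}), the $|z| \to \infty$ limits of Lemma \ref{asympt-mod}, and the oscillatory-integral estimates of Proposition \ref{prop-Fourier-1}.

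For the $L^2_z$ half of (\ref{m-n-H-1}) I would split the integrand of (\ref{int-jost-asymp-2}) by replacing $m_-^{(1)}$ with its limit $m_-^{\infty}$. This produces a \emph{$z$-independent} source $q_{\rm src}(x;z) = \int_{-\infty}^x e^{2iz(x-y)} h(y)\,dy$ with $h = -\tfrac{1}{2i}(2i\partial_y\bar u + |u|^2\bar u)m_-^{\infty}$, plus a remainder $\mathcal{R}(p,q)$ linear in the differences. Since $u \in H^{1,1}(\mathbb{R})$ gives $\partial_x u \in L^2$ and $|u|^2 u \in L^2$ (using $H^1 \hookrightarrow L^\infty$), we have $h \in L^2$, and the bound (\ref{Fourier-1b}) controls $\sup_x\|q_{\rm src}(x;\cdot)\|_{L^2_z}$ by genuine oscillation; note that Minkowski is useless here because the limit $m_-^{\infty} \ne 0$ is not in $L^2_z$. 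The remainder, by contrast, carries the small factors $p, q$ and has modulus-one kernel, so Minkowski turns it into the $L^1$ coefficient bound already used in Lemma \ref{Jost}; the same subinterval decomposition and contraction, now run in $L^\infty_x(L^2_z)$, yield $q(x;\cdot) \in L^2_z$, and (\ref{int-jost-asymp-6}) gives $p(x;\cdot) \in L^2_z$.

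The delicate half of Part 1 is $\partial_z q \in L^2_z$, since $\partial_z$ drops the factor $2i(x-y)$ inside the oscillatory integral. On the $z$-independent source this is harmless: regarding $(x-y)h(y)$ as a single amplitude and using $|x-y| \le \langle y \rangle$ for $y \le x \le 0$, the bound (\ref{Fourier-1b}) controls it by $\|h\|_{L^{2,1}}$, and $h \in L^{2,1}$ is exactly what $u \in H^{1,1}(\mathbb{R})$ supplies ($\partial_x u \in L^{2,1}$ and $|u|^2 u \in L^{2,1}$). The terms where $(x-y)$ multiplies the remainder split into a harmless part, in which the coefficient $|u|^2\bar u$ or the identity $\partial_y p = \tfrac{1}{2i}(|u|^2 p + u q)$ furnishes an extra power of $u$ so that $\langle y\rangle|u|^3, \langle y\rangle|u|^2 \in L^1$ and Minkowski applies, and one resistant term $\int_{-\infty}^x (x-y) e^{2iz(x-y)} \partial_y\bar u(y)\,p(y;z)\,dy$, in which the internal weight meets $\partial_x u$ against the $z$-dependent $p$. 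I expect this to be the \textbf{main obstacle}: integration by parts in $y$ merely reshuffles it into itself, so it cannot be removed that way. My route would be to substitute the representation (\ref{int-jost-asymp-6}) of $p$ together with the Volterra representation of $q$, rendering every oscillation explicit and pairing the offending $\partial_x u$ with an extra integrated factor of $u$; the resulting higher iterate is then estimated by a weighted Plancherel bound of the type (\ref{Fourier-1}), closing the $H^1_z$ statement under $u \in H^{1,1}(\mathbb{R})$ alone.

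Finally, for (\ref{m-n-L-2-1a})--(\ref{m-n-L-2-1b}) under the stronger hypothesis $u \in H^2(\mathbb{R}) \cap H^{1,1}(\mathbb{R})$ I would avoid the $z$-derivative altogether and instead multiply the Volterra equation by $z$, using the identity behind (\ref{Fourier-1a}) in the form $2iz\,q(x;z) + g(x;z) = \int_{-\infty}^x e^{2iz(x-y)} \partial_y g(y;z)\,dy$, where $g$ is the integrand with $q = \int_{-\infty}^x e^{2iz(x-y)} g\,dy$ and $g_\infty := \lim_{|z|\to\infty} g$. This simultaneously identifies the surviving coefficient $q_-^{(2)} = \lim_{|z|\to\infty} z q = -\tfrac{1}{2i} g_\infty$, in agreement with the Watson-lemma computation (\ref{int-jost-asymp-5}), and exhibits the remainder $z q - q_-^{(2)}$ as an oscillatory integral of $\partial_y g$ plus a controlled piece of $g - g_\infty$; since $\partial_y g$ carries $\partial_x^2 u$, its membership in $L^2$ is precisely where $u \in H^2(\mathbb{R})$ enters, and (\ref{Fourier-1b}) then puts the remainder in $L^2_z$. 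The first-component statement in (\ref{m-n-L-2-1a}) follows once more from (\ref{int-jost-asymp-6}), since $z[m_-^{(1)} - m_-^{\infty}] - q_-^{(1)}$ is the $L^1$ average of $z q - q_-^{(2)} \in L^2_z$.
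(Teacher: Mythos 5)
Your architecture for the first statement (\ref{m-n-H-1}) coincides with the paper's: the same source/remainder splitting around the equation $(I-K)(m_- - m_-^{\infty}e_1) = h e_2$ as in (\ref{int-m-2}), the same inputs from Proposition \ref{prop-Fourier-1}, and the same inversion of $I-K$ in $L^{\infty}_x L^2_z$ via the Volterra bound (\ref{I-K-estimate}). You diverge at the decisive point. The paper first proves the \emph{weighted} estimate (\ref{estimate-m}), $\sup_{x \le x_0} \| \langle x \rangle (m_-(x;\cdot) - m_-^{\infty}(x)e_1)\|_{L^2_z} < \infty$, using (\ref{Fourier-1}) and $w \in L^{2,1}(\mathbb{R})$; after that, every internal factor $y$ in the differentiated equation (\ref{int-m-z}) is absorbed by pairing an $L^1$ coefficient against $\langle y \rangle (m_-^{(1)} - m_-^{\infty})$ or $\langle y \rangle m_-^{(2)}$ (this is exactly how the sources $h_1, h_2$ are estimated). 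Your ``resistant term'' $\int_{-\infty}^x (x-y) e^{2iz(x-y)} \partial_y \bar{u}(y)\, p(y;z)\, dy$ is resistant only because you never derived this weighted bound; with it, Minkowski gives $\| \cdot \|_{L^2_z} \leq 2 \| \partial_x u \|_{L^1} \sup_y \| \langle y \rangle p(y;\cdot) \|_{L^2_z}$ at once. That said, your substitute does close: substituting (\ref{int-jost-asymp-6}), swapping the order of integration, and estimating the inner oscillatory integral by the Plancherel computation (\ref{plancherel}) with $z$-independent amplitude $(x-y) \partial_y \bar{u}\, m_-^{\infty} \in L^2$ (norm at most $\| \partial_x u \|_{L^{2,1}}$ for $x \leq 0$) against the uniform bound $|m_-^{(2)}| \leq C$ from (\ref{bounds-uniform-Jost}) yields the required $L^2_z$ control; you should only justify the Fubini swap (e.g.\ by density in $u$), since $(x-y)\partial_y \bar{u}$ need not lie in $L^1(-\infty,x)$. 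So your Part 1 is a genuinely different, viable route that trades the paper's weighted a priori estimate for one iteration of the Volterra representation.

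Your Part 2, however, has a genuine gap. After the integration by parts $2iz\, q(x;z) + g(x;z) = \int_{-\infty}^x e^{2iz(x-y)} \partial_y g(y;z)\, dy$, the amplitude $\partial_y g$ is \emph{not} $z$-independent: $g$ contains $m_-^{(1)}(y;z)$ and $m_-^{(2)}(y;z)$, so the claim that ``(\ref{Fourier-1b}) then puts the remainder in $L^2_z$'' is illegitimate — the Plancherel identity (\ref{plancherel}) behind Proposition \ref{prop-Fourier-1} requires an amplitude independent of $z$. Concretely, $\partial_y g$ contains $\partial_y^2 \bar{u}\,(m_-^{(1)} - m_-^{\infty})$, for which Minkowski would need $\partial_x^2 u \in L^1(\mathbb{R})$ (not supplied by $H^2 \cap H^{1,1}$), and, worse, $|u|^2 \partial_y m_-^{(2)} = |u|^2 \left( g + 2iz\, m_-^{(2)} \right)$, which reintroduces the very quantity $z\, m_-^{(2)}$ being estimated, so no one-shot oscillatory estimate can work and a Volterra/contraction closure is unavoidable. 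The paper's proof is arranged precisely to dodge this: in (\ref{int-m-3}) every $z$-dependent term stays inside $(I-K)[\,\cdot\,]$ on the left, the cancellation via (\ref{int-jost-asymp-6}) removes the first component of the source, and the remaining source $\tilde{h}$ carries only $z$-independent amplitudes — the $w$-part handled by (\ref{Fourier-1a}) (this is exactly where $u \in H^2(\mathbb{R})$ enters) and the $q_-^{(1)}, q_-^{(2)}$-part by (\ref{Fourier-1b}). Your Part 2 is repairable with tools already in your proposal — split $m_-^{(1)} = m_-^{\infty} + p$, run your Part-1 iteration trick on the term $\partial_y^2 \bar{u}\, p$, and absorb the $2iz\, m_-^{(2)}$ term into the same contraction argument in $L^{\infty}_x L^2_z$ — but as written the step fails.
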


\begin{proof}
Again, we prove the statement for the Jost function $m_-$. The proof for other Jost functions is analogous.
We write the integral equation \eqref{int-jost-1} for $m_-$ in the abstract form
\begin{equation} \label{int-m}
m_- = e_1 + K m_-,
\end{equation}
where the operator $K$ is given by (\ref{kernel-Q-1}).
Although equation (\ref{int-m}) is convenient for verifying the boundary condition
$m_-(x;z) \to e_1$ as $x \to -\infty$, we note that the asymptotic limit
as $|z| \to \infty$ is different by the complex exponential factor. Indeed,
for every $x \in \mathbb{R}$, the asymptotic limit \eqref{asymptotics-1-lim} is written as
\begin{equation*}
m_-(x;z) \to m_-^{\infty}(x) e_1 \quad \mbox{\rm as} \quad |z| \to \infty,
\quad \mbox{\rm where} \quad m_-^{\infty}(x) := e^{\frac{1}{2i} \int_{-\infty}^x |u(y)|^2 dy}.
\end{equation*}
Therefore, we rewrite equation \eqref{int-m} in the equivalent form
\begin{equation} \label{int-m-2}
(I-K)(m_- - m_-^{\infty} e_1) = h e_2,
\end{equation}
where we have used the integral equation (\ref{int-jost-asymp-3}) that yields
$e_1 - (I - K) m_-^{\infty} e_1 = h e_2$ with
\begin{equation}
\label{def-h}
h(x;z) = \int_{-\infty}^x e^{2iz(x-y)} w(y) dy, \quad
w(x) := - \partial_x \left( \overline{u}(x) e^{\frac{1}{2i} \int_{-\infty}^x  |u(y)|^2 dy} \right).
\end{equation}
If $u \in H^{1,1}(\mathbb{R})$, then $w\in L^2(\mathbb{R})$. By the bounds (\ref{Fourier-1b}) and (\ref{Fourier-1}) in
Proposition \ref{prop-Fourier-1}, we have $h(x;z) \in L^{\infty}_x(\mathbb{R};L^2_z(\mathbb{R}))$
and for every $x_0 \in \mathbb{R}^-$, the following bound is satisfied:
\begin{eqnarray}
\nonumber
\sup_{x \in (-\infty,x_0)} \| \langle x \rangle \; h(x;z) \|_{L^2_z(\mathbb{R})} & \leq & \sqrt{\pi}
\left( \| \partial_x u \|_{L^{2,1}} + \frac{1}{2} \| u^3 \|_{L^{2,1}} \right) \\
\label{bound-on-h} & \leq & C (\| u \|_{H^{1,1}} + \| u \|_{H^{1,1}}^3),
\end{eqnarray}
where $C$ is a positive $u$-independent constant and the Sobolev inequality
$\| u \|_{L^{\infty}} \leq \frac{1}{\sqrt{2}} \| u \|_{H^1}$ is used.

By using estimates similar to those in the derivation of the bound (\ref{bound-on-K-kernel}) in Lemma \ref{Jost},
we find that for every $f(x;z) \in L^{\infty}_x(\mathbb{R};L^2_z(\mathbb{R}))$, we have
\begin{equation}
\label{bound-on-K-kernel-L2}
\| (K^n f)(x;z) \|_{L^{\infty}_x L^2_z}\leq \frac{1}{n!} \| Q_1(u) \|_{L^1}^n \|f(x;z)\|_{L^{\infty}_xL^2_z}.
\end{equation}
Therefore, the operator $I - K$ is invertible on the space $L^{\infty}_x(\mathbb{R}; L^2_z(\mathbb{R}))$
and a bound on the inverse operator is given by
\begin{equation}\label{I-K-estimate}
\|(I-K)^{-1}\|_{L^{\infty}_xL^2_z \to L^{\infty}_xL^2_z} \leq \sum_{n=0}^{\infty} \frac{1}{n!} \| Q_1(u) \|_{L^1}^n = e^{\| Q_1(u)\|_{L^1}}.
\end{equation}
Moreover, the same estimate (\ref{I-K-estimate}) can be obtained
in the norm $L^{\infty}_x((-\infty,x_0); L^2_z(\mathbb{R}))$ for every $x_0 \in \mathbb{R}$.
By using \eqref{int-m-2}, \eqref{bound-on-h}, and \eqref{I-K-estimate},
we obtain the following estimate for every $x_0 \in \mathbb{R}^-$:
\begin{equation} \label{estimate-m}
\sup_{x \in (-\infty,x_0)} \left\| \langle x \rangle \left( m_-(x;z) - m_-^{\infty}(x) e_1 \right) \right\|_{L^2_z(\mathbb{R})}
\leq C e^{\| Q_1(u)\|_{L^1}} \left( \|u\|_{H^{1,1}} + \|u\|_{H^{1,1}}^3 \right).
\end{equation}

Next, we want to show $\partial_z m_-(x;z) \in L^{\infty}_x((-\infty,x_0); L^2_z(\mathbb{R}))$ for
every $x_0 \in \mathbb{R}^-$. We differentiate the integral equation \eqref{int-m} in $z$
and introduce the vector $v = [v^{(1)},v^{(2)}]^t$ with the components
$$
v^{(1)}(x;z) := \partial_z m_-^{(1)}(x;z) \quad \mbox{\rm and} \quad
v^{(2)}(x;z) := \partial_z m_-^{(2)}(x;z) - 2 i x m_-^{(2)}(x;z).
$$
Thus, we obtain from (\ref{int-m}):
\begin{equation} \label{int-m-z}
(I-K) v = h_1 e_1 + h_2 e_2 + h_3 e_2,
\end{equation}
where
\begin{eqnarray*}
h_1(x;z) & = & \int_{-\infty}^x y u(y) m_-^{(2)}(y;z) dy, \\
h_2(x;z) & = & \int_{-\infty}^x y e^{2 i z (x-y)} (2i \overline{u}_y(y) + |u(y)|^2 \overline{u}(y)) (m_-^{(1)}(y;z)-m_-^{\infty}(y)) dy, \\
h_3(x;z) & = & \int_{-\infty}^x y e^{2 i z (x-y)} (2i \overline{u}_y(y) + |u(y)|^2 \overline{u}(y)) m_-^{\infty}(y) dy.
\end{eqnarray*}
For every $x_0 \in \mathbb{R}^-$, each inhomogeneous term of the integral equation (\ref{int-m-z}) can be estimated by
using H\"{o}lder's inequality and the bound (\ref{Fourier-1b}) of Proposition \ref{prop-Fourier-1}:
\begin{eqnarray*}
\sup_{x \in (-\infty,x_0)} \|h_1(x;z) \|_{L_z^2(\mathbb{R})} & \leq & \|u\|_{L^1}
\sup_{x \in (-\infty,x_0)} \| \langle x \rangle \; m_-^{(2)}(x;z)\|_{L_z^2(\mathbb{R})}, \\
\sup_{x \in (-\infty,x_0)} \|h_2(x;z) \|_{L_z^2(\mathbb{R})} & \leq & \left( 2\| \partial_x u\|_{L^1}
+ \| u^3 \|_{L^1} \right) \sup_{x \in (-\infty,x_0)}  \left\| \langle x \rangle
\left( m_-^{(1)}(x;z) - m_-^{\infty}(x)\right) \right\|_{L_z^2(\mathbb{R})},  \\
\sup_{x \in (-\infty,x_0)} \|h_3(x;z) \|_{L_z^2(\mathbb{R})} & \leq & \sqrt{\pi} \left( 2\| \partial_x u\|_{L^{2,1}}
+ \| u^3 \|_{L^{2,1}} \right).
\end{eqnarray*}
The upper bounds in the first two inequalities are finite due to estimate \eqref{estimate-m} and the embedding
of $L^{2,1}(\mathbb{R})$ into $L^1(\mathbb{R})$.
Using the bounds (\ref{I-K-estimate}), (\ref{estimate-m}), and the integral equation (\ref{int-m-z}), we conclude that
$v(x;z) \in L^{\infty}_x((-\infty,x_0); L^2_z(\mathbb{R}))$ for every $x_0 \in \mathbb{R}^-$.
Since $x m_-^{(2)}(x;z)$ is bounded in $L^{\infty}_x((-\infty,x_0); L^2_z(\mathbb{R}))$ by the same estimate \eqref{estimate-m},
we finally obtain $\partial_z m_-(x;z) \in L^{\infty}_x((-\infty,x_0); L^2_z(\mathbb{R}))$ for every $x_0 \in \mathbb{R}^-$.
This completes the proof of (\ref{m-n-H-1}) for $m_-$.

To prove (\ref{m-n-L-2-1a}) for $m_-$, we subtract the $\mathcal{O}(z^{-1})$ term as defined by (\ref{asymptotics-1})
from the integral equation (\ref{int-m-2}) and multiply the result by $z$. Thus, we obtain
\begin{equation} \label{int-m-3}
(I-K)\left[ z \left( m_- - m_-^{\infty} e_1 \right) -( q_-^{(1)} e_1 + q_-^{(2)} e_2) \right] =
z h e_2 - (I-K) ( q_-^{(1)} e_1 + q_-^{(2)} e_2),
\end{equation}
where the limiting values $q_-^{(1)}$ and $q_-^{(2)}$ are defined in Lemma \ref{asympt-mod}.
Using the integral equation (\ref{int-jost-asymp-6}), we obtain cancelation of the first component of the source term,
so that
$$
z h e_2 - (I-K) ( q_-^{(1)} e_1 + q_-^{(2)} e_2) = \tilde{h} e_2
$$
with
\begin{eqnarray*}
\tilde{h}(x;z) & = & z \int_{-\infty}^x e^{2iz(x-y)} w(y) dy + \frac{1}{2i} w(x)\\
& \phantom{t} & - \frac{1}{2i} \int_{-\infty}^x e^{2iz(x-y)} \left[ (2i \partial_y \bar{u}(y) + \bar{u}(y) |u(y)|^2 )
q_-^{(1)}(y) + |u(y)|^2 q_-^{(2)}(y) \right] dy,
\end{eqnarray*}
where $w$ is the same as in (\ref{def-h}). By using bounds (\ref{Fourier-1b}) and (\ref{Fourier-1a}) in Proposition \ref{prop-Fourier-1},
we have $\tilde{h}(x;z) \in L^{\infty}_x(\mathbb{R};L^2_z(\mathbb{Z}))$ if $w \in H^1(\mathbb{R})$ in
addition to $u \in H^{1,1}(\mathbb{R})$, that is, if $u \in H^2(\mathbb{R}) \cap H^{1,1}(\mathbb{R})$.
Inverting $(I-K)$ on $L^{\infty}_x(\mathbb{R};L^2_z(\mathbb{Z}))$,
we finally obtain (\ref{m-n-L-2-1a}) for $m_-$.
\end{proof}

The following result is deduced from Lemma \ref{regularity-m} to show that
the mapping
\begin{equation}
\label{map-u-Jost}
H^{1,1}(\mathbb{R}) \ni u \to [m_{\pm}(x;z)-m_{\pm}^{\infty}(x) e_1,
n_{\pm}(x;z) - n_{\pm}^{\infty}(x)] \in L_x^{\infty}(\mathbb{R}^{\pm};H^1_z(\mathbb{R}))
\end{equation}
is Lipschitz continuous. Moreover, by restricting the potential to $H^2(\mathbb{R}) \cap H^{1,1}(\mathbb{R})$,
subtracting $\mathcal{O}(z^{-1})$ terms from the Jost functions, and multiplying them by $z$, we also have Lipschitz continuity
of remainders of the Jost functions in function space $L_x^{\infty}(\mathbb{R};L^{2}_z(\mathbb{R}))$.

\begin{cor}
\label{cor-regularity-m}
Let $u, \tilde{u} \in H^{1,1}(\mathbb{R})$ satisfy $\| u \|_{H^{1,1}}, \| \tilde{u} \|_{H^{1,1}} \leq U$ for some $U > 0$.
Denote the corresponding Jost functions by $[m_{\pm},n_{\pm}]$ and $[\tilde{m}_{\pm},\tilde{n}_{\pm}]$ respectively.
Then, there is a positive $U$-dependent constant $C(U)$ such that
for every $x \in \mathbb{R}^{\pm}$, we have
\begin{equation}
\label{LC-m}
\| m_{\pm}(x;\cdot) - m_{\pm}^{\infty}(x) e_1
- \tilde{m}_{\pm}(x;\cdot) + \tilde{m}_{\pm}^{\infty}(x) e_1 \|_{H^1} \leq C(U) \| u - \tilde{u} \|_{H^{1,1}}
\end{equation}
and
\begin{equation}
\label{LC-n}
\| n_{\pm}(x;\cdot) - n_{\pm}^{\infty}(x) e_2
- \tilde{n}_{\pm}(x;\cdot) + \tilde{n}_{\pm}^{\infty}(x) e_2 \|_{H^1} \leq C(U) \| u - \tilde{u} \|_{H^{1,1}}.
\end{equation}
Moreover, if $u, \tilde{u} \in H^2(\mathbb{R}) \cap H^{1,1}(\mathbb{R})$
satisfy $\| u \|_{H^2 \cap H^{1,1}}, \| \tilde{u} \|_{H^2 \cap H^{1,1}} \leq U$,
then for every $x \in \mathbb{R}$, there is a positive $U$-dependent constant $C(U)$ such that
\begin{equation}
\label{LC-mn}
\| \hat{m}_{\pm}(x;\cdot) - \hat{\tilde{m}}_{\pm}(x;\cdot) \|_{L^{2}}
+ \| \hat{n}_{\pm}(x;\cdot) - \hat{\tilde{n}}_{\pm}(x;\cdot) \|_{L^{2}}
\leq C(U) \| u - \tilde{u} \|_{H^2 \cap H^{1,1}}.
\end{equation}
where
\begin{eqnarray*}
\hat{m}_{\pm}(x;z) & := & z \left[ m_{\pm}(x;z) - m_{\pm}^{\infty}(x) e_1 \right] - (q_{\pm}^{(1)}(x) e_1 + q_{\pm}^{(2)}(x) e_2), \\
\hat{n}_{\pm}(x;z) & := & z \left[ n_{\pm}(x;z) - n_{\pm}^{\infty}(x) e_2 \right] - (s_{\pm}^{(1)}(x) e_1 + s_{\pm}^{(2)}(x) e_2).
\end{eqnarray*}
\end{cor}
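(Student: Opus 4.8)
The plan is to deduce all three Lipschitz bounds from the integral-equation representations already obtained in the proof of Lemma~\ref{regularity-m}, exploiting the uniform invertibility of $I-K$ together with a resolvent identity that isolates the $u$-dependence in a single difference factor. Throughout I would let $K$ and $\tilde{K}$ denote the operators \eqref{kernel-Q-1} built from $u$ and $\tilde{u}$, and record at the outset the two algebraic facts that drive everything. First, every entry of $Q_1(u)$ in \eqref{lax-jost-1} is a quadratic or cubic monomial in $u,\overline{u},\partial_x\overline{u}$, so that the telescoping identity $ab-\tilde{a}\tilde{b}=(a-\tilde{a})b+\tilde{a}(b-\tilde{b})$ gives
\[
\|Q_1(u)-Q_1(\tilde{u})\|_{L^1}\le C(U)\,\|u-\tilde{u}\|_{H^{1,1}},
\]
uniformly in $U$. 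Second, the scalar prefactors $m_\pm^\infty$ of \eqref{asymptotics-1-lim} satisfy $|m_\pm^\infty(x)-\tilde{m}_\pm^\infty(x)|\le\tfrac12\int_{\mathbb{R}}\big||u|^2-|\tilde{u}|^2\big|\,dy\le C(U)\|u-\tilde{u}\|_{L^2}$ via $|e^{i\alpha}-e^{i\beta}|\le|\alpha-\beta|$, with the analogous statement for $n_\pm^\infty$.

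For the bound \eqref{LC-m} I would start from the equivalent equation \eqref{int-m-2}, written as $(I-K)M=h\,e_2$ with $M:=m_- - m_-^\infty e_1$, and its tilde counterpart, and subtract to obtain
\[
(I-K)(M-\tilde{M})=(K-\tilde{K})\tilde{M}+(h-\tilde{h})\,e_2 .
\]
Inverting $I-K$ by \eqref{I-K-estimate}, the norm of $M-\tilde{M}$ in $L^\infty_x((-\infty,x_0);L^2_z)$ is then controlled by $\|(K-\tilde{K})\tilde{M}\|$, which is bounded by the $L^1$-Lipschitz estimate for $Q_1$ combined with the a priori bound \eqref{estimate-m} for $\tilde{M}$, and by $\|h-\tilde{h}\|$, which follows from the Fourier bounds \eqref{Fourier-1b} and \eqref{Fourier-1} of Proposition~\ref{prop-Fourier-1} applied to $w-\tilde{w}$; the latter is Lipschitz in $\|u-\tilde{u}\|_{H^{1,1}}$ since $w$ in \eqref{def-h} is assembled from $\partial_x\overline{u}$, $|u|^2\overline{u}$, and the prefactor $m_-^\infty$. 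To upgrade from $L^2_z$ to $H^1_z$ I would repeat this on the differenced version of \eqref{int-m-z}: each source term $h_1,h_2,h_3$ there is again multilinear in the Jost function and the potential, so its difference is estimated by feeding in the already-established $L^2_z$ Lipschitz bound on $M-\tilde{M}$ together with the Lipschitz bounds on the coefficients. The bound \eqref{LC-n} is identical after exchanging the roles of the two components.

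For \eqref{LC-mn} I would run the same scheme on the refined equation \eqref{int-m-3}, whose left-hand side is exactly $(I-K)\hat{m}_-$. The new ingredient is that its right-hand side contains the combination $z\int_{-\infty}^x e^{2iz(x-y)}w\,dy+\tfrac{1}{2i}w$, which is controlled only through the second Fourier bound \eqref{Fourier-1a}; differencing it therefore brings in $\partial_x w$, and this is precisely where the $H^2$ regularity of $u$ enters. In addition one must difference the limiting coefficients $q_\pm^{(1)},q_\pm^{(2)},s_\pm^{(j)}$ of Lemma~\ref{asympt-mod}, which are integrals of multilinear expressions in $u$ and $\partial_x u$ weighted by $m_\pm^\infty$; the telescoping principle again yields their Lipschitz continuity in $\|u-\tilde{u}\|_{H^2\cap H^{1,1}}$. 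Inverting $I-K$ on $L^\infty_x(\mathbb{R};L^2_z(\mathbb{R}))$ as in \eqref{I-K-estimate} then produces \eqref{LC-mn}.

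I expect the principal difficulty to be organizational rather than conceptual: the many quadratic, cubic, and quartic terms in $u$, together with the exponential integrating factors $m_\pm^\infty$ and $n_\pm^\infty$, must each be differenced and every surviving factor bounded uniformly by $U$, which is exactly where the hypothesis $\|u\|,\|\tilde{u}\|\le U$ is used. The only genuinely delicate point is absorbing the $z$-weight in \eqref{LC-mn} without loss; this relies on the cancellation of the first component of the source already exploited in \eqref{int-m-3} through \eqref{int-jost-asymp-6}, and that cancellation survives differencing because it is linear in the source data.
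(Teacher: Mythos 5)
Your proposal is correct and follows essentially the same route as the paper's proof: the same second-resolvent/telescoping identity (your form $(I-K)(M-\tilde{M})=(K-\tilde{K})\tilde{M}+(h-\tilde{h})e_2$ is algebraically equivalent to the paper's $(I-K)^{-1}(h-\tilde{h})e_2+(I-K)^{-1}(K-\tilde{K})(I-\tilde{K})^{-1}\tilde{h}e_2$ upon substituting $\tilde{M}=(I-\tilde{K})^{-1}\tilde{h}e_2$), the same exponential-difference bound for $m_\pm^{\infty}$, the same Lipschitz estimate on $K-\tilde{K}$ via $\|Q_1(u)-Q_1(\tilde{u})\|_{L^1}$, the same use of Proposition~\ref{prop-Fourier-1} on $w-\tilde{w}$, and the same repetition of the scheme on \eqref{int-m-z} and \eqref{int-m-3} for the $H^1_z$ and weighted $L^2_z$ parts. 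Your closing observations --- that the $H^2$ regularity enters exactly through $\partial_x w$ in the bound \eqref{Fourier-1a} and that the first-component cancellation in \eqref{int-m-3} survives differencing by linearity --- are correct and in fact make explicit what the paper leaves implicit in its final sentence.
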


\begin{proof}
Again, we prove the statement for the Jost function $m_-$. The proof for other Jost functions is analogous.
First, let us consider the limiting values of $m_-$ and $\tilde{m}_-$ given by
$$
m_{-}^{\infty}(x) := e^{\frac{1}{2i} \int_{-\infty}^x |u(y)|^2 dy}, \quad
\tilde{m}_{-}^{\infty}(x) := e^{\frac{1}{2i} \int_{-\infty}^x |\tilde{u}(y)|^2 dy}
$$
Then, for every $x \in \mathbb{R}$, we have
\begin{eqnarray}
\nonumber
|m_{-}^{\infty}(x) - \tilde{m}_{-}^{\infty}(x)| & = & \left| e^{\frac{1}{2i} \int_{-\infty}^x (|u(y)|^2 - |\tilde{u}(y)|^2) dy} - 1\right| \\
\nonumber
& \leq & C_1(U) \int_{-\infty}^x (|u(y)|^2 - |\tilde{u}(y)|^2) dy \\
\label{LC-first}
& \leq & 2 U C_1(U) \| u - \tilde{u} \|_{L^2},
\end{eqnarray}
where $C_1(U)$ is a $U$-dependent positive constant. Using the integral equation \eqref{int-m-2}, we obtain
\begin{eqnarray}
\nonumber
(m_- - m_-^{\infty} e_1) - (\tilde{m}_- - \tilde{m}_-^{\infty}e_1) &=& (I-K)^{-1}he_2-(I-\tilde{K})^{-1}\tilde{h}e_2\\
\nonumber
&=&(I-K)^{-1}(h-\tilde{h})e_2+[(I-K)^{-1}-(I-\tilde{K})^{-1}]\tilde{h}e_2\\
\label{LC-third}
&=&(I-K)^{-1}(h-\tilde{h})e_2+(I-K)^{-1}(K-\tilde{K})(I-\tilde{K})^{-1}\tilde{h}e_2, \phantom{texttext}
\end{eqnarray}
where $\tilde{K}$ and $\tilde{h}$ denote the same as $K$ and $h$ but with $u$ being replaced by $\tilde{u}$.
To estimate the first term, we write
\begin{eqnarray}
\label{LC-second}
h(x;z) - \tilde{h}(x;z) = \int_{-\infty}^x e^{2iz(x-y)} \left[ w(y) - \tilde{w}(y) \right] dy,
\end{eqnarray}
where
\begin{eqnarray*}
w - \tilde{w} & = &  \left( \partial_x \bar{\tilde{u}} + \frac{1}{2i} |\tilde{u}|^2 \bar{\tilde{u}} \right) \tilde{m}_-^{\infty}
-\left( \partial_x \bar{u} + \frac{1}{2i} |u|^2 \bar{u} \right) m_-^{\infty}.
\end{eqnarray*}
By using (\ref{LC-first}), we obtain $\| w - \tilde{w} \|_{L^{2,1}} \leq C_2(U) \| u - \tilde{u} \|_{H^{1,1}}$,
where $C_2(U)$ is another $U$-dependent positive constant. By using (\ref{LC-second}) and Proposition \ref{prop-Fourier-1},
we obtain for every $x_0 \in \mathbb{R}^-$:
\begin{eqnarray}
\label{LC-fourth}
\sup_{x \in (-\infty,x_0)} \left\| \langle x \rangle \left( h(x;z) - \tilde{h}(x;z) \right) \right\|_{L^2_z(\mathbb{R})}
& \leq & \sqrt{\pi} C_2(U) \| u - \tilde{u} \|_{H^{1,1}}.
\end{eqnarray}
This gives the estimate for the first term in (\ref{LC-third}). To estimate the second term,
we use (\ref{kernel-Q-1}) and observe that $K$ is a Lipschitz continuous operator
from $L^{\infty}_x(\mathbb{R};L^2_z(\mathbb{R}))$ to  $L^{\infty}_x(\mathbb{R};L^2_z(\mathbb{R}))$ in the sense
that for every $f \in L^{\infty}_x(\mathbb{R};L^2_z(\mathbb{R}))$, we have
\begin{equation}
\label{LC-fifth}
\| (K - \tilde{K}) f \|_{L^{\infty}_x L^2_z} \leq
C_3(U) \| u - \tilde{u} \|_{H^{1,1}} \| f \|_{L^{\infty}_x L^2_z},
\end{equation}
where $C_3(U)$ is another $U$-dependent positive constant that is independent of $f$.
By using (\ref{bound-on-h}), (\ref{I-K-estimate}), (\ref{LC-third}), (\ref{LC-fourth}), and (\ref{LC-fifth}),
we obtain for every $x_0 \in \mathbb{R}^-$:
$$
\sup_{x \in (-\infty,x_0)}  \left\| \langle x \rangle \left( m_-(x;\cdot) - m_-^{\infty}(x) e_1
- \tilde{m}_-(x;\cdot) + \tilde{m}_-^{\infty}(x) e_1 \right) \right\|_{L^2_z(\mathbb{R})}
\leq  C(U) \|u-\tilde{u}\|_{H^{1,1}}.
$$
This yields the first part of the bound (\ref{LC-m}) for $m_-$ and $\tilde{m}_-$. The other part of the bound (\ref{LC-m})
and the bound (\ref{LC-mn}) for $m_-$ and $\tilde{m}_-$ follow by repeating the same analysis to
the integral equations (\ref{int-m-z}) and (\ref{int-m-3}).
\end{proof}

\subsection{Scattering coefficients}

Let us define the Jost functions of the original Kaup--Newell spectral problem (\ref{lax1}).
These Jost functions are related to the Jost functions of the Zakharov--Shabat spectral problems (\ref{lax-jost-1}) and (\ref{lax-jost-2})
by using the matrix transformations (\ref{T}). To be precise, we define
\begin{equation}
\label{original-Jost}
\varphi_{\pm}(x;\lambda) = T_1^{-1}(x;\lambda) m_{\pm}(x;z), \quad
\phi_{\pm}(x;\lambda) = T_2^{-1}(x;\lambda) n_{\pm}(x;z),
\end{equation}
where the inverse matrices are given by
\begin{equation}
\label{T-inverse}
T_1^{-1}(x;\lambda) = \frac{1}{2i\lambda} \left[\begin{matrix} 2 i \lambda & 0 \\ \overline{u}(x) & 1 \end{matrix}\right] \quad\mbox{and}
\quad T_2^{-1}(x;\lambda) = \frac{1}{2i\lambda} \left[\begin{matrix} 1 & u(x) \\ 0 & 2i\lambda  \end{matrix}\right].
\end{equation}
It follows from the integral equations (\ref{int-jost-1})--(\ref{int-jost-2}) and the transformation
(\ref{original-Jost}) that the original Jost functions $\varphi_{\pm}$ and $\phi_{\pm}$ satisfy the
following Volterra's integral equations
\begin{equation}\label{int-jost-1-orig}
\varphi_{\pm}(x;\lambda) = e_1 + \lambda \int_{\pm \infty}^{x} \left[\begin{matrix} 1 & 0 \\ 0 & e^{2i \lambda^2 (x-y)}
\end{matrix}\right] Q(u(y)) \varphi_{\pm}(y;\lambda) dy,
\end{equation}
and
\begin{equation} \label{int-jost-2-orig}
\phi_{\pm}(x;\lambda) = e_2 + \lambda \int_{\pm \infty}^{x}\left[\begin{matrix} e^{-2i \lambda^2 (x-y)} & 0 \\ 0 & 1
\end{matrix}\right] Q(u(y)) \phi_{\pm}(y;\lambda) dy.
\end{equation}
The following corollary is obtained from Lemma \ref{Jost}
and the representations (\ref{original-Jost})--(\ref{T-inverse}).

\begin{cor}
Let $u \in L^1(\mathbb{R}) \cap L^{\infty}(\mathbb{R})$ and
$\partial_x u \in L^1(\mathbb{R})$. For every $\lambda^2 \in \mathbb{R} \backslash \{0\}$,
there exist unique functions $\varphi_{\pm}(\cdot;\lambda) \in L^{\infty}(\mathbb{R})$ and
$\phi_{\pm}(\cdot;\lambda) \in L^{\infty}(\mathbb{R})$ such that
\begin{equation}\label{jost-infinity-original}
\left. \begin{array}{l}
\varphi_{\pm}(x;\lambda) \to e_1, \\
\phi_{\pm}(x;\lambda) \to e_2, \end{array} \right\}
\quad \mbox{\rm as} \quad x \to \pm \infty.
\end{equation}
Moreover, $\varphi_{\pm}^{(1)}(x;\lambda)$ and $\phi_{\pm}^{(2)}(x;\lambda)$ are
even in $\lambda$, whereas $\varphi_{\pm}^{(2)}(x;\lambda)$ and $\phi_{\pm}^{(1)}(x;\lambda)$ are
odd in $\lambda$.
\label{Jost-original}
\end{cor}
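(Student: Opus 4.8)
The plan is to obtain $\varphi_\pm$ and $\phi_\pm$ directly from the Jost functions $m_\pm$, $n_\pm$ of Lemma \ref{Jost} through the algebraic transformation (\ref{original-Jost}), rather than solving the Volterra equations (\ref{int-jost-1-orig})--(\ref{int-jost-2-orig}) from scratch. First I would check that the hypotheses of Lemma \ref{Jost} are met: if $u \in L^1(\mathbb{R}) \cap L^\infty(\mathbb{R})$, then $u \in L^p(\mathbb{R})$ for every $p \in [1,\infty]$ by interpolation, so in particular $u \in L^1(\mathbb{R}) \cap L^3(\mathbb{R})$, while $\partial_x u \in L^1(\mathbb{R})$ is assumed. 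Hence for every $z = \lambda^2 \in \mathbb{R}$ there exist unique $m_\pm(\cdot;z), n_\pm(\cdot;z) \in L^\infty(\mathbb{R})$. Since $u \in L^\infty(\mathbb{R})$ and $\lambda \neq 0$, the entries of $T_1^{-1}$ and $T_2^{-1}$ in (\ref{T-inverse}) are uniformly bounded in $x$, so that $\varphi_\pm(\cdot;\lambda) = T_1^{-1} m_\pm(\cdot;\lambda^2)$ and $\phi_\pm(\cdot;\lambda) = T_2^{-1} n_\pm(\cdot;\lambda^2)$ belong to $L^\infty(\mathbb{R})$; this is precisely why the restriction $\lambda^2 \neq 0$ appears.

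Next I would verify the boundary conditions (\ref{jost-infinity-original}). Evaluating the transformation on the limiting vectors gives $T_1^{-1} e_1 = e_1 + (2i\lambda)^{-1}\overline{u}(x)\, e_2$ and $T_2^{-1} e_2 = e_2 + (2i\lambda)^{-1} u(x)\, e_1$, so the limits $\varphi_\pm \to e_1$ and $\phi_\pm \to e_2$ as $x \to \pm\infty$ hold provided $u(x) \to 0$ at infinity. This is where a little care is needed: the algebraic factor $T_1^{-1}$ does not fix $e_1$ pointwise, and the decay of $u$ must be invoked. The decay follows from $u \in L^1(\mathbb{R})$ with $\partial_x u \in L^1(\mathbb{R})$, i.e. $u \in W^{1,1}(\mathbb{R}) \hookrightarrow C_0(\mathbb{R})$, so that $u(x) \to 0$ as $|x| \to \infty$, and together with $m_\pm \to e_1$, $n_\pm \to e_2$ from (\ref{lax-jost-infinity}) this gives (\ref{jost-infinity-original}). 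Uniqueness of $\varphi_\pm, \phi_\pm$ is inherited from the uniqueness of $m_\pm, n_\pm$ in Lemma \ref{Jost} together with the pointwise invertibility of $T_{1,2}(x;\lambda)$ for $\lambda \neq 0$; equivalently, $\varphi_\pm, \phi_\pm$ satisfy the Volterra equations (\ref{int-jost-1-orig})--(\ref{int-jost-2-orig}), whose solutions in $L^\infty(\mathbb{R})$ are unique.

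Finally, the parity statements reduce to a direct computation once one observes that $m_\pm$ and $n_\pm$ depend on $\lambda$ only through $z = \lambda^2$, hence are even in $\lambda$. Reading off the components from (\ref{original-Jost}) and (\ref{T-inverse}), I get $\varphi_\pm^{(1)} = m_\pm^{(1)}$ and $\varphi_\pm^{(2)} = (2i\lambda)^{-1}\bigl(\overline{u}\, m_\pm^{(1)} + m_\pm^{(2)}\bigr)$, while $\phi_\pm^{(2)} = n_\pm^{(2)}$ and $\phi_\pm^{(1)} = (2i\lambda)^{-1}\bigl(n_\pm^{(1)} + u\, n_\pm^{(2)}\bigr)$. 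The numerators are even in $\lambda$, so $\varphi_\pm^{(1)}$ and $\phi_\pm^{(2)}$ are even, whereas division by the odd factor $2i\lambda$ makes $\varphi_\pm^{(2)}$ and $\phi_\pm^{(1)}$ odd in $\lambda$. The only genuine obstacle in the whole argument is the boundary-condition step, which forces the mild decay hypothesis on $u$; the existence, uniqueness, and parity claims are then purely algebraic consequences of Lemma \ref{Jost} and the explicit transformation matrices.
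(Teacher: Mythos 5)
Your proposal is correct and follows essentially the same route as the paper's proof: both obtain $\varphi_{\pm},\phi_{\pm}$ from the Jost functions $m_{\pm}, n_{\pm}$ of Lemma \ref{Jost} via the transformation (\ref{original-Jost})--(\ref{T-inverse}), using $u \in L^{\infty}(\mathbb{R})$ and $\lambda \neq 0$ to bound $T_{1,2}^{-1}$, and deriving the parity claims from the evenness of $m_{\pm}, n_{\pm}$ in $\lambda$ through $z = \lambda^2$. You in fact supply details the paper leaves implicit, notably the interpolation $L^1 \cap L^{\infty} \subset L^3$ verifying the hypotheses of Lemma \ref{Jost} and the embedding $W^{1,1}(\mathbb{R}) \hookrightarrow C_0(\mathbb{R})$ giving $u(x) \to 0$, which is what makes the boundary conditions (\ref{jost-infinity-original}) survive the transformation.
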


\begin{proof}
To the conditions of Lemma \ref{Jost}, we added the condition $u \in L^{\infty}(\mathbb{R})$,
which ensures that $T_{1,2}^{-1}(x;\lambda)$ are bounded for every $x \in \mathbb{R}$ and
for every $\lambda \in \mathbb{C} \backslash \{0\}$.
Then, the existence and uniqueness of the functions  $\varphi_{\pm}(\cdot;\lambda) \in L^{\infty}(\mathbb{R})$ and
$\phi_{\pm}(\cdot;\lambda) \in L^{\infty}(\mathbb{R})$, as well as the limits (\ref{jost-infinity-original})
follow by the representation (\ref{original-Jost})--(\ref{T-inverse}) and
by the first assertion of Lemma \ref{Jost}. The parity argument for components of
$\varphi_{\pm}(x;\lambda)$ and $\psi_{\pm}(x;\lambda)$ in $\lambda$ follow from the representation (\ref{original-Jost})--(\ref{T-inverse})
and the fact that $m_{\pm}(x;z)$ and $n_{\pm}(x;z)$ are even in $\lambda$ since $z = \lambda^2$.
\end{proof}

\begin{remark}\label{remark-zero}
There is no singularity in the definition of Jost functions at the value $\lambda = 0$.
The integral equations (\ref{int-jost-1-orig}) and (\ref{int-jost-2-orig})
with $\lambda = 0$ admit unique Jost functions
$\varphi_{\pm}(x;0) = e_1$ and $\phi_{\pm}(x;0) = e_2$,
which yield unique definitions for $m_{\pm}(x;0)$ and $n_{\pm}(x;0)$:
$$
m_{\pm}(x;0) = \left[ \begin{matrix} 1 \\ -\bar{u}(x) \end{matrix} \right], \quad
n_{\pm}(x;0) = \left[ \begin{matrix} -u(x) \\ 1 \end{matrix} \right],
$$
which follow from the unique solutions to the integral equations (\ref{int-jost-1}) and (\ref{int-jost-2}) at $z = 0$.
\end{remark}

\begin{remark}
The only purpose in the definition of the original Jost functions (\ref{original-Jost}) is
to introduce the standard form of the scattering relations, similar to the one used
in the literature \cite{KN78}. After introducing the scattering data for $\lambda \in \mathbb{R} \cup i \mathbb{R}$,
we analyze their behavior in the complex $z$-plane, instead of the complex $\lambda$-plane,
where $z = \lambda^2$.
\end{remark}

Analytic properties of the Jost functions $\varphi_{\pm}(x;\cdot)$ and $\psi_{\pm}(x;\cdot)$
for every $x \in \mathbb{R}$ are summarized in the following result. The result is a corollary
of Lemmas \ref{Jost} and \ref{regularity-m}.

\begin{cor}
Under the same assumption as Corollary \ref{Jost-original}, for every $x \in \mathbb{R}$, the Jost functions
$\varphi_{-}(x;\cdot)$ and $\phi_{+}(x;\cdot)$ are analytic in the first and third quadrant of the $\lambda$ plane
(where ${\rm Im}(\lambda^2) > 0$), whereas the Jost functions
$\varphi_{+}(x;\cdot)$ and $\phi_{-}(x;\cdot)$ are analytic in the second and fourth quadrant of the $\lambda$ plane
(where ${\rm Im}(\lambda^2) < 0$). Moreover, if $u \in H^{1,1}(\mathbb{R})$, then for every $x \in \mathbb{R}^{\pm}$,
we have
\begin{equation}
\label{varphi-H-1}
\varphi_{\pm}^{(1)}(x;\lambda) - m_{\pm}^{\infty}(x), \;\;
2 i \lambda \varphi_{\pm}^{(2)}(x;\lambda) - \bar{u}(x)m_{\pm}^{\infty}(x), \;\; \lambda^{-1} \varphi_{\pm}^{(2)}(x;\lambda)
\in H^1_z(\mathbb{R})
\end{equation}
and
\begin{equation}
\label{phi-H-1}
\lambda^{-1} \phi_{\pm}^{(1)}(x;\lambda), \;\;
2 i \lambda \phi_{\pm}^{(1)}(x;\lambda) - u(x) n_{\pm}^{\infty}(x), \;\;
\phi_{\pm}^{(2)}(x;\lambda) - n_{\pm}^{\infty}(x) \in H^1_z(\mathbb{R}),
\end{equation}
where $m_{\pm}^{\infty}$ and $n_{\pm}^{\infty}$ are the same as in Lemma \ref{asympt-mod}.
\label{lemma-Jost-original}
\end{cor}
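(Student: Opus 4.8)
The plan is to read off every assertion from the explicit transformation (\ref{original-Jost})--(\ref{T-inverse}) together with the already-established Lemmas \ref{Jost} and \ref{regularity-m}. Carrying out the matrix products $\varphi_{\pm} = T_1^{-1} m_{\pm}$ and $\phi_{\pm} = T_2^{-1} n_{\pm}$ componentwise yields the identities
\begin{equation*}
\varphi_{\pm}^{(1)} = m_{\pm}^{(1)}, \quad 2i\lambda\, \varphi_{\pm}^{(2)} = \bar{u}\, m_{\pm}^{(1)} + m_{\pm}^{(2)}, \quad
\phi_{\pm}^{(2)} = n_{\pm}^{(2)}, \quad 2i\lambda\, \phi_{\pm}^{(1)} = n_{\pm}^{(1)} + u\, n_{\pm}^{(2)},
\end{equation*}
which reduce every claim about $\varphi_{\pm},\phi_{\pm}$ to a corresponding claim about the components of $m_{\pm},n_{\pm}$. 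For the analyticity statement I would use that $z = \lambda^2$ maps the upper half-plane $\{\mathrm{Im}\,z > 0\}$ onto the open first and third quadrants of the $\lambda$-plane and the lower half-plane onto the second and fourth quadrants. Since $\varphi_{\pm}^{(1)} = m_{\pm}^{(1)}$ and $\phi_{\pm}^{(2)} = n_{\pm}^{(2)}$ carry no factor $\lambda^{-1}$, their analyticity in the claimed quadrants is immediate from the half-plane analyticity in Lemma \ref{Jost}; the other components carry the prefactor $(2i\lambda)^{-1}$, which is harmless on the open quadrants where $\lambda \neq 0$, while the apparent singularity at $\lambda = 0$ is removed by Remark \ref{remark-zero}.

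Four of the six $H^1_z$ statements then follow directly. Indeed $\varphi_{\pm}^{(1)} - m_{\pm}^{\infty} = m_{\pm}^{(1)} - m_{\pm}^{\infty}$ and $\phi_{\pm}^{(2)} - n_{\pm}^{\infty} = n_{\pm}^{(2)} - n_{\pm}^{\infty}$ are precisely the components of the remainders in (\ref{m-n-H-1}), whereas
\begin{equation*}
2i\lambda\, \varphi_{\pm}^{(2)} - \bar{u}\, m_{\pm}^{\infty} = \bar{u}\,(m_{\pm}^{(1)} - m_{\pm}^{\infty}) + m_{\pm}^{(2)}, \qquad
2i\lambda\, \phi_{\pm}^{(1)} - u\, n_{\pm}^{\infty} = n_{\pm}^{(1)} + u\,(n_{\pm}^{(2)} - n_{\pm}^{\infty}).
\end{equation*}
Since $u(x)$ and $\bar{u}(x)$ are constants in $z$, multiplication preserves $H^1_z$, so each right-hand side lies in $H^1_z(\mathbb{R})$ by (\ref{m-n-H-1}).

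The two \emph{genuinely delicate} claims are the ones carrying $\lambda^{-1}$, namely
$$
\lambda^{-1}\varphi_{\pm}^{(2)} = \frac{1}{2iz}\bigl(\bar{u}\, m_{\pm}^{(1)} + m_{\pm}^{(2)}\bigr), \qquad
\lambda^{-1}\phi_{\pm}^{(1)} = \frac{1}{2iz}\bigl(n_{\pm}^{(1)} + u\, n_{\pm}^{(2)}\bigr),
$$
where one must divide by $z$. Here the main obstacle is the behavior at $z = 0$: the Sobolev embedding $H^1_z \hookrightarrow C^{1/2}$ applied to the numerators (which vanish at $z = 0$ by Remark \ref{remark-zero}) only gives $O(|z|^{1/2})$, so that the quotient is a priori of order $|z|^{-1/2}$ and need not be square integrable near the origin. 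The resolution is to invoke the parity from Corollary \ref{Jost-original}: $\varphi_{\pm}^{(2)}$ and $\phi_{\pm}^{(1)}$ are odd in $\lambda$ and, by the convergent Neumann series of Lemma \ref{Jost} together with Remark \ref{remark-zero}, depend smoothly on $\lambda$ near $\lambda = 0$, hence $\varphi_{\pm}^{(2)} = O(\lambda)$ and $\phi_{\pm}^{(1)} = O(\lambda)$. Consequently $\lambda^{-1}\varphi_{\pm}^{(2)}$ and $\lambda^{-1}\phi_{\pm}^{(1)}$ are even in $\lambda$ and regular at $\lambda = 0$, i.e.\ regular functions of $z$ in a neighborhood of $z = 0$.

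It remains to control these quotients at $z = \infty$. By Lemma \ref{asympt-mod} the components $m_{\pm}^{(2)}, n_{\pm}^{(1)}$ decay while $m_{\pm}^{(1)} \to m_{\pm}^{\infty}$ and $n_{\pm}^{(2)} \to n_{\pm}^{\infty}$, so the numerators tend to the constants $\bar{u}\, m_{\pm}^{\infty}$ and $u\, n_{\pm}^{\infty}$; thus both quotients decay like $1/z$, which is square integrable at infinity, and the matching $L^2_z$ control of the $z$-derivative at infinity is furnished by the $H^1_z$ memberships of $2i\lambda\,\varphi_{\pm}^{(2)} - \bar{u}\, m_{\pm}^{\infty}$ and $2i\lambda\,\phi_{\pm}^{(1)} - u\, n_{\pm}^{\infty}$ established in the previous paragraph. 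Splitting each quotient with a smooth cutoff near the origin and combining the near-zero regularity with the $1/z$-decay at infinity then gives $\lambda^{-1}\varphi_{\pm}^{(2)}, \lambda^{-1}\phi_{\pm}^{(1)} \in H^1_z(\mathbb{R})$, which completes the proof.
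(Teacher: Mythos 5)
Your reduction to the components of $m_{\pm},n_{\pm}$, the quadrant/half-plane analyticity argument, and the four ``easy'' memberships in (\ref{varphi-H-1})--(\ref{phi-H-1}) are all correct and essentially identical to the paper's proof. The genuine gap is in your treatment of the two $\lambda^{-1}$ terms: the entire argument near $z=0$ rests on the assertion that $\varphi_{\pm}^{(2)}$ and $\phi_{\pm}^{(1)}$ ``depend smoothly on $\lambda$ near $\lambda=0$, hence are $O(\lambda)$'', and nothing you cite delivers this. Lemma \ref{Jost}'s Neumann series gives uniform convergence and analyticity in the \emph{open} half-plane $\{{\rm Im}\,z>0\}$ plus boundedness up to the real boundary; Remark \ref{remark-zero} only gives the value at $z=0$. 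Continuity at $z=0$ yields $2i\lambda\varphi_{\pm}^{(2)}\to 0$, i.e.\ $\varphi_{\pm}^{(2)}=o(\lambda^{-1})$, and the Sobolev embedding yields $O(|z|^{1/2})=O(|\lambda|)$ for the numerator --- exactly the borderline you yourself identified, and parity (evenness of the quotient in $\lambda$) does not improve the vanishing rate in $z$. Differentiability of $m_{\pm}$ in $z$ on the real line is only established in the $L^2_z$ sense (Lemma \ref{regularity-m}), never pointwise at $z=0$; and a pointwise $\lambda$-derivative obtained by termwise differentiation of the Neumann series would require first moments such as $\int |y|\,|u(y)|\,dy$, which can diverge for $u\in H^{1,1}(\mathbb{R})$ (e.g.\ $u\sim \langle y\rangle^{-2}$). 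Moreover, even granting boundedness of the quotient near the origin, your cutoff assembly only discusses $L^2_z$ control of the $z$-derivative ``at infinity''; the derivative of $N(z)/(2iz)$ near $z=0$ involves $N(z)/z^2$ and is precisely where the unproven regularity would be needed again.

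The fix is much simpler than what you attempt, and it is what the paper does: the factor $\lambda$ in the potential term of the Volterra equation (\ref{int-jost-1-orig}) is \emph{explicit}, so the second component gives the exact identity (\ref{varphi-int-1}),
$\lambda^{-1}\varphi_{\pm}^{(2)}(x;\lambda) = -\int_{\pm\infty}^{x} e^{2iz(x-y)}\,\overline{u}(y)\, m_{\pm}^{(1)}(y;z)\,dy$,
in which no division by $z$ ever occurs and which is manifestly regular at $z=0$. Splitting $m_{\pm}^{(1)} = m_{\pm}^{\infty} + (m_{\pm}^{(1)}-m_{\pm}^{\infty})$ and applying Proposition \ref{prop-Fourier-1} exactly as in the proof of Lemma \ref{regularity-m} then gives $\lambda^{-1}\varphi_{\pm}^{(2)}(x;\cdot)\in H^1_z(\mathbb{R})$, and the same via (\ref{int-jost-2-orig}) (cf.\ (\ref{varphi-3})) handles $\lambda^{-1}\phi_{\pm}^{(1)}$. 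In short: your global structure is sound and matches the paper except at the one delicate point, where you replace the paper's one-line integral representation by an unsubstantiated smoothness claim at $\lambda=0$; as written, the proof of the $\lambda^{-1}$ memberships is incomplete.
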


\begin{proof}
By chain rule, we obtain
$$
\frac{\partial}{\partial \bar{\lambda}} = 2 \bar{\lambda} \frac{\partial}{\partial \bar{z}}.
$$
As a result, the analyticity result for the Jost functions $\varphi_{\pm}$ and $\phi_{\pm}$ follows
from the corresponding result of Lemma \ref{Jost}. With the transformation
(\ref{original-Jost})--(\ref{T-inverse}) and the result of Lemma \ref{regularity-m},
we obtain (\ref{varphi-H-1}) and (\ref{phi-H-1}) for $\varphi_{\pm}^{(1)}$, $\lambda \varphi_{\pm}^{(2)}$,
$\lambda \phi_{\pm}^{(1)}$, and $\phi_{\pm}^{(2)}$.

It remains to consider $\lambda^{-1} \varphi_{\pm}^{(2)}$ and $\lambda^{-1} \phi_{\pm}^{(1)}$. Although the result
also follows from Remark \ref{remark-zero}, we will give a direct proof.
We write explicitly from the integral equation (\ref{int-jost-1-orig}):
\begin{equation}
\label{varphi-int-1}
\lambda^{-1} \varphi_{\pm}^{(2)}(x;\lambda) =
- \int_{\pm \infty}^x e^{2 i z (x-y)} \overline{u}(y) m_{\pm}^{\infty}(y) dy
- \int_{\pm \infty}^x e^{2 i z (x-y)} \overline{u}(y) \left( m_{\pm}^{(1)}(y;z) -  m_{\pm}^{\infty}(y) \right) dy,
\end{equation}
where $m_{\pm}^{\infty} = e^{\frac{1}{2i} \int_{\pm \infty}^x |u(y)|^2 dy}$ and $z = \lambda^2$
as the same as in Lemma \ref{regularity-m}.
By using Proposition \ref{prop-Fourier-1} in the same way as it was used in the proof of Lemma \ref{regularity-m},
we obtain $\lambda^{-1} \varphi_{\pm}^{(2)}(x;\lambda) \in H^1_z(\mathbb{R})$ for every $x \in \mathbb{R}^{\pm}$.
The proof of $\lambda^{-1} \phi_{\pm}^{(1)}(x;\lambda) \in H^1_z(\mathbb{R})$ is similar.
\end{proof}

We note that $\psi(x) := \varphi_{\pm}(x;\lambda) e^{-i \lambda^2 x}$ and $\psi(x) := \phi_{\pm}(x;\lambda) e^{i \lambda^2 x}$
satisfies the Kaup--Newell spectral problem (\ref{lax1}), see asymptotic limits (\ref{potential-free}) and
(\ref{jost-infinity-original}). By the ODE theory for the second-order differential systems,
only two solutions are linearly independent. Therefore,
for every $x \in \mathbb{R}$ and every $\lambda^2 \in \mathbb{R} \backslash\{0\}$,
we define the scattering data according to the following transfer matrix
\begin{equation}\label{linear}
\left[\begin{matrix}  \varphi_-(x;\lambda) \\ \phi_-(x;\lambda) \end{matrix} \right]
= \left[\begin{matrix}  a(\lambda) & b(\lambda) e^{2 i \lambda^2 x} \\ c(\lambda) e^{-2 i \lambda^2 x} & d(\lambda) \end{matrix} \right]
\left[\begin{matrix}  \varphi_+(x;\lambda) \\ \phi_+(x;\lambda) \end{matrix} \right].
\end{equation}
By Remark \ref{remark-zero}, the transfer matrix is extended to $\lambda = 0$
with $a(0) = d(0) = 1$ and $b(0) = c(0) = 0$.

Since the coefficient matrix in the Kaup--Newell spectral problem \eqref{lax1} has zero trace,
the Wronskian determinant, denoted by $W$, of two solutions
to the differential system (\ref{lax1}) for any $\lambda \in \mathbb{C}$ is independent of $x$.
As a result, we verify that the scattering coefficients $a$, $b$, $c$, and $d$ are independent of $x$:
\begin{eqnarray}
\label{W-a-b-1}
a(\lambda) & = & W(\varphi_-(x;\lambda) e^{-i \lambda^2 x},\phi_+(x;\lambda) e^{+i \lambda^2 x}) = W(\varphi_-(0;\lambda),\phi_+(0;\lambda)), \\
\label{W-a-b-2}
b(\lambda) & = & W(\varphi_+(x;\lambda) e^{-i \lambda^2 x},\varphi_-(x;\lambda) e^{-i \lambda^2 x}) = W(\varphi_+(0;\lambda),\varphi_-(0;\lambda)),
\end{eqnarray}
where we have used the Wronskian relation $W(\varphi_+,\phi_+) = 1$, which follows
from the boundary conditions (\ref{jost-infinity-original}) as $x \to +\infty$.

Now we note the symmetry on solutions to the linear equation (\ref{lax1}).
If $\psi$ is a solution for any $\lambda \in \mathbb{C}$,
then $\sigma_1 \sigma_3 \overline{\psi}$ is also a solution for $\bar{\lambda} \in \mathbb{C}$,
where $\sigma_1$ and $\sigma_3$ are Pauli matrices in (\ref{Pauli}).
As a result, using the boundary conditions for the normalized Jost functions, we obtain the following relations:
$$
\phi_{\pm}(x;\lambda) = \sigma_1 \sigma_3 \overline{\varphi_{\pm}(x;\overline{\lambda})},
$$
where $\overline{\varphi_{\pm}(x;\overline{\lambda})}$ means that we take complex conjugation of
$\varphi_{\pm}$ constructed from the system of integral equations (\ref{int-jost-1-orig}) for $\bar{\lambda}$.
By applying complex conjugation to the first equation in system (\ref{linear}) for $\bar{\lambda}$,
multiplying it by $\sigma_1 \sigma_3$, and using the relations
$\sigma_1 \sigma_3 = - \sigma_3 \sigma_1$ and $\sigma_1^2 = \sigma_3^2 = 1$,
we obtain the second equation in system (\ref{linear}) with the correspondence
\begin{eqnarray}
\label{W-a-b-3}
c(\lambda)=-\overline{b(\overline{\lambda})},\quad d(\lambda)=\overline{a(\overline{\lambda})}, \quad \lambda \in \mathbb{R} \cup i \mathbb{R}.
\end{eqnarray}

From the Wronskian relation $W(\varphi_-,\phi_-) = 1$, which can be established
from the boundary conditions (\ref{jost-infinity-original}) as $x \to -\infty$, we verify that the
transfer matrix in system (\ref{linear}) has the determinant equals to unity.
In view  of the correspondence (\ref{W-a-b-3}), this yields the result
\begin{eqnarray}
\label{W-a-b-4}
a(\lambda)\overline{a(\overline{\lambda})} + b(\lambda)\overline{b(\overline{\lambda})} = 1, \quad \lambda \in \mathbb{R} \cup i \mathbb{R}.
\end{eqnarray}

We now study properties of the scattering coefficients $a$ and $b$ in suitable function spaces.
We prove that
\begin{equation}
\label{asymptotics-a}
a(\lambda) \to a_{\infty} := e^{\frac{1}{2i} \int_{\mathbb{R}} |u|^2 dx} \quad \mbox{\rm as} \quad |\lambda| \to \infty,
\end{equation}
whereas $a(\lambda) - a_{\infty}$, $\lambda b(\lambda)$, and $\lambda^{-1} b(\lambda)$
are $H^1_z(\mathbb{R})$ functions with respect to $z$ if $u$ belongs to $H^{1,1}(\mathbb{R})$ defined
in (\ref{H-1-1}). Moreover, we show that $\lambda b(\lambda)$ is also in $L^{2,1}_z(\mathbb{R})$ if
$u \in H^2(\mathbb{R}) \cap H^{1,1}(\mathbb{R})$.

\begin{lem}\label{regularity-a-b}
If $u\in H^{1,1}(\mathbb{R})$, then the functions $a(\lambda)$ and $\overline{a(\overline{\lambda})}$
are continued analytically in $\mathbb{C}^+$ and $\mathbb{C}^-$ with respect to $z$, and, in addition,
\begin{equation}
\label{scat-data}
a(\lambda) - a_{\infty}, \; \lambda b(\lambda), \; \lambda^{-1} b(\lambda) \in H^1_z(\mathbb{R}),
\end{equation}
where $a_{\infty} := e^{\frac{1}{2i} \int_{\mathbb{R}} |u|^2 dx}$.
Moreover, if $u\in H^2(\mathbb{R}) \cap H^{1,1}(\mathbb{R})$, then
\begin{equation}
\label{scat-data-H-2}
\lambda b(\lambda), \; \lambda^{-1} b(\lambda) \in L^{2,1}_z(\mathbb{R}).
\end{equation}
\end{lem}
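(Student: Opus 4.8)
The plan is to extract $a$ and $b$ from the Wronskian identities (\ref{W-a-b-1})--(\ref{W-a-b-2}) evaluated at the single point $x=0$, and then to substitute the componentwise regularity of the Jost functions from Corollary \ref{lemma-Jost-original}, which applies at $x=0$ since $0 \in \mathbb{R}^{\pm}$. Writing the Wronskian of two vectors as $W(f,g) = f^{(1)} g^{(2)} - f^{(2)} g^{(1)}$, I have
\[
a(\lambda) = \varphi_-^{(1)}(0;\lambda)\,\phi_+^{(2)}(0;\lambda) - \varphi_-^{(2)}(0;\lambda)\,\phi_+^{(1)}(0;\lambda), \qquad
b(\lambda) = \varphi_+^{(1)}(0;\lambda)\,\varphi_-^{(2)}(0;\lambda) - \varphi_+^{(2)}(0;\lambda)\,\varphi_-^{(1)}(0;\lambda).
\]
Analyticity is then immediate: by Lemma \ref{Jost} the functions $m_-(0;\cdot)$ and $n_+(0;\cdot)$, hence $\varphi_-(0;\cdot)$ and $\phi_+(0;\cdot)$, are analytic in $z \in \mathbb{C}^+$, and $a$ is an algebraic combination of them, so $a$ is analytic in $\mathbb{C}^+$; by Schwarz reflection $\overline{a(\bar\lambda)} = \overline{A(\bar z)}$ with $A$ the analytic continuation of $a$, which is analytic in $z \in \mathbb{C}^-$.

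The membership (\ref{scat-data}) rests on the fact that $H^1_z(\mathbb{R})$ is a Banach algebra (via $H^1 \hookrightarrow L^\infty$) that is moreover stable under multiplication by the bounded $z$-independent numbers $m_\pm^\infty(0)$, $n_\pm^\infty(0)$, $u(0)$. Corollary \ref{lemma-Jost-original} supplies, at $x=0$, that $\varphi_-^{(1)}-m_-^\infty(0)$, $\phi_+^{(2)}-n_+^\infty(0)$, $2i\lambda\varphi_\pm^{(2)}-\bar u(0)m_\pm^\infty(0)$, $\lambda^{-1}\varphi_\pm^{(2)}$ and the analogous $\phi$-quantities all lie in $H^1_z$. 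In the product defining $a$, the cross term factorizes as $\varphi_-^{(2)}\phi_+^{(1)}=(\lambda^{-1}\varphi_-^{(2)})(\lambda\phi_+^{(1)})$ and carries the $H^1_z$ factor $\lambda^{-1}\varphi_-^{(2)}$, so it is purely $H^1_z$; the term $\varphi_-^{(1)}\phi_+^{(2)}$ contributes the sole constant $m_-^\infty(0)n_+^\infty(0)$, which equals $a_\infty = e^{\frac{1}{2i}\int_{\mathbb{R}}|u|^2\,dx}$ by additivity of the exponents, giving $a-a_\infty\in H^1_z$. For $\lambda b = \varphi_+^{(1)}(\lambda\varphi_-^{(2)}) - (\lambda\varphi_+^{(2)})\varphi_-^{(1)}$ both products are bounded with constant parts equal to $\frac{1}{2i}\bar u(0)m_+^\infty(0)m_-^\infty(0)$, which cancel in the difference, so $\lambda b\in H^1_z$; for $\lambda^{-1}b$ each product already contains a factor $\lambda^{-1}\varphi_\pm^{(2)}\in H^1_z$, so $\lambda^{-1}b\in H^1_z$ directly.

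For the weighted bounds (\ref{scat-data-H-2}) under $u\in H^2(\mathbb{R})\cap H^{1,1}(\mathbb{R})$ I must show $z\lambda b$ and $z\lambda^{-1}b$ lie in $L^2_z$. The factor $\lambda^{-1}b$ is the gentler one: since $z\lambda^{-1}\varphi_\pm^{(2)} = \frac{1}{2i}(\bar u m_\pm^{(1)}+m_\pm^{(2)})$ is already bounded, the same cancellation of constants as above yields $z\lambda^{-1}b\in H^1_z\subset L^2_z$ using only the $H^{1,1}$ information. The factor $\lambda b$ is where the extra regularity enters, because $z\varphi_+^{(1)}\sim z\,m_+^\infty(0)$ grows. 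Here I invoke the second-order expansions (\ref{m-n-L-2-1a}) (equivalently Lemma \ref{asympt-mod}), valid for $u\in H^2\cap H^{1,1}$, which give $z(\varphi_+^{(1)}-m_+^\infty)=q_+^{(1)}(0)+L^2_z$ and $z(\lambda\varphi_+^{(2)}-\frac{1}{2i}\bar u m_+^\infty)=\frac{1}{2i}(\bar u q_+^{(1)}+q_+^{(2)})(0)+L^2_z$, together with their $\varphi_-$ analogues. Substituting into $z\lambda b$, the $O(z)$ terms cancel exactly as in the $H^1$ computation since the leading coefficients $m_\pm^\infty(0)$ and $\frac{1}{2i}\bar u(0)m_\pm^\infty(0)$ match, and the remaining $O(1)$ constant reduces to $\frac{1}{2i}\big(m_+^\infty(0)q_-^{(2)}(0)-m_-^\infty(0)q_+^{(2)}(0)\big)$, which vanishes because $q_\pm^{(2)}(0)=\frac{1}{2i}\big(\partial_x\bar u(0)+\frac{1}{2i}|u(0)|^2\bar u(0)\big)m_\pm^\infty(0)$ is proportional to $m_\pm^\infty(0)$. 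What survives is a sum of products of $L^2_z$ remainders with bounded factors, hence $z\lambda b\in L^2_z$.

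The algebraic bookkeeping is routine; the genuine difficulty, and the reason the hypothesis must be sharpened to $H^2\cap H^{1,1}$, is the \emph{second-order} cancellation in $z\lambda b$: both its $O(z)$ and its $O(1)$ contributions must disappear. The $O(z)$ cancellation is at the $H^1$ level, but the $O(1)$ cancellation is not automatic from the limiting values alone and relies on the explicit coefficients $q_\pm^{(1)},q_\pm^{(2)}$ from Lemma \ref{asympt-mod} together with the fact that $b$ is a Wronskian of two Jost functions of the \emph{same} $\varphi$-type, which forces the relevant leading coefficients to coincide. A final routine check is that the $O(z^{-1})$ remainders have no pole at $z=0$; this is guaranteed by Remark \ref{remark-zero}, where the Jost functions are shown to extend analytically and boundedly through $z=0$.
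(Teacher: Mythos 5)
Your proof is correct and takes essentially the same route as the paper's: you extract $a$ and $b$ from the Wronskians at $x=0$ exactly as in the paper's representations (\ref{a-det}), (\ref{b-det}), (\ref{bb}), (\ref{bbb}), combine the Banach algebra property of $H^1_z$ with the regularity supplied by Lemma \ref{regularity-m} and Corollary \ref{lemma-Jost-original}, and your $O(1)$ cancellation for $z\lambda b(\lambda)$ is precisely the paper's identity $q_-^{(2)}(0)\, m_+^{\infty}(0) - q_+^{(2)}(0)\, m_-^{\infty}(0) = 0$. The only cosmetic difference is that you deduce analyticity of $a$ directly from the Wronskian at $x=0$ rather than from the paper's integral representations (\ref{int-a-b-1})--(\ref{int-a-b-2}), which changes nothing of substance.
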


\begin{proof}
We consider the integral equations (\ref{int-jost-1-orig}) and (\ref{int-jost-2-orig}).
By taking the limit $x \to +\infty$, which is justified due to Corollary \ref{Jost-original}
and Remark \ref{remark-zero} for every
$\lambda \in \mathbb{R} \cup i \mathbb{R}$, and using the scattering relation (\ref{linear}) and
the transformation (\ref{original-Jost})--(\ref{T-inverse}), we obtain
\begin{eqnarray}
\label{int-a-b-1}
a(\lambda) = 1 + \lambda \int_{\mathbb{R}} u(x) \varphi_-^{(2)}(x;\lambda) dx
\end{eqnarray}
and
\begin{eqnarray}
\label{int-a-b-2}
\overline{a(\overline{\lambda})} = 1 - \lambda \int_{\mathbb{R}} \overline{u}(x) \phi_-^{(1)}(x;\lambda) dx.
\end{eqnarray}
It follows from the representations (\ref{int-a-b-1}) and (\ref{int-a-b-2}), as well as Corollary \ref{lemma-Jost-original},
that $a(\lambda)$ is continued analytically in $\mathbb{C}^+$ with respect to $z$, whereas
$\overline{a(\overline{\lambda})}$ is continued analytically in $\mathbb{C}^-$ with respect to $z$.
Using limits (\ref{asymptotics-1-lim}) in Lemma \ref{asympt-mod} and transformation (\ref{T-inverse}),
we obtain the following limit for
the scattering coefficient $a(\lambda)$ as $|{\rm Im}(z)| \to \infty$ along a contour in $\mathbb{C}^+$:
$$
\lim_{|z| \to \infty} a(\lambda) = 1 + \frac{1}{2i} \int_{\mathbb{R}} |u(x)|^2 e^{\frac{1}{2i} \int_{-\infty}^x |u(y)|^2 dy} dx =
e^{\frac{1}{2i} \int_{\mathbb{R}} |u(x)|^2 dx} =: a_{\infty}.
$$

In order to prove that $a(\lambda) - a_{\infty}$ is a $H^1_z(\mathbb{R})$ function,
we use the Wronskian representation (\ref{W-a-b-1}).
Recall from the transformation (\ref{original-Jost})--(\ref{T-inverse}) that
$$
\varphi_{\pm}^{(1)}(x;\lambda) = m_{\pm}^{(1)}(x;z) \quad \mbox{\rm and} \quad
\phi_{\pm}^{(2)}(x;\lambda) = n_{\pm}^{(2)}(x;z).
$$
Subtracting the limiting values for $a$ and the normalized Jost functions $m_{\pm}$ and $n_{\pm}$,
we rewrite the Wronskian representation (\ref{W-a-b-1}) explicitly
\begin{eqnarray} \nonumber
a(\lambda) - a_{\infty} & = & (m_-^{(1)}(0;z)- m_-^{\infty}(0)) (n_+^{(2)}(0;z)-n_+^{\infty}(0)) +
m_-^{\infty}(0) (n_+^{(2)}(0;z)-n_+^{\infty}(0)) \\
& \phantom{t} & + n_+^{\infty}(0) (m_-^{(1)}(0;z)-m_-^{\infty}(0))
- \varphi_-^{(2)}(0;\lambda) \phi_+^{(1)}(0;\lambda).
\label{a-det}
\end{eqnarray}
By (\ref{m-n-H-1}) in Lemma \ref{regularity-m}, all but the last term in (\ref{a-det}) belong to $H^1_z(\mathbb{R})$.
Furthermore, $\lambda^{-1} \varphi_{\pm}^{(2)}(0;\lambda)$ and
$2 i \lambda \phi_{\pm}^{(1)}(0;\lambda) - u(0) n_{\pm}^{\infty}(0)$ also belong to $H^1_z(\mathbb{R})$
by Corollary \ref{lemma-Jost-original}. Using the representation (\ref{a-det}) and the Banach algebra property of $H^1(\mathbb{R})$,
we conclude that $a(\lambda) - a_{\infty} \in H^1_z(\mathbb{R})$.

Next, we analyze the scattering coefficient $b$. By using the representation (\ref{original-Jost})--(\ref{T-inverse})
and the Wronskian representation (\ref{W-a-b-2}), we write
\begin{equation}\label{b-det}
2 i \lambda b(\lambda) = m_+^{(1)}(0;z) m_-^{(2)}(0;z) - m_+^{(2)}(0;z) m_-^{(1)}(0;z).
\end{equation}
By (\ref{m-n-H-1}) in Lemma \ref{regularity-m} (after the corresponding limiting values are subtracted from
$m_{\pm}^{(1)}(0;z)$), we establish that $\lambda b(\lambda) \in H^1_z(\mathbb{R})$.
On the other hand, the same Wronskian representation (\ref{W-a-b-2}) can also be
written in the form
\begin{equation}
\label{bb}
\lambda^{-1} b(\lambda) = m_+^{(1)}(0;z) \lambda^{-1} \varphi_-^{(2)}(0;\lambda) -
m_-^{(1)}(0;z) \lambda^{-1} \varphi_+^{(2)}(0;\lambda).
\end{equation}
Recalling that $\lambda^{-1} \varphi_{\pm}^{(2)}(0;\lambda)$ belongs to $H^1_z(\mathbb{R})$ by Corollary
\ref{lemma-Jost-original}, we obtain $\lambda^{-1} b(\lambda) \in H^1_z(\mathbb{R})$. The first assertion (\ref{scat-data})
of the lemma is proved.

To prove the second assertion (\ref{scat-data-H-2}) of the lemma,
we note that $\lambda^{-1} b(\lambda) \in L^{2,1}_z(\mathbb{R})$ because
$z \lambda^{-1} b(\lambda) = \lambda b(\lambda) \in H^1_z(\mathbb{R})$.
On the other hand, to show that $\lambda b(\lambda)\in L^{2,1}_z(\mathbb{R})$, we multiply equation \eqref{b-det} by $z$
and write the resulting equation in the form
\begin{eqnarray}
\nonumber
2i \lambda z b(\lambda) & = & m_+^{(1)}(0;z) \left( z m_-^{(2)}(0;z) - q_-^{(2)}(0) \right)
-  m_-^{(1)}(0;z) \left( z m_+^{(2)}(0;z) - q_+^{(2)}(0) \right) \\ \label{bbb}
& \phantom{t} & + q_-^{(2)}(0) \left( m_+^{(1)}(0;z) - m_+^{\infty}(0) \right)  - q_+^{(2)}(0) \left(m_-^{(1)}(0;z) - m_-^{\infty}(0) \right),
\end{eqnarray}
where we have used the identity $q_-^{(2)}(0) m_+^{\infty}(0)- q_+^{(2)}(0) m_-^{\infty}(0) = 0$
that follows from limits (\ref{asymptotics-1-lim}) and (\ref{asymptotics-1}).
By (\ref{m-n-H-1}) and (\ref{m-n-L-2-1a}) in Lemma \ref{regularity-m},
all the terms in the representation (\ref{bbb}) are in $L^2_z(\mathbb{R})$,
hence $\lambda b(\lambda) \in L^{2,1}_z(\mathbb{R})$. The second assertion (\ref{scat-data-H-2})
of the lemma is proved.
\end{proof}

We show that the mapping
\begin{equation}
\label{map-u-a-b}
H^{1,1}(\mathbb{R}) \ni u \to a(\lambda)- a_{\infty}, \lambda b(\lambda), \lambda^{-1} b(\lambda) \in H^1_z(\mathbb{R})
\end{equation}
is Lipschitz continuous. Moreover, we also have Lipschitz continuity of the mapping
\begin{equation}
\label{map-u-a-b-weighted}
H^2(\mathbb{R}) \cap H^{1,1}(\mathbb{R}) \ni u \to \lambda b(\lambda), \lambda^{-1} b(\lambda) \in L^{2,1}_z(\mathbb{R}).
\end{equation}
The corresponding result is deduced from Lemma \ref{regularity-a-b} and Corollary \ref{cor-regularity-m}.

\begin{cor}
\label{cor-regularity-a-b}
Let $u, \tilde{u} \in H^{1,1}(\mathbb{R})$ satisfy $\| u \|_{H^{1,1}}, \| \tilde{u} \|_{H^{1,1}} \leq U$ for some $U > 0$.
Denote the corresponding scattering coefficients by $(a,b)$ and $(\tilde{a},\tilde{b})$ respectively.
Then, there is a positive $U$-dependent constant $C(U)$ such that
\begin{equation}
\label{LC-a}
\| a(\lambda) - a_{\infty} - \tilde{a}(\lambda) + \tilde{a}_{\infty} \|_{H^1_z} +
\| \lambda b(\lambda) - \lambda \tilde{b}(\lambda) \|_{H^1_z} +
\| \lambda^{-1} b(\lambda) - \lambda^{-1} \tilde{b}(\lambda) \|_{H^1_z} \leq C(U) \| u - \tilde{u} \|_{H^{1,1}}.
\end{equation}
Moreover, if $u, \tilde{u} \in H^2(\mathbb{R}) \cap H^{1,1}(\mathbb{R})$
satisfy $\| u \|_{H^2 \cap H^{1,1}}, \| \tilde{u} \|_{H^2 \cap H^{1,1}} \leq U$,
then there is a positive $U$-dependent constant $C(U)$ such that
\begin{equation}
\label{LC-b}
\| \lambda b(\lambda) - \lambda \tilde{b}(\lambda) \|_{L^{2,1}_z} +
\| \lambda^{-1} b(\lambda) - \lambda^{-1} \tilde{b}(\lambda) \|_{L^{2,1}_z} \leq C(U) \| u - \tilde{u} \|_{H^2 \cap H^{1,1}}.
\end{equation}
\end{cor}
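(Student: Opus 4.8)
The plan is to follow the proof of Lemma \ref{regularity-a-b} line by line, but now tracking the differences between the quantities generated by two potentials $u$ and $\tilde u$. The starting point is that Lemma \ref{regularity-a-b} expresses each of $a(\lambda)-a_{\infty}$, $2i\lambda b(\lambda)$, $\lambda^{-1}b(\lambda)$, and $2i\lambda z\, b(\lambda)$ as a finite sum of products of the normalized Jost functions $m_{\pm}^{(j)}(0;z)$, $n_{\pm}^{(j)}(0;z)$ (with their $|z|\to\infty$ limits subtracted) and of the scalar limiting values $m_{\pm}^{\infty}(0)$, $n_{\pm}^{\infty}(0)$; see the representations (\ref{a-det}), (\ref{b-det}), (\ref{bb}), and (\ref{bbb}). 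Differencing each representation and expanding every product by the telescoping identity $PQ-\tilde P\tilde Q=(P-\tilde P)Q+\tilde P(Q-\tilde Q)$ reduces the corollary to two ingredients: first, uniform bounds (depending only on $U$) on each individual factor, supplied by Lemma \ref{regularity-m} together with the Banach algebra property of $H^1(\mathbb{R})$; and second, Lipschitz bounds on the differences of those factors, supplied by Corollary \ref{cor-regularity-m}.

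Concretely, for the first assertion (\ref{LC-a}) I would begin with the scalar limiting values. The bound (\ref{LC-first}) in the proof of Corollary \ref{cor-regularity-m} already gives $|m_{\pm}^{\infty}(0)-\tilde m_{\pm}^{\infty}(0)|+|n_{\pm}^{\infty}(0)-\tilde n_{\pm}^{\infty}(0)|\le C(U)\|u-\tilde u\|_{L^2}$, and the same computation applied to $a_{\infty}=e^{\frac{1}{2i}\int_{\mathbb{R}}|u|^2dx}$ yields $|a_{\infty}-\tilde a_{\infty}|\le C(U)\|u-\tilde u\|_{L^2}$; since these scalars have modulus one they are uniformly bounded as well. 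In the representation (\ref{a-det}) the $H^1_z$ factors $m_-^{(1)}(0;\cdot)-m_-^{\infty}(0)$ and $n_+^{(2)}(0;\cdot)-n_+^{\infty}(0)$ are uniformly bounded in $H^1_z$ by Lemma \ref{regularity-m} (with $\|u\|_{H^{1,1}}\le U$) and Lipschitz continuous in $H^1_z$ by (\ref{LC-m})--(\ref{LC-n}). For the last term $-\varphi_-^{(2)}(0;\lambda)\phi_+^{(1)}(0;\lambda)$ I would write it as $-(\lambda^{-1}\varphi_-^{(2)})(\lambda\phi_+^{(1)})$ and use that $\lambda^{-1}\varphi_{\pm}^{(2)}(0;\cdot)$ and $2i\lambda\phi_{\pm}^{(1)}(0;\cdot)-u(0)n_{\pm}^{\infty}(0)$ belong to $H^1_z$ by Corollary \ref{lemma-Jost-original}; their Lipschitz continuity follows from (\ref{LC-m})--(\ref{LC-n}) through the transformation (\ref{original-Jost})--(\ref{T-inverse}), once one notes that $u\mapsto u(0)$ is Lipschitz because $H^1(\mathbb{R})\hookrightarrow C^0(\mathbb{R})$. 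Combining these through the Banach algebra property gives the $H^1_z$ Lipschitz bound for $a(\lambda)-a_{\infty}$, and the estimates for $\lambda b(\lambda)$ and $\lambda^{-1}b(\lambda)$ are obtained identically from (\ref{b-det}) and (\ref{bb}).

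For the second assertion (\ref{LC-b}) I would proceed as in Lemma \ref{regularity-a-b}. The bound for $\lambda^{-1}b$ is immediate, since $z\,\lambda^{-1}b=\lambda b\in H^1_z$, so $\|\lambda^{-1}b-\lambda^{-1}\tilde b\|_{L^{2,1}_z}$ is controlled by the $H^1_z$ Lipschitz bound on $\lambda b-\lambda\tilde b$ already established. For $\lambda b\in L^{2,1}_z$ I would difference the representation (\ref{bbb}) for $2i\lambda z\, b$, whose building blocks are the quantities $\hat m_{\pm}$ of Corollary \ref{cor-regularity-m}; their differences are Lipschitz in $L^2_z$ by (\ref{LC-mn}) (valid under $\|u\|_{H^2\cap H^{1,1}}\le U$), while the remaining factors $m_{\pm}^{(1)}(0;\cdot)$ and the scalars $q_{\pm}^{(2)}(0)$ are uniformly bounded and Lipschitz. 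Using $H^1_z\hookrightarrow L^{\infty}_z$ to pass products of an $L^{\infty}_z$ factor with an $L^2_z$ difference back into $L^2_z$ then yields the $L^{2,1}_z$ Lipschitz bound for $\lambda b$.

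The analysis contains no genuinely new estimate; the only care needed, and hence the main (largely bookkeeping) obstacle, is to arrange each differenced product so that the \emph{large} factor carries a $U$-dependent uniform bound (from Lemma \ref{regularity-m}, Corollary \ref{lemma-Jost-original}, or a modulus-one scalar) while the \emph{small} factor carries one of the Lipschitz bounds of Corollary \ref{cor-regularity-m}, and then to verify that every constant produced this way depends only on $U$. Keeping track of which norm ($H^1_z$ versus $L^2_z$) and which regularity hypothesis ($H^{1,1}$ versus $H^2\cap H^{1,1}$) is in force for each factor is the one point requiring attention.
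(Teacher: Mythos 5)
Your proposal is correct and follows essentially the same route as the paper: the published proof simply cites the representations (\ref{a-det}), (\ref{b-det}), (\ref{bb}), and (\ref{bbb}) together with the Lipschitz continuity of the Jost functions from Corollary \ref{cor-regularity-m}, which is exactly the differencing scheme you carry out. Your write-up merely makes explicit the bookkeeping (telescoping products, uniform versus Lipschitz factors, the modulus-one scalars, and the identity $z\,\lambda^{-1}b(\lambda)=\lambda b(\lambda)$) that the paper leaves implicit.
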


\begin{proof}
The assertion follows from the representations (\ref{a-det}),
(\ref{b-det}), (\ref{bb}), and (\ref{bbb}), as well as the Lipschitz continuity of the Jost functions
$m_{\pm}$ and $n_{\pm}$ established in Corollary \ref{cor-regularity-m}.
\end{proof}

\begin{remark}
Since Corollary \ref{cor-regularity-a-b} yields Lipschitz continuity of the mappings (\ref{map-u-a-b}) 
and (\ref{map-u-a-b-weighted}) for every $u$, $\tilde{u}$ in a ball of a fixed (but possibly large) radius $U$, 
the mappings (\ref{map-u-a-b}) and (\ref{map-u-a-b-weighted}) are one-to-one for every $u$ in the ball.  \label{remark-direct}
\end{remark}

Another result, which follows from Lemma \ref{regularity-a-b}, is the parity property of the scattering coefficients $a$
and $b$ with respect to $\lambda$. The corresponding result is given in the following corollary.

\begin{cor}
\label{corollary-a-b}
The scattering coefficients $a$ and $b$ are even and odd functions in $\lambda$ for $\lambda \in \mathbb{R} \cup i \mathbb{R}$.
Moreover, they satisfy the following scattering relation
\begin{equation}
\label{scattering-relation-modified}
\left\{ \begin{array}{l}
|a(\lambda)|^2 + |b(\lambda)|^2 = 1, \quad \lambda \in \mathbb{R}, \\
|a(\lambda)|^2 - |b(\lambda)|^2 = 1, \quad \lambda \in i\mathbb{R}.
\end{array} \right.
\end{equation}
\end{cor}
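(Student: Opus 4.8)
The plan is to deduce both assertions from the Wronskian representations (\ref{W-a-b-1}) and (\ref{W-a-b-2}) of the scattering coefficients, together with the parity of the individual Jost function components established in Corollary \ref{Jost-original}, and then to combine the parity with the determinant identity (\ref{W-a-b-4}). No analytic machinery is needed here; the entire argument is algebraic bookkeeping on relations already in hand.

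First I would prove the parity. Writing the Wronskian of two vectors $f = [f^{(1)},f^{(2)}]^t$ and $g = [g^{(1)},g^{(2)}]^t$ as $W(f,g) = f^{(1)} g^{(2)} - f^{(2)} g^{(1)}$, the representation (\ref{W-a-b-1}) gives
$$
a(\lambda) = \varphi_-^{(1)}(0;\lambda) \phi_+^{(2)}(0;\lambda) - \varphi_-^{(2)}(0;\lambda) \phi_+^{(1)}(0;\lambda).
$$
By Corollary \ref{Jost-original}, $\varphi_-^{(1)}$ and $\phi_+^{(2)}$ are even in $\lambda$, whereas $\varphi_-^{(2)}$ and $\phi_+^{(1)}$ are odd; hence each of the two products is a product of two factors of equal parity, so $a$ is even in $\lambda$. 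Likewise, (\ref{W-a-b-2}) gives
$$
b(\lambda) = \varphi_+^{(1)}(0;\lambda) \varphi_-^{(2)}(0;\lambda) - \varphi_+^{(2)}(0;\lambda) \varphi_-^{(1)}(0;\lambda),
$$
where each product is now a product of an even and an odd factor, hence odd, so $b$ is odd in $\lambda$.

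Second I would establish the scattering relation from the determinant identity (\ref{W-a-b-4}), namely $a(\lambda)\overline{a(\overline{\lambda})} + b(\lambda)\overline{b(\overline{\lambda})} = 1$ for $\lambda \in \mathbb{R} \cup i\mathbb{R}$. On the real axis $\overline{\lambda} = \lambda$, so the identity reduces immediately to $|a(\lambda)|^2 + |b(\lambda)|^2 = 1$. On the imaginary axis, writing $\lambda = i\eta$ with $\eta \in \mathbb{R}$ gives $\overline{\lambda} = -\lambda$; using the evenness of $a$ we get $a(\overline{\lambda}) = a(-\lambda) = a(\lambda)$, and using the oddness of $b$ we get $b(\overline{\lambda}) = b(-\lambda) = -b(\lambda)$. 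Substituting these into (\ref{W-a-b-4}) flips the sign of the $b$-term and yields $|a(\lambda)|^2 - |b(\lambda)|^2 = 1$ for $\lambda \in i\mathbb{R}$.

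The only point that requires genuine care is the sign bookkeeping on the imaginary axis: it is precisely the interaction of the conjugation $\overline{\lambda} = -\lambda$ with the definite parities of $a$ and $b$ that produces the minus sign distinguishing the two cases in (\ref{scattering-relation-modified}). Everything else is a direct consequence of the Wronskian representations and the already-derived relation (\ref{W-a-b-4}), so I do not anticipate any substantive obstacle.
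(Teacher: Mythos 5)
Your proposal is correct and follows essentially the same route as the paper: parity of $a$ and $b$ combined with the determinant identity (\ref{W-a-b-4}), where the conjugation $\overline{\lambda} = -\lambda$ on the imaginary axis interacts with the parities to flip the sign of the $b$-term. The only cosmetic difference is that you derive the parity from the Wronskian formulas (\ref{W-a-b-1})--(\ref{W-a-b-2}) together with the component parities of Corollary \ref{Jost-original}, whereas the paper cites Lemma \ref{regularity-a-b}, by which $a(\lambda)$ and $\lambda^{-1} b(\lambda)$ are functions of $z = \lambda^2$; the two justifications are equivalent, resting on the same underlying fact.
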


\begin{proof}
Because $a(\lambda)$ and $\lambda^{-1} b(\lambda)$ are functions of $z = \lambda^2$, as follows from
Lemma \ref{regularity-a-b}, we have $a(-\lambda) = a(\lambda)$ and $b(-\lambda) = -b(\lambda)$
for all $\lambda \in \mathbb{R} \cup i\mathbb{R}$. For $\lambda \in \mathbb{R}$, the
scattering relation (\ref{W-a-b-4}) yields the first line of
(\ref{scattering-relation-modified}). For $\lambda = i \gamma$ with $\gamma \in \mathbb{R}$,
the parity properties of $a$ and $b$ imply
$$
\overline{a(\bar{\lambda})} = \overline{a(-i\gamma)} = \overline{a(i\gamma)} = \overline{a(\lambda)} \quad
\mbox{\rm and} \quad
\overline{b(\overline{\lambda})} = \overline{b(-i\gamma)} = -\overline{b(i\gamma)} = -\overline{b(\lambda)}.
$$
Substituting these relations to the scattering relation (\ref{W-a-b-4}), we obtain the second line of (\ref{scattering-relation-modified})
\end{proof}

\section{Formulations of the Riemann--Hilbert problem}

We deduce the Riemann--Hilbert problem of complex analysis from the jump condition for normalized Jost functions
on $\mathbb{R} \cup i \mathbb{R}$ in the $\lambda$ plane, which corresponds to $\mathbb{R}$ in the $z$ plane,
where $z = \lambda^2$. The jump condition yields boundary conditions for the Jost functions
extended to sectionally analytic functions in different domains of the corresponding complex plane.
In the beginning, we derive the jump condition in the $\lambda$ plane by using the Jost functions of the original
Kaup--Newell spectral problem (\ref{lax1}).

Let us define the reflection coefficient by
\begin{equation}
r(\lambda) := \frac{b(\lambda)}{a(\lambda)}, \quad \lambda \in \mathbb{R} \cup i \mathbb{R}.
\end{equation}
Each zero of $a$ on $\mathbb{R} \cup i \mathbb{R}$ corresponds to the resonance,
according to Definition \ref{definition-resonance}. By the assumptions of Theorem \ref{main},
the spectral problem (\ref{lax1}) admits no resonances, therefore, there exists a positive number $A$ 
such that 
\begin{equation}
\label{constraint-on-a}
|a(\lambda)| \geq A > 0, \quad \lambda \in \mathbb{R} \cup i \mathbb{R}.
\end{equation}
Thus, $r(\lambda)$ is well-defined for every $\lambda \in \mathbb{R} \cup i \mathbb{R}$.

Under the condition (\ref{constraint-on-a}),
the scattering relations (\ref{linear}) with (\ref{W-a-b-3}) can be rewritten in the equivalent form:
\begin{equation}
\label{linear-1}
\frac{\varphi_-(x;\lambda)}{a(\lambda)} - \varphi_+(x;\lambda) = r(\lambda) e^{2 i \lambda^2 x}\phi_+(x;\lambda)
\end{equation}
and
\begin{equation}
\label{linear-2}
\frac{\phi_-(x;\lambda)}{\overline{a(\bar{\lambda})}} - \phi_+(x;\lambda) =
-\overline{r(\bar{\lambda})} e^{-2 i \lambda^2 x} \varphi_+(x;\lambda),
\end{equation}
where $\lambda \in \mathbb{R} \cup i \mathbb{R}$.

By Lemma \ref{regularity-a-b}, $a(\lambda)$
is continued analytically in the first and third quadrants of the $\lambda$ plane,
where ${\rm Im}(\lambda^2) >0$. Also $a(\lambda)$ approaches to a finite limit
$a_{\infty} \neq 0$ as $|\lambda| \to \infty$. By a theorem of complex analysis on zeros
of analytic functions, $a$ has at most finite number of zeros in
each quadrant of the $\lambda$ plane. Each zero of $a$ corresponds to an eigenvalue
of the spectral problem (\ref{lax1}) with the $L^2(\mathbb{R})$ solution $\psi(x)$ decaying to
zero exponentially fast as $|x| \to \infty$. Indeed, this follows from the Wronskian relation (\ref{W-a-b-1})
between the Jost functions $\varphi_-$ and $\psi_+$ extended to the first and third quadrant of
the $\lambda$ plane by Corollary \ref{lemma-Jost-original}. By the assumptions of Theorem \ref{main},
the spectral problem (\ref{lax1}) admits no eigenvalues, hence the bound (\ref{constraint-on-a}) 
is extended to the first and third quadrants of the $\lambda$ plane. Therefore, the functions
$\frac{\varphi_-(x;\lambda)}{a(\lambda)}$ and $\frac{\phi_-(x;\lambda)}{\overline{a(\bar{\lambda})}}$
are analytic in the corresponding domains of the $\lambda$ plane.

From the scattering relations (\ref{linear-1}) and (\ref{linear-2}), we can define the complex functions
\begin{equation} \label{lambda-jump}
\Phi_+(x;\lambda) := \left[\frac{\varphi_-(x;\lambda)}{a(\lambda)}, \phi_+(x;\lambda) \right],
\quad
\Phi_-(x;\lambda) := \left[ \varphi_+(x;\lambda), \frac{\phi_-(x;\lambda)}{\overline{a(\bar{\lambda})}}\right].
\end{equation}
By Corollary \ref{lemma-Jost-original}, Lemma \ref{regularity-a-b}, and the condition (\ref{constraint-on-a})
on $a$, for every $x \in \mathbb{R}$, the function $\Phi_+(x;\cdot)$ 
is analytic in the first and third quadrants of the $\lambda$ plane, whereas
the function $\Phi_-(x;\cdot)$ is analytic in the second and fourth quadrants of the $\lambda$ plane.
For every $x \in \mathbb{R}$ and $\lambda \in \mathbb{R} \cup i \mathbb{R}$,
the two functions are related by the jump condition
\begin{equation} \label{S}
\Phi_+(x;\lambda) - \Phi_-(x;\lambda) = \Phi_-(x;\lambda) S(x;\lambda),
\end{equation}
where
\begin{equation}
\label{S1}
S(x;\lambda) := \left[\begin{matrix} |r(\lambda)|^2 &
\overline{r(\lambda)} e^{-2i\lambda^2x} \\ r(\lambda)e^{2i\lambda^2x} &0 \end{matrix}\right], \quad \lambda \in \mathbb{R}
\end{equation}
and
\begin{equation}
\label{S2}
S(x;\lambda) := \left[\begin{matrix} -|r(\lambda)|^2 &
-\overline{r(\lambda)} e^{-2i\lambda^2x} \\ r(\lambda)e^{2i\lambda^2x} &0 \end{matrix}\right], \quad \lambda \in i \mathbb{R}.
\end{equation}
Note that $r(-\lambda) = -r(\lambda)$ by Corollary \ref{corollary-a-b}, so that $r(0) = 0$. By Corollary \ref{lemma-Jost-original},
the functions $\Phi_{\pm}(x;\lambda)$ satisfy the limiting behavior as $|\lambda| \to \infty$
along a contour in the corresponding domains of their analyticity in the $\lambda$ plane:
\begin{equation}
\label{jump-bc}
\Phi_{\pm}(x;\lambda) \to \Phi_{\infty}(x) :=
\left[e^{\frac{1}{2i} \int_{+\infty}^x |u(y)|^2 dy} e_1, \quad e^{-\frac{1}{2i}\int_{+\infty}^x |u(y)|^2 dy} e_2\right]
\quad \mbox{\rm as} \quad  |\lambda| \to \infty.
\end{equation}

The jump conditions (\ref{S}) and the boundary conditions (\ref{jump-bc}) set up a
Riemann--Hilbert problem to find sectionally analytic functions $\Phi(x;\cdot)$ for every $x \in \mathbb{R}$.
It is quite remarkable that the matrix $S$ is Hermitian for $\lambda \in \mathbb{R}$. In this case, we
can use the theory of Zhou \cite{ZhouSIMA} to obtain a unique solution to the
Riemann--Hilbert problem (\ref{S}), (\ref{S1}), and (\ref{jump-bc}). However,
the matrix $S$ is not Hermitian for $\lambda \in i \mathbb{R}$. Nevertheless,
the second scattering relation \eqref{scattering-relation-modified} yields a useful constraint:
\begin{equation}
\label{smallness-defocusing}
1 - |r(\lambda)|^2 = \frac{1}{|a(\lambda)|^2} \geq c_0^2 > 0, \quad \lambda \in i \mathbb{R},
\end{equation}
where $c_0 := \sup_{\lambda \in i \mathbb{R}} |a(\lambda)|$. The constraint (\ref{smallness-defocusing})
will be used to obtain a unique solution to the
Riemann--Hilbert problem (\ref{S}), (\ref{S2}), and (\ref{jump-bc}).

We note that only the latter case (\ref{S2}), which is relevant to the imaginary values of $\lambda$, was considered
in the context of the Kaup--Newell spectral problem by Kitaev \& Vartanian \cite{V1}, who studied the long time asymptotic solution
of the derivative NLS equation (\ref{dnls}, also in the case of no solitons. The smallness condition (\ref{smallness-defocusing})
does not need to be assumed a priori, as it is done in Lemma 2.2 in \cite{V1}, but appears naturally from the second
scattering relation (\ref{scattering-relation-modified}). The Hermitian case of real values of $\lambda$ was missed in \cite{V1}.

We also note that the scattering matrix $S(x;\lambda)$ is analogous to the one known for the focusing NLS equation
if $\lambda \in \mathbb{R}$ and the one known for the defocusing NLS equation if $\lambda \in i \mathbb{R}$.
As a result, the inverse scattering transform
for the derivative NLS equation combines elements of the inverse scattering transforms developed
for the focusing and defocusing cubic NLS equations \cite{D-J,D-Z-1,Z}.

In the rest of this section, we reformulate the jump condition in the $z$ plane and introduce
two scattering coefficients $r_{\pm}$, which are defined on the real line in the function
space $H^1(\mathbb{R}) \cap L^{2,1}(\mathbb{R})$.  The scattering coefficients $r_{\pm}$ allow us to recover
a potential $u$ in the function space $H^2(\mathbb{R}) \cap H^{1,1}(\mathbb{R})$ (in Section 4).

\subsection{Reformulation of the Riemann--Hilbert problem (\ref{S})}

Using transformation matrices in (\ref{original-Jost})--(\ref{T-inverse}),
we can rewrite the scattering relations (\ref{linear-1}) and (\ref{linear-2})
in terms of the $z$-dependent Jost functions $m_{\pm}$ and $n_{\pm}$:
\begin{equation}\label{linear-3}
\frac{m_-(x;z)}{a(\lambda)} - m_+(x;z) = \frac{2 i \lambda b(\lambda)}{a(\lambda)} e^{2 i z x} p_+(x;z)
\end{equation}
and
\begin{equation}
\label{linear-4}
\frac{p_-(x;z)}{\overline{a(\bar{\lambda})}} - p_+(x;z) =
-\frac{\overline{b(\bar{\lambda})}}{2 i \overline{\lambda} \overline{a(\bar{\lambda})}}
e^{-2 i z x} m_+(x;z),
\end{equation}
where $z \in \mathbb{R}$, $m_{\pm}$ are defined by Lemma \ref{Jost}, and $p_{\pm}$ are given explicitly by
\begin{equation}
\label{f}
p_{\pm}(x;z) = \frac{1}{2 i \lambda} T_1(x;\lambda) T_2^{-1}(x;\lambda) n_{\pm}(x;z) = -\frac{1}{4z}
\left[ \begin{matrix} 1 & u(x) \\ -\bar{u}(x) & -|u(x)|^2 - 4 z \end{matrix} \right] n_{\pm}(x;z).
\end{equation}
Properties of the new functions $p_{\pm}$ are summarized in the following result.

\begin{lem}
\label{lemma-p}
Under the conditions of Lemma \ref{Jost}, for every $x \in \mathbb{R}$,
the functions $p_{\pm}(x;z)$ are continued analytically in $\mathbb{C}^{\pm}$ and satisfy
the following limits as $|{\rm Im}(z)| \to \infty$ along a contour in the domains of their analyticity:
\begin{equation}
\label{asymptotic-p}
\lim_{|z| \to \infty} p_{\pm}(x;z) = n_{\pm}^{\infty}(x) e_2,
\end{equation}
where $n_{\pm}^{\infty}$ are the same as in the limits (\ref{asymptotics-2-lim}).
\end{lem}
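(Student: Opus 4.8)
The plan is to argue directly from the explicit representation (\ref{f}) of $p_{\pm}$ in terms of the normalized Jost functions $n_{\pm}$, first separating out the part that carries the apparent $z^{-1}$ singularity. Writing $n_{\pm} = [n_{\pm}^{(1)},n_{\pm}^{(2)}]^t$ and using $-\frac{1}{4z}\,\mathrm{diag}(0,-4z) = \mathrm{diag}(0,1)$, I would split
\[
p_{\pm}(x;z) = n_{\pm}^{(2)}(x;z)\, e_2 - \frac{1}{4z} \left[\begin{matrix} 1 & u(x) \\ -\bar{u}(x) & -|u(x)|^2 \end{matrix}\right] n_{\pm}(x;z).
\]
This isolates a bounded, manifestly analytic term $n_{\pm}^{(2)} e_2$ from the term with the $z^{-1}$ prefactor, and is the representation on which all three parts of the proof will rest.

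For analyticity, I would invoke Lemma \ref{Jost}: $n_{\pm}(x;\cdot)$ is analytic in $\mathbb{C}^{\pm}$, and the scalar factor $z^{-1}$ is analytic throughout $\mathbb{C}^{\pm}$ because $z=0$ lies on the real axis and hence outside the open half-planes. Thus $p_{\pm}(x;\cdot)$ is analytic in $\mathbb{C}^{\pm}$ as a product of analytic functions. The delicate point, which I expect to be the main (if minor) obstacle, is that $p_{\pm}$ must also be well defined and bounded on the real line, including at $z=0$, where the $z^{-1}$ factor naively threatens a pole needed later for the reformulated Riemann--Hilbert problem. I would dispatch this using Remark \ref{remark-zero}, which gives $n_{\pm}(x;0) = [-u(x),1]^t$, so that
\[
\left[\begin{matrix} 1 & u(x) \\ -\bar{u}(x) & -|u(x)|^2 \end{matrix}\right] n_{\pm}(x;0) = \left[\begin{matrix} -u(x) + u(x) \\ |u(x)|^2 - |u(x)|^2 \end{matrix}\right] = 0.
\]
Since $n_{\pm}(x;\cdot)$ is continuous (indeed analytic) in $z$, the numerator is $\mathcal{O}(z)$ as $z \to 0$, so the apparent singularity at $z=0$ is removable and $p_{\pm}(x;z)$ remains bounded across $z=0$.

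Finally, for the limit (\ref{asymptotic-p}) I would let $|z| \to \infty$ along a contour in $\mathbb{C}^{\pm}$ in the split representation. By Lemma \ref{asympt-mod}, $n_{\pm}(x;z) \to n_{\pm}^{\infty}(x) e_2$, so $n_{\pm}^{(2)}(x;z) \to n_{\pm}^{\infty}(x)$ and $n_{\pm}^{(1)}(x;z) \to 0$, which handles the first term. In the second term the matrix prefactor is $z$-independent and $n_{\pm}(x;\cdot)$ is uniformly bounded by (\ref{bounds-uniform-Jost}), so the whole expression is $\mathcal{O}(z^{-1}) \to 0$. Combining the two contributions yields $\lim_{|z|\to\infty} p_{\pm}(x;z) = n_{\pm}^{\infty}(x) e_2$. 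As in the earlier lemmas, the computations for $p_+$ and $p_-$ differ only in the choice of half-plane, so it suffices to carry out the argument once.
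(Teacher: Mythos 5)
Your proof is correct for the two claims the lemma actually makes, and it is essentially the paper's argument in lightly disguised form: splitting off the $-4z$ entry of the matrix in (\ref{f}) gives exactly the paper's representation (\ref{p-form}), since the leftover matrix is rank one and its action on $n_{\pm}$ equals $\left[1, -\bar{u}(x)\right]^t$ times $n_{\pm}^{(1)} + u\, n_{\pm}^{(2)} = 2i\lambda\, \phi_{\pm}^{(1)}$. The analyticity argument (product of functions analytic in the open half-planes, $z^{-1}$ being harmless off the real axis) and the limit argument (first term by Lemma \ref{asympt-mod}, second term $\mathcal{O}(z^{-1})$ by the uniform bound (\ref{bounds-uniform-Jost})) coincide with what the paper does.

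The one soft spot is your treatment of $z=0$. The cancellation you found --- that $n_{\pm}(x;0) = [-u(x),1]^t$ lies in the kernel of the rank-one matrix --- is correct and is precisely the mechanism behind Remark \ref{remark-zero}. But the inference ``$n_{\pm}(x;\cdot)$ is continuous (indeed analytic) in $z$, so the numerator is $\mathcal{O}(z)$'' does not hold as written: Lemma \ref{Jost} gives analyticity only in the \emph{open} half-plane $\mathbb{C}^{\pm}$, and $z=0$ is a boundary point, where you have at best continuity; continuity of a function vanishing at $z=0$ yields only $o(1)$, not $\mathcal{O}(z)$, so $z^{-1}$ times the numerator could in principle still be unbounded along the real axis. (Differentiability of $n_{\pm}(x;\cdot)$ at real $z$ is not available under the hypotheses of Lemma \ref{Jost}; $\partial_z$-regularity is only established later, in Lemma \ref{regularity-m}, under the stronger assumption $u \in H^{1,1}(\mathbb{R})$.) The paper closes this gap exactly where your argument is loose, by reading the vanishing rate off the Volterra equation (\ref{int-jost-2-orig}): this yields the identity (\ref{varphi-3}),
\[
\lambda^{-1} \phi_{\pm}^{(1)}(x;\lambda) = \int_{\pm \infty}^{x} e^{-2i z (x-y)}\, u(y)\, n_{\pm}^{(2)}(y;z)\, dy,
\]
so the apparently singular part of $p_{\pm}$ equals $\frac{1}{2i}\left[1, -\bar{u}(x)\right]^t$ times this integral, which is manifestly bounded at $z=0$ by $\frac{1}{2}\|u\|_{L^1} \|n_{\pm}^{(2)}\|_{L^{\infty}}$. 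Since the lemma as stated only asserts analyticity in the open half-planes and the limit (\ref{asymptotic-p}), this slip does not invalidate your proof of the statement; but the finiteness of $p_{\pm}(x;0)$ is used downstream in the Riemann--Hilbert reformulation, so you should replace the continuity argument by the integral-equation identity.
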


\begin{proof}
The asymptotic limits (\ref{asymptotic-p}) follow from the representation
(\ref{f}) and the asymptotic limits (\ref{asymptotics-2-lim}) for $n_{\pm}(x;z)$
as $|z| \to \infty$ in Lemma \ref{asympt-mod}. Using the transformation
(\ref{original-Jost})--(\ref{T-inverse}), functions $p_{\pm}$ can be written in the
equivalent form
\begin{equation}
\label{p-form}
p_{\pm}(x;z) = n_{\pm}^{(2)}(x;z) e_2 + \frac{1}{2 i \lambda}
\left[ \begin{matrix} 1 \\ -\bar{u}(x) \end{matrix} \right] \phi_{\pm}^{(1)}(x;\lambda),
\end{equation}
where both $n_{\pm}^{(2)}(x;z)$ and $\lambda^{-1} \phi_{\pm}^{(1)}(x;\lambda)$ are continued analytically in $\mathbb{C}^{\pm}$
with respect to $z$ by Lemma \ref{Jost} and Corollary \ref{lemma-Jost-original}.
From the Volterra integral equation (\ref{int-jost-2-orig}), we also obtain
\begin{equation}
\label{varphi-3}
\lambda^{-1} \phi_{\pm}^{(1)}(x;\lambda) = \int_{\pm \infty}^{x} e^{-2i z (x-y)} u(y) n_{\pm}^{(2)}(y;z) dy,
\end{equation}
therefore, $p_{\pm}(x;0)$ exists for every $x \in \mathbb{R}$.
Thus, for every $x \in \mathbb{R}$, the analyticity properties of $p_{\pm}(x;\cdot)$
are the same as those of $n_{\pm}(x;\cdot)$.
\end{proof}

Let us now introduce the new scattering data:
\begin{equation}
\label{r-definition}
r_+(z) := -\frac{b(\lambda)}{2 i \lambda a(\lambda)}, \quad
r_-(z) := \frac{2i \lambda b(\lambda)}{a(\lambda)}, \quad z \in \mathbb{R}.
\end{equation}
which satisfy the relation
\begin{equation} \label{r1-r2}
r_-(z) = 4 z r_+(z), \quad z \in \mathbb{R}.
\end{equation}
It is worthwhile noting that
\begin{equation} \label{r1-r2-other}
\left\{ \begin{array}{l}
\overline{r}_+(z) r_-(z) = |r(\lambda)|^2, \quad \quad z \in \mathbb{R}^+, \quad \lambda \in \mathbb{R}, \\
\overline{r}_+(z) r_-(z) = -|r(\lambda)|^2, \quad \; z \in \mathbb{R}^-, \quad \lambda \in i\mathbb{R}.
\end{array} \right.
\end{equation}
The scattering data $r_{\pm}$ satisfy the following properties, which are derived from the previous results.

\begin{lem}
\label{lemma-scattering-data}
Assume the condition (\ref{constraint-on-a}) on $a$. 
If $u \in H^{1,1}(\mathbb{R})$, then $r_{\pm} \in H^1(\mathbb{R})$, whereas if
$u \in H^2(\mathbb{R}) \cap H^{1,1}(\mathbb{R})$, then $r_{\pm} \in L^{2,1}(\mathbb{R})$.
Moreover, the mapping
\begin{equation}
\label{map-u-r}
H^2(\mathbb{R}) \cap H^{1,1}(\mathbb{R}) \ni u \to (r_+, r_-) \in H^1(\mathbb{R}) \cap L^{2,1}(\mathbb{R})
\end{equation}
is Lipschitz continuous.
\end{lem}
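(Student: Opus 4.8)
The plan is to read $r_{\pm}$ directly off the definition (\ref{r-definition}) as products of the functions $\lambda^{-1} b(\lambda)$ and $\lambda b(\lambda)$, whose regularity is already supplied by Lemma \ref{regularity-a-b}, with the reciprocal $a(\lambda)^{-1}$, whose regularity must be extracted from the lower bound (\ref{constraint-on-a}). Explicitly, $r_+ = -\frac{1}{2i}\,\lambda^{-1}b(\lambda)\,a(\lambda)^{-1}$ and $r_- = 2i\,\lambda b(\lambda)\,a(\lambda)^{-1}$, so the entire argument reduces to understanding the multiplication operator $f \mapsto f/a$ on the spaces $H^1_z(\mathbb{R})$ and $L^{2,1}_z(\mathbb{R})$.

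First I would control $1/a$. Since $a(\lambda) - a_{\infty} \in H^1_z(\mathbb{R})$ by Lemma \ref{regularity-a-b} and $|a(\lambda)| \geq A > 0$ by (\ref{constraint-on-a}), the function $1/a$ is bounded on $\mathbb{R}$, tends to $1/a_{\infty}$ as $|z| \to \infty$, and has derivative $\partial_z(1/a) = -a^{-2}\partial_z a \in L^2_z(\mathbb{R})$; combined with the $L^2_z$ bound on $1/a - 1/a_{\infty} = -(a-a_{\infty})/(a\,a_{\infty})$ this gives $1/a - 1/a_{\infty} \in H^1_z(\mathbb{R})$. Thus $1/a$ is a constant plus an $H^1_z$ function. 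Writing $f/a = a_{\infty}^{-1} f + f\,(1/a - 1/a_{\infty})$ and using that $H^1(\mathbb{R})$ is a Banach algebra, multiplication by $1/a$ maps $H^1_z$ into itself, so the memberships $\lambda^{-1}b,\ \lambda b \in H^1_z$ from Lemma \ref{regularity-a-b} yield $r_{\pm} \in H^1(\mathbb{R})$ when $u \in H^{1,1}(\mathbb{R})$. For the weighted statement, when $u \in H^2(\mathbb{R}) \cap H^{1,1}(\mathbb{R})$ Lemma \ref{regularity-a-b} gives $\lambda^{-1}b,\ \lambda b \in L^{2,1}_z$, and since $1/a \in L^{\infty}_z$ one obtains $\|r_{\pm}\|_{L^{2,1}} \leq \|1/a\|_{L^{\infty}}\,\|\lambda^{\mp 1}b\|_{L^{2,1}} < \infty$ directly, with no need for the algebra property.

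For Lipschitz continuity I would pass to differences, abbreviating $\beta_+ = \lambda^{-1}b$, $\beta_- = \lambda b$ and their tildes. Up to the fixed prefactors in (\ref{r-definition}), the quotient difference decomposes as
$$
\frac{\beta_{\pm}}{a} - \frac{\tilde{\beta}_{\pm}}{\tilde{a}}
= \frac{\beta_{\pm}(\tilde{a} - a)}{a\,\tilde{a}} + \frac{\beta_{\pm} - \tilde{\beta}_{\pm}}{\tilde{a}}.
$$
The second term is an $H^1_z$ (resp. $L^{2,1}_z$) function times the bounded factor $1/\tilde{a}$, and Corollary \ref{cor-regularity-a-b} bounds $\|\beta_{\pm} - \tilde{\beta}_{\pm}\|_{H^1_z}$ (resp. its $L^{2,1}_z$ norm) by $C(U)\|u - \tilde{u}\|$. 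The first term is the only genuine obstacle: $a - \tilde{a}$ is not itself in $H^1_z$ because of the nonzero constant offset $a_{\infty} - \tilde{a}_{\infty}$. However, this offset becomes harmless once multiplied by the decaying factor $\beta_{\pm}$. I would split $\tilde{a} - a = [(\tilde{a} - \tilde{a}_{\infty}) - (a - a_{\infty})] + (\tilde{a}_{\infty} - a_{\infty})$, estimate the bracket in $H^1_z$ by Corollary \ref{cor-regularity-a-b}, and bound the scalar $|a_{\infty} - \tilde{a}_{\infty}| \leq C(U)\|u - \tilde{u}\|_{L^2}$ exactly as in the estimate (\ref{LC-first}), using $a_{\infty} = e^{\frac{1}{2i}\int_{\mathbb{R}}|u|^2 dx}$. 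Then $\beta_{\pm}(\tilde{a} - a)$ is a sum of a product of two $H^1_z$ functions and a product of an $H^1_z$ function with a bounded constant, hence lies in $H^1_z$ (resp. $L^{2,1}_z$) with the required Lipschitz bound, and dividing by $1/(a\tilde{a}) = a_{\infty}^{-1}\tilde{a}_{\infty}^{-1} + H^1_z$ preserves these spaces. Collecting the two terms establishes Lipschitz continuity of the mapping (\ref{map-u-r}).
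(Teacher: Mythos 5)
Your proof is correct and follows essentially the same route as the paper: your quotient decomposition is, up to swapping the roles of $u$ and $\tilde{u}$, exactly the paper's representation (\ref{continuity-r}), including the key splitting of $\tilde{a}-a$ into the $H^1_z$ bracket $[(\tilde{a}-\tilde{a}_{\infty})-(a-a_{\infty})]$ plus the constant offset $\tilde{a}_{\infty}-a_{\infty}$, with everything controlled by Lemma \ref{regularity-a-b} and Corollary \ref{cor-regularity-a-b}. The only difference is that you spell out details the paper leaves implicit (the membership $1/a - 1/a_{\infty} \in H^1_z$ via the lower bound (\ref{constraint-on-a}), the Banach algebra property of $H^1$, and the bound on $|a_{\infty}-\tilde{a}_{\infty}|$ as in (\ref{LC-first})), which is an elaboration of the same argument rather than a different one.
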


\begin{proof}
The first assertion on $r_{\pm}$ follows from Lemma \ref{regularity-a-b}.
To prove Lipschitz continuity of the mapping (\ref{map-u-r}), we use
the following representation for $r_-$ and $\tilde{r}_-$
that correspond to two potentials $u$ and $\tilde{u}$,
\begin{equation}
\label{continuity-r}
r_- - \widetilde{r}_- = \frac{2i\lambda(b-\widetilde{b})}{a} +
\frac{2i\lambda \widetilde{b}}{a\widetilde{a}}[ (\widetilde{a}-\widetilde{a}_{\infty})-(a-a_{\infty})]+\frac{2i\lambda \widetilde{b}}{a\widetilde{a}}(\widetilde{a}_{\infty}-a_{\infty}).
\end{equation}
Lipschitz continuity of the mapping (\ref{map-u-r}) for $r_-$ follows from
the representation (\ref{continuity-r}) and
Corollary \ref{cor-regularity-a-b}. Lipschitz continuity of the mapping (\ref{map-u-r})
for $r_+$ is studied by using a
representation similar to (\ref{continuity-r}).
\end{proof}

\begin{remark}
By Corollary \ref{corollary-a-b}, $a(-\lambda) = a(\lambda)$ for every $\lambda \in \mathbb{R} \cup i \mathbb{R}$.
Therefore, when we introduce $z = \lambda^2$ and start considering functions of $z$, it makes sense to introduce
${\bf a}(z) := a(\lambda)$ for every $z \in \mathbb{R}$. In what follows, we drop the bold notations
in the definition of $a(z)$.
\end{remark}

For every $x \in \mathbb{R}$ and $z \in \mathbb{R}$, we define two matrices $P_+(x;z)$ and $P_-(x;z)$ by
\begin{equation}
\label{correspondence-M}
P_+(x;z) := \left[ \frac{m_-(x;z)}{a(z)}, \; p_+(x;z) \right], \quad
P_-(x;z) := \left[ m_+(x;z), \; \frac{p_-(x;z)}{\overline{a}(z)} \right].
\end{equation}
By Lemmas \ref{Jost}, \ref{regularity-a-b}, and \ref{lemma-p}, as well as the condition (\ref{constraint-on-a}) on $a$, 
the functions $P_{\pm}(x;\cdot)$ for every $x \in \mathbb{R}$
are continued analytically in $\mathbb{C}^{\pm}$.
The scattering relations (\ref{linear-3}) and (\ref{linear-4}) are now rewritten
as the jump condition between functions $P_{\pm}(x;z)$ across the real axis in $z$ for every $x \in \mathbb{R}$:
\begin{equation} \label{jump-P}
P_+(x;z) - P_-(x;z) = P_-(x;z) R(x;z), \quad
R(x;z) := \left[\begin{matrix} \overline{r}_+(z) r_-(z) & \overline{r}_+(z) e^{-2izx} \\
r_-(z) e^{2izx} & 0 \end{matrix} \right] \quad z\in \mathbb{R}.
\end{equation}
By Lemmas \ref{asympt-mod}, \ref{regularity-a-b}, and \ref{lemma-p}, the functions
$P_{\pm}(x;\cdot)$ satisfy the limiting behavior as $|z| \to \infty$
along a contour in the domain of their analyticity in the $z$ plane:
\begin{equation}
\label{jump-bc-z}
P_{\pm}(x;z) \to \Phi_{\infty}(x) \quad \mbox{\rm as} \quad |z| \to \infty,
\end{equation}
where $\Phi_{\infty}$ is the same as in (\ref{jump-bc}).
The boundary conditions (\ref{jump-bc-z}) depend on $x$, which represents an obstacle in the inverse scattering transform,
where we reconstruct the potential $u(x)$ from the behavior of the analytic continuations
of the Jost functions $P_{\pm}(x;\cdot)$ for $x \in \mathbb{R}$.
Therefore, we fix the boundary conditions to the identity matrix by defining
new matrices
\begin{equation}
\label{formula-P-M}
M_{\pm}(x;z) := \left[ \Phi_{\infty}(x) \right]^{-1} P_{\pm}(x;z), \quad x \in \mathbb{R}, \quad z \in \mathbb{C}^{\pm}.
\end{equation}
As a result, we obtain the Riemann--Hilbert problem for analytic functions $M_{\pm}(x;\cdot)$ in $\mathbb{C}^{\pm}$,
which is given by the jump condition equipped with the uniform boundary conditions:
\begin{equation} \label{jump}
\left\{ \begin{matrix} M_+(x;z) - M_-(x;z) = M_-(x;z) R(x;z), & z\in \mathbb{R}, \\
M_{\pm}(x;z) \to I & \mbox{\rm as} \; |z| \to \infty. \end{matrix} \right.
\end{equation}
The scattering data $r_{\pm} \in H^1(\mathbb{R}) \cap L^{2,1}(\mathbb{R})$ are defined in Lemma \ref{lemma-scattering-data}.

Figure \ref{Jost-func} shows the regions of analyticity of functions $\Phi_{\pm}$ in the $\lambda$ plane (left) 
and those of functions $M_{\pm}$ in the $z$ plane (right).

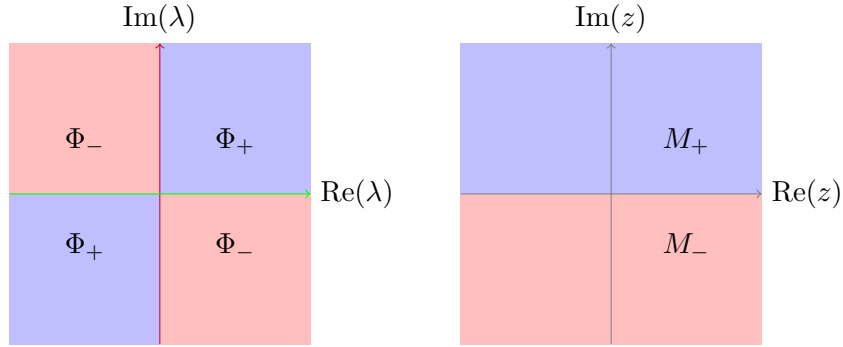
\begin{figure}[htbp] 
   \centering
 \begin{tikzpicture}
\fill[blue!25!white] (-4,0) rectangle (-2,2);
\fill[red!25!white] (-6,0) rectangle (-4,2);
\fill[blue!25!white] (-6,-2) rectangle (-4,0);
\fill[red!25!white] (-4,-2) rectangle (-2,0);

 \draw (-3,0.7) node
      {$\Phi_+$};
 \draw (-5,0.7) node
      {$\Phi_-$};
 \draw (-3,-0.7) node
      {$\Phi_-$};
 \draw (-5,-0.7) node
      {$\Phi_+$};

    \draw [purple, ->] (-4,-2) -- (-4,2)      
        node [above, black] {$\mbox{Im}(\lambda) $};              

    \draw [green, ->] (-6,0) -- (-2,0)      
        node [right, black] {$\mbox{Re}(\lambda) $}; 
        
\fill[blue!25!white] (2,0) rectangle (4,2);
\fill[blue!25!white] (0,0) rectangle (2,2);
\fill[red!25!white] (0,-2) rectangle (2,0);
\fill[red!25!white] (2,-2) rectangle (4,0);

 \draw (3,0.7) node
      {$M_+$};
 \draw (3,-0.7) node
      {$M_-$};

    \draw [gray,->] (2,-2) -- (2,2)      
        node [above, black] {$\mbox{Im}(z) $};              

    \draw [gray,->] (0,0) -- (4,0)      
        node [right, black] {$\mbox{Re}(z) $};

         \end{tikzpicture}
   \caption{Blue and red regions mark domains of analyticity of $\Phi_{\pm}$ in the $\lambda$ plane (left) 
   and those of $M_{\pm}$ in the $z$ plane (right).}
   \label{Jost-func}
\end{figure}

The scattering matrix $R$ in the Riemann--Hilbert problem (\ref{jump}) is not Hermitian. As a result, it is difficult to
use the theory of Zhou \cite{ZhouSIMA} in order to construct a unique solution for $M_{\pm}$
in the Riemann--Hilbert problem (\ref{jump}) without restricting the scattering data $r_{\pm}$ to be small in their norms.
On the other hand, the original Riemann--Hilbert problem (\ref{S}) in the $\lambda$ plane
does not have these limitations. Therefore, in the following subsection,
we consider two equivalent reductions of the Riemann--Hilbert problem (\ref{jump})
in the $z$ plane to those related with the scattering matrix $S$ instead of the scattering matrix $R$.

\subsection{Two transformations of the Riemann-Hilbert problem \eqref{jump}}

For every $\lambda \in \mathbb{C} \backslash \{0\}$, we denote
\begin{equation}
\label{RH-factors}
\tau_1(\lambda) := \left[\begin{matrix} 1 & 0\\0 & 2 i \lambda \end{matrix}\right], \quad
\tau_2(\lambda) := \left[\begin{matrix} (2 i \lambda)^{-1} & 0\\0 & 1 \end{matrix}\right]
\end{equation}
and observe that
$$
\tau_1^{-1}(\lambda) R(x;z) \tau_1(\lambda) = \tau_2^{-1}(\lambda) R(x;z) \tau_2(\lambda) = S(x;\lambda), \quad
z \in \mathbb{R}, \quad \lambda \in \mathbb{R} \cup i \mathbb{R},
$$
where $S(x;\lambda)$ is defined in (\ref{S1}) and (\ref{S2}), whereas $R(x;z)$ is defined in (\ref{jump-P}).
Using these properties, we introduce two formally equivalent reformulations
of the Riemann--Hilbert problem (\ref{jump}):
\begin{equation} \label{jump-2}
\left\{ \begin{array}{l} G_{+ 1,2}(x;\lambda) - G_{- 1,2}(x;\lambda) = G_{- 1,2}(x;\lambda) S(x;\lambda) + F_{1,2}(x;\lambda), \quad
\lambda \in \mathbb{R} \cup i \mathbb{R}, \\
\lim_{|\lambda|\rightarrow\infty} G_{\pm 1,2}(x;\lambda) = 0, \end{array} \right.
\end{equation}
where
\begin{equation}
\label{correspondence-G-F}
G_{\pm 1,2}(x;\lambda) := M_{\pm}(x;z) \tau_{1,2}(\lambda) - \tau_{1,2}(\lambda), \quad F_{1,2}(x;\lambda) := \tau_{1,2}(\lambda)  S(x;\lambda).
\end{equation}
The functions $G_{+ 1,2}(x;\lambda)$ are analytic in the first and third quadrants of the $\lambda$ plane,
whereas the functions $G_{- 1,2}(x;\lambda)$  are analytic in the second and fourth quadrants of the $\lambda$ plane.
Although the behavior of functions $M_{\pm}(x;z) \tau_{1,2}(\lambda)$ may become singular as
$\lambda \to 0$, we prove in Corollary \ref{RH-general} below
that $G_{\pm 1,2}(x;\lambda)$ are free of singularities as $\lambda \to 0$. 

Figure \ref{RH-diagram} summarizes on the transformations of the Riemann--Hilbert problems.

\begin{figure}[htbp] 
   \centering
  \begin{tikzpicture}
  \node at (0,2) (RH1) {$\Phi_{\pm}$};
  \node at (0,0) (RH2) {$M_{\pm}$};
  \node at ( -2,-2) (RH2-1) {$G_{\pm1}$};
  \node at (2,-2) (RH2-2)  {$G_{\pm2}$};
 \draw[<->] (RH2) to node[right]{$T_{1,2}$} (RH1);
  \draw[<->] (RH2-1) to node[left]{$\tau_1$} (RH2);
  \draw[<->] (RH2-2) to node[right] {$\tau_2$} (RH2);
  \draw[<->] (RH2-1) to node[below]{$2i\lambda$} (RH2-2);
\end{tikzpicture}
   \caption{A useful diagram showing transformations of the Riemann--Hilbert problems}
   \label{RH-diagram}
\end{figure}
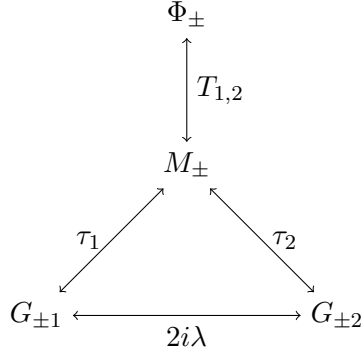

Solvability of the Riemann--Hilbert problem \eqref{jump-2} is obtained in Section 4.1. 
Then, in Section 4.2, we show that the solution to the two related Riemann-Hilbert problems
\eqref{jump-2} can be used to obtain the solution to the Riemann-Hilbert problem \eqref{jump}. 
In Section 4.3, we show how this procedure defines the inverse 
scattering transform to recover the potential $u$ of the Kaup--Newell spectral problem (\ref{lax1}) 
from the scattering data $r_{\pm}$.

\section{Inverse scattering transform}

We are now concerned with the solvability of the Riemann-Hilbert problem (\ref{jump})
for the given scattering data $r_+,r_- \in H^1(\mathbb{R}) \cap L^{2,1}(\mathbb{R})$
satisfying the constraint (\ref{r1-r2}). We are looking
for analytic matrix functions $M_{\pm}(x;\cdot)$ in $\mathbb{C}^{\pm}$ for every
$x \in \mathbb{R}$. Let us introduce the following notations for the column vectors
of the matrices $M_{\pm}$ as
\begin{equation}
\label{definitions-M}
M_{\pm}(x;z) = [ \mu_{\pm}(x;z), \eta_{\pm}(x;z) ].
\end{equation}
Before we proceed, let us inspect regularity of the reflection coefficient 
$r(\lambda)$ as a function of $z$ on $\mathbb{R}$.

\begin{prop} \label{r-regularity}
If $r_{\pm}(z) \in H^1_z(\mathbb{R})\cap L^{2,1}_z(\mathbb{R})$, then $r(\lambda) \in L^{2,1}_z(\mathbb{R})\cap L^{\infty}_z(\mathbb{R})$.
\end{prop}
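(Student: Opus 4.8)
The plan is to read off $r(\lambda)$ from the two definitions in (\ref{r-definition}) and to estimate it separately near $z=0$ and near $z=\infty$, exploiting the fact that the factor $2i\lambda$ vanishes or grows in a controlled way as a function of $z=\lambda^2$. From (\ref{r-definition}) one has the two representations $r(\lambda) = -2i\lambda\, r_+(z)$ and $r(\lambda) = (2i\lambda)^{-1} r_-(z)$, which are consistent by the relation (\ref{r1-r2}). Since $|\lambda|^2 = |\lambda^2| = |z|$, we have $|2i\lambda| = 2|z|^{1/2}$, hence $|r(\lambda)| = 2|z|^{1/2}|r_+(z)| = \tfrac12 |z|^{-1/2}|r_-(z)|$. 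In particular $|r(\lambda)|$ is a genuine single-valued function of $z$, even though $r$ itself is odd in $\lambda$ by Corollary \ref{corollary-a-b} and is therefore defined only up to a sign as a function of $z$; since only $|r(\lambda)|$ enters the $L^{2,1}_z$ and $L^\infty_z$ norms, this sign ambiguity is harmless.

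For the $L^\infty_z$ bound I would use the first representation on $|z|\le 1$ and the second on $|z|\ge 1$. Since $r_\pm \in H^1(\mathbb{R}) \hookrightarrow L^\infty(\mathbb{R})$, on $|z|\le 1$ we get $|r(\lambda)| \le 2|z|^{1/2}\|r_+\|_{L^\infty} \le 2\|r_+\|_{L^\infty}$, while on $|z|\ge 1$ we get $|r(\lambda)| \le \tfrac12 |z|^{-1/2}\|r_-\|_{L^\infty} \le \tfrac12\|r_-\|_{L^\infty}$. The vanishing factors $|z|^{1/2}$ and $|z|^{-1/2}$ are precisely what prevent $r$ from blowing up at the origin and at infinity, and this yields $r(\lambda) \in L^\infty_z(\mathbb{R})$.

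For the weighted $L^2$ bound I would split $\int_{\mathbb{R}} (1+z^2)|r(\lambda)|^2\,dz$ at $|z|=1$. On $|z|\le 1$, using $|r(\lambda)|^2 = 4|z|\,|r_+(z)|^2 \le 4|r_+(z)|^2$ and $(1+z^2)\le 2$, the contribution is controlled by $\|r_+\|_{L^2}^2$. On $|z|\ge 1$, using $|r(\lambda)|^2 = \tfrac14 |z|^{-1}|r_-(z)|^2$ together with the elementary inequality $(1+z^2)/|z| \le 1+z^2$ valid for $|z|\ge 1$, the contribution is controlled by $\|r_-\|_{L^{2,1}}^2$. Adding the two pieces gives $r(\lambda)\in L^{2,1}_z(\mathbb{R})$, which together with the previous step completes the proof.

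I do not expect a serious obstacle; the whole argument is a change-of-variable and book-keeping exercise. The only points requiring care are switching between the $r_+$ and $r_-$ representations in the two regions so that the factor $|2i\lambda| = 2|z|^{1/2}$ always helps rather than hurts, and recording that the norms depend only on $|r(\lambda)|$, which removes the branch ambiguity coming from the oddness of $r$ in $\lambda$.
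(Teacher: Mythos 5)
Your proof is correct, and for the $L^{\infty}_z$ bound it coincides with the paper's argument essentially verbatim: both split at $|z|=1$ (equivalently $|\lambda|=1$), use $r(\lambda)=-2i\lambda\,r_+(z)$ on the inner region and $r(\lambda)=(2i\lambda)^{-1}r_-(z)$ on the outer region, and invoke $H^1(\mathbb{R})\hookrightarrow L^{\infty}(\mathbb{R})$ for $r_{\pm}$. The only divergence is in the $L^{2,1}_z$ part: the paper avoids any splitting by using the product identity $|r(\lambda)|^2 = \mathrm{sign}(z)\,\overline{r}_+(z) r_-(z)$ (cf.\ (\ref{r1-r2-other})) and applying the Cauchy--Schwarz inequality, $\int_{\mathbb{R}} \langle z\rangle^2 |r_+(z)|\,|r_-(z)|\,dz \leq \|r_+\|_{L^{2,1}} \|r_-\|_{L^{2,1}}$, whereas you select pointwise whichever of the two single-factor representations is favorable in each region, using $|r(\lambda)|=2|z|^{1/2}|r_+(z)|=\tfrac{1}{2}|z|^{-1/2}|r_-(z)|$. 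These are really the same observation in two guises --- your pointwise choice of the smaller representation is dominated by the geometric mean that Cauchy--Schwarz exploits --- but the paper's version is a one-line global estimate yielding the bilinear bound $\|r\|_{L^{2,1}_z}^2 \leq \|r_+\|_{L^{2,1}}\|r_-\|_{L^{2,1}}$, while yours makes explicit which factor tames which endpoint (only $\|r_+\|_{L^2}$ is needed near $z=0$, only $\|r_-\|_{L^{2,1}}$ near infinity). Your remark that $r$ is odd in $\lambda$ and hence defined only up to sign as a function of $z$, but that both norms depend only on $|r(\lambda)|$, is a point the paper leaves implicit and is correctly handled.
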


\begin{proof}
Since $r_{\pm} \in L^{2,1}(\mathbb{R})$ and $|r(\lambda)|^2 = {\rm sign}(z) \; \overline{r}_+(z) r_-(z)$ for every $z \in \mathbb{R}$,
we have $r(\lambda) \in L^{2,1}_z(\mathbb{R})$ by Cauchy--Schwarz inequality.

To show that $r(\lambda) \in L^{\infty}_z(\mathbb{R})$, we notice that $r(\lambda)$ can be defined equivalently
from (\ref{r-definition}) in the following form:
$$
r(\lambda) = \left\{ \begin{matrix} -2i \lambda r_+(z) & |\lambda| \leq 1 \\ (2i \lambda)^{-1} r_-(z) & |\lambda|\geq 1.\end{matrix}\right.
$$
Since $r_{\pm} \in L^{\infty}(\mathbb{R})$ as it follows from $r_{\pm} \in H^1(\mathbb{R})$, then we have $r(\lambda) \in L^{\infty}_z(\mathbb{R})$.
\end{proof}

\begin{remark}
We do not expect generally that $r(\lambda)$ belongs to $H^1_z(\mathbb{R})$. For instance,
if
$$
h(\lambda) := \frac{\lambda}{(1+\lambda^4)^s}, \quad s > \frac{5}{4},
$$
then $\lambda h(\lambda), \lambda^{-1} h(\lambda) \in H^1_z(\mathbb{R}) \cap L^{2,1}_z(\mathbb{R})$,
$h(\lambda) \in L^{2,1}_z(\mathbb{R}) \cap L^{\infty}_z(\mathbb{R})$
but $h(\lambda) \notin H^1_z(\mathbb{R})$.
\end{remark}

We also note another useful elementary result.

\begin{prop} \label{r-boundness}
If $r_-(z) \in H^1_z(\mathbb{R})\cap L_z^{2,1}(\mathbb{R})$, then $\| \lambda r_-(z) \|_{L^{\infty}_z} \leq \| r_- \|_{H^1 \cap L^{2,1}}$.
\end{prop}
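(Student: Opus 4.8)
The plan is to exploit the relation $z = \lambda^2$, so that $|\lambda| = |z|^{1/2}$ and the modulus $|\lambda r_-(z)| = |z|^{1/2}\,|r_-(z)|$ is well defined on $\mathbb{R}$ independently of the branch of the square root. Hence it suffices to prove the pointwise bound $|z|\,|r_-(z)|^2 \le \|r_-\|_{H^1}^2 + \|r_-\|_{L^{2,1}}^2 = \|r_-\|^2_{H^1 \cap L^{2,1}}$ for every $z \in \mathbb{R}$, and then take square roots and the supremum over $z$. This reduces the claim to a standard weighted Agmon-type estimate.

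First I would set $\psi(z) := z\,|r_-(z)|^2$ and note that $\psi$ is absolutely continuous, with $\psi'(s) = |r_-(s)|^2 + 2 s\,\mathrm{Re}\big(\overline{r_-(s)}\,r_-'(s)\big)$, since $r_- \in H^1(\mathbb{R})$. The key preliminary observation is that $\psi(z) \to 0$ as $|z| \to \infty$: indeed $\psi' \in L^1(\mathbb{R})$, because $|r_-|^2 \in L^1$ (as $r_- \in L^2$) and $\int_{\mathbb{R}} |s|\,|r_-|\,|r_-'|\,ds \le \|s r_-\|_{L^2}\|r_-'\|_{L^2} < \infty$ by Cauchy--Schwarz (using $r_- \in L^{2,1}$ and $r_-' \in L^2$). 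Thus $\psi$ has finite limits at $\pm\infty$, and these limits must vanish, since $|r_-|^2 \in L^1$ rules out a nonzero $1/s$ tail. With the boundary terms gone, I would write, for $z > 0$, $\psi(z) = -\int_z^\infty \psi'(s)\,ds$, and for $z < 0$, $\psi(z) = \int_{-\infty}^z \psi'(s)\,ds$; this is the only point at which the two signs of $z$ are treated separately.

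In either case I would estimate $|\psi(z)| \le \int_{\mathbb{R}} |r_-(s)|^2\,ds + 2 \int_{\mathbb{R}} |s|\,|r_-(s)|\,|r_-'(s)|\,ds \le \|r_-\|_{L^2}^2 + 2\|s r_-\|_{L^2}\|r_-'\|_{L^2}$, and then absorb the cross term via Young's inequality $2AB \le A^2 + B^2$ to obtain $|\psi(z)| \le \|r_-\|_{L^2}^2 + \|s r_-\|_{L^2}^2 + \|r_-'\|_{L^2}^2 \le \|r_-\|_{H^1}^2 + \|r_-\|_{L^{2,1}}^2$. Taking the supremum over $z$ and a square root yields $\|\lambda r_-\|_{L^\infty_z} \le \|r_-\|_{H^1 \cap L^{2,1}}$ with the stated constant $1$. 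The only genuinely delicate point is the justification that $s\,|r_-(s)|^2 \to 0$ at infinity, which is precisely where both the $H^1$ and the $L^{2,1}$ hypotheses are used together; the remaining steps are just Cauchy--Schwarz and Young's inequality.
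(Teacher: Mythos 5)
Your proof is correct, and its engine is the same as the paper's: apply the fundamental theorem of calculus to $s\,|r_-(s)|^2$, whose derivative is $|r_-(s)|^2 + 2s\,{\rm Re}\big(\overline{r_-(s)}\,r_-'(s)\big)$, and conclude by Cauchy--Schwarz and Young, using $|\lambda|^2 = |z|$ for $z = \lambda^2 \in \mathbb{R}$. The one genuine divergence is the choice of base point: the paper writes $z\, r_-(z)^2 = \int_0^z \big( r_-(s)^2 + 2 s\, r_-(s)\, r_-'(s) \big)\, ds$, anchoring at $z=0$ where the function vanishes identically, so no behavior at infinity ever enters. You instead anchor at $\pm\infty$, which obliges you to prove the decay $z\,|r_-(z)|^2 \to 0$ as $|z| \to \infty$ --- precisely the step you flag as the delicate one. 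Your argument for that decay is sound: finite limits follow from $\psi' \in L^1(\mathbb{R})$, and a nonzero limit would force a non-integrable $1/s$ tail in $|r_-|^2$, contradicting $r_- \in L^2(\mathbb{R})$. So nothing fails; but the paper's anchoring at the origin renders that entire paragraph unnecessary and reaches the same final estimate $|z|\,|r_-(z)|^2 \le \|r_-\|_{L^2}^2 + 2\|s\, r_-\|_{L^2}\|r_-'\|_{L^2} \le \|r_-\|_{H^1}^2 + \|r_-\|_{L^{2,1}}^2$ in two lines. What your longer route buys, if anything, is the extra qualitative fact that $\lambda\, r_-(z)$ vanishes at infinity rather than merely being bounded --- a fact the paper's proof does not deliver but also does not need.
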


\begin{proof}
The result follows from the representation
$$
z r_-(z)^2 = \int_0^z \left( r_-(z)^2 + 2 z r_-(z) r_-'(z) \right) dz.
$$
Using Cauchy--Schwarz inequality for $r_-(z) \in H^1_z(\mathbb{R})\cap L_z^{2,1}(\mathbb{R})$,
we obtain the desired bound.
\end{proof}

\subsection{Solution to the Riemann--Hilbert problems (\ref{jump-2})}

Let us start with the definition of the Cauchy operator, which can be found in
many sources, e.g., in \cite{D-Z-1}. For any function $h \in L^p(\mathbb{R})$  with $1 \leq p < \infty$,
the Cauchy operator denoted by $\mathcal{C}$ is given by
\begin{equation}
\label{Cauchy}
\mathcal{C}(h)(z) := \frac{1}{2\pi i} \int_{\mathbb{R}} \frac{h(s)}{s-z}ds, \quad
z\in \mathbb{C}\setminus \mathbb{R}.
\end{equation}
The function $\mathcal{C}(h)$ is analytic off the real line such that $\mathcal{C}(h)(\cdot + i y)$
is in $L^p(\mathbb{R})$ for each $y \neq 0$.
When $z$ approaches to a point on the real line transversely from the upper and lower half planes,
that is, if $y \to \pm 0$,
the Cauchy operator $\mathcal{C}$ becomes the Plemelj projection operators,
denoted respectively by $\mathcal{P}^{\pm}$. These projection operators are given explicitly by
\begin{equation} \label{C-pm}
\mathcal{P}^{\pm}(h)(z) :=
\lim_{\epsilon\downarrow 0}  \frac{1}{2\pi i}\int_{\mathbb{R}}\frac{h(s)}{s-(z\pm \epsilon i)}ds , \quad z\in \mathbb{R}.
\end{equation}
The following proposition summarizes the basic properties of the Cauchy and projection operators.

\begin{prop}
For every $h \in L^p(\mathbb{R})$, $1 \leq p < \infty$,
the Cauchy operator $\mathcal{C}(h)$ is analytic off the real line,
decays to zero as $|z| \to \infty$, and approaches to $\mathcal{P}^{\pm}(h)$
almost everywhere, when a point $z \in \mathbb{C}^{\pm}$ approaches to
a point on the real axis by any non-tangential contour from $\mathbb{C}^{\pm}$.
If $1 < p < \infty$, then there exists a positive constant $C_p$ (with $C_{p=2} = 1$) such that
\begin{equation}
\label{bound-projection-operator}
\| \mathcal{P}^{\pm}(h) \|_{L^p} \leq C_p \| h \|_{L^p}.
\end{equation}
If $h \in L^1(\mathbb{R})$, then the Cauchy operator admits
the following asymptotic limit in either $\mathbb{C}^+$ or $\mathbb{C}^-$:
\begin{equation}
\label{limit-Cauchy-operator}
\lim_{|z| \to \infty} z \mathcal{C}(h)(z) = -\frac{1}{2\pi i} \int_{\mathbb{R}} h(s) ds.
\end{equation}
\label{prop-RHP}
\end{prop}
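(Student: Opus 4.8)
The plan is to treat the five assertions in turn, reducing the two substantive ones to classical facts about the Hilbert transform. For the analyticity and decay, I would fix $z \in \mathbb{C}\setminus\mathbb{R}$ and note that $s\mapsto(s-z)^{-1}$ is bounded near $s={\rm Re}(z)$, since $|s-z|\ge|{\rm Im}(z)|>0$, and decays like $|s|^{-1}$ at infinity, hence lies in $L^{p'}(\mathbb{R})$ for every $1\le p<\infty$ (with $p'$ the H\"older conjugate). By H\"older's inequality the integral in \eqref{Cauchy} converges absolutely with $|\mathcal{C}(h)(z)|\le\frac{1}{2\pi}\|h\|_{L^p}\,\|(\cdot-z)^{-1}\|_{L^{p'}}$, and since the kernel is holomorphic in $z$ while its difference quotients are dominated uniformly on compact subsets of $\mathbb{C}\setminus\mathbb{R}$, differentiation under the integral sign shows $\mathcal{C}(h)$ is analytic off $\mathbb{R}$. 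A direct substitution gives $\|(\cdot-z)^{-1}\|_{L^{p'}}=C_{p'}|{\rm Im}(z)|^{-1/p}$, which together with the elementary bound $|\mathcal{C}(h)(z)|\le\frac{1}{2\pi|{\rm Im}(z)|}\|h\|_{L^1}$ for $h\in L^1(\mathbb{R})$ yields decay to zero as $|z|\to\infty$.

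For the boundary values I would record the Sokhotski--Plemelj decomposition $\mathcal{P}^{\pm}(h)(x)=\pm\tfrac12 h(x)+\frac{1}{2\pi i}\,\mathrm{p.v.}\!\int_{\mathbb{R}}\frac{h(s)}{s-x}\,ds$ for $x\in\mathbb{R}$, equivalently $\mathcal{P}^{\pm}=\tfrac12(\pm I+iH)$, where $H$ is the Hilbert transform with Fourier symbol $-i\,\sgn(\xi)$. The assertion that $\mathcal{C}(h)(z)\to\mathcal{P}^{\pm}(h)(x)$ for almost every $x\in\mathbb{R}$ as $z\to x$ nontangentially from $\mathbb{C}^{\pm}$ is the classical Fatou/Privalov theorem for the Cauchy integral; its proof rests on the weak-type and $L^p$ bounds for the nontangential maximal function and the truncated Hilbert transforms, and I would import it from standard references (e.g. \cite{D-Z-1}) rather than reproduce it.

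The bound \eqref{bound-projection-operator} is then the M.\ Riesz theorem: $H$ is bounded on $L^p(\mathbb{R})$ for every $1<p<\infty$, hence so are $\mathcal{P}^{\pm}=\tfrac12(\pm I+iH)$. The sharp value $C_{p=2}=1$ follows from Plancherel's theorem: since $\widehat{\mathcal{P}^{\pm}h}(\xi)=\tfrac12(\pm1+\sgn\xi)\,\hat h(\xi)$, the multiplier has modulus $\mathbf{1}_{\pm\xi>0}\le1$, so $\|\mathcal{P}^{\pm}(h)\|_{L^2}\le\|h\|_{L^2}$ with equality attained; in particular $\mathcal{P}^+$ is the orthogonal projection of $L^2(\mathbb{R})$ onto the Hardy space of functions with spectrum in $\{\xi>0\}$.

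Finally, for \eqref{limit-Cauchy-operator} with $h\in L^1(\mathbb{R})$ I would write
\[
z\,\mathcal{C}(h)(z)=\frac{1}{2\pi i}\int_{\mathbb{R}}\frac{z}{s-z}\,h(s)\,ds=-\frac{1}{2\pi i}\int_{\mathbb{R}}\frac{h(s)}{1-s/z}\,ds,
\]
observe that the integrand converges pointwise to $-\frac{1}{2\pi i}h(s)$ as $|z|\to\infty$ and is dominated by $\frac{|z|}{|s-z|}|h(s)|\le C|h(s)|$ uniformly along a nontangential approach to infinity in $\mathbb{C}^{\pm}$ (with $C$ depending only on the aperture of the cone), and then apply Lebesgue's dominated convergence theorem. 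The main obstacle is concentrated in the two middle steps: the almost-everywhere existence of nontangential boundary values of the Cauchy integral and the M.\ Riesz $L^p$ bound for the Hilbert transform are the genuinely deep inputs, whereas analyticity, decay, and the $1/z$ asymptotics reduce to H\"older's inequality and dominated convergence.
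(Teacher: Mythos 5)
Your proof is correct, and its skeleton is the same as the paper's: the Sokhotski--Plemelj decomposition of $\mathcal{P}^{\pm}$ into $\pm\tfrac12 I$ plus a Hilbert-transform term, M.~Riesz's theorem for the $L^p$ bound, and dominated convergence for the $1/z$ asymptotics. The differences are in how much is outsourced. The paper dispatches analyticity, decay and the nontangential boundary values in one stroke by citing Theorem 11.2 and Corollary 2 of \cite{Duren}, and obtains the bound with $C_{p=2}=1$ from Riesz's theorem as stated in \cite{Duo}; you instead prove analyticity and the $|{\rm Im}(z)|^{-1/p}$ bound by hand from H\"older's inequality and differentiation under the integral, and you get the sharp $L^2$ constant from the multiplier computation $|\widehat{\mathcal{P}^{\pm}h}(\xi)| = \mathbf{1}_{\pm\xi>0}\,|\hat h(\xi)|$ --- a cleaner and more informative route to $C_2=1$, which also identifies $\mathcal{P}^{\pm}$ (up to sign) as the Hardy-space projections, a structure the paper later exploits implicitly in its Cauchy--Goursat arguments. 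Both you and the paper import the genuinely deep input --- the a.e.\ nontangential convergence of $\mathcal{C}(h)$ to $\mathcal{P}^{\pm}(h)$ --- from the literature, so nothing is lost there; and your explicit remark that the domination $|z|/|s-z|\le C$ in the dominated-convergence step holds with a constant depending on the aperture of the cone is more careful than the paper, which invokes Lebesgue's theorem without comment.

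One step is stated too quickly: the two bounds you use for decay, $|\mathcal{C}(h)(z)|\le C_{p'}|{\rm Im}(z)|^{-1/p}\|h\|_{L^p}$ and $|\mathcal{C}(h)(z)|\le \tfrac{1}{2\pi|{\rm Im}(z)|}\|h\|_{L^1}$, are uniform in ${\rm Re}(z)$ and therefore only force $\mathcal{C}(h)(z)\to 0$ when $|{\rm Im}(z)|\to\infty$; they give nothing as $|{\rm Re}(z)|\to\infty$ at fixed height. The correct reading of the decay claim is decay as $|z|\to\infty$ within half-planes $\{\pm{\rm Im}(z)\ge\delta\}$ (unrestricted decay toward the real axis must fail in general, since the boundary function $\mathcal{P}^{\pm}(h)$ is merely $L^p$ and need not vanish pointwise at infinity), and to get it you need one more ingredient: split $h=h_1+h_2$ with $h_1$ compactly supported and $\|h_2\|_{L^p}<\epsilon$; then $|\mathcal{C}(h_1)(z)|\le \|h_1\|_{L^1}/\bigl(2\pi\,{\rm dist}(z,{\rm supp}\,h_1)\bigr)\to 0$ as $|z|\to\infty$, while your H\"older bound controls $\mathcal{C}(h_2)$ by $C\delta^{-1/p}\epsilon$ on the half-plane. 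This is a one-line repair, not a flaw in the approach, and it still avoids the appeal to \cite{Duren} that the paper makes at this point.
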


\begin{proof}
Analyticity, decay, and boundary values of $\mathcal{C}$ on the real axis follow
from Theorem 11.2 and Corollary 2 on pp. 190--191 in \cite{Duren}.
By Sokhotski--Plemelj theorem, we have the relations
\begin{equation}
\label{plemelj}
\mathcal{P}^{\pm}(h)(z) = \pm \frac{1}{2} h(z) - \frac{i}{2} \mathcal{H}(h)(z), \quad z \in \mathbb{R},
\end{equation}
where $\mathcal{H}$ is the Hilbert transform given by
$$
\mathcal{H}(h)(z) := \frac{1}{\pi}\lim_{\epsilon \downarrow 0} \left( \int_{-\infty}^{z-\epsilon}
+ \int_{z+\epsilon}^{\infty} \right) \frac{h(s)}{s-z} ds, \quad z\in \mathbb{R}.
$$
By Riesz's theorem (Theorem 3.2 in \cite{Duo}),
$\mathcal{H}$ is a bounded operator from $L^p(\mathbb{R})$ to $L^p(\mathbb{R})$
for every $1 < p < \infty$, so that the bound (\ref{bound-projection-operator})
holds with $C_2 = 1$ and $C_p \to +\infty$ as $p \to 1$ and $p \to \infty$.
Finally, the asymptotic limit (\ref{limit-Cauchy-operator}) is justified by Lebesgue's
dominated convergence theorem if $h \in L^1(\mathbb{R})$.
\end{proof}

We recall the scattering matrix $S(x;\lambda)$ given explicitly by (\ref{S1})
and (\ref{S2}). The following proposition states that 
if $r(\lambda)$ is bounded and satisfies (\ref{smallness-defocusing}), then 
the quadratic form associated with the matrix $I+S(x;\lambda)$ is strictly positive for every $x \in \mathbb{R}$ and
every $\lambda \in \mathbb{R} \cup i \mathbb{R}$, whereas the matrix $I + S(x;\lambda)$ is bounded. In 
what follows, $\| \cdot \|$ denotes the Euclidean norm of vectors in $\mathbb{C}^2$. 

\begin{prop} \label{positivity}
For every $r(\lambda) \in L^{\infty}_z(\mathbb{R})$ satisfying (\ref{smallness-defocusing}),
there exist positive constants $C_-$ and $C_+$ such that
for every $x \in \mathbb{R}$ and every column-vector $g\in \mathbb{C}^2$, we have
\begin{equation}
\label{positivity-I-S}
{\rm Re} \; g^t \left(I + S(x;\lambda)\right) g \geq C_- g^t g, \quad \lambda \in \mathbb{R} \cup i \mathbb{R}
\end{equation}
and
\begin{equation}
\label{boundness-I-S}
\left\| \left(I + S(x;\lambda)\right) g \right\| \leq C_+ \|g\|, \quad \lambda \in \mathbb{R} \cup i \mathbb{R}.
\end{equation}
\end{prop}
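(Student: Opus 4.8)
The plan is to evaluate the Hermitian quadratic form ${\rm Re}\, g^t(I+S(x;\lambda))g$ directly in components and to read off the two constants from elementary $2\times2$ estimates, treating the cases $\lambda\in\mathbb{R}$ (where $S$ is given by (\ref{S1})) and $\lambda\in i\mathbb{R}$ (where $S$ is given by (\ref{S2})) separately. Throughout I write $g=[g_1,g_2]^t$, interpret $g^t$ as the conjugate transpose so that $g^tg=\|g\|^2=|g_1|^2+|g_2|^2$, and abbreviate $\alpha:=r(\lambda)e^{2izx}$. Since $z=\lambda^2$ is real for $\lambda\in\mathbb{R}\cup i\mathbb{R}$, the exponential factors in (\ref{S1})--(\ref{S2}) are unimodular, so that $|\alpha|=|r(\lambda)|\le\|r\|_{L^\infty_z}=:R$ by hypothesis, and all constants below are automatically uniform in $x$. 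The boundedness bound (\ref{boundness-I-S}) is then immediate: every entry of $I+S(x;\lambda)$ is bounded in modulus by $1+R^2$, so its operator norm is controlled, say via the Frobenius norm, by a constant $C_+$ depending only on $R$.

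For the coercivity (\ref{positivity-I-S}) on the real axis the matrix $I+S$ is Hermitian, and a direct computation gives
\begin{equation*}
{\rm Re}\, g^t(I+S)g=(1+|r|^2)|g_1|^2+|g_2|^2+2\,{\rm Re}(\alpha g_1\overline{g_2}).
\end{equation*}
Estimating the cross term by Young's inequality, $2|{\rm Re}(\alpha g_1\overline{g_2})|\le 2|r|\,|g_1|\,|g_2|$, reduces the claim to positivity of the real quadratic form with matrix $\left[\begin{smallmatrix}1+|r|^2 & -|r|\\ -|r| & 1\end{smallmatrix}\right]$. This matrix has determinant exactly $1$ and trace $2+|r|^2\le 2+R^2$; hence its larger eigenvalue is at most $2+R^2$ and its smaller eigenvalue, being the reciprocal, is at least $(2+R^2)^{-1}>0$. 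This establishes (\ref{positivity-I-S}) on $\mathbb{R}$ with no smallness assumption on $R$.

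On the imaginary axis $I+S$ is no longer Hermitian, but the key observation is that the off-diagonal contribution to the real part cancels: a short computation gives
\begin{equation*}
g^t(I+S)g=(1-|r|^2)|g_1|^2+|g_2|^2+2i\,{\rm Im}(\alpha g_1\overline{g_2}),
\end{equation*}
so that ${\rm Re}\, g^t(I+S)g=(1-|r|^2)|g_1|^2+|g_2|^2$. Here I would invoke the constraint (\ref{smallness-defocusing}), which supplies precisely $1-|r(\lambda)|^2\ge c_0^2>0$ for $\lambda\in i\mathbb{R}$, to obtain ${\rm Re}\, g^t(I+S)g\ge\min\{c_0^2,1\}\,\|g\|^2$. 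Taking $C_-:=\min\{(2+R^2)^{-1},c_0^2,1\}$ then yields (\ref{positivity-I-S}) on all of $\mathbb{R}\cup i\mathbb{R}$.

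The computations are routine; the one point requiring care is that a single constant $C_-$ must serve both regimes --- the Hermitian, focusing-type matrix on $\mathbb{R}$ and the non-Hermitian, defocusing-type matrix on $i\mathbb{R}$ --- and that coercivity in the genuinely non-Hermitian case is not automatic. Recognizing that the awkward off-diagonal terms drop out of the real part on $i\mathbb{R}$, leaving only a diagonal form whose positivity is guaranteed exactly by (\ref{smallness-defocusing}), is what makes the estimate go through without assuming $\|r\|_{L^\infty}$ small.
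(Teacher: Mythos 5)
Your proof is correct and follows essentially the same route as the paper: on $\mathbb{R}$ your comparison matrix after Young's inequality has exactly the eigenvalues $1+\tfrac{1}{2}|r|^2\pm|r|\sqrt{1+\tfrac14|r|^2}$ that the paper computes directly for the Hermitian matrix $I+S$, and on $i\mathbb{R}$ your observation that the off-diagonal terms contribute only an imaginary part is precisely the paper's passage to the Hermitian part $S_H=\mathrm{diag}(-|r|^2,0)$, with coercivity supplied in both cases by (\ref{smallness-defocusing}). The only cosmetic differences are your use of a Frobenius-norm bound for $C_+$ where the paper estimates componentwise, and slightly cruder (but equally valid) constants such as $(2+R^2)^{-1}$ in place of $(1+\sup|r|)^{-2}$.
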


\begin{proof}
For $\lambda \in \mathbb{R}$, we use representation (\ref{S1}). Since $I+S(x;\lambda)$ is Hermitian for every $x \in \mathbb{R}$
and $\lambda \in \mathbb{R}$, we compute the two real eigenvalues of $I + S(x;\lambda)$ given by
$$
\mu_{\pm}(\lambda) = 1 + \frac{1}{2} |r(\lambda)|^2 \pm |r(\lambda)| \sqrt{1 + \frac{1}{4} |r(\lambda)|^2}
= \left( \sqrt{1 + \frac{1}{4} |r(\lambda)|^2} \pm \frac{1}{2} |r(\lambda)| \right)^2 > 0.
$$
Note that
$$
\frac{1}{(1 + |r(\lambda)|)^2} \leq \mu_-(\lambda) \leq \mu_+(\lambda) \leq (1 + |r(\lambda)|)^2, \quad \lambda \in \mathbb{R}.
$$
It follows from the above inequalities that the bounds (\ref{positivity-I-S}) and (\ref{boundness-I-S})
for $\lambda \in \mathbb{R}$ hold with
$$
C_- := \frac{1}{(1 + \sup_{\lambda \in \mathbb{R}} |r(\lambda)|)^2} > 0 \quad \mbox{\rm and} \quad
C_+ := (1 + \sup_{\lambda \in \mathbb{R}} |r(\lambda)|)^2 < \infty.
$$

For $\lambda \in i\mathbb{R}$, we use representation (\ref{S2}). Since $I+S(x;\lambda)$ is no longer Hermitian,
we define the Hermitian part of $S(x;\lambda)$ by
$$
S_H(\lambda) := \frac{1}{2} S(x;\lambda) + \frac{1}{2} S^*(x;\lambda) = \left[\begin{matrix} -|r(\lambda)|^2 &
0 \\ 0 &0 \end{matrix}\right],
$$
where the asterisk denotes Hermite conjugate (matrix transposition and complex conjugate).
It follows from (\ref{smallness-defocusing}) that $\sup_{\lambda \in i \mathbb{R}} r(\lambda) \leq 1 - c_0^2 < 1$
so that the diagonal matrix $I + S_H(\lambda)$ is positive definite for every $\lambda \in i \mathbb{R}$.
The bound (\ref{positivity-I-S}) for $\lambda \in i \mathbb{R}$
follows from this estimate with $C_- := 1 - \sup_{\lambda \in i \mathbb{R}} |r(\lambda)|^2 \geq c_0^2 > 0$.
Finally, estimating componentwise
\begin{eqnarray*}
\left\| (I + S(x;\lambda)) g \right\|^2 & \leq & (1 + |r(\lambda)|^2) \|g\|^2 + |r(\lambda)|^2 \left( r(\lambda) g^{(1)} \overline{g^{(2)}}
+ \overline{r(\lambda)} \overline{g^{(1)}} g^{(2)} \right) \\
& \leq & \left( 1 + |r(\lambda)|^2 \right) \left(1 + \frac{1}{2} |r(\lambda)|^2 \right) \|g\|^2,
\end{eqnarray*}
we obtain the bound (\ref{boundness-I-S}) for $\lambda \in i \mathbb{R}$ with $C_+ := (1 + \sup_{\lambda \in i\mathbb{R}} |r(\lambda)|^2) < \infty$.
\end{proof}

Thanks to the result of Proposition \ref{positivity}, we shall prove solvability of the two related Riemann--Hilbert problems
(\ref{jump-2}) by using the method of Zhou \cite{ZhouSIMA}. Dropping the subscripts, 
we rewrite the two related Riemann--Hilbert problems (\ref{jump-2}) in the following abstract form
\begin{equation} \label{jump-3}
\left\{ \begin{matrix} G_+(x;\lambda) - G_-(x;\lambda) = G_-(x;\lambda) S(x;\lambda) + F(x;\lambda), &
\lambda \in \mathbb{R} \cup i \mathbb{R}, \\
G_{\pm}(x,\lambda) \to 0 & \mbox{\rm as} \; |\lambda| \rightarrow \infty. \end{matrix} \right.
\end{equation}
If $r_{\pm} \in H^1_z(\mathbb{R}) \cap L^{2,1}(\mathbb{R})$, then Proposition \ref{r-regularity}
implies that
$S(x;\lambda) \in L^1_z(\mathbb{R}) \cap L^{\infty}_z(\mathbb{R})$ and $F(x;\lambda) \in L^2_z(\mathbb{R})$
for every $x \in \mathbb{R}$. We consider the class of solutions to the Riemann--Hilbert problem (\ref{jump-3}) such that
for every $x \in \mathbb{R}$,
\begin{itemize}
\item $G_{\pm}(x;\lambda)$ are analytic functions of $z = \lambda^2$ in $\mathbb{C}^{\pm}$
\item $G_{\pm}(x;\lambda) \in L^2_z(\mathbb{R})$
\item The same columns of $G_{\pm}(x;\lambda)$, $G_-(x;\lambda) S(x;\lambda)$, and $F(x;\lambda)$
are either even or odd in $\lambda$.
\end{itemize}

By Proposition \ref{prop-RHP} with $p = 2$, for every $x \in \mathbb{R}$,
the Riemann-Hilbert problem (\ref{jump-3}) has a solution given by the Cauchy operator
\begin{equation}\label{RH-1}
G_{\pm}(x;\lambda) = \mathcal{C} \left(G_-(x;\lambda) S(x;\lambda) + F(x;\lambda) \right)(z), \quad z\in \mathbb{C}^{\pm}
\end{equation}
if and only if there is a solution $G_-(x;\lambda) \in L^2_z(\mathbb{R})$ of the Fredholm integral equation:
\begin{equation}\label{RH-2}
G_-(x;\lambda) = \mathcal{P}^- \left(G_-(x;\lambda) S(x;\lambda) + F(x;\lambda) \right)(z), \quad z\in \mathbb{R}.
\end{equation}
Once $G_-(x;\lambda) \in L^2_z(\mathbb{R})$ is found from the Fredholm integral equation (\ref{RH-2}),
then $G_+(x;\lambda) \in L^2_z(\mathbb{R})$ is obtained from the projection formula
\begin{equation}\label{RH-4}
G_+(x;\lambda) = \mathcal{P}^+ \left(G_-(x;\lambda) S(x;\lambda) + F(x;\lambda) \right)(z), \quad z\in \mathbb{R}.
\end{equation}

\begin{remark}
The complex integrals in $\mathcal{C}$ and $\mathcal{P}^{\pm}$ over the real line $z = \lambda^2$ can be parameterized 
by $\lambda$ on $\mathbb{R}^+ \cup i \mathbb{R}^+$. Extensions of integral
representations (\ref{RH-1}), (\ref{RH-2}), and (\ref{RH-4}) for $\lambda \in \mathbb{R}^- \cup i \mathbb{R}^-$ is performed
with the account of parity symmetries of the corresponding columns of $G_{\pm}(x;\lambda)$, $G_-(x;\lambda) S(x;\lambda)$, and $F(x;\lambda)$.
See Proposition \ref{prop-redundancy}, Corollary \ref{cor-redundancy}, and Remark \ref{remark-redundancy} below.
\end{remark}

The following lemma relies on the positivity result of Proposition \ref{positivity} and
states solvability of the integral equation (\ref{RH-2}) in $L^2_z(\mathbb{R})$. For simplicity
of notations, we drop dependence of $S$, $F$ and $G_{\pm}$ from the variable $x$.

\begin{lem} \label{solve-RH-integral}
For every $r(\lambda) \in L^2_z(\mathbb{R}) \cap L^{\infty}_z(\mathbb{R})$ satisfying (\ref{smallness-defocusing})
and every $F(\lambda) \in L^2_z(\mathbb{R})$, there is a unique solution $G(\lambda) \in L^2_z(\mathbb{R})$ of the
linear inhomogeneous equation
\begin{equation}
\label{RH-3}
(I - \mathcal{P}^-_S) G(\lambda) = F(\lambda), \quad \lambda \in \mathbb{R} \cup i \mathbb{R},
\end{equation}
where $\mathcal{P}^-_S G := \mathcal{P}^-(G S)$.
\end{lem}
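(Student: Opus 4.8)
The plan is to solve \eqref{RH-3} by the energy method of Zhou \cite{ZhouSIMA}, for which Proposition \ref{positivity} supplies the two structural ingredients: the strict positivity \eqref{positivity-I-S} and the boundedness \eqref{boundness-I-S} of $I+S$. First I would record that $\mathcal{P}^-_S$ is a bounded operator on $L^2_z(\mathbb{R})$: since $r(\lambda)\in L^\infty_z(\mathbb{R})$ we have $S\in L^\infty_z(\mathbb{R})$, so $GS\in L^2_z(\mathbb{R})$ whenever $G\in L^2_z(\mathbb{R})$, and the bound $\|\mathcal{P}^-\|_{L^2\to L^2}=1$ from Proposition \ref{prop-RHP} (with $C_2=1$) gives $\|\mathcal{P}^-_S G\|_{L^2}\le\|S\|_{L^\infty}\|G\|_{L^2}$. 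Because $S\in L^1_z\cap L^\infty_z\subset L^2_z$, the product $GS$ also lies in $L^1_z(\mathbb{R})$, so the associated Cauchy integral decays like $z^{-1}$ at infinity and the asymptotic formula \eqref{limit-Cauchy-operator} is available.

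The core of the argument is the a priori estimate $\|G\|_{L^2}\le C\|F\|_{L^2}$. I would introduce the sectionally analytic function $\Gamma(z):=\mathcal{C}(GS)(z)$, analytic in $\mathbb{C}\setminus\mathbb{R}$, decaying at infinity, with boundary values $\Gamma_\pm=\mathcal{P}^\pm(GS)$ satisfying $\Gamma_+-\Gamma_-=GS$. Equation \eqref{RH-3} identifies $\Gamma_-=G-F$, whence $\Gamma_+=\Gamma_-+GS=G(I+S)-F$. The key observation is that $\Gamma_+(z)$ and the Schwarz reflection $\overline{\Gamma(\bar z)}^{\,t}$ are both boundary values of functions analytic in $\mathbb{C}^+$ that decay like $z^{-1}$; their product is $O(z^{-2})$, so Cauchy's theorem (closing in the upper half plane) gives $\int_{\mathbb{R}}\Gamma_+\,\overline{\Gamma_-}^{\,t}\,dz=0$. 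Substituting $\Gamma_-=G-F$ and $\Gamma_+=G(I+S)-F$ and expanding produces the quadratic form $\int_{\mathbb{R}}G\,(I+S)\,\overline{G}^{\,t}\,dz$ together with terms linear in $F$; taking real parts and applying \eqref{positivity-I-S} yields
\[
C_-\|G\|_{L^2}^2\le \mathrm{Re}\int_{\mathbb{R}} G\,(I+S)\,\overline{G}^{\,t}\,dz \le C'\big(\|G\|_{L^2}\|F\|_{L^2}+\|F\|_{L^2}^2\big),
\]
where the cross terms are controlled by $S\in L^\infty$, the boundedness \eqref{boundness-I-S}, and Cauchy--Schwarz. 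Young's inequality then gives $\|G\|_{L^2}\le C\|F\|_{L^2}$.

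This estimate yields uniqueness at once (take $F=0$ to force $G=0$) and shows that $I-\mathcal{P}^-_S$ is injective with closed range. For existence I would establish the same coercive bound for the adjoint operator $(I-\mathcal{P}^-_S)^\ast$: since $\mathcal{P}^\pm$ are orthogonal (hence self-adjoint) projections on $L^2$, the adjoint has the analogous structure with $S$ replaced by $S^\ast$, and the identity $\mathrm{Re}\,g^\ast(I+S)g=\mathrm{Re}\,g^\ast(I+S^\ast)g$ shows that \eqref{positivity-I-S} applies verbatim to $I+S^\ast$. Thus the adjoint is also bounded below, its kernel is trivial, and the closed range of $I-\mathcal{P}^-_S$ exhausts $L^2_z(\mathbb{R})$. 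Therefore $I-\mathcal{P}^-_S$ is boundedly invertible and \eqref{RH-3} has a unique solution $G\in L^2_z(\mathbb{R})$ for every $F\in L^2_z(\mathbb{R})$.

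The main obstacle is that $\mathcal{P}^-_S$ is not compact, so the Fredholm alternative is unavailable and solvability cannot be reduced to uniqueness by a soft argument; everything rests on the quantitative coercivity coming from Proposition \ref{positivity}. The delicate points in executing the plan are the justification that the boundary integral vanishes -- which requires pairing the correct $\mathbb{C}^+$-analytic, $z^{-1}$-decaying factors and hence careful use of the Schwarz-reflection symmetry -- and the bookkeeping of the parity of the columns in $\lambda$, so that the Cauchy operator over the real axis $z=\lambda^2$ is consistent with the reduction to $\mathbb{R}^+\cup i\mathbb{R}^+$ in the $\lambda$ plane.
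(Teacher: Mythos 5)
Your proof is correct, but it takes a genuinely different functional-analytic route than the paper. The paper is soft at the level of existence: it quotes \cite{B-C-1,B-C-2,ZhouSIMA} for the fact that $I-\mathcal{P}^-_S$ is a Fredholm operator of index zero, so the Fredholm alternative reduces solvability of \eqref{RH-3} to uniqueness, and then it proves uniqueness by exactly your contour mechanism: for a homogeneous solution $g=\mathcal{P}^-(gS)$ it pairs $g_1=\mathcal{C}(gS)$ with the Schwarz-reflected Hermitian conjugate $g_2=\mathcal{C}(gS)^*$, closes the contour in $\mathbb{C}^+$ using the $O(z^{-1})$ decay from \eqref{limit-Cauchy-operator}, and concludes $0=\int_{\mathbb{R}} g\,(I+S)\,g^*\,dz$, whence $g=0$ by \eqref{positivity-I-S}. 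You instead run the same contour argument directly on the inhomogeneous equation, with $\Gamma_-=G-F$ and $\Gamma_+=G(I+S)-F$, to get the coercive a priori bound $\|G\|_{L^2}\le C\|F\|_{L^2}$, and you obtain surjectivity from triviality of the adjoint kernel. This avoids invoking the Beals--Coifman/Zhou Fredholm theory altogether; note, though, that your closing remark that the Fredholm alternative is ``unavailable'' is slightly overstated --- it is available once the index-zero property is quoted, which is precisely what the paper does; what is unavailable is a compactness-based proof of that property. Your route buys two things: self-containedness, and the quantitative bound on $(I-\mathcal{P}^-_S)^{-1}$, which the paper establishes separately in Lemma \ref{inverse-fredholm} by splitting $G=G_+-G_-$ and running the contour estimate twice --- your single computation is a streamlined version of that lemma. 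One detail needs a one-line repair in the adjoint step: since $(\mathcal{P}^-)^*=\mathcal{P}^-$ (it is self-adjoint, $-\mathcal{P}^-$ being the orthogonal Hardy-space projection), the adjoint of $G\mapsto G-\mathcal{P}^-(GS)$ is $H\mapsto H-(\mathcal{P}^-H)S^*$, \emph{not} $H\mapsto H-\mathcal{P}^-(HS^*)$, so its kernel equation is not literally ``the same equation with $S$ replaced by $S^*$''; but setting $h:=\mathcal{P}^-H$ and applying $\mathcal{P}^-$ to $H=(\mathcal{P}^-H)S^*$ gives $h=\mathcal{P}^-(hS^*)$, which \emph{is} the homogeneous equation with $S^*$ in place of $S$, and since $\mathrm{Re}\,g(I+S^*)g^*=\mathrm{Re}\,g(I+S)g^*$ the vanishing argument applies verbatim, forcing $h=0$ and hence $H=hS^*=0$. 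With that correction your argument is complete and fully consistent with Proposition \ref{positivity}.
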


\begin{proof}
The operator $I-\mathcal{P}^-_S$ is known to be a Fredholm operator of the index zero \cite{B-C-1,B-C-2,ZhouSIMA}.
By Fredholm's alternative, a unique solution to the linear integral equation (\ref{RH-3}) exists for
$G(\lambda) \in L^2_z(\mathbb{R})$ if and only if the zero solution to the homogeneous equation
$(I - \mathcal{P}_S^-) g = 0$ is unique in $L^2_z(\mathbb{R})$.

Suppose that there exists nonzero $g \in L^2_z(\mathbb{R})$ such that $(I - \mathcal{P}_S^-) g = 0$.
Since $S(\lambda) \in L^2_z(\mathbb{R}) \cap L^{\infty}_z(\mathbb{R})$, we define
two analytic functions in $\mathbb{C}\setminus\mathbb{R}$ by
$$
g_1(z) := \mathcal{C}(gS)(z) \quad \mbox{\rm and} \quad g_2(z) := \mathcal{C}(g S)^*(z),
$$
where the asterisk denotes Hermite conjugate.
We multiply the two functions by each other and integrate along the semi-circle of radius $R$ centered at zero in $\mathbb{C}^+$.
Because $g_1$ and $g_2$ are analytic functions in $\mathbb{C}^+$,
the Cauchy--Goursat theorem implies that
$$
0 = \oint g_1(z) g_2(z) dz.
$$
Because $g(\lambda), S(\lambda) \in L^2_z(\mathbb{R})$, we have $g(\lambda) S(\lambda) \in L^1_z(\mathbb{R})$, so that
the asymptotic limit (\ref{limit-Cauchy-operator}) in Proposition \ref{prop-RHP} implies that
$g_{1,2}(z) = \mathcal{O}(z^{-1})$ as $|z| \to \infty$. Therefore,
the integral on arc  goes to zero as $R \to \infty$, so that we obtain
\begin{eqnarray*}
0 & = & \int_{\mathbb{R}} g_1(z) g_2(z) dz \\
& = & \int_{\mathbb{R}} \mathcal{P}^+ (g S) \; [\mathcal{P}^-(g S)]^* dz \\
 & = & \int_{\mathbb{R}} \left[ \mathcal{P}^-(g S) + g S \right] [\mathcal{P}^- (g S)]^* dz,
\end{eqnarray*}
where we have used the identity $\mathcal{P}^+ - \mathcal{P}^- = I$ following from
relations (\ref{plemelj}). Since $\mathcal{P}^-(g S) = g$, we finally obtain
\begin{eqnarray}
\label{empty-equation}
0 =  \int_{\mathbb{R}} g (I+S) g^* dz.
\end{eqnarray}
By bound (\ref{positivity-I-S}) in Proposition \ref{positivity}, the real part of the
quadratic form associated with the matrix $I + S$
is strictly positive definite for every $z \in \mathbb{R}$. Therefore, equation (\ref{empty-equation}) 
implies that $g=0$ is the only solution to the homogeneous equation $(I - \mathcal{P}_S^-) g = 0$
in $L^2_z(\mathbb{R})$.
\end{proof}

As a consequence of Lemma \ref{solve-RH-integral}, we obtain
solvability of the two related Riemann--Hilbert problems (\ref{jump-2}).

\begin{cor}\label{RH-general}
Let $r_{\pm}\in H^1(\mathbb{R})\cap L^{2,1}(\mathbb{R})$ such that the inequality (\ref{smallness-defocusing})
is satisfied. There exists a unique solution to the Riemann--Hilbert problems (\ref{jump-2}) for every $x \in \mathbb{R}$
such that the functions
$$
G_{\pm 1,2}(x;\lambda) := M_{\pm}(x;z) \tau_{1,2}(\lambda) - \tau_{1,2}(\lambda)
$$
are analytic functions of $z$ in $\mathbb{C}^{\pm}$ and $G_{\pm 1,2}(x;\lambda) \in L^2_z(\mathbb{R})$.
\end{cor}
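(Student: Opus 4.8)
The plan is to observe that each of the two problems collected in (\ref{jump-2}) is a concrete instance of the abstract Riemann--Hilbert problem (\ref{jump-3}), and then to invoke Lemma \ref{solve-RH-integral} twice, once with source $F = F_1$ and once with $F = F_2$. Before doing so I would check that the hypotheses of that lemma hold. Since $r_{\pm} \in H^1(\mathbb{R}) \cap L^{2,1}(\mathbb{R})$ by assumption, Proposition \ref{r-regularity} gives $r(\lambda) \in L^{2,1}_z(\mathbb{R}) \cap L^{\infty}_z(\mathbb{R})$, in particular $r(\lambda) \in L^2_z(\mathbb{R}) \cap L^{\infty}_z(\mathbb{R})$; together with the standing assumption (\ref{smallness-defocusing}) this places us exactly in the setting of Proposition \ref{positivity} and Lemma \ref{solve-RH-integral}. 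The substantive point is to confirm that the source terms $F_{1,2}(x;\lambda) = \tau_{1,2}(\lambda) S(x;\lambda)$ in (\ref{correspondence-G-F}) lie in $L^2_z(\mathbb{R})$ for every $x \in \mathbb{R}$, despite the factors $(2i\lambda)^{\pm 1}$ in $\tau_{1,2}$. Multiplying $S$ by $\tau_{1,2}$ and using the identities $2i\lambda\, r(\lambda) = r_-(z)$ and $r(\lambda)/(2i\lambda) = -r_+(z)$ from (\ref{r-definition}), every entry of $F_{1,2}$ is expressed through $r_{\pm}$ and through $|r(\lambda)|^2 = \mathrm{sign}(z)\,\overline{r}_+(z) r_-(z)$, all of which lie in $L^2_z(\mathbb{R})$ because $r_{\pm} \in H^1 \cap L^{2,1} \subset L^2 \cap L^{\infty}$.

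With the hypotheses in hand, I would apply Lemma \ref{solve-RH-integral} with right-hand side $\mathcal{P}^-(F_{1,2}) \in L^2_z(\mathbb{R})$ to produce, for each problem and each $x \in \mathbb{R}$, a unique $G_{-1,2}(x;\cdot) \in L^2_z(\mathbb{R})$ solving the Fredholm equation (\ref{RH-2}). The companion function $G_{+1,2}$ is then defined by the projection formula (\ref{RH-4}), and both are recovered off the real axis through the Cauchy representation (\ref{RH-1}). By Proposition \ref{prop-RHP}, the right-hand side of (\ref{RH-1}) is analytic in $z$ off $\mathbb{R}$, decays at infinity, and has $L^2_z$ boundary values, which delivers precisely the two asserted properties: analyticity of $G_{\pm 1,2}(x;\cdot)$ in $\mathbb{C}^{\pm}$ and membership in $L^2_z(\mathbb{R})$. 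Uniqueness is inherited from the uniqueness in Lemma \ref{solve-RH-integral}. I would also note the reduction to a single variable: the problems (\ref{jump-2}) are posed for $\lambda \in \mathbb{R} \cup i\mathbb{R}$, whereas Lemma \ref{solve-RH-integral} works in $z = \lambda^2$, which sweeps out $\mathbb{R}$ as $\lambda$ ranges over $\mathbb{R} \cup i\mathbb{R}$; to legitimize this I would verify that the columns of $S$ and of $F_{1,2}$ have definite parity in $\lambda$, so that prescribing $G_{\pm 1,2}$ on $\mathbb{R}^+ \cup i\mathbb{R}^+$ and extending by the even/odd rule reproduces the solution on all of $\mathbb{R} \cup i\mathbb{R}$, as flagged in the remark following (\ref{RH-4}).

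The main obstacle is not existence or uniqueness, which are immediate once the above reductions are in place, but rather the potential singularity at $\lambda = 0$ anticipated before the statement: since $\tau_2(\lambda)$ contains $(2i\lambda)^{-1}$, the product $M_{\pm}(x;z)\tau_{1,2}(\lambda)$ could a priori blow up as $\lambda \to 0$. The resolution I would emphasize is that this is handled automatically by the choice of solution space. Once the entries of $F_{1,2}$ have been rewritten through $r_{\pm}$ as above, they are genuine functions of $z$ with no $z = 0$ singularity — recall $r(0)=0$, so $r_-(z) = 2i\lambda\, r(\lambda) = \mathcal{O}(z)$ while $r_+(z) = -r(\lambda)/(2i\lambda)$ stays finite at $z=0$ — so $F_{1,2} \in L^2_z(\mathbb{R})$ with no pole at the origin. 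Because the Cauchy operator in (\ref{RH-1}) acts in the variable $z$ and returns an $L^2_z$ function, the constructed $G_{\pm 1,2}$ cannot carry a pole at $z = 0$ (any such singularity would violate $L^2_z$ integrability under the parametrization $z = \lambda^2$). Hence, viewed in the $\lambda$ variable, $G_{\pm 1,2}(x;\lambda)$ are free of singularities as $\lambda \to 0$, which is exactly the claim the corollary is meant to secure; the whole role of the $\tau_{1,2}$ transformations is precisely to make $F_{1,2}$ square-integrable in $z$ so that this regularity comes for free.
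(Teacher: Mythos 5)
Your proposal is correct and follows essentially the same route as the paper's proof: recast the two problems (\ref{jump-2}) in the abstract form (\ref{jump-3}), use Proposition \ref{r-regularity} to place $S(x;\lambda)$ and $F_{1,2}(x;\lambda)$ in the right spaces, obtain the unique $G_{-1,2}(x;\cdot) \in L^2_z(\mathbb{R})$ from Lemma \ref{solve-RH-integral}, define $G_{+1,2}$ by (\ref{RH-4}), and extend analytically via the Cauchy integrals (\ref{RH-1}) with Proposition \ref{prop-RHP} for $p=2$. Your additional verifications — the explicit rewriting of $F_{1,2}$ through $r_{\pm}$ via $2i\lambda\, r(\lambda) = r_-(z)$, the parity bookkeeping in $\lambda$, and the absence of a pole at $\lambda = 0$ — are sound and simply spell out points the paper states tersely or defers to Proposition \ref{prop-redundancy} and Corollary \ref{cor-redundancy}.
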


\begin{proof}
For every $x \in \mathbb{R}$, the two related Riemann--Hilbert problems (\ref{jump-2}) are rewritten for $G_{\pm 1,2}$ and $F_{1,2}$
given by (\ref{correspondence-G-F}) in the form (\ref{jump-3}). By Proposition \ref{r-regularity},
we have $S(x;\lambda) \in L^1_z(\mathbb{R}) \cap L^{\infty}_z(\mathbb{R})$ and
$F_{1,2}(x;\lambda) \in L^2_z(\mathbb{R})$, hence $\mathcal{P}^-(F_{1,2}) \in L^2_z(\mathbb{R})$.
By Lemma \ref{solve-RH-integral}, equation (\ref{RH-2}) admits a unique solution for $G_{-1,2}(x;\lambda) \in L^2_z(\mathbb{R})$
for every $x \in \mathbb{R}$.
Then, we define a unique solution for $G_{+1,2}(x;\lambda) \in L^2_z(\mathbb{R})$ by equation (\ref{RH-4}).
Analytic extensions of $G_{\pm 1,2}(x;\lambda)$ as functions of $z$ in $\mathbb{C}^{\pm}$ are
defined by the Cauchy integrals (\ref{RH-1}). These functions solve the Riemann--Hilbert problem
(\ref{jump-3}) by Proposition \ref{prop-RHP} with $p = 2$.
\end{proof}

For further estimates, we modify the method of Lemma \ref{solve-RH-integral} and
prove that the operator $(I-\mathcal{P}^-_S)^{-1}$ in the
integral Fredholm equation (\ref{RH-3}) is invertible with a bounded inverse
in space $L^2_z(\mathbb{R})$.

\begin{lem} \label{inverse-fredholm}
For every $r(\lambda) \in L^2_z(\mathbb{R}) \cap L^{\infty}_z(\mathbb{R})$ satisfying (\ref{smallness-defocusing}),
the inverse operator $(I-\mathcal{P}^-_S)^{-1}$ is a bounded operator from $L^2_z(\mathbb{R})$ to $L^2_z(\mathbb{R})$.
In particular, there is a positive constant $C$ that only depends on $\| r(\lambda) \|_{L^{\infty}_z}$
such that for every row-vector $f \in L^2_z(\mathbb{R})$, we have
\begin{equation}
\label{bound-on-inverse-lemma}
\| (I-\mathcal{P}^-_S)^{-1} f \|_{L^2_z} \leq C \| f \|_{L^2_z}.
\end{equation}
\end{lem}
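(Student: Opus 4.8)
The plan is to note that existence and uniqueness of the solution $G = (I-\mathcal{P}^-_S)^{-1}f$ in $L^2_z(\mathbb{R})$ are already furnished by Lemma \ref{solve-RH-integral}, so the only new content here is the quantitative bound (\ref{bound-on-inverse-lemma}). I would obtain it by upgrading the energy identity used in the proof of Lemma \ref{solve-RH-integral} from the homogeneous equation to the inhomogeneous one, this time retaining the source term $f$ and playing it off against the positivity and boundedness constants $C_-$ and $C_+$ supplied by Proposition \ref{positivity}.

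Concretely, let $G \in L^2_z(\mathbb{R})$ be the row-vector solving $(I-\mathcal{P}^-_S)G = f$, so that $\mathcal{P}^-(GS) = G - f$, and, using the identity $\mathcal{P}^+ - \mathcal{P}^- = I$ from (\ref{plemelj}), also $\mathcal{P}^+(GS) = G(I+S) - f$. Since $G, S \in L^2_z(\mathbb{R})$, the product $GS$ lies in $L^1_z(\mathbb{R})$, so by (\ref{limit-Cauchy-operator}) the Cauchy transform $\mathcal{C}(GS)$ and its Hermitian-conjugate analytic continuation, defined exactly as $g_1$ and $g_2$ in the proof of Lemma \ref{solve-RH-integral}, are both analytic in $\mathbb{C}^+$ and decay like $\mathcal{O}(z^{-1})$. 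Integrating their product over a large semicircle in $\mathbb{C}^+$ and letting the radius tend to infinity, the Cauchy--Goursat theorem yields
\begin{equation*}
\int_{\mathbb{R}} \mathcal{P}^+(GS) \, [\mathcal{P}^-(GS)]^* \, dz = 0.
\end{equation*}

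Substituting the two boundary identities above into this relation gives
\begin{equation*}
\int_{\mathbb{R}} G (I+S) G^* \, dz = \int_{\mathbb{R}} G(I+S) f^* \, dz + \int_{\mathbb{R}} f G^* \, dz - \int_{\mathbb{R}} f f^* \, dz .
\end{equation*}
Taking real parts, I would bound the left-hand side from below by $C_- \|G\|_{L^2_z}^2$ using (\ref{positivity-I-S}), and estimate the three terms on the right from above by the Cauchy--Schwarz inequality together with the pointwise bound (\ref{boundness-I-S}), obtaining
\begin{equation*}
C_- \|G\|_{L^2_z}^2 \leq (C_+ + 1)\, \|f\|_{L^2_z} \|G\|_{L^2_z} + \|f\|_{L^2_z}^2 .
\end{equation*}
This is a quadratic inequality in $\|G\|_{L^2_z}$, whose solution gives $\|G\|_{L^2_z} \leq C \|f\|_{L^2_z}$ with $C = \bigl[ (C_+ + 1) + \sqrt{(C_+ + 1)^2 + 4 C_-}\, \bigr] / (2 C_-)$, which is precisely (\ref{bound-on-inverse-lemma}).

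The two delicate points are the justification that the arc contribution vanishes, which rests on the $L^1_z$ membership of $GS$ and hence on the $\mathcal{O}(z^{-1})$ decay from (\ref{limit-Cauchy-operator}), and the verification that the resulting constant $C$ depends on the scattering data only through $\| r(\lambda) \|_{L^{\infty}_z}$. The latter I expect to be the conceptual crux, but it is automatic here: by Proposition \ref{positivity}, both $C_-$ and $C_+$ are determined solely by $\sup_{\lambda} |r(\lambda)|$ and are independent of $x$, since the $x$-dependence of $S(x;\lambda)$ enters only through the unimodular oscillatory factors $e^{\pm 2 i \lambda^2 x}$. Hence the bound is uniform in $x$ once the suppressed variable is restored.
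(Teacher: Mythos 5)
Your proof is correct, and it takes a genuinely different route from the paper's. The paper does not run the energy identity directly on $G$: it splits the data as $F = \mathcal{P}^+(F) - \mathcal{P}^-(F)$, solves the two inhomogeneous equations $G_{\pm} - \mathcal{P}^-(G_{\pm}S) = \mathcal{P}^{\pm}(F)$ separately (their uniqueness coming from Lemma \ref{solve-RH-integral}, as in your argument), and performs two contour integrations --- one in $\mathbb{C}^+$ for $G_-$, and one in $\mathbb{C}^-$ for $G_+$ after rewriting its equation as $G_+(I+S) - \mathcal{P}^+(G_+S) = \mathcal{P}^+(F)$ --- obtaining the individual bounds (\ref{G-minus-bound}) and (\ref{G-plus-bound}) with constants $C_-^{-1}$ and $C_-^{-1}C_+$, and concluding by the triangle inequality for $G = G_+ - G_-$. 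You instead keep the source $f$ inside the boundary identities $\mathcal{P}^-(GS) = G - f$ and $\mathcal{P}^+(GS) = G(I+S) - f$ and close the estimate with a single Cauchy--Goursat argument in $\mathbb{C}^+$; your version even has a small technical advantage, since the same $L^1_z$ function $GS$ feeds both $g_1$ and $g_2$, so both factors are $\mathcal{O}(z^{-1})$ and the arc contribution is $\mathcal{O}(R^{-1})$, whereas in the paper's argument $g_2 = \mathcal{C}(G_{\pm}S + F)^*$ merely tends to zero (as $F$ is only $L^2_z$) and the vanishing of the arc integral needs the extra dominated-convergence step. What the paper's split buys is the pair of sharper bounds on $(I - \mathcal{P}^-_S)^{-1}\mathcal{P}^{\pm}$ taken separately; since the lemma's statement and its later applications (with $f = \mathcal{P}^-(R\tau_{1,2})$ in Lemma \ref{inverse-fredholm-cor}) only require the combined estimate, nothing downstream is lost by your route. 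Two cosmetic points: since $\mathrm{Re}\bigl(-\int_{\mathbb{R}} f f^* \, dz\bigr) = -\|f\|_{L^2_z}^2 \leq 0$, you may simply drop that term and obtain the linear inequality $C_- \|G\|_{L^2_z}^2 \leq (C_+ + 1)\|f\|_{L^2_z}\|G\|_{L^2_z}$, yielding the cleaner constant $C = (C_+ + 1)/C_-$ without solving a quadratic; and your application of the operator bound (\ref{boundness-I-S}) to the row vector $G(I+S)$ is legitimate because the transposed matrix has the same operator norm, but this deserves a word of justification. Your closing observation that $C_{\pm}$ depend only on $\sup_{\lambda}|r(\lambda)|$ and are uniform in $x$ matches the paper exactly.
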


\begin{proof}
We consider the linear inhomogeneous equation (\ref{RH-3}) with $F \in L^2_z(\mathbb{R})$.
Recalling that $\mathcal{P}^+ - \mathcal{P}^- = I$, we write $G = G_+ - G_-$, where
$G_+$ and $G_-$ satisfy the inhomogeneous equations
\begin{equation}
\label{inhomogeneous-fredholm-plus-minus}
G_- - \mathcal{P}^- (G_- S) = \mathcal{P}^-(F), \quad G_+ - \mathcal{P}^- (G_+ S) = \mathcal{P}^+(F).
\end{equation}
By Lemma \ref{solve-RH-integral}, since $\mathcal{P}^{\pm}(F) \in L^2_z(\mathbb{R})$,
there are unique solutions to the inhomogeneous equations
(\ref{RH-3}) and (\ref{inhomogeneous-fredholm-plus-minus}), so that the decomposition $G = G_+ - G_-$ is unique.
Therefore, we only need to find the estimates of $G_+$ and $G_-$ in $L^2_z(\mathbb{R})$.

To deal with $G_-$, we define two analytic functions in $\mathbb{C}\setminus\mathbb{R}$ by
$$
g_1(z) := \mathcal{C}(G_- S)(z) \quad \mbox{\rm and} \quad
g_2(z) := \mathcal{C}(G_- S + F)^*(z),
$$
similarly to the proof of Lemma \ref{solve-RH-integral}. By Proposition \ref{prop-RHP},
$g_1(z) = \mathcal{O}(z^{-1})$ and $g_2(z) \to 0$ as $|z| \to \infty$, since
$F \in L^2_z(\mathbb{R})$, $G_- \in L^2_z(\mathbb{R})$, and $S(\lambda) \in L^2_z(\mathbb{R}) \cap L^{\infty}_z(\mathbb{R})$.
Therefore, the integral on the semi-circle of radius $R > 0$ in the upper half-plane
still goes to zero as $R \to \infty$ by Lebesgue's dominated convergence theorem.
Performing the same manipulations as in the proof of Lemma \ref{solve-RH-integral},
we obtain
\begin{eqnarray*}
0 & = & \oint g_1(z) g_2(z) dz  \\
& = & \int_{\mathbb{R}} \mathcal{P}^+ (G_- S) \left[ \mathcal{P}^- (G_- S + F) \right]^* dz \\
& = & \int_{\mathbb{R}} \left[ \mathcal{P}^- (G_- S) + G_- S \right] \left[ \mathcal{P}^- (G_- S + F) \right]^* dz \\
& = &  \int_{\mathbb{R}} \left[ G_- - \mathcal{P}^-(F) + G_- S \right] G_-^* dz,
\end{eqnarray*}
where we have used the first inhomogeneous equation in system (\ref{inhomogeneous-fredholm-plus-minus}).
By the bound (\ref{positivity-I-S}) in Proposition \ref{positivity}, there is a positive constant $C_-$ such that
$$
C_- \|G_-\|_{L^2}^2 \leq {\rm Re}\; \int_{\mathbb{R}} G_- (I + S) G_-^* dz =
{\rm Re} \int_{\mathbb{R}} \mathcal{P}^-(F) G_-^* dz \leq \| F \|_{L^2} \| G_- \|_{L^2},
$$
where we have used the Cauchy--Schwarz inequality and bound
(\ref{bound-projection-operator}) with $C_{p=2} = 1$.
Note that the above estimate holds independently for the corresponding row-vectors of the matrices
$G_-$ and $F$. Since $G_- = (I-\mathcal{P}^-_S)^{-1}\mathcal{P}^- F$,
for every row-vector $f \in L^2_z(\mathbb{R})$ of the matrix $F \in L^2_z(\mathbb{R})$,
the above inequality yields
\begin{equation}
\label{G-minus-bound}
\|(I-\mathcal{P}^-_S)^{-1} \mathcal{P}^- f \|_{L^2_z} \leq C_-^{-1} \| f \|_{L^2_z}.
\end{equation}

To deal with $G_+$, we use $\mathcal{P}^+ - \mathcal{P}^- = I$ and rewrite
the second inhomogeneous equation in system (\ref{inhomogeneous-fredholm-plus-minus}) as
follows:
\begin{equation}
\label{inhomogeneous-fredholm-last}
G_+ (I + S) - \mathcal{P}^+ (G_+ S) = \mathcal{P}^+(F).
\end{equation}
We now define two analytic functions in $\mathbb{C}\setminus\mathbb{R}$ by
$$
g_1(z) := \mathcal{C}(G_+ S)(z) \quad \mbox{\rm and} \quad
g_2(z) := \mathcal{C}(G_+ S + F)^*(z)
$$
and integrate the product of $g_1$ and $g_2$ on the semi-circle of radius $R > 0$ in the lower half-plane.
Performing the same manipulations as above, we obtain
\begin{eqnarray*}
0 & = & \oint g_1(z) g_2(z) dz \\
& = & \int_{\mathbb{R}} \mathcal{P}^- (G_+ S) \left[ \mathcal{P}^+ (G_+ S + F) \right]^* dz \\
& = &  \int_{\mathbb{R}} \left[ G_+ - \mathcal{P}^+(F) \right] \left[ G_+ (I + S) \right]^* dz,
\end{eqnarray*}
where we have used equation (\ref{inhomogeneous-fredholm-last}).

By the bounds (\ref{positivity-I-S}) and (\ref{boundness-I-S}) in
Proposition \ref{positivity}, there are positive constants $C_+$ and $C_-$ such that
$$
C_- \| G_+\|_{L^2}^2 \leq {\rm Re}\; \int_{\mathbb{R}} G_+ (I + S)^* G_+^* dz =
{\rm Re} \int_{\mathbb{R}} \mathcal{P}^+(F) (I+S)^* G_+^* dz \leq C_+ \| F \|_{L^2} \| G_+ \|_{L^2},
$$
where we have used the Cauchy--Schwarz inequality and bound
(\ref{bound-projection-operator}) with $C_{p=2} = 1$.
Again, the above estimate holds independently for the corresponding row-vectors of the matrices
$G_+$ and $F$. Since $G_+ = (I-\mathcal{P}^-_S)^{-1}\mathcal{P}^+ F$,
for every row-vector $f \in L^2_z(\mathbb{R})$ of the matrix $F \in L^2_z(\mathbb{R})$,
the above inequality yields
\begin{equation}
\label{G-plus-bound}
\|(I-\mathcal{P}^-_S)^{-1} \mathcal{P}^+ f \|_{L^2_z} \leq C_-^{-1} C_+ \| f \|_{L^2_z}.
\end{equation}
The assertion of the lemma is proved with bounds (\ref{G-minus-bound}), (\ref{G-plus-bound}), and
the triangle inequality.
\end{proof}

\subsection{Estimates on solutions to the Riemann-Hilbert problem (\ref{jump})}

Using Corollary \ref{RH-general}, we obtain solvability of the Riemann--Hilbert problem (\ref{jump}).
Indeed, the abstract Riemann--Hilbert problem (\ref{jump-3}) is derived for two
versions of $G_{\pm}$ and $F_{\pm}$ given by (\ref{correspondence-G-F}).
For the first version, we have
\begin{equation}
\label{G-1}
G_{\pm 1}(x;\lambda) := M_{\pm}(x;z) \tau_1(\lambda) - \tau_1(\lambda) = \left[ \mu_{\pm}(x;z) - e_1,
2 i \lambda \left( \eta_{\pm}(x;z) - e_2 \right) \right]
\end{equation}
and
\begin{equation}
\label{F-1}
F_1(x;\lambda) := \tau_1(\lambda)  S(x;\lambda) = R(x;z) \tau_1(\lambda).
\end{equation}
By Corollary \ref{RH-general}, there is a solution $G_{\pm 1}(x;\lambda) \in L^2_z(\mathbb{R})$
of the integral Fredholm equations
\begin{equation} \label{projection-int-type-1}
G_{\pm 1}(x;\lambda) = \mathcal{P}^{\pm} \left( G_{- 1}(x;\lambda) S(x;\lambda) + F_1(x;\lambda) \right)(z), \quad  z\in \mathbb{R}.
\end{equation}
Using equation (\ref{projection-int-type-1}) for the first column of $G_{\pm}$,
we obtain
\begin{equation} \label{projection-int-type-2}
\mu_{\pm}(x;z) - e_1 = \mathcal{P}^{\pm} \left( M_-(x;\cdot) R(x;\cdot) \right)^{(1)}(z), \quad  z\in \mathbb{R},
\end{equation}
where we have used the following identities:
\begin{eqnarray*}
(G_{- 1} S + F_1)^{(1)} = (M_- \tau_1 S)^{(1)} = (M_- R \tau_1)^{(1)} = (M_- R)^{(1)}.
\end{eqnarray*}

For the second version of the abstract Riemann--Hilbert problem (\ref{jump-3}), we have
\begin{equation}
\label{G-2}
G_{\pm 2}(x;\lambda) := M_{\pm}(x;z) \tau_2(\lambda) - \tau_2(\lambda) = \left[
(2i \lambda)^{-1} \left( \mu_{\pm}(x;z) - e_1 \right), \eta_{\pm}(x;z) - e_2 \right]
\end{equation}
and
\begin{equation}
\label{F-2}
F_2(x;\lambda) := \tau_2(\lambda)  S(x;\lambda) = R(x;z) \tau_2(\lambda).
\end{equation}
Again by Corollary \ref{RH-general}, there is a solution $G_{\pm 2}(x;\lambda) \in L^2_z(\mathbb{R})$
of the integral Fredholm equations (\ref{projection-int-type-1}), where $G_{\pm 1}$ and $F_1$
are replaced by $G_{\pm 2}$ and $F_2$. Using equation (\ref{projection-int-type-1})
for the second column of $G_{\pm 2}$, we obtain
\begin{equation} \label{projection-int-type-3}
\eta_{\pm}(x;z) - e_2 = \mathcal{P}^{\pm} \left( M_-(x;\cdot) R(x;\cdot) \right)^{(2)}(z), \quad  z \in \mathbb{R}.
\end{equation}
where we have used the following identities:
\begin{eqnarray*}
(G_{-2} S + F_2)^{(2)} = (M_- \tau_2 S)^{(2)} = (M_- R \tau_2)^{(2)} = (M_- R)^{(2)}.
\end{eqnarray*}

Equations (\ref{projection-int-type-2}) and (\ref{projection-int-type-3}) can be written in the form
\begin{equation}
\label{RH-M}
M_{\pm}(x;z) = I + \mathcal{P}^{\pm} \left( M_-(x;\cdot) R(x;\cdot) \right)(z), \quad z\in \mathbb{R},
\end{equation}
which represents the solution to the Riemann--Hilbert problem (\ref{jump}) on the real line.
The analytic continuation of functions $M_{\pm}(x;\cdot)$ in $\mathbb{C}^{\pm}$ is given by
the Cauchy operators
\begin{equation}
\label{RH-M-complex}
M_{\pm}(x;z) = I + \mathcal{C} \left( M_-(x;\cdot) R(x;\cdot) \right)(z), \quad z\in \mathbb{C}^{\pm}.
\end{equation}
The corresponding result on solvability of the integral equations (\ref{RH-M}) is given by the following lemma.

\begin{lem} \label{inverse-fredholm-cor}
Let $r_{\pm}\in H^1(\mathbb{R})\cap L^{2,1}(\mathbb{R})$ such that the inequality (\ref{smallness-defocusing})
is satisfied. There is a positive constant $C$ that only depends on $\| r_{\pm} \|_{L^{\infty}}$
such that the unique solution to the integral equations (\ref{RH-M}) enjoys the estimate for every $x \in \mathbb{R}$,
\begin{equation}
\label{bound-on-M-tech}
\| M_{\pm}(x;\cdot) - I \|_{L^2} \leq C \left( \| r_+ \|_{L^2} + \| r_- \|_{L^2} \right).
\end{equation}
\end{lem}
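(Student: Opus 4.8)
The plan is to reduce this matrix estimate to the scalar Fredholm bound already proved in Lemma \ref{inverse-fredholm}, by passing through the two auxiliary Riemann--Hilbert problems (\ref{jump-2}). The key observation is that the first column of $M_{\pm}(x;z)-I$ is exactly $\mu_{\pm}(x;z)-e_1$, which is the first column of $G_{\pm 1}(x;\lambda)$ in (\ref{G-1}), while the second column $\eta_{\pm}(x;z)-e_2$ is the second column of $G_{\pm 2}(x;\lambda)$ in (\ref{G-2}). Hence
\[
\| M_{\pm}(x;\cdot)-I \|_{L^2} \leq \| \mu_{\pm}(x;\cdot)-e_1 \|_{L^2} + \| \eta_{\pm}(x;\cdot)-e_2 \|_{L^2} \leq \| G_{\pm 1}(x;\cdot) \|_{L^2} + \| G_{\pm 2}(x;\cdot) \|_{L^2},
\]
and it suffices to estimate the two matrices $G_{\pm 1}$ and $G_{\pm 2}$ separately. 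Using both versions is precisely what avoids the spurious factors $\lambda$ and $\lambda^{-1}$ that would appear if one tried to extract both columns from a single $G_{\pm j}$.

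First I would bound the ``minus'' pieces. By Corollary \ref{RH-general}, $G_{-j}$ solves the Fredholm equation (\ref{RH-2}), so that $(I-\mathcal{P}^-_S)G_{-j} = \mathcal{P}^-(F_j)$ and $G_{-j} = (I-\mathcal{P}^-_S)^{-1}\mathcal{P}^-(F_j)$. Lemma \ref{inverse-fredholm} together with the bound (\ref{bound-projection-operator}) at $p=2$ (where $C_{p=2}=1$) then gives $\| G_{-j} \|_{L^2} \leq C \| F_j \|_{L^2}$ with $C$ depending only on $\| r(\lambda) \|_{L^{\infty}_z}$, hence only on $\| r_{\pm} \|_{L^{\infty}}$ by Proposition \ref{r-regularity}. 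For the ``plus'' pieces I would use the algebraic relation $G_{+j} = G_{-j}(I+S)+F_j$, which follows by subtracting (\ref{RH-2}) from (\ref{RH-4}) and invoking $\mathcal{P}^+-\mathcal{P}^-=I$; combined with the boundedness estimate (\ref{boundness-I-S}) of Proposition \ref{positivity}, this yields $\| G_{+j} \|_{L^2} \leq C \| F_j \|_{L^2}$ with a constant of the same type. The whole estimate is thereby reduced to controlling $\| F_1 \|_{L^2}$ and $\| F_2 \|_{L^2}$.

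The remaining step is to estimate the source terms $F_{1,2}=\tau_{1,2}S$ in $L^2_z$. Writing out $F_1=\tau_1 S$ and $F_2=\tau_2 S$ entrywise and using the identities $2i\lambda\, r_+(z) = -r(\lambda)$ and $(2i\lambda)^{-1} r_-(z) = r(\lambda)$ coming from (\ref{r-definition}), every nonzero entry of $F_1$ and $F_2$ is, up to a unimodular factor $e^{\pm 2izx}$, one of $\overline{r}_+ r_-$, $r_-$, $r(\lambda)$, $r_+$, or $r_+\, r(\lambda)$. I would bound the bilinear entries by $\| \overline{r}_+ r_- \|_{L^2} \leq \| r_+ \|_{L^{\infty}} \| r_- \|_{L^2}$ and $\| r_+\, r(\lambda) \|_{L^2} \leq \| r_+ \|_{L^{\infty}} \| r(\lambda) \|_{L^2_z}$, and the linear entries directly by $\| r_{\pm} \|_{L^2}$. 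The one point requiring the relation (\ref{r1-r2-other}) is the bound on $\| r(\lambda) \|_{L^2_z}$: since $|r(\lambda)|^2 = |\overline{r}_+(z) r_-(z)|$ for every $z\in\mathbb{R}$, the Cauchy--Schwarz inequality gives $\| r(\lambda) \|_{L^2_z}^2 = \int_{\mathbb{R}} |r_+|\,|r_-|\,dz \leq \| r_+ \|_{L^2} \| r_- \|_{L^2}$. Collecting these estimates yields $\| F_1 \|_{L^2} + \| F_2 \|_{L^2} \leq C ( \| r_+ \|_{L^2} + \| r_- \|_{L^2} )$ with $C$ depending only on $\| r_{\pm} \|_{L^{\infty}}$, which combined with the reduction above proves (\ref{bound-on-M-tech}).

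The main obstacle I anticipate is not the Fredholm inversion, which is already supplied by Lemma \ref{inverse-fredholm}, but the bookkeeping needed to make the constant depend only on the $L^{\infty}$ norms of $r_{\pm}$ while the final bound is phrased in their $L^2$ norms. In particular, one must verify that the factors $2i\lambda$ and $(2i\lambda)^{-1}$ hidden in $\tau_{1,2}$ always recombine, through (\ref{r-definition}), into the bounded reflection coefficient $r(\lambda)$ or into $r_{\pm}$ themselves, rather than producing an unweighted power $\lambda^{\pm1}$ that fails to be square integrable. The choice of the two versions $G_{\pm 1,2}$ for column extraction and the identity $|r(\lambda)|^2 = |\overline{r}_+ r_-|$ for controlling $\| r(\lambda) \|_{L^2_z}$ are exactly what guarantee that no such uncontrolled factor survives.
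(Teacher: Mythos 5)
Your proposal is correct and follows essentially the same route as the paper's proof: both reduce the integral equations (\ref{RH-M}) to the two $\tau_{1,2}$-conjugated Fredholm problems, bound the source terms $F_{1,2} = R(x;z)\tau_{1,2}(\lambda)$ in $L^2_z$ by $C\left( \| r_+ \|_{L^2} + \| r_- \|_{L^2} \right)$ with $C$ depending only on $\| r_{\pm} \|_{L^{\infty}}$ (the identities behind (\ref{r-definition}) and $|r(\lambda)|^2 = |\overline{r}_+(z) r_-(z)|$ doing the same work in both arguments), and then invoke the inverse bound (\ref{bound-on-inverse-lemma}) of Lemma \ref{inverse-fredholm}. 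Your only addition is to make explicit the passage from $M_-$ to $M_+$ via the relation $G_{+j} = G_{-j}(I+S) + F_j$ together with the bound (\ref{boundness-I-S}), a step the paper leaves implicit.
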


\begin{proof}
By Proposition \ref{r-regularity},  if $r_{\pm} \in H^1(\mathbb{R})\cap L^{2,1}(\mathbb{R})$,
then $r(\lambda) \in L^2(\mathbb{R}) \cap L^{\infty}_z(\mathbb{R})$. Under these conditions,
it follows from the explicit expressions (\ref{F-1}) and (\ref{F-2}) that
$R(x;z) \tau_{1,2}(\lambda)$ belong to $L^2_z(\mathbb{R})$ for every $x \in \mathbb{R}$ and
there is a positive constant $C$ that only depends on $\| r_{\pm} \|_{L^{\infty}(\mathbb{R})}$
such that for every $x \in \mathbb{R}$,
\begin{equation}
\label{bound-on-R-in-M}
\| R(x;z) \tau_{1,2}(\lambda) \|_{L^2_z} \leq C \left( \| r_+ \|_{L^2} + \| r_- \|_{L^2} \right).
\end{equation}
By derivation above, the integral equation (\ref{RH-M}) for the projection operator $\mathcal{P}^-$
is obtained from two versions of the integral
equation (\ref{RH-3}) corresponding to $F_{1,2}(x;\lambda) := \mathcal{P}^- \left( R(x;z) \tau_{1,2}(\lambda) \right)(z)$.
Therefore, each element of $M_-(x;z)$ enjoys
the bound (\ref{bound-on-inverse-lemma}) for the corresponding
row vectors of the two versions of $F_{1,2}(x;z)$.
Combining the estimates (\ref{bound-on-inverse-lemma}) and (\ref{bound-on-R-in-M}),
we obtain the bound (\ref{bound-on-M-tech}).
\end{proof}

Before we continue, let us discuss the redundancy between solutions to the two versions
of the Riemann--Hilbert problems (\ref{jump-2}). By using equation (\ref{projection-int-type-1}) for the second column of $G_{\pm 1}$,
we obtain
\begin{equation} \label{projection-int-type-3-red}
2 i \lambda \left( \eta_{\pm}(x;z) - e_2 \right) =
\mathcal{P}^{\pm} \left( 2 i \lambda \left( M_-(x;\cdot) R(x;\cdot) \right)^{(2)} \right)(z), \quad  z \in \mathbb{R}.
\end{equation}
By using equation (\ref{projection-int-type-1}) for the first column of $G_{\pm 2}$,
we obtain
\begin{equation} \label{projection-int-type-2-red}
(2 i \lambda)^{-1} \left( \mu_{\pm}(x;z) - e_1 \right) =
\mathcal{P}^{\pm} \left( (2 i \lambda)^{-1} \left( M_-(x;\cdot) R(x;\cdot) \right)^{(1)} \right)(z), \quad  z\in \mathbb{R}.
\end{equation}
Unless equations (\ref{projection-int-type-3-red}) and (\ref{projection-int-type-2-red}) are redundant
in view of equations (\ref{projection-int-type-2}) and (\ref{projection-int-type-3}),
the two versions of the Riemann--Hilbert problems (\ref{RH-2}) may seem to be inconsistent.
In order to show the redundancy explicitly, we use the following result.

\begin{prop}
\label{prop-redundancy}
Let $f(\lambda) \in L^1_z(\mathbb{R}) \cap L^{\infty}_z(\mathbb{R})$ be even in $\lambda$
for all $\lambda \in \mathbb{R} \cup i \mathbb{R}$. Then
\begin{equation}
\label{projection-even}
\mathcal{P}^{\pm}_{\rm even}\left(\lambda f(\lambda) \right)(\lambda) = \lambda \mathcal{P}^{\pm}_{\rm even}(f)(\lambda),
\quad \lambda \in \mathbb{R} \cup i \mathbb{R},
\end{equation}
where
\begin{equation}
\label{integral-even}
\mathcal{P}^{\pm}_{\rm even}(f)(\lambda) := \left( \int_0^{+\infty} + \int_{+i\infty}^{i0} + \int_0^{-\infty} + \int_{-i \infty}^{i0} \right)
\frac{f(\lambda') d\lambda'}{\lambda' - (\lambda \pm i0)} \equiv \mathcal{P}^{\pm}(f(\lambda))(\lambda^2).
\end{equation}
Similarly, let $g(\lambda) \in L^1_z(\mathbb{R}) \cap L^2_z(\mathbb{R})$ be odd in $\lambda$
for all $\lambda \in \mathbb{R} \cup i \mathbb{R}$. Then
\begin{equation}
\label{projection-odd}
\mathcal{P}^{\pm}_{\rm odd}\left(\lambda g(\lambda) \right)(\lambda) = \lambda \mathcal{P}^{\pm}_{\rm odd}(g)(\lambda),
\quad \lambda \in \mathbb{R} \cup i \mathbb{R},
\end{equation}
where
\begin{equation}
\label{integral-odd}
\mathcal{P}^{\pm}_{\rm odd}(g)(\lambda) := \left( \int_0^{+\infty} + \int_{+i\infty}^{i0} + \int_{-\infty}^0 + \int_{i0}^{-i \infty} \right)
\frac{g(\lambda') d\lambda'}{\lambda' - (\lambda \pm i0)} \equiv \mathcal{P}^{\pm}(g(\lambda))(\lambda^2).
\end{equation}
\end{prop}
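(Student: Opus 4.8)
The plan is to reduce both identities to one mechanism: under the map $z=\lambda^2$, $z'=(\lambda')^2$, the contour $\mathbb{R}\cup i\mathbb{R}$ in the $\lambda'$-plane becomes the real line in the $z'$-plane (covered twice), and the Cauchy kernel splits by parity in $\lambda'$. The computational engine is the elementary identity
\[
\frac{1}{\lambda'-\lambda}=\frac{\lambda'+\lambda}{(\lambda')^2-\lambda^2}=\frac{\lambda'}{z'-z}+\frac{\lambda}{z'-z},
\]
in which the first summand is odd in $\lambda'$ and the second is even in $\lambda'$, together with $dz'=2\lambda'\,d\lambda'$, so that $\frac{\lambda'\,d\lambda'}{z'-z}=\tfrac12\frac{dz'}{z'-z}$ already carries the $z$-Cauchy kernel while the even summand carries an explicit factor $\lambda$.

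First I would prove (\ref{projection-even}). Writing $f(\lambda)=F(z)$, I combine the two real rays of (\ref{integral-even}) by the substitution $\lambda'\mapsto-\lambda'$ (using $f(-\lambda')=f(\lambda')$); the orientation built into the even contour is exactly the one for which, against the even integrand $f$, the odd kernel part $\frac{\lambda'}{z'-z}$ survives and reassembles into $\int_0^{\infty}F(z')\frac{dz'}{z'-z}$. The same reduction on the two imaginary rays, with $\lambda'=it$ and $z'=-t^2$, yields $\int_{-\infty}^{0}F(z')\frac{dz'}{z'-z}$, so that $\mathcal{P}^{\pm}_{\rm even}(f)(\lambda)=\int_{\mathbb{R}}\frac{F(z')\,dz'}{z'-(z\pm i0)}$; this is precisely $\mathcal{P}^{\pm}(f)(\lambda^2)$ and proves the equivalence asserted in (\ref{integral-even}). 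I then apply the identical manipulation to the odd function $\lambda f(\lambda)$ against the same even contour: multiplying the integrand by $\lambda'$ flips its parity, so now the even kernel part $\frac{\lambda}{z'-z}$ survives, the factor $\lambda$ comes out of the integral, and the remaining integrand is again $F(z')$. The result is $\lambda\int_{\mathbb{R}}\frac{F(z')\,dz'}{z'-z}=\lambda\,\mathcal{P}^{\pm}_{\rm even}(f)(\lambda)$, which is (\ref{projection-even}).

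The odd identity (\ref{projection-odd}) follows by the same argument with the two parities interchanged. For odd $g$ the reversed orientation of the negative real and imaginary rays in (\ref{integral-odd}) compensates the sign change $g(-\lambda')=-g(\lambda')$, so that the odd kernel part again survives and the two sheets combine into the single-valued function $\hat g(z'):=g(\lambda')|_{\lambda'\in\mathbb{R}^+\cup i\mathbb{R}^+}$, giving $\mathcal{P}^{\pm}_{\rm odd}(g)(\lambda)=\mathcal{P}^{\pm}(\hat g)(z)$. Multiplying by $\lambda$ turns $g$ into the even function $\lambda g$, so now the even kernel part survives, extracts the factor $\lambda$, and leaves $\hat g(z')$, which is exactly $\lambda\,\mathcal{P}^{\pm}_{\rm odd}(g)(\lambda)$.

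I expect the only delicate point to be the orientation and parity bookkeeping on the negative rays, together with the boundary prescription: I must check that $\lambda\pm i0$ is carried to $z\pm i0$ consistently along all four rays so that the limiting values $\mathcal{P}^{\pm}$ in the $z$-plane (as in (\ref{C-pm})) are correctly recovered, and that the integrability hypotheses — $f\in L^1_z\cap L^\infty_z$ in the even case and $g\in L^1_z\cap L^2_z$ in the odd case — legitimize the Sokhotski--Plemelj limit and the rearrangement of the contour integrals. Once the orientations in (\ref{integral-even}) and (\ref{integral-odd}) are matched to the parity of the integrand, the remainder is the routine partial-fraction bookkeeping above.
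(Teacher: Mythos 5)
Your proof is correct and follows essentially the paper's own route: both arguments rest on the substitution $z'=(\lambda')^2$ carrying the four-ray contour to the doubly covered real $z'$-line, parity-versus-orientation cancellation on that contour, and a partial-fraction split of the Cauchy kernel to extract the factor $\lambda$. Your identity $\frac{1}{\lambda'-\lambda}=\frac{\lambda'}{z'-z}+\frac{\lambda}{z'-z}$ is the same algebra as the paper's $\frac{2\lambda'}{(\lambda')^2-\lambda^2}=\frac{1}{\lambda'-\lambda}+\frac{1}{\lambda'+\lambda}$, and your cancellation of the even-in-$\lambda'$ remainder plays exactly the role of the paper's ``trivial result'' that an even $L^1_z$ function integrates to zero over the oriented contour.
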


\begin{proof}
First, we note the validity of the definition (\ref{integral-even}) if $f(-\lambda) = f(\lambda)$:
\begin{eqnarray*}
\mathcal{P}^{\pm}(f(\lambda))(\lambda^2) & = & \int_{-\infty}^{\infty}
\frac{f(\lambda^{\prime}) 2 \lambda' d \lambda'}{(\lambda^{\prime})^2 - (\lambda^2 \pm i 0)} \\
& = &
\left( \int_0^{+\infty} + \int_{+i \infty}^{i0} \right) f(\lambda') \left[ \frac{1}{\lambda' - (\lambda \pm i0)} +
\frac{1}{\lambda' + (\lambda \pm i0)} \right] d \lambda' =: \mathcal{P}^{\pm}_{\rm even}(f)(\lambda).
\end{eqnarray*}
Then, relation (\ref{projection-even}) is established from the trivial result
$$
\left( \int_0^{+\infty} + \int_{+i\infty}^{i0} + \int_0^{-\infty} + \int_{-i \infty}^{i0} \right)
f(\lambda') d\lambda' = 0,
$$
which is justified if $f(\lambda) \in L^1_z(\mathbb{R})$ and even in $\lambda$.
The relation (\ref{projection-odd}) is proved similarly, thanks to the changes in the definition (\ref{integral-odd}).
\end{proof}

\begin{figure}[htbp] 
   \centering
 \begin{tikzpicture}

     \draw [-] (-4,1) -- (-4,-1);
     \draw[->](-4,-2)--(-4,-1);  	
    \draw [ <-] (-4,1) -- (-4,2)      
        node [above, black] {$\mbox{Im}(\lambda) $};              

    \draw[-](-6,0)--(-5,0);
    \draw[<-](-5,0)--(-4,0);
    \draw[->](-4,0)--(-3,0);
    \draw [-] (-3,0) -- (-2,0)      
        node [right, black] {$\mbox{Re}(\lambda) $};

     \draw [->] (4,1) -- (4,-1);
     \draw[-](4,-2)--(4,-1);  	
    \draw [ <-] (4,1) -- (4,2)      
        node [above, black] {$\mbox{Im}(\lambda) $};              

    \draw[->](2,0)--(3,0);
    \draw[-](3,0)--(4,0);
    \draw[->](4,0)--(5,0);
    \draw [-] (5,0) -- (6,0)      
        node [right, black] {$\mbox{Re}(\lambda) $};
 \end{tikzpicture}

   \caption{The left and right panels show the direction of contours used for $\mathcal{P}^{\pm}_{\rm even}$ and $\mathcal{P}^{\pm}_{\rm odd}$}
   \label{fig-contours}
\end{figure}
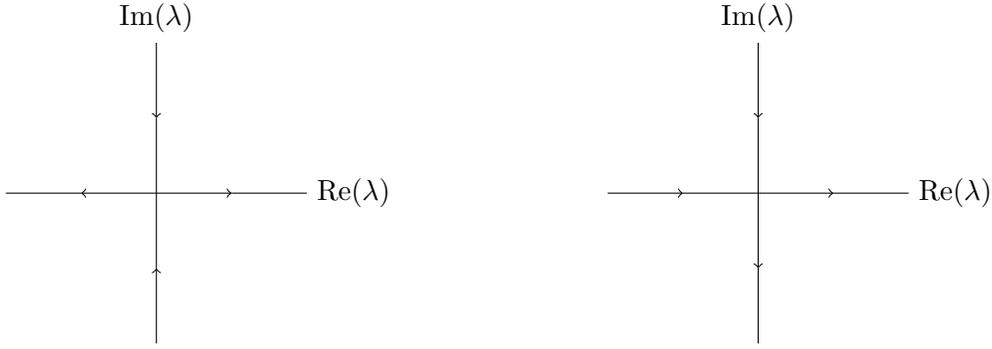

Figure \ref{fig-contours} shows the contours of integration used in the definitions of $\mathcal{P}^{\pm}_{\rm even}$ and $\mathcal{P}^{\pm}_{\rm odd}$
in (\ref{integral-even}) and (\ref{integral-odd}). The following corollary of Proposition \ref{prop-redundancy} 
specifies the redundancy between the two different versions of the Riemann--Hilbert problems (\ref{jump-2}).

\begin{cor}
\label{cor-redundancy}
Consider two unique solutions to the Riemann--Hilbert problems (\ref{jump-2}) in Corollary \ref{RH-general}.
Then, for every $x \in \mathbb{R}$, we have 
\begin{equation}
\label{G-redundancy}
G_{\pm 1}(x;\lambda) = 2i \lambda {\rm sign}(\lambda) G_{\pm 2}(x;\lambda),  \quad \lambda \in \mathbb{R} \cup i \mathbb{R},
\end{equation}
where the sign function returns the sign of either real or imaginary part of $\lambda$. 
\end{cor}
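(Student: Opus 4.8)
The plan is to reduce everything to the single algebraic identity $\tau_1(\lambda) = 2i\lambda\,\tau_2(\lambda)$, which is read off directly from \eqref{RH-factors}. Substituting it into the definitions \eqref{correspondence-G-F} gives $F_1(x;\lambda) = \tau_1(\lambda)S(x;\lambda) = 2i\lambda\,\tau_2(\lambda)S(x;\lambda) = 2i\lambda\,F_2(x;\lambda)$, so the two Riemann--Hilbert problems \eqref{jump-2} are driven by the \emph{same} scattering matrix $S(x;\lambda)$ and by inhomogeneous terms differing only by the scalar $2i\lambda$. Since Corollary \ref{RH-general} produces both solutions through the Cauchy-operator/projection formulas \eqref{RH-1}--\eqref{RH-4}, I would obtain \eqref{G-redundancy} by transporting the factor $2i\lambda$ through the projection operators.

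The first step is to pin down the parity of the columns. From $r(-\lambda) = -r(\lambda)$ (Corollary \ref{corollary-a-b}) and \eqref{S1}--\eqref{S2}, together with \eqref{RH-factors}, the first column of $F_1$ and the second column of $F_2$ are even in $\lambda$, whereas the second column of $F_1$ and the first column of $F_2$ are odd; moreover $S(x;\lambda)$ has precisely the parity pattern that leaves this even/odd splitting invariant under $G_-\mapsto G_-S$, so the corresponding columns of $G_{\pm 1}$ and $G_{\pm 2}$ inherit these parities. Consequently, when the representations \eqref{RH-1}--\eqref{RH-4} are parameterized over $\lambda\in\mathbb{R}\cup i\mathbb{R}$ as in the remark following Corollary \ref{RH-general}, the even columns are evaluated with $\mathcal{P}^{\pm}_{\rm even}$ and the odd columns with $\mathcal{P}^{\pm}_{\rm odd}$ from \eqref{integral-even}--\eqref{integral-odd}.

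The core of the argument is then to commute $2i\lambda$ with these projections. Because a given column is even in one of the two problems and odd in the other, transporting $2i\lambda$ from the convention natural to $G_{\pm 2}$ to the one natural to $G_{\pm 1}$ requires passing between $\mathcal{P}^{\pm}_{\rm even}$ and $\mathcal{P}^{\pm}_{\rm odd}$. Proposition \ref{prop-redundancy} is exactly the tool for this: relations \eqref{projection-even} and \eqref{projection-odd} let the scalar be pulled through each projection, while the two contours in \eqref{integral-even} and \eqref{integral-odd} are oppositely oriented on the negative real and negative imaginary semi-axes (Figure \ref{fig-contours}); this orientation mismatch, equivalently the choice of branch of $\lambda = \pm\sqrt{z}$, is recorded by the factor ${\rm sign}(\lambda)$. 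Carrying this out column by column and matching against the explicit representations \eqref{projection-int-type-2}--\eqref{projection-int-type-2-red} (and their second-column analogues \eqref{projection-int-type-3}, \eqref{projection-int-type-3-red}) yields \eqref{G-redundancy}; the uniqueness guaranteed by Lemma \ref{solve-RH-integral} and Corollary \ref{RH-general} makes the two representations of each column agree.

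The step I expect to be the main obstacle is precisely this contour bookkeeping: one must check that pushing $2i\lambda$ through the projection and reconciling the even- and odd-contour conventions produces exactly ${\rm sign}(\lambda)$, with no leftover contributions from the negative semi-axes. All of this is supplied by Proposition \ref{prop-redundancy}, so in the end the proof amounts to a careful, parity-respecting application of \eqref{projection-even} and \eqref{projection-odd} to each column.
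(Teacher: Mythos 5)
Your proposal is correct and follows essentially the same route as the paper's proof: both rest on the algebraic identity $\tau_1(\lambda) = 2i\lambda\,\tau_2(\lambda)$ (the paper writes it as $\tau_2^{-1}(\lambda)\tau_1(\lambda) = 2i\lambda I$), which gives \eqref{G-redundancy} directly on $\mathbb{R}^+ \cup i\mathbb{R}^+$, and both extend to the negative semi-axes by applying Proposition \ref{prop-redundancy} to the even and odd columns with the contour conventions \eqref{integral-even}--\eqref{integral-odd}, the factor ${\rm sign}(\lambda)$ arising exactly from the orientation mismatch you identify. Your closing appeal to uniqueness via Lemma \ref{solve-RH-integral} and Corollary \ref{RH-general} is consistent with, and implicit in, the paper's argument.
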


\begin{proof}
We note the relation $\tau_2^{-1}(\lambda) \tau_1(\lambda) = 2i \lambda I$, where $I$ is the identity $2$-by-$2$ matrix.
From here, the relation (\ref{G-redundancy}) follows for $\lambda \in \mathbb{R}^+ \cup i \mathbb{R}^+$. 
To consider the continuation of this relation to $\lambda \in \mathbb{R}^- \cup i \mathbb{R}^-$, we apply 
Proposition \ref{prop-redundancy} with the explicit parametrization of the contours of integrations 
as on Figure \ref{fig-contours}. We choose the even function $f$
and the odd function $g$ in the form
$$
f(\lambda) := \left( M_-(x;\lambda^2) R(x;\lambda^2) \right)^{(2)}, \quad
g(\lambda) := (2 i \lambda)^{-1} \left( M_-(x;\lambda^2) R(x;\lambda^2) \right)^{(1)}.
$$
Then, equation (\ref{projection-int-type-3-red}) follows from equation (\ref{projection-int-type-3}),
thanks to the relation (\ref{projection-even}), whereas equation (\ref{projection-int-type-2})
follows from equation (\ref{projection-int-type-2-red}) thanks to the relation (\ref{projection-odd}). 
Thus, the relation (\ref{G-redundancy}) is verified for every $\lambda \in \mathbb{R} \cup i \mathbb{R}$. 
To ensure that the integrations (\ref{integral-even}) and (\ref{integral-odd}) returns $\mathcal{P}^{\pm}$ 
for $\lambda f(\lambda)$ and $\lambda g(\lambda)$, the sign function is used in the relation (\ref{G-redundancy}).
\end{proof}

\begin{remark}
Corollary \ref{cor-redundancy} shows that the complex integration in the $z$ plane in the integral 
equations (\ref{projection-int-type-1}) has to be extended in two different ways in the $\lambda$ plane. 
For the first vector columns of the integral equation (\ref{projection-int-type-1}), we have to use the definition
(\ref{integral-odd}) for odd functions in $\lambda$, whereas for the second vector
columns of the integral equation (\ref{projection-int-type-1}), we have to use
the definition (\ref{integral-even}) for even functions in $\lambda$.
\label{remark-redundancy}
\end{remark}

Next, we shall obtain refined estimates on the solution to the integral equations (\ref{RH-M}).
We start with estimates on the scattering coefficients $r_+$ and $r_-$ obtained with the Fourier theory.

\begin{prop}
\label{prop-Fourier-2}
For every $x_0 \in \mathbb{R}^+$ and every $r_{\pm} \in H^1(\mathbb{R})$, we have
\begin{equation}
\label{Fourier-2}
\sup_{x \in (x_0,\infty)} \left\| \langle x \rangle \mathcal{P}^+ \left(\bar{r}_+(z) e^{-2i z x}\right) \right\|_{L^2_z} \leq \| r_+ \|_{H^1}
\end{equation}
and
\begin{equation}
\label{Fourier-3}
\sup_{x \in (x_0,\infty)} \left\| \langle x \rangle \mathcal{P}^- \left( r_-(z) e^{2i z x}\right) \right\|_{L^2_z} \leq \| r_- \|_{H^1},
\end{equation}
where $\langle x \rangle := (1+x^2)^{1/2}$. In addition, if $r_{\pm} \in H^1(\mathbb{R})$, then
\begin{equation}
\label{Fourier-2-inf}
\sup_{x \in \mathbb{R}} \left\| \mathcal{P}^+ \left(\bar{r}_+(z) e^{-2i z x}\right) \right\|_{L^{\infty}_z} \leq
\frac{1}{\sqrt{2}} \| r_+ \|_{H^1}
\end{equation}
and
\begin{equation}
\label{Fourier-3-inf}
\sup_{x \in \mathbb{R}}  \left\| \mathcal{P}^- \left( r_-(z) e^{2i z x}\right) \right\|_{L^{\infty}_z} \leq
\frac{1}{\sqrt{2}} \| r_- \|_{H^1}.
\end{equation}
Furthermore, if $r_{\pm} \in L^{2,1}(\mathbb{R})$, then
\begin{equation}
\label{Fourier-2-3}
\sup_{x \in \mathbb{R}}  \left\| \mathcal{P}^+ \left( z \bar{r}_+(z) e^{-2i z x}\right) \right\|_{L^2_z} \leq
\| z r_+(z) \|_{L_z^{2}},
\end{equation}
and
\begin{equation}
\label{Fourier-2-3a}
\sup_{x \in \mathbb{R}} \left\| \mathcal{P}^- \left( z r_-(z) e^{2i z x} \right) \right\|_{L^2_z} \leq
\| z r_-(z) \|_{L_z^{2}}.
\end{equation}
\end{prop}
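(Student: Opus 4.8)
The plan is to prove all six bounds by passing to the Fourier side and exploiting the fact that the Plemelj projections $\mathcal{P}^{\pm}$ act as restrictions of the Fourier transform to complementary half-lines. First I would record this characterization: combining the Sokhotski--Plemelj formula (\ref{plemelj}) with the Fourier multiplier $\pm\sgn(k)$ of the Hilbert transform $\mathcal{H}$, one sees that, writing $\tilde{h}(k) := \int_{\mathbb{R}} h(z) e^{-ikz}\, dz$, the operator $\mathcal{P}^+$ keeps only the part of $\tilde{h}$ supported on a half-line (say $k > 0$), while $\mathcal{P}^-$ keeps the complementary part $k < 0$. In particular $\mathcal{P}^{\pm}$ are orthogonal projections on $L^2_z(\mathbb{R})$ of operator norm $1$, consistent with $C_{p=2} = 1$ in (\ref{bound-projection-operator}).

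For the weighted bounds (\ref{Fourier-2}) and (\ref{Fourier-3}), I would use that modulation by $e^{\mp 2izx}$ shifts the Fourier variable by $\pm 2x$, so that after projection the surviving spectrum of $\mathcal{P}^+(\bar{r}_+(z) e^{-2izx})$ (resp. $\mathcal{P}^-(r_-(z) e^{2izx})$) is supported on $\{|\ell| > 2x\}$ when $x > 0$. Plancherel's theorem then gives, up to normalization, $\|\mathcal{P}^+(\bar{r}_+(z) e^{-2izx})\|_{L^2_z}^2 = \tfrac{1}{2\pi}\int_{2x}^{\infty} |\widetilde{\bar{r}_+}(\ell)|^2\, d\ell$, and the elementary inequality $\langle x\rangle \le \langle\ell\rangle$, valid for $|\ell| \ge 2x \ge 0$, lets me absorb the weight $\langle x\rangle$ and extend the integral to all of $\mathbb{R}$, bounding the result by $\|r_+\|_{H^1}$ (resp. $\|r_-\|_{H^1}$). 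This is exactly the mechanism by which the spectral gap created by projection-plus-modulation becomes decay in $x$, and it forces the pairing $\mathcal{P}^+ \leftrightarrow e^{-2izx}$, $\mathcal{P}^- \leftrightarrow e^{+2izx}$ together with the positivity of $x$.

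For the $L^{\infty}$ bounds (\ref{Fourier-2-inf}) and (\ref{Fourier-3-inf}), I would estimate the $L^{\infty}_z$ norm of the same projected, modulated functions by the $L^1$ norm of their half-line-restricted, shifted Fourier transforms, and then apply Cauchy--Schwarz in the form $\int |\widetilde{\bar{r}_+}(\ell)|\, d\ell \le \bigl(\int \langle\ell\rangle^{-2}\, d\ell\bigr)^{1/2}\bigl(\int \langle\ell\rangle^2 |\widetilde{\bar{r}_+}(\ell)|^2\, d\ell\bigr)^{1/2}$. Using the explicit value $\int_{\mathbb{R}} \langle\ell\rangle^{-2}\, d\ell = \pi$ and the Plancherel identity $\int \langle\ell\rangle^2 |\widetilde{\bar{r}_+}(\ell)|^2\, d\ell = 2\pi \|r_+\|_{H^1}^2$ produces exactly the constant $1/\sqrt{2}$ after accounting for the $1/(2\pi)$ normalization. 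Finally, the bounds (\ref{Fourier-2-3}) and (\ref{Fourier-2-3a}) are immediate: modulation by $e^{\mp 2izx}$ is an isometry of $L^2_z(\mathbb{R})$ and $|z\bar{r}_+(z)| = |z r_+(z)|$, so boundedness of $\mathcal{P}^{\pm}$ on $L^2_z(\mathbb{R})$ with constant $C_{p=2} = 1$ (Proposition \ref{prop-RHP}) gives the claim with no loss.

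I expect the only genuine bookkeeping to lie in the second step, namely verifying that the projection and the modulation conspire to leave precisely the high-frequency tail $\{|\ell| > 2x\}$; this is what renders the $\langle x\rangle$ weight controllable and explains why $x$ must be positive in (\ref{Fourier-2})--(\ref{Fourier-3}). Everything else reduces to Plancherel, Cauchy--Schwarz, and the integral $\int \langle\ell\rangle^{-2}\, d\ell = \pi$, with the sharp constants $1$ and $1/\sqrt{2}$ emerging once the Fourier normalization is fixed.
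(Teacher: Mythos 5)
Your proposal is correct and follows essentially the same route as the paper: the paper's proof likewise identifies $\mathcal{P}^{\pm}$ composed with the modulation $e^{\mp 2izx}$ as a Fourier restriction to the half-line $k>2x$ (respectively $k<-2x$) --- obtained there via the explicit residue computation (\ref{residue-computation}) rather than your multiplier identity --- and then concludes by Plancherel with the weight comparison $\langle x\rangle\leq\langle k\rangle$, Cauchy--Schwarz with $\int_{\mathbb{R}}\langle k\rangle^{-2}\,dk=\pi$, and the bound (\ref{bound-projection-operator}) with $C_{p=2}=1$, producing the same constants $1$ and $1/\sqrt{2}$. The only nitpick is that with the paper's conventions $\mathcal{P}^-$ equals \emph{minus} the restriction to negative frequencies (consistent with $\mathcal{P}^+-\mathcal{P}^-=I$), so it is not literally an orthogonal projection as you state, but this sign is invisible in every norm estimate.
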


\begin{proof}
Recall the following elementary result from the Fourier theory.
For a given function $r \in L^2(\mathbb{R})$, we use the Fourier transform
$\widehat{r} \in L^2(\mathbb{R})$ with the definition
$\widehat{r}(k) := \frac{1}{2\pi} \int_{\mathbb{R}} r(z) e^{-ikz} dz$, so that 
$$
\| r \|_{L^2}^2 = 2 \pi \| \hat{r} \|_{L^2}^2.
$$
Then, we have $r \in H^1(\mathbb{R})$ if and only if $\widehat{r} \in L^{2,1}(\mathbb{R})$.
Similarly, $r \in L^{2,1}(\mathbb{R})$ if and only if $\widehat{r} \in H^1(\mathbb{R})$.

In order to prove (\ref{Fourier-2}), we write explicitly
\begin{eqnarray}
\nonumber
\mathcal{P}^+ \left(\bar{r}_+(z) e^{-2i z x}  \right)(z) & = & \frac{1}{2\pi i} \lim_{\epsilon \downarrow 0}
\int_{\mathbb{R}} \frac{\overline{r}_+(s) e^{-2isx}}{s-(z+i\epsilon)} ds \\
\nonumber
& = & \frac{1}{2\pi i} \int_{\mathbb{R}} \widehat{\overline{r}_+}(k) \left(  \lim_{\epsilon \downarrow 0}
\int_{\mathbb{R}} \frac{e^{i (k-2x) s}}{s-(z+i\epsilon)} ds \right) dk \\
\label{residue-term}
& = & \int_{2x}^{\infty} \widehat{\overline{r}_+}(k) e^{i (k-2x) z} dk,
 \end{eqnarray}
where the following residue computation has been used:
\begin{equation}
\label{residue-computation}
\lim_{\epsilon \downarrow 0} \frac{1}{2\pi i} \int_{\mathbb{R}} \frac{e^{is(k-2x)}}{s-i\epsilon}ds =
\lim_{\epsilon \downarrow 0} \left\{ \begin{array}{l}
e^{-\epsilon(k-2x)}, \quad \mbox{\rm if} \;\; k - 2x > 0 \\
0, \quad \quad \quad \quad \;\; \mbox{\rm if} \;\; k - 2x < 0 \end{array} \right. = \chi(k-2x),
\end{equation}
with $\chi$ being the characteristic function. The bound (\ref{Fourier-2})
is obtained from the bound (\ref{Fourier-1}) of Proposition \ref{prop-Fourier-1}
for every $x_0 \in \mathbb{R}^+$:
$$
\sup_{x \in (x_0,\infty)} \left\| \langle x \rangle \int_{2x}^{\infty} \widehat{\overline{r}_+}(k) e^{i (k-2x) z} dk \right\|_{L^2_z}
\leq \sqrt{2\pi} \| \widehat{r}_+ \|_{L^{2,1}} = \| r_+ \|_{H^1}.
$$
Similarly, we use the representation (\ref{residue-term}) and obtain bound (\ref{Fourier-2-inf})
for every $x \in \mathbb{R}$:
\begin{eqnarray}
\| \mathcal{P}^+ \left(\bar{r}_+(z) e^{-2i z x}  \right)(z) \|_{L^{\infty}_z}
\leq \| \widehat{\overline{r}_+}(k) \|_{L^1_k} \leq \sqrt{\pi} \| \widehat{\overline{r}_+}(k) \|_{L^{2,1}_k}
\leq \frac{1}{\sqrt{2}}  \| r_+ \|_{H^1}.
 \end{eqnarray}
The bounds (\ref{Fourier-3}) and (\ref{Fourier-3-inf}) are obtained similarly from the representation
\begin{eqnarray*}
\mathcal{P}^- \left(r_-(z) e^{2i z x}  \right)(z) = \frac{1}{2\pi i} \lim_{\epsilon \downarrow 0}
\int_{\mathbb{R}} \frac{r_-(s) e^{2isx}}{s-(z-i\epsilon)} ds =  -\int_{-\infty}^{-2x} \widehat{r_-}(k) e^{i (k+2x) z} dk.
 \end{eqnarray*}
The bounds (\ref{Fourier-2-3}) and (\ref{Fourier-2-3a}) follow from
the bound (\ref{bound-projection-operator}) with $C_{p=2} = 1$ of Proposition \ref{prop-RHP}.
\end{proof}

We shall use the estimates of Lemma \ref{inverse-fredholm-cor} and Proposition \ref{prop-Fourier-2}
to derive useful estimates on the solutions to the Riemann--Hilbert problem (\ref{jump}).
By Lemma \ref{inverse-fredholm-cor}, these solutions on the real line can be written in the integral Fredholm form (\ref{RH-M}).
We only need to obtain estimates on the vector columns $\mu_- - e_1$ and $\eta_+ - e_2$.
From equation (\ref{projection-int-type-2}), we obtain
\begin{equation} \label{projection-int-type-4}
\mu_-(x;z) - e_1 = \mathcal{P}^- \left( r_-(z) e^{2i z x} \eta_+(x;z) \right)(z), \quad  z\in \mathbb{R},
\end{equation}
where we have used the following identities
\begin{eqnarray*}
(M_- R)^{(1)} = \Phi_{\infty}^{-1} (P_- R)^{(1)} = r_-(z) e^{2izx} \Phi_{\infty}^{-1} p_+ = r_-(z) e^{2izx} M_+^{(2)},
\end{eqnarray*}
which follow from the representations (\ref{r-definition}), (\ref{correspondence-M}), and (\ref{formula-P-M}),
as well as the scattering relation (\ref{linear-4}). From equation (\ref{projection-int-type-3}), we obtain
\begin{equation} \label{projection-int-type-5}
\eta_+(x;z) - e_2 = \mathcal{P}^+ \left( \bar{r}_+(z) e^{-2i z x} \mu_-(x;z) \right)(z), \quad  z \in \mathbb{R},
\end{equation}
where we have used the following identities
\begin{eqnarray*}
(M_- R)^{(2)} = \Phi_{\infty}^{-1} (P_- R)^{(2)} = \bar{r}_+(z) e^{-2izx} \Phi_{\infty}^{-1} m_+ = \bar{r}_+(z) e^{-2izx} M_-^{(1)},
\end{eqnarray*}
which also follow from the representations (\ref{r-definition}), (\ref{correspondence-M}), and (\ref{formula-P-M}).

Let us introduce the $2$-by-$2$ matrix
\begin{equation}
\label{RH-5aa}
M(x;z) = \left[\mu_-(x;z) - e_1, \quad \eta_+(x;z) - e_2 \right]
\end{equation}
and write the system of integral equations (\ref{projection-int-type-4}) and (\ref{projection-int-type-5}) in the matrix form
\begin{eqnarray}
\label{RH-5}
M - \mathcal{P}^+(M R_+) - \mathcal{P}^-(M R_-)  = F,
\end{eqnarray}
where
\begin{eqnarray}
\label{RH-5a}
R_+(x;z) = \left[ \begin{matrix} 0  & \bar{r}_+(z) e^{-2izx} \\ 0 & 0 \end{matrix} \right], \quad
R_-(x;z) = \left[ \begin{matrix} 0  & 0 \\ r_-(z) e^{2izx}  & 0 \end{matrix} \right]
\end{eqnarray}
and
\begin{eqnarray}
\label{RH-5b}
F(x;z) := \left[ e_2 \mathcal{P}^-(r_-(z) e^{2i z x}), \quad  e_1 \mathcal{P}^+(\bar{r}_+(z) e^{-2i z x}) \right].
\end{eqnarray}
The inhomogeneous term $F$  given by (\ref{RH-5b}) isestimated by Proposition \ref{prop-Fourier-2}.
The following lemma estimates solutions to the system of integral equations (\ref{RH-5}).

\begin{lem}
For every $x_0 \in \mathbb{R}^+$ and every $r_{\pm} \in H^1(\mathbb{R})$,
the unique solution to the system of integral equations (\ref{projection-int-type-4}) and (\ref{projection-int-type-5})
satisfies the estimates
\begin{equation}
\label{Fourier-2a}
\sup_{x \in (x_0,\infty)} \left\| \langle x \rangle  \mu_-^{(2)}(x;z) \right\|_{L^2_z} \leq C \| r_- \|_{H^1}
\end{equation}
and
\begin{equation}
\label{Fourier-3a}
\sup_{x \in (x_0,\infty)} \left\| \langle x \rangle \eta_+^{(1)}(x;z) \right\|_{L^2_z} \leq C \| r_+ \|_{H^1},
\end{equation}
where $C$ is a positive constant that depends on
$\| r_{\pm} \|_{L^{\infty}}$. Moreover, if
$r_{\pm} \in H^1(\mathbb{R}) \cap L^{2,1}(\mathbb{R})$,
then
\begin{equation}
\label{Fourier-2aa}
\sup_{x \in \mathbb{R}} \left\| \partial_x \mu_-^{(2)}(x;z) \right\|_{L^2_z} \leq
C \left( \| r_+ \|_{H^1 \cap L^{2,1}} + \| r_- \|_{H^1 \cap L^{2,1}} \right)
\end{equation}
and
\begin{equation}
\label{Fourier-3aa}
\sup_{x \in \mathbb{R}} \left\| \partial_x \eta_+^{(1)}(x;z) \right\|_{L^2_z} \leq
C \left( \| r_+ \|_{H^1 \cap L^{2,1}} + \| r_- \|_{H^1 \cap L^{2,1}} \right)
\end{equation}
where $C$ is another positive constant that depends on
$\| r_{\pm} \|_{L^{\infty}}$.
\label{cor-Fourier-2}
\end{lem}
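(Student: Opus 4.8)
The plan is to exploit the block structure of the coupled system (\ref{projection-int-type-4})--(\ref{projection-int-type-5}) and then transport the uniform resolvent bound of Lemma \ref{inverse-fredholm} to the weighted and differentiated equations. Writing each equation componentwise, one sees that the pair $(\mu_-^{(1)},\eta_+^{(1)})$ decouples from the pair $(\mu_-^{(2)},\eta_+^{(2)})$: the relations $\eta_+^{(1)}=\mathcal{P}^+(\bar r_+ e^{-2izx}\mu_-^{(1)})$ and $\mu_-^{(1)}=1+\mathcal{P}^-(r_- e^{2izx}\eta_+^{(1)})$ involve only the first components, while $\mu_-^{(2)}=\mathcal{P}^-(r_- e^{2izx}\eta_+^{(2)})$ and $\eta_+^{(2)}=1+\mathcal{P}^+(\bar r_+ e^{-2izx}\mu_-^{(2)})$ involve only the second ones. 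Substituting one relation into the other gives the closed scalar equation $(I-\mathcal{L})\mu_-^{(2)}=\mathcal{P}^-(r_- e^{2izx})$ for $\mu_-^{(2)}$, with $\mathcal{L}h:=\mathcal{P}^-\!\big(r_- e^{2izx}\,\mathcal{P}^+(\bar r_+ e^{-2izx}h)\big)$, and the analogous equation $(I-\tilde{\mathcal{L}})\eta_+^{(1)}=\mathcal{P}^+(\bar r_+ e^{-2izx})$ for $\eta_+^{(1)}$, with $\tilde{\mathcal{L}}h:=\mathcal{P}^+\!\big(\bar r_+ e^{-2izx}\,\mathcal{P}^-(r_- e^{2izx}h)\big)$. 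These operators are governed by the same Fredholm structure as in Lemma \ref{inverse-fredholm}, whose inverse is bounded on $L^2_z(\mathbb{R})$ with a constant depending only on $\|r\|_{L^\infty_z}$ and, crucially, uniformly in $x$.

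For the weighted bounds (\ref{Fourier-2a}) and (\ref{Fourier-3a}), I would multiply the scalar equation for $\mu_-^{(2)}$ by $\langle x\rangle$. Since $\langle x\rangle$ is independent of $z$, it commutes with $\mathcal{L}$ and with the projection operators, so $\langle x\rangle\mu_-^{(2)}$ solves $(I-\mathcal{L})(\langle x\rangle\mu_-^{(2)})=\langle x\rangle\,\mathcal{P}^-(r_- e^{2izx})$. Applying the uniform bound on $(I-\mathcal{L})^{-1}$ reduces matters to estimating $\sup_{x\in(x_0,\infty)}\|\langle x\rangle\,\mathcal{P}^-(r_- e^{2izx})\|_{L^2_z}$, which is precisely bound (\ref{Fourier-3}) of Proposition \ref{prop-Fourier-2}; this yields (\ref{Fourier-2a}). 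The estimate (\ref{Fourier-3a}) follows identically from bound (\ref{Fourier-2}) applied to the inhomogeneity $\mathcal{P}^+(\bar r_+ e^{-2izx})$ in the closed equation for $\eta_+^{(1)}$.

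For the derivative bounds (\ref{Fourier-2aa}) and (\ref{Fourier-3aa}) I would differentiate the subsystem for $(\mu_-^{(2)},\eta_+^{(2)})$ in $x$. Because $\mathcal{L}$ depends on $x$ only through the exponentials $e^{\pm2izx}$, the derivative $\partial_x$ produces factors $\pm2iz$, and $(\partial_x\mu_-^{(2)},\partial_x\eta_+^{(2)})$ again solves the same Fredholm system with a new inhomogeneity. Its leading contributions are $\mathcal{P}^-(2iz\,r_- e^{2izx})$ and $\mathcal{P}^+(-2iz\,\bar r_+ e^{-2izx}\mu_-^{(2)})$, controlled in $L^2_z$ by bounds (\ref{Fourier-2-3a}) and (\ref{Fourier-2-3}); this is exactly where the extra regularity $r_\pm\in L^{2,1}(\mathbb{R})$ enters, through $\|z r_\pm\|_{L^2}\le\|r_\pm\|_{L^{2,1}}$. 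Invoking the uniform resolvent bound once more then delivers the stated estimates, provided the remaining terms can be handled.

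The main obstacle is the genuinely lower-order coupling term $\mathcal{P}^-\!\big(2iz\,r_- e^{2izx}(\eta_+^{(2)}-1)\big)$, together with its counterpart carrying the factor $z\bar r_+\mu_-^{(2)}$, which appears in the differentiated inhomogeneity: the factor $z$ is unbounded, so it cannot be absorbed by $\|r_-\|_{L^\infty}$ alone. Here I would estimate $\|z r_-(\eta_+^{(2)}-1)\|_{L^2_z}\le\|z r_-\|_{L^2_z}\,\|\eta_+^{(2)}-1\|_{L^\infty_z}$ and close the argument using an $L^\infty_z$ bound on the solution components, obtained from the zeroth-order equations together with the uniform estimates (\ref{Fourier-2-inf})--(\ref{Fourier-3-inf}) and Lemma \ref{inverse-fredholm-cor}. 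Keeping this $L^\infty_z$ control uniform in $x$ while pairing it with the $L^2_z$ factor $z r_\pm$ is the delicate point; once it is in place, the triangle inequality and the bounded inverse of $I-\mathcal{L}$ (equivalently $I-\tilde{\mathcal{L}}$) give (\ref{Fourier-2aa}) and (\ref{Fourier-3aa}).
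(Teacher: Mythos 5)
Your first half is essentially sound. The componentwise decoupling of (\ref{projection-int-type-4})--(\ref{projection-int-type-5}) into the pairs $(\mu_-^{(1)},\eta_+^{(1)})$ and $(\mu_-^{(2)},\eta_+^{(2)})$ is correct, and since $\langle x\rangle$ is a constant for each fixed $x$, your commuting trick reduces (\ref{Fourier-2a})--(\ref{Fourier-3a}) to the free-term estimates (\ref{Fourier-2})--(\ref{Fourier-3}) of Proposition \ref{prop-Fourier-2}, exactly as in the paper (which instead works with the matrix form $G-\mathcal{P}^-(GR)=F$, $G=M(I-R_+)$, and applies the row-wise resolvent bound (\ref{bound-on-inverse-lemma}) to the rows of $G\tau_1$ and $G\tau_2$). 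One debt you leave unpaid even here: the uniform-in-$x$ invertibility of $I-\mathcal{L}$ and $I-\tilde{\mathcal{L}}$ on $L^2_z$ is asserted by analogy with Lemma \ref{inverse-fredholm}; it is true, but it needs to be derived, e.g.\ by embedding your scalar equations back into the row-vector system to which that lemma applies.

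The genuine gap is in the derivative step, precisely at what you call the main obstacle. Your proposed H\"older split $\| z r_-(\eta_+^{(2)}-1)\|_{L^2_z}\le \| z r_-\|_{L^2_z}\,\|\eta_+^{(2)}-1\|_{L^{\infty}_z}$ requires a uniform-in-$x$ $L^{\infty}_z$ bound on a solution component, and none of the results you invoke supplies it: the bounds (\ref{Fourier-2-inf})--(\ref{Fourier-3-inf}) control only the free terms $\mathcal{P}^{\pm}(r e^{\mp 2izx})$, Lemma \ref{inverse-fredholm-cor} gives only $L^2_z$ control of $M_{\pm}-I$, and the resolvent bound is an $L^2$ bound --- the projections $\mathcal{P}^{\pm}$ do not map $L^2$ into $L^{\infty}$, while the alternative route through $\|\bar r_+ e^{-2izx}\mu_-^{(2)}\|_{H^1_z}$ produces a factor $x$ from differentiating the exponential and destroys uniformity as $|x|\to\infty$. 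The paper closes this step by a different mechanism, which your purely scalar bookkeeping cannot reproduce: by Proposition \ref{r-boundness}, $\lambda r_-(z)\in L^{\infty}_z$, and since $r_-=4zr_+$ also $\lambda z\bar r_+\in L^{\infty}_z$; these are paired with the $\lambda$-weighted $L^2$ solution bounds (\ref{bound-eta-plusplus}) and (\ref{bound-mu-minus-1}), which come for free from applying the resolvent estimate to the second row of $G\tau_1$ and the first row of $G\tau_2$. One then writes, e.g., $z r_-(\eta_+^{(2)}-1)=(2i)^{-1}(\lambda r_-)\cdot 2i\lambda(\eta_+^{(2)}-1)$ and $z\bar r_+(\mu_-^{(1)}-1)=2i(\lambda z\bar r_+)\cdot(2i\lambda)^{-1}(\mu_-^{(1)}-1)$, i.e.\ $L^{\infty}\times L^2$ pairings with every factor already controlled; note this is also where $r_{\pm}\in L^{2,1}$ truly enters, through Proposition \ref{r-boundness} rather than only through $\|zr_{\pm}\|_{L^2}$. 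Because you track only $\mu_-^{(2)}$ and $\eta_+^{(1)}$ in unweighted $L^2_z$, the weighted bounds (\ref{bound-eta-plusplus}) and (\ref{bound-mu-minus-1}) are absent from your scheme, and they are exactly what replaces the unproven $L^{\infty}_z$ control; without them your argument does not close.
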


\begin{proof}
Using the identity $\mathcal{P}^+ - \mathcal{P}^- = I$ following from
relations (\ref{plemelj}) and the identity
$$
R_+ + R_- = (I - R_+) R,
$$
which follows from the explicit form (\ref{jump-P}), we rewrite
the inhomogeneous equation (\ref{RH-5}) in the matrix form
\begin{equation}
\label{RH-7}
G - \mathcal{P}^-(G R) = F,
\end{equation}
where $G := M (I - R_+)$ is given explicitly from (\ref{RH-5aa}) and (\ref{RH-5a}) by
\begin{equation}
\label{RH-5ab}
G(x;z) = \left[ \begin{array}{cc} \mu_-^{(1)}(x;z) - 1 & \eta_+^{(1)}(x;z) - \bar{r}_+(z) e^{-2izx} (\mu_-^{(1)}(x;z) - 1) \\
\mu_-^{(2)}(x;z) & \eta_+^{(2)}(x;z) - 1 - \bar{r}_+(z) e^{-2izx} \mu_-^{(2)}(x;z) \end{array} \right].
\end{equation}

From the explicit expression (\ref{RH-5b}) for $F(x;z)$, we can see that
the second row vector of $F(x;z)$ and $F(x;z) \tau_1(\lambda)$ remains the same
and is given by $[ \mathcal{P}^-(r_-(z) e^{2i z x}), 0]$. From the explicit expressions (\ref{RH-5ab}),
the second row vector of $G(x;z) \tau_1(\lambda)$ is given by
$$
\left[ \mu_-^{(2)}(x;z),\;\; 2 i \lambda \left( \eta_+^{(2)}(x;z) - 1 - \bar{r}_+(z) e^{-2izx} \mu_-^{(2)}(x;z) \right) \right]
$$
Using bound (\ref{bound-on-inverse-lemma}) for the second row vector of $G(x;z) \tau_1(\lambda)$,
we obtain the following bounds for every $x \in \mathbb{R}$,
\begin{equation}
\label{bound-mu-minus-2}
\| \mu_-^{(2)}(x;z) \|_{L^2_z} \leq C \| \mathcal{P}^-(r_-(z) e^{2i z x}) \|_{L^2_z}
\end{equation}
and
\begin{equation}
\label{bound-eta-plus}
\| 2 i \lambda \left( \eta_+^{(2)}(x;z) - 1 - \bar{r}_+(z) e^{-2izx} \mu_-^{(2)}(x;z) \right) \|_{L^2_z}
\leq C \| \mathcal{P}^-(r_-(z) e^{2i z x}) \|_{L^2_z},
\end{equation}
where the positive constant $C$ only depends on $\| r_{\pm} \|_{L^{\infty}}$.
By substituting bound (\ref{Fourier-3}) of Proposition \ref{prop-Fourier-2} into (\ref{bound-mu-minus-2}),
we obtain bound (\ref{Fourier-2a}). Also note that since $|2 i \lambda \bar{r}_+(z)| = |r(\lambda)|$
and $r(\lambda) \in L^{\infty}_z(\mathbb{R})$, we also obtain from (\ref{bound-mu-minus-2})
and (\ref{bound-eta-plus}) by the triangle inequality,
\begin{equation}
\label{bound-eta-plusplus}
\| 2 i \lambda \left( \eta_+^{(2)}(x;z) - 1  \right) \|_{L^2_z} \leq
C \| \mathcal{P}^-(r_-(z) e^{2i z x}) \|_{L^2_z},
\end{equation}
where the positive constant $C$ still depends on $\| r_{\pm} \|_{L^{\infty}}$ only.

Similarly, from the explicit expression (\ref{RH-5b}) for $F(x;z)$, we can see that
the first row vector of $F(x;z)$ and $F(x;z) \tau_2(\lambda)$ remains the same
and is given by $[ 0, \mathcal{P}^+(\bar{r}_+(z) e^{-2i z x})]$.
From the explicit expressions (\ref{RH-5ab}), the first row vector of
$G(x;z) \tau_2(\lambda)$ is given by
$$
\left[ (2i\lambda)^{-1} (\mu_-^{(1)}(x;z) - 1), \;\; \eta_+^{(1)}(x;z) - \bar{r}_+(z) e^{-2izx} (\mu_-^{(1)}(x;z) - 1) \right]
$$
Using bound (\ref{bound-on-inverse-lemma}) for the first row vector of $G(x;z) \tau_2(\lambda)$,
we obtain the following bounds for every $x \in \mathbb{R}$,
\begin{equation}
\label{bound-mu-minus-1}
\| (2i\lambda)^{-1} (\mu_-^{(1)}(x;z) - 1) \|_{L^2_z} \leq C \| \mathcal{P}^+(\bar{r}_+(z) e^{-2i z x}) \|_{L^2_z}
\end{equation}
and
\begin{equation}
\label{bound-eta-plus-1}
\| \eta_+^{(1)}(x;z) - \bar{r}_+(z) e^{-2izx} (\mu_-^{(1)}(x;z) - 1) \|_{L^2_z} \leq C \| \mathcal{P}^+(\bar{r}_+(z) e^{-2i z x}) \|_{L^2_z},
\end{equation}
where the positive constant $C$ only depends on $\| r_{\pm} \|_{L^{\infty}}$.
Since $|2 i \lambda \bar{r}_+(z)| = |r(\lambda)|$
and $r(\lambda) \in L^{\infty}_z(\mathbb{R})$, we also obtain from (\ref{bound-mu-minus-1}) and (\ref{bound-eta-plus-1})
by the triangle inequality,
\begin{eqnarray}
\nonumber
\| \eta_+^{(1)}(x;z) \|_{L^2_z} & \leq &
\| (2 i \lambda) \bar{r}_+(z) e^{-2izx} (2i \lambda)^{-1} (\mu_-^{(1)}(x;z) - 1) \|_{L^2_z} + C \| \mathcal{P}^+(\bar{r}_+(z) e^{-2i z x}) \|_{L^2_z} \\
& \leq & C' \| \mathcal{P}^+(\bar{r}_+(z) e^{-2i z x}) \|_{L^2_z},
\label{bound-eta-plus-2}
\end{eqnarray}
where the positive constant $C'$ still depends on $\| r_{\pm} \|_{L^{\infty}}$ only.
By substituting bound (\ref{Fourier-2}) of Proposition \ref{prop-Fourier-2} into (\ref{bound-eta-plus-2}),
we obtain bound (\ref{Fourier-3a}).

In order to obtain bounds (\ref{Fourier-2aa}) and (\ref{Fourier-3aa}), we take derivative of
the inhomogeneous equation (\ref{RH-5}) in $x$ and obtain
\begin{eqnarray}
\label{RH-5c}
\partial_x M - \mathcal{P}^+ \left( \partial_x M \right) R_+ - \mathcal{P}^- \left( \partial_x M \right) R_-  = \tilde{F},
\end{eqnarray}
where
\begin{eqnarray*}
\tilde{F} & := & \partial_x F + \mathcal{P}^+ M \partial_x R_+ + \mathcal{P}^- M \partial_x  R_- \\
& = & 2i \left[ e_2 \mathcal{P}^-(z r_-(z) e^{2i z x}), \quad  e_1 \mathcal{P}^+(-z \bar{r}_+(z) e^{-2i z x}) \right] \\
&\phantom{t} & +
2i \left[ \begin{array}{cc} z r_-(z) \eta_+^{(1)}(x;z) e^{2izx} & -z \bar{r}_+(z) (\mu_-^{(1)}(x;z) - 1) e^{-2izx}  \\
z r_-(z) (\eta_+^{(2)}(x;z) - 1) e^{2izx} & -z \bar{r}_+(z) \mu_-^{(2)}(x;z) e^{-2izx} \end{array} \right].
\end{eqnarray*}
Recall that $\lambda r_-(z) \in L^{\infty}_z(\mathbb{R})$ by Proposition \ref{r-boundness}.
The second row vector of $\tilde{F}(x;z) \tau_1(\lambda)$ and the first row vector of $\tilde{F}(x;z) \tau_2(\lambda)$
belongs to $L^2_z(\mathbb{R})$, thanks to bounds (\ref{Fourier-2-3}) and (\ref{Fourier-2-3a}) of Proposition \ref{prop-Fourier-2},
as well as bounds (\ref{bound-on-M-tech}), (\ref{bound-eta-plusplus}), and (\ref{bound-mu-minus-1}). As a result,
repeating the previous analysis, we obtain
the bounds (\ref{Fourier-2aa}) and (\ref{Fourier-3aa}).
\end{proof}

\subsection{Reconstruction formulas}

We shall now recover the potential $u$ of the Kaup--Newell spectral problem (\ref{lax1}) from
the matrices $M_{\pm}$, which satisfy the integral equations (\ref{RH-M}).
This will gives us the map
\begin{equation}
H^1(\mathbb{R})  \cap L^{2,1}(\mathbb{R}) \ni (r_-,r_+) \mapsto u \in H^2(\mathbb{R}) \cap H^{1,1}(\mathbb{R}),
\end{equation}
where $r_-$ and $r_+$ are related by (\ref{r1-r2}).

Let us recall the connection formulas between the potential $u$ and the Jost functions
of the direct scattering transform in Section 2. By Lemma \ref{asympt-mod},
if $u \in H^2(\mathbb{R}) \cap H^{1,1}(\mathbb{R})$, then
\begin{equation}
\label{inverse-1}
\partial_x \left(\bar{u}(x) e^{\frac{1}{2i} \int_{\pm \infty}^x |u(y)|^2 dy} \right) = 2 i \lim_{|z| \to \infty} z m_{\pm}^{(2)}(x;z).
\end{equation}
On the other hand, by Lemma \ref{asympt-mod} and the representation (\ref{f}),
if $u \in H^2(\mathbb{R}) \cap H^{1,1}(\mathbb{R})$, then
\begin{equation}
\label{inverse-2}
u(x) e^{-\frac{1}{2i} \int_{\pm \infty}^x |u(y)|^2 dy} = -4  \lim_{|z| \to \infty} z p_{\pm}^{(1)}(x;z).
\end{equation}

We shall now study properties of the potential $u$ recovered by
equations (\ref{inverse-1}) and (\ref{inverse-2}) from properties of
the matrices $M_{\pm}$. The two choices in the reconstruction formulas (\ref{inverse-1}) and (\ref{inverse-2})
are useful for controlling the potential $u$ on the positive and negative half-lines.
We shall proceed separately with the estimates on the two half-lines.

\subsubsection{Estimates on the positive half-line}

By comparing (\ref{correspondence-M}) with (\ref{definitions-M}), we rewrite
the reconstruction formulas (\ref{inverse-1}) and (\ref{inverse-2}) for
the choice of $m_+^{(2)}$ and $p_+^{(1)}$ as follows:
\begin{equation}
\label{inverse-1a}
\partial_x \left(\bar{u}(x) e^{\frac{1}{2i} \int_{+\infty}^x |u(y)|^2 dy} \right)
= 2 i e^{-\frac{1}{2i} \int_{+\infty}^x |u(y)|^2 dy}  \lim_{|z| \to \infty} z \mu_-^{(2)}(x;z)
\end{equation}
and
\begin{equation}
\label{inverse-2a}
u(x) e^{-\frac{1}{2i} \int_{+\infty}^x |u(y)|^2 dy} = -4 e^{\frac{1}{2i} \int_{+\infty}^x |u(y)|^2 dy}
 \lim_{|z| \to \infty} z \eta_+^{(1)}(x;z)
\end{equation}

Since $r_{\pm} \in H^1(\mathbb{R})  \cap L^{2,1}(\mathbb{R})$, we have $R(x;\cdot) \in L^1(\mathbb{R}) \cap L^2(\mathbb{R})$
for every $x \in \mathbb{R}$, so that the asymptotic limit (\ref{limit-Cauchy-operator}) in Proposition \ref{prop-RHP} 
is justified since $M_-(x;\cdot) - I \in L^2(\mathbb{R})$ by Lemma \ref{inverse-fredholm-cor}. Therefore, 
we use the solution representation (\ref{RH-M-complex}) and rewrite 
the reconstruction formulas (\ref{inverse-1a}) and (\ref{inverse-2a}) in the explicit form
\begin{eqnarray}
\nonumber
e^{\frac{1}{2i} \int_{+\infty}^x |u(y)|^2 dy} \partial_x \left(\bar{u}(x) e^{\frac{1}{2i} \int_{+\infty}^x |u(y)|^2 dy} \right)
& = & -\frac{1}{\pi} \int_{\mathbb{R}} r_-(z) e^{2 i z x} \left[ \eta_-^{(2)}(x;z)
+ \bar{r}_+(z) e^{-2izx} \mu_-^{(2)} (x;z) \right] dz \\ \label{inverse-3}
& = & -\frac{1}{\pi} \int_{\mathbb{R}} r_-(z) e^{2 i z x} \eta_+^{(2)} (x;z) dz
\end{eqnarray}
and
\begin{equation}
\label{inverse-4}
u(x) e^{i \int_{+\infty}^x |u(y)|^2 dy} = \frac{2}{\pi i}  \int_{\mathbb{R}} \bar{r}_+(z) e^{-2 i z x} \mu_-^{(1)}(x;z) dz.
\end{equation}
where we have used the jump condition (\ref{jump}) for the second equality in (\ref{inverse-3}).

If $r_+, r_- \in H^1(\mathbb{R})$, then the reconstruction formulas (\ref{inverse-3}) and (\ref{inverse-4})
recover $u$ in class $H^{1,1}(\mathbb{R}^+)$. Furthermore, if $r_+,r_-\in L^{2,1}(\mathbb{R})$, then $u$ is in class $H^2(\mathbb{R}^+)$.

\begin{lem} \label{mu-estimate}
Let $r_{\pm}\in H^1(\mathbb{R})\cap L^{2,1}(\mathbb{R})$ such that the inequality (\ref{smallness-defocusing})
is satisfied. Then, $u \in H^2(\mathbb{R}^+) \cap H^{1,1}(\mathbb{R}^+)$ satisfies the bound
\begin{equation} \label{mu-estimate-1}
  \| u \|_{H^2(\mathbb{R}^+) \cap H^{1,1}(\mathbb{R}^+)} \leq C \left( \|r_+\|_{H^1 \cap L^{2,1}} + \|r_-\|_{H^1 \cap L^{2,1}}\right),
\end{equation}
where $C$ is a positive constant  that depends on $\| r_{\pm} \|_{H^1 \cap L^{2,1}}$.
\end{lem}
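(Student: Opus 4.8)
The plan is to read off $u$ on $\mathbb{R}^+$ from the two reconstruction formulas (\ref{inverse-3}) and (\ref{inverse-4}), which present the unimodular multiple $u(x)e^{i\int_{+\infty}^x|u|^2dy}$ and the combination $\partial_x\bar u+\frac{1}{2i}|u|^2\bar u$ (again up to a unimodular factor) as integrals of the form $\int_{\mathbb{R}}r_{\mp}(z)e^{\pm 2izx}\,(\text{Jost component})(x;z)\,dz$. The guiding principle is the Fourier dictionary: on the leading contribution, obtained by replacing the Jost component $\mu_-^{(1)}$ or $\eta_+^{(2)}$ by its $|z|\to\infty$ limit $1$, each integral is up to constants a Fourier transform in $x$, so by Plancherel an $x$-weight $\langle x\rangle$ is traded for $H^1_z$-regularity of the data, while each $x$-derivative is traded for multiplication by $z$, i.e. for $L^{2,1}_z$-decay. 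I would organize the estimate around the three quantities $\langle x\rangle u$, $\langle x\rangle\partial_x u$, and $\partial_x^2 u$ in $L^2(\mathbb{R}^+)$, which together with $L^{2,1}\hookrightarrow L^2$ reproduce the full norm in (\ref{mu-estimate-1}). The leading part $\int\bar r_+(z)e^{-2izx}dz$ of $|u|$ in (\ref{inverse-4}) is bounded in $L^2$ and in $\langle x\rangle L^2$ by $\|r_+\|_{L^2}$ and $\|r_+\|_{H^1}$; the leading part $\int r_-(z)e^{2izx}dz$ of $\partial_x\bar u$ in (\ref{inverse-3}) is bounded in $L^2$, in $\langle x\rangle L^2$, and (after one more $\partial_x$, which inserts a factor $2iz$) in $L^2$ by $\|r_-\|_{L^2}$, $\|r_-\|_{H^1}$, and $\|zr_-\|_{L^2}=\|r_-\|_{L^{2,1}}$. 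Thus the leading parts alone already yield membership $u\in H^2(\mathbb{R}^+)\cap H^{1,1}(\mathbb{R}^+)$ and the bound with the $H^1\cap L^{2,1}$ norms of $r_\pm$.

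Two families of lower-order terms must then be controlled. The cubic terms $|u|^2\bar u$ and $\partial_x(|u|^2\bar u)$, produced when converting between $u$ and $u\,e^{i\int|u|^2}$ and when differentiating (\ref{inverse-3}), I would handle without a bootstrap by first establishing $u\in L^\infty(\mathbb{R}^+)$ directly: splitting $\mu_-^{(1)}=1+(\mu_-^{(1)}-1)$ in (\ref{inverse-4}), the first piece is bounded since $r_+\in L^{2,1}\hookrightarrow L^1$, and the second by Cauchy--Schwarz using the uniform estimate $\|M_-(x;\cdot)-I\|_{L^2_z}\le C(\|r_+\|_{L^2}+\|r_-\|_{L^2})$ of Lemma \ref{inverse-fredholm-cor}. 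With $u\in L^\infty\cap L^{2,1}$ in hand one has $\|\langle x\rangle|u|^2\bar u\|_{L^2}\le\|u\|_{L^\infty}^2\|\langle x\rangle u\|_{L^2}$ and, once $\partial_x u\in L^2$ is known, $\|\partial_x(|u|^2\bar u)\|_{L^2}\le C\|u\|_{L^\infty}^2\|\partial_x u\|_{L^2}$, both finite with constants depending on $\|r_\pm\|_{H^1\cap L^{2,1}}$. The correction terms, in which the Jost component is replaced by $\mu_-^{(1)}-1$ or $\eta_+^{(2)}-1$, are governed by the coupled relations (\ref{projection-int-type-4})--(\ref{projection-int-type-5}) and are controlled by the weighted and $x$-differentiated bounds (\ref{Fourier-2a})--(\ref{Fourier-3aa}) of Lemma \ref{cor-Fourier-2}, the inhomogeneous Fourier estimates of Proposition \ref{prop-Fourier-2}, and the $L^\infty_z$ bounds on $r(\lambda)$ and $\lambda r_-(z)$ from Propositions \ref{r-regularity} and \ref{r-boundness}. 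Since those bounds hold only for $x>x_0>0$, I would split $\mathbb{R}^+=(0,x_0)\cup(x_0,\infty)$ and use the uniform $L^2_z$ control of Lemma \ref{inverse-fredholm-cor} on the compact piece.

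The hard part will be precisely the weighted control of these correction terms. A naive Cauchy--Schwarz bound of, say, $\int\bar r_+(z)e^{-2izx}(\mu_-^{(1)}(x;z)-1)\,dz$ gives only a bound uniform in $x$, which is not integrable against $\langle x\rangle^2\,dx$ on $\mathbb{R}^+$; the resolution is to exploit the genuine $x$-decay of the small components $\mu_-^{(2)}$ and $\eta_+^{(1)}$ quantified by $\sup_{x>x_0}\|\langle x\rangle\mu_-^{(2)}\|_{L^2_z}$ and $\sup_{x>x_0}\|\langle x\rangle\eta_+^{(1)}\|_{L^2_z}$ in (\ref{Fourier-2a})--(\ref{Fourier-3a}), which propagate through (\ref{projection-int-type-4})--(\ref{projection-int-type-5}) to produce the decay of $\mu_-^{(1)}-1$ and $\eta_+^{(2)}-1$. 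The second-derivative estimate behind $\partial_x^2 u\in L^2$ is the most delicate, relying in addition on the $\partial_x$-bounds (\ref{Fourier-2aa})--(\ref{Fourier-3aa}) and on the $z$-weighted data bounds (\ref{Fourier-2-3})--(\ref{Fourier-2-3a}). Collecting all contributions gives a constant depending on $\|r_\pm\|_{L^\infty}\le C\|r_\pm\|_{H^1}$ and, through the cubic terms, on $\|r_\pm\|_{H^1\cap L^{2,1}}$, in agreement with the statement.
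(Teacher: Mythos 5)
Your proposal follows the paper's own proof in all essentials: the same reconstruction formulas (\ref{repres-u-r}) and (\ref{repres-uu-r}), the same split into a pure Fourier leading term plus a correction governed by the coupled Plemelj equations (\ref{projection-int-type-4})--(\ref{projection-int-type-5}), and the same three inputs (Proposition \ref{prop-Fourier-2}, Lemma \ref{cor-Fourier-2}, Lemma \ref{inverse-fredholm-cor}); your way of taming the cubic terms by first extracting $u \in L^{\infty}(\mathbb{R}^+)$ from Lemma \ref{inverse-fredholm-cor} is a harmless variant of the paper's trick of working with $v(x) := u(x) e^{-\frac{1}{2i}\int_{+\infty}^x |u(y)|^2 dy}$, for which $|v| = |u|$, together with the embedding $H^1 \hookrightarrow L^6$. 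One quantitative point in your ``hard part'' paragraph needs the paper's exact mechanism rather than the one you describe: propagating the decay of $\eta_+^{(1)}$ through (\ref{projection-int-type-4}) yields only $\sup_{x > x_0} \| \langle x \rangle ( \mu_-^{(1)}(x;\cdot) - 1 ) \|_{L^2_z} \leq C$, whence Cauchy--Schwarz against $\bar{r}_+ \in L^2$ gives $|I(x)| \lesssim \langle x \rangle^{-1}$, so that $\langle x \rangle I$ is merely bounded and \emph{not} in $L^2(x_0,\infty)$ --- one full power of decay short. The paper instead substitutes (\ref{projection-int-type-4}) into $I(x)$ and integrates by parts, using the duality $\int f \, \mathcal{P}^-(g) \, dz = - \int \mathcal{P}^+(f) \, g \, dz$ to move the projection onto the datum, so that
$I(x) = -\int_{\mathbb{R}} r_-(z) \, \eta_+^{(1)}(x;z) \, e^{2izx} \, \mathcal{P}^+ \bigl( \overline{r}_+(z) e^{-2izx} \bigr)(z) \, dz$,
and then harvests a factor $\langle x \rangle^{-1}$ from \emph{each} of the two weighted bounds (\ref{Fourier-2}) and (\ref{Fourier-3a}) simultaneously, giving $\sup_{x > x_0} |\langle x \rangle^2 I(x)| < \infty$ and hence $\langle x \rangle I \in L^2(x_0,\infty)$; the same double pairing (augmented by (\ref{Fourier-2-inf}), (\ref{Fourier-2-3}), and (\ref{Fourier-2aa})--(\ref{Fourier-3aa})) is what closes the $I'(x)$, i.e.\ $H^2$, estimate. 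Since you already list all of these estimates among your tools, this is a repair of a single bookkeeping step rather than a change of approach.
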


\begin{proof}
We use the reconstruction formula (\ref{inverse-4}) rewritten as follows:
\begin{eqnarray}
\nonumber
u(x) e^{i \int_{+\infty}^x |u(y)|^2 dy} & = & \frac{2}{\pi i} \int_{\mathbb{R}} \bar{r}_+(z) e^{-2 i z x} dz \\
& \phantom{t} & +
\frac{2}{\pi i} \int_{\mathbb{R}} \bar{r}_+(z) e^{-2 i z x}
\left[ \mu_-^{(1)}(x;z) - 1 \right] dz. \label{repres-u-r}
\end{eqnarray}
The first term is controlled in $L^{2,1}(\mathbb{R})$
because $\overline{r}_+$ is in $H^1(\mathbb{R})$
and its Fourier transform $\widehat{\overline{r}_+}$is in $L^{2,1}(\mathbb{R})$. To control the second term
in $L^{2,1}(\mathbb{R}^+)$, we denote
$$
I(x) :=  \int_{- \infty}^{\infty} \bar{r}_+(z) e^{-2 i z x} \left[ \mu_-^{(1)}(x;z) - 1  \right] dz,
$$
use the inhomogeneous equation (\ref{projection-int-type-4}), and integrate by parts to obtain
\begin{eqnarray*}
I(x) & = & -\int_{\mathbb{R}} r_-(z) \eta_+^{(1)}(x;z) e^{2i z x}
\mathcal{P}^+ \left( \overline{r}_+(z) e^{-2izx} \right)(z) dz.
\end{eqnarray*}
By bounds (\ref{Fourier-2}) in Proposition \ref{prop-Fourier-2}, bound (\ref{Fourier-3a}) in Lemma \ref{cor-Fourier-2}, and
the Cauchy--Schwarz inequality, we have for every $x_0 \in \mathbb{R}^+$,
\begin{eqnarray*}
\sup_{x \in (x_0,\infty)} |\langle x \rangle^2 I(x)| & \leq & \| r_- \|_{L^{\infty}}
\sup_{x \in (x_0,\infty)} \| \langle x \rangle\eta_+^{(1)}(x;z) \|_{L^2_z}
\sup_{x \in (x_0,\infty)} \left\|  \langle x \rangle \mathcal{P}^+ \left( \overline{r}_+(z) e^{-2izx} \right) \right\|_{L^2_z} \\
& \leq & C \| r_+ \|_{H^1}^2,
\end{eqnarray*}
where the positive constant $C$ only depends on $\| r_{\pm} \|_{L^{\infty}}$.
By combining the estimates for the two terms with the triangle inequality, we obtain the bound
\begin{equation}
\label{bound-1-u}
\| u \|_{L^{2,1}(\mathbb{R}^+)} \leq C \left(1 + \| r_+ \|_{H^1}\right)\|r_+\|_{H^1}.
\end{equation}

On the other hand, the reconstruction formula (\ref{inverse-3}) can be rewritten in the form
\begin{eqnarray}
\nonumber
e^{\frac{1}{2i} \int_{+\infty}^x |u(y)|^2 dy} \partial_x \left(\bar{u}(x) e^{\frac{1}{2i} \int_{+\infty}^x |u(y)|^2 dy} \right)
& = & -\frac{1}{\pi}  \int_{\mathbb{R}} r_-(z) e^{2 i z x} dz \\
& \phantom{t} & -\frac{1}{\pi} \int_{\mathbb{R}} r_-(z) e^{2 i z x} \left[ \eta_+^{(2)} (x;z) - 1 \right] dz.
\label{repres-uu-r}
\end{eqnarray}
Using the same analysis as above yields the bound
\begin{eqnarray}
\label{bound-2-u}
\left\| \partial_x \left(\bar{u} e^{\frac{1}{2i} \int_{+\infty}^x |u(y)|^2 dy} \right) \right\|_{L^{2,1}(\mathbb{R}^+)}
\leq C \left(1 + \| r_- \|_{H^1} \right) \|r_-\|_{H^1},
\end{eqnarray}
where $C$ is another positive constant that depends on $\| r_{\pm} \|_{L^{\infty}}$.
Combining bounds (\ref{bound-1-u}) and (\ref{bound-2-u}), we set $v(x) := u(x) e^{-\frac{1}{2i} \int_{+\infty}^x |u(y)|^2 dy}$ and obtain
\begin{equation} \label{mu-estimate-1-v}
\| v \|_{H^{1,1}(\mathbb{R}^+)} \leq C \left( \|r_+\|_{H^1} + \|r_-\|_{H^1}\right),
\end{equation}
where  $C$ is a new positive constant  that depends on $\| r_{\pm} \|_{H^1}$.
Since $|v(x)|=|u(x)|$ and $H^1(\mathbb{R})$ is embedded into $L^6(\mathbb{R})$,
the estimate (\ref{mu-estimate-1-v}) implies the bound
\begin{equation} \label{mu-estimate-1-add}
  \| u \|_{H^{1,1}(\mathbb{R}^+)} \leq C \left( \|r_+\|_{H^1} + \|r_-\|_{H^1}\right),
\end{equation}
where $C$ is a positive constant  that depends on $\| r_{\pm} \|_{H^1}$.

In order to obtain the estimate $u$ in $H^2(\mathbb{R}^+)$ and complete the proof
of the bound (\ref{mu-estimate-1}), we differentiate $I$ in $x$, substitute
the inhomogeneous equation (\ref{projection-int-type-4}) and its $x$ derivative,
and integrate by parts to obtain
\begin{eqnarray*}
I'(x) & = &  - 2 i \int_{- \infty}^{\infty} z \bar{r}_+(z) e^{-2 i z x} \left[ \mu_-^{(1)}(x;z) - 1  \right] dz
+ \int_{- \infty}^{\infty} \bar{r}_+(z) e^{-2 i z x} \partial_x \mu_-^{(1)}(x;z) dz \\
& = &  2 i \int_{- \infty}^{\infty} r_-(z) \eta_+^{(1)}(x;z) e^{2izx} \mathcal{P}^+( z \bar{r}_+(z) e^{-2 i z x})(z)  dz \\
& \phantom{t} & - 2 i \int_{- \infty}^{\infty} z r_-(z) \eta_+^{(1)}(x;z) e^{2izx} \mathcal{P}^+( \bar{r}_+(z) e^{-2 i z x})(z)  dz \\
& \phantom{t} & -\int_{- \infty}^{\infty} r_-(z) \partial_x \eta_+^{(1)}(x;z) e^{2izx} \mathcal{P}^+( \bar{r}_+(z) e^{-2 i z x})(z)  dz.
\end{eqnarray*}
Using bounds (\ref{Fourier-2}), (\ref{Fourier-2-inf}) and (\ref{Fourier-2-3}) in Proposition \ref{prop-Fourier-2},
bounds (\ref{Fourier-3a}) and (\ref{Fourier-3aa}) in Lemma \ref{cor-Fourier-2}, as well as
the Cauchy--Schwarz inequality, we have for every $x_0 \in \mathbb{R}^+$,
\begin{eqnarray*}
\sup_{x \in (x_0,\infty)} | \langle x \rangle I'(x)| & \leq & 2 \| r_- \|_{L^{\infty}}
\sup_{x \in (x_0,\infty)} \| \langle x \rangle \eta_+^{(1)}(x;z) \|_{L^2_z}
\sup_{x \in (x_0,\infty)} \left\|  \mathcal{P}^+ \left( z \overline{r}_+(z)  e^{-2izx} \right)\right\|_{L^2_z} \\
& \phantom{t} & + 2 \| z r_- \|_{L^2}
\sup_{x \in (x_0,\infty)} \| \langle x \rangle \eta_+^{(1)}(x;z) \|_{L^2_z}
\sup_{x \in (x_0,\infty)} \left\|  \mathcal{P}^+ \left( \overline{r}_+(z) e^{-2izx}\right) \right\|_{L^{\infty}_z}\\
& \phantom{t} & + \| r_- \|_{L^{\infty}}
\sup_{x \in (x_0,\infty)} \| \partial_x \eta_+^{(1)}(x;z) \|_{L^2_z}
\sup_{x \in (x_0,\infty)} \left\| \langle x \rangle \mathcal{P}^+ \left( \overline{r}_+(z) e^{-2izx} \right)\right\|_{L^2_z}\\
& \leq & C \| r_- \|_{H^1 \cap L^{2,1}}  \| r_+ \|_{H^1 \cap L^{2,1}}
\left( \| r_+ \|_{H^1 \cap L^{2,1}} + \| r_- \|_{H^1 \cap L^{2,1}} \right),
\end{eqnarray*}
where $C$ is a positive constant that only depends on $\| r_{\pm} \|_{L^{\infty}}$.
This bound on $\sup_{x \in \mathbb{R}^+} |\langle x \rangle I'(x)|$ is sufficient
to control $I'$ in $L^2(\mathbb{R}^+)$ norm and hence the derivative of (\ref{repres-u-r}) in $x$.
Using the same analysis for the derivative of (\ref{repres-uu-r}) in $x$ yields similar estimates.
The proof of the bound (\ref{mu-estimate-1}) is complete.
\end{proof}

By Lemma \ref{mu-estimate}, we obtain the existence of the mapping
\begin{equation}
\label{mapping-r-u}
H^1(\mathbb{R})  \cap L^{2,1}(\mathbb{R}) \ni (r_-,r_+) \mapsto u \in H^2(\mathbb{R}^+) \cap H^{1,1}(\mathbb{R}^+).
\end{equation}
We now show that this map is Lipschitz.

\begin{cor}
\label{cor-mu-estimate}
Let $r_{\pm}, \tilde{r}_{\pm} \in H^1(\mathbb{R}) \cap L^{2,1}(\mathbb{R}) $ satisfy $\| r_{\pm} \|_{H^{1}\cap L^{2,1}},
\| \tilde{r}_{\pm} \|_{H^{1}\cap L^{2,1}} \leq \rho$
for some $\rho > 0$. Denote the corresponding potentials by $u$ and $\tilde{u}$ respectively.
Then, there is a positive $\rho$-dependent constant $C(\rho)$ such that
\begin{equation} \label{mu-estimate-1-lipschitz}
  \| u - \tilde{u} \|_{H^2(\mathbb{R}^+) \cap H^{1,1}(\mathbb{R}^+)} \leq C(\rho) \left( \|r_+ - \tilde{r}_+\|_{H^1 \cap L^{2,1}}
  + \|r_- - \tilde{r}_- \|_{H^1 \cap L^{2,1}}\right).
\end{equation}
\end{cor}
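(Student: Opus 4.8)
The plan is to follow the proof of Lemma \ref{mu-estimate} line by line, but now tracking the dependence of every quantity on the scattering data. Throughout, both data sets lie in the ball of radius $\rho$, so the a priori bound (\ref{mu-estimate-1}) controls $\| u \|_{H^2 \cap H^{1,1}(\mathbb{R}^+)}$ and $\| \tilde{u} \|_{H^2 \cap H^{1,1}(\mathbb{R}^+)}$ by a $\rho$-dependent constant; moreover, since the constants in Proposition \ref{positivity} and hence in Lemmas \ref{inverse-fredholm} and \ref{inverse-fredholm-cor} depend only on $\| r_{\pm} \|_{L^{\infty}} \leq C \rho$, the bound (\ref{bound-on-inverse-lemma}) on the inverse operator is uniform over the ball. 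This uniformity is what will convert the linear structure of the integral equations into Lipschitz estimates.

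First I would establish Lipschitz continuity of the map $(r_+, r_-) \mapsto (\mu_-, \eta_+)$ in the norms of Lemma \ref{cor-Fourier-2}. Writing the matrix equation (\ref{RH-5}) (equivalently its reduction (\ref{RH-7})) for $M$ and for $\tilde{M}$ and subtracting, the difference $M - \tilde{M}$ solves the same integral equation with an inhomogeneity of the schematic form $(F - \tilde{F}) + \mathcal{P}^{\pm}\big( \tilde{M} (R - \tilde{R}) \big)$. Here $R - \tilde{R}$ is expressed through $r_{\pm} - \tilde{r}_{\pm}$ and estimated by Proposition \ref{prop-Fourier-2}, while $\tilde{M} - I$ is already controlled by Lemma \ref{inverse-fredholm-cor}; inverting $I - \mathcal{P}^-_S$ with the uniform bound (\ref{bound-on-inverse-lemma}) then yields, for every $x_0 \in \mathbb{R}^+$,
$$
\sup_{x \in (x_0,\infty)} \| \langle x \rangle (\mu_-^{(2)} - \tilde{\mu}_-^{(2)}) \|_{L^2_z}
+ \sup_{x \in (x_0,\infty)} \| \langle x \rangle (\eta_+^{(1)} - \tilde{\eta}_+^{(1)}) \|_{L^2_z}
\leq C(\rho) \left( \| r_+ - \tilde{r}_+ \|_{H^1} + \| r_- - \tilde{r}_- \|_{H^1} \right),
$$
together with the analogous estimates for $\partial_x(\mu_-^{(2)} - \tilde{\mu}_-^{(2)})$ and $\partial_x(\eta_+^{(1)} - \tilde{\eta}_+^{(1)})$ in the $L^{2,1}$-metric on the data. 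These are the Lipschitz analogues of (\ref{Fourier-2a})--(\ref{Fourier-3aa}).

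Next I would subtract the reconstruction formulas (\ref{inverse-3}) and (\ref{inverse-4}) for the two data sets. Each integrand has the form $r \cdot M$, so its difference splits as $(r - \tilde{r}) \tilde{M} + r (M - \tilde{M})$: the first factor is controlled by $\| r_{\pm} - \tilde{r}_{\pm} \|$ via Proposition \ref{prop-Fourier-2} and the a priori bound on $\tilde{M}$, and the second by the Lipschitz estimate of the previous step. Repeating the integration-by-parts and Cauchy--Schwarz arguments of Lemma \ref{mu-estimate}, now applied to the differences, produces the bound (\ref{mu-estimate-1-lipschitz}) for the gauge-transformed variable $v - \tilde{v}$, where $v(x) := u(x) e^{-\frac{1}{2i} \int_{+\infty}^x |u(y)|^2 dy}$.

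The main obstacle, exactly as in Corollary \ref{cor-regularity-m}, is that the reconstruction returns $u$ only up to the nonlinear gauge factor $e^{-\frac{1}{2i} \int_{+\infty}^x |u|^2 dy}$, which itself depends on the unknown $u$. To pass from the Lipschitz bound on $v - \tilde{v}$ to one on $u - \tilde{u}$, I would write $u - \tilde{u} = (v - \tilde{v}) e^{\frac{1}{2i} \int_{+\infty}^x |u|^2 dy} + \tilde{v} \big( e^{\frac{1}{2i} \int_{+\infty}^x |u|^2 dy} - e^{\frac{1}{2i} \int_{+\infty}^x |\tilde{u}|^2 dy} \big)$ and estimate the difference of exponentials precisely as in (\ref{LC-first}): it is bounded by $\int_{+\infty}^x (|u|^2 - |\tilde{u}|^2)\, dy$, which is in turn controlled by $(\| u \|_{L^2} + \| \tilde{u} \|_{L^2}) \| u - \tilde{u} \|_{L^2} \leq C(\rho) \| u - \tilde{u} \|_{L^2}$. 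Since $|v| = |u|$, the $L^2$ and (via Gagliardo--Nirenberg) $L^6$ norms of $u$ and $v$ coincide, so differentiating the gauge factor brings down only terms of the form $v'$ and $v |v|^2$ that are already controlled; this closes the argument with $\rho$-dependent constants and transfers the weighted and second-derivative estimates to $u - \tilde{u}$, yielding (\ref{mu-estimate-1-lipschitz}).
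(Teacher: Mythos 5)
Your overall route is the paper's own: the published proof of Corollary \ref{cor-mu-estimate} is deliberately terse --- it asserts that Lipschitz continuity of the maps $(r_+,r_-)\mapsto v$ and $(r_+,r_-)\mapsto w$, with $v := u\, e^{i\int_{+\infty}^x |u(y)|^2 dy}$ and $w := \left( \partial_x u + \tfrac{i}{2}|u|^2 u \right) e^{i\int_{+\infty}^x |u(y)|^2 dy}$, ``follows by repeating the same estimates'' of Lemma \ref{mu-estimate}, and then transfers the bound from $v-\tilde v$ to $u-\tilde u$ through the gauge identity. Your steps 1 and 2 are exactly the content hidden behind that phrase: the second-resolvent decomposition of $M-\tilde M$ with inhomogeneity $(F-\tilde F)+\mathcal{P}^{\pm}\bigl(\tilde M (R-\tilde R)\bigr)$ mirrors the decomposition (\ref{LC-third}) used for the direct problem in Corollary \ref{cor-regularity-m}, and your observation that the constant in (\ref{bound-on-inverse-lemma}) depends only on $\|r_{\pm}\|_{L^{\infty}} \leq C\rho$ (via $H^1 \hookrightarrow L^{\infty}$), hence is uniform over the ball, is indeed the mechanism that converts the linear structure into Lipschitz estimates.

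The one step that fails as written is the gauge transfer. You bound the difference of exponentials by $\int_{+\infty}^x \left( |u|^2 - |\tilde u|^2 \right) dy$, controlled by $C(\rho)\, \|u-\tilde u\|_{L^2}$ --- but $\|u-\tilde u\|$ is precisely the quantity you are trying to estimate, and $C(\rho)$ is not small for large $\rho$, so the inequality cannot be closed by absorption: as stated it is circular. The repair is the observation that the paper builds into its formula from the start, and which you invoke only one sentence too late: since $|v|=|u|$ pointwise, the phase can be rewritten entirely in terms of the \emph{reconstructed} quantity, namely $u - \tilde{u} = (v - \tilde{v})\, e^{-i\int_{+\infty}^x |v(y)|^2 dy} + \tilde{v}\, \bigl( e^{-i\int_{+\infty}^x |v(y)|^2 dy} - e^{-i\int_{+\infty}^x |\tilde{v}(y)|^2 dy} \bigr)$, so that the exponential difference is bounded, as in (\ref{LC-first}), by $\left( \|v\|_{L^2} + \|\tilde v\|_{L^2} \right) \|v - \tilde v\|_{L^2}$, which is already controlled by the Lipschitz bound established for $v$ in the previous step --- no reference to $u-\tilde u$ enters. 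With that substitution (and the analogous device using $w$ for the derivative and second-derivative terms, which you sketch correctly), your argument coincides with the paper's proof of (\ref{mu-estimate-1-lipschitz}).
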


\begin{proof}
By the estimates in Lemma \ref{mu-estimate},
if $r_{\pm} \in H^1(\mathbb{R}) \cap L^{2,1}(\mathbb{R})$, then the quantities
$$
v(x) := u(x) e^{i \int_{+\infty}^x |u(y)|^2 dy} \quad \mbox{\rm and} \quad
w(x) := \left( \partial_x u(x) + \frac{i}{2} |u(x)|^2 u(x) \right) e^{i \int_{+\infty}^x |u(y)|^2 dy},
$$
are defined in function space $H^1(\mathbb{R}^+) \cap L^{2,1}(\mathbb{R}^+)$.
Lipschitz continuity of the corresponding mappings
follows from the reconstruction formula (\ref{repres-u-r}) and (\ref{repres-uu-r})
by repeating the same estimates in Lemma \ref{mu-estimate}. Since $|v| = |u|$,
we can write
$$
u - \tilde{u} = (v - \tilde{v}) e^{-i  \int_{+\infty}^x |v(y)|^2 dy} +
\tilde{v} \left( e^{-i  \int_{+\infty}^x |v(y)|^2 dy}  - e^{-i  \int_{+\infty}^x |\tilde{v}(y)|^2 dy}   \right).
$$
Therefore, Lipschitz continuity of the mapping $(r_+,r_-) \mapsto v \in H^1(\mathbb{R}^+) \cap L^{2,1}(\mathbb{R}^+)$
is translated to Lipschitz continuity of the mapping
$(r_+,r_-) \mapsto u \in H^1(\mathbb{R}^+) \cap L^{2,1}(\mathbb{R}^+)$. Using a similar
representation for $\partial_x u$ in terms of $v$ and $w$, we obtain Lipschitz continuity of
the mapping (\ref{mapping-r-u}) with the bound (\ref{mu-estimate-1-lipschitz}).
\end{proof}

\subsubsection{Estimates on the negative half-line}

Estimates on the positive half-line were found from the reconstruction formulas \eqref{inverse-1a} and \eqref{inverse-2a},
which only use estimates of vector columns $\mu_-$ and $\eta_+$, as seen in \eqref{inverse-3} and \eqref{inverse-4}.
By comparing (\ref{correspondence-M}) with (\ref{definitions-M}),  we can rewrite
the reconstruction formulas (\ref{inverse-1}) and (\ref{inverse-2}) for
the lower choice of $m_-^{(2)}$ and $p_-^{(2)}$ as follows:
\begin{equation}
\label{inverse-1b}
\partial_x \left(\bar{u}(x) e^{\frac{1}{2i} \int_{-\infty}^x |u(y)|^2 dy} \right)
= 2 i e^{-\frac{1}{2i} \int_{+\infty}^x |u(y)|^2 dy} a_{\infty} \lim_{|z| \to \infty} z \mu_+^{(2)}(x;z)
\end{equation}
and
\begin{equation}
\label{inverse-2b}
u(x) e^{-\frac{1}{2i} \int_{-\infty}^x |u(y)|^2 dy} = -4  e^{\frac{1}{2i} \int_{+\infty}^x |u(y)|^2 dy}
\bar{a}_{\infty} \lim_{|z| \to \infty} z  \eta_-^{(1)}(x;z),
\end{equation}
where $a_{\infty} := \lim_{|z| \to \infty} a(z) = e^{\frac{1}{2i} \int_{\mathbb{R}} |u(y)|^2 dy}$.
If we now use the same solution representation (\ref{RH-M-complex})
in the reconstruction formulas (\ref{inverse-1b}) and (\ref{inverse-2b}),
we obtain the same explicit expressions (\ref{inverse-3}) and (\ref{inverse-4}).
On the other hand, if we rewrite the Riemann--Hilbert problem (\ref{jump})
in an equivalent form, we will be able to find nontrivial representation formulas for $u$,
which are useful on the negative half-line. To do so,
we need to factorize the scattering matrix $R(x;z)$ in an equivalent form.

Let us consider the scalar Riemann--Hilbert problem
\begin{equation}
\label{scalar-RH}
\left\{ \begin{array}{l} \delta_+(z) - \delta_-(z) = \bar{r}_+(z) r_-(z) \delta_-(z), \quad z \in \mathbb{R}, \\
\delta_{\pm}(z) \to 1 \quad \mbox{\rm as} \quad |z| \to \infty,\end{array} \right.
\end{equation}
and look for analytic continuations of functions $\delta_{\pm}$ in $\mathbb{C}^{\pm}$.
The solution to the scalar Riemann--Hilbert problem (\ref{scalar-RH})
and some useful estimates  are reported in the following two propositions,
where we recall from (\ref{r1-r2-other}) that
$$
\left\{ \begin{array}{lr} 1+\overline{r}_+(z)r_-(z)=1 + |r(\lambda)|^2 \geq 1, & z \in \mathbb{R}^+, \\
1+\overline{r}_+(z)r_-(z) = 1 - |r(\lambda)|^2 \geq c_0^2 > 0, & z \in \mathbb{R}^-,\end{array} \right.
$$
where the latter inequality is due to (\ref{smallness-defocusing}).

\begin{prop} \label{delta-analytic}
Let $r_{\pm}\in H^1(\mathbb{R})\cap L^{2,1}(\mathbb{R})$ such that the inequality (\ref{smallness-defocusing})
is satisfied. There exists unique analytic functions $\delta_{\pm}$ in $\mathbb{C}^{\pm}$ of the form
\begin{equation} \label{delta}
\delta(z) = e^{\mathcal{C} \log(1+\overline{r}_+r_- )}, \quad z\in \mathbb{C}^{\pm},
\end{equation}
which solve the scalar Riemann--Hilbert problem (\ref{scalar-RH}) and which have the limits
\begin{equation}
\label{delta-limits}
\delta_{\pm}(z) = e^{\mathcal{P}^{\pm}\log(1+\overline{r}_+r_-)}, \quad z \in \mathbb{R},
\end{equation}
as $z \in \mathbb{C}^{\pm}$ approaches to a point on the real axis by any non-tangential contour in $\mathbb{C}^{\pm}$.
\end{prop}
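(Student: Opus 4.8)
The plan is to reduce the multiplicative scalar Riemann--Hilbert problem (\ref{scalar-RH}) to an additive one by taking logarithms, and then to solve the resulting additive problem with the Cauchy operator $\mathcal{C}$. First I would rewrite the jump condition in the equivalent multiplicative form $\delta_+(z) = (1 + \bar{r}_+(z) r_-(z)) \delta_-(z)$ and observe, using the relations (\ref{r1-r2-other}) together with the positivity (\ref{smallness-defocusing}), that $w(z) := \bar{r}_+(z) r_-(z)$ is real for every $z \in \mathbb{R}$, with $1 + w(z) \geq 1$ on $\mathbb{R}^+$ and $1 + w(z) \geq c_0^2 > 0$ on $\mathbb{R}^-$. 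In particular $1 + w(z)$ stays on the positive real axis, uniformly bounded away from zero, so the principal branch of $\log(1 + w(z))$ is unambiguously defined and real-valued.

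Next I would establish that $\log(1+w) \in L^1(\mathbb{R}) \cap L^2(\mathbb{R})$. Since $r_{\pm} \in H^1(\mathbb{R})$ are bounded and decay at infinity, the product $w = \bar{r}_+ r_-$ belongs to $L^1(\mathbb{R}) \cap L^2(\mathbb{R}) \cap L^{\infty}(\mathbb{R})$, and the elementary estimate $|\log(1+w)| \leq C |w|$, valid because $1 + w$ is confined to a compact subinterval of $(0,\infty)$, transfers these integrability properties to $\log(1+w)$. With $h := \log(1+w) \in L^2(\mathbb{R})$ in hand, I would define $\delta(z) := e^{\mathcal{C}(h)(z)}$ for $z \in \mathbb{C}^{\pm}$ and invoke Proposition \ref{prop-RHP}: the function $\mathcal{C}(h)$ is analytic off $\mathbb{R}$ and decays to zero as $|z| \to \infty$, so $\delta$ is analytic and nonvanishing in $\mathbb{C}^{\pm}$ with $\delta(z) \to 1$, and its non-tangential boundary values are $\delta_{\pm}(z) = e^{\mathcal{P}^{\pm}(h)(z)}$, which is precisely (\ref{delta-limits}).

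To verify the jump condition I would use the Plemelj identity $\mathcal{P}^+ - \mathcal{P}^- = I$ following from (\ref{plemelj}), which gives $\delta_+(z)/\delta_-(z) = e^{(\mathcal{P}^+ - \mathcal{P}^-)h(z)} = e^{h(z)} = 1 + w(z)$ for $z \in \mathbb{R}$, i.e. exactly the content of (\ref{scalar-RH}). For uniqueness, given any second solution $\hat{\delta}_{\pm}$ analytic and bounded in $\mathbb{C}^{\pm}$ with $\hat{\delta}_{\pm} \to 1$, the ratio $g := \hat{\delta}/\delta$ satisfies $g_+ = g_-$ on $\mathbb{R}$ because the common multiplicative jump $1 + w$ cancels; hence $g$ extends to an entire function that tends to $1$ at infinity, and Liouville's theorem forces $g \equiv 1$.

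The step I expect to require the most care is not the algebra but the justification that the logarithm is globally single-valued and integrable: this hinges entirely on the sign dichotomy in (\ref{r1-r2-other})---positivity of $1 + w$ on $\mathbb{R}^+$ and the strict lower bound from (\ref{smallness-defocusing}) on $\mathbb{R}^-$---so that $1 + w$ never winds around or approaches the origin and $\log(1+w)$ inherits the decay of $w$. Once $h \in L^1(\mathbb{R}) \cap L^2(\mathbb{R})$ is secured, the analyticity, the limit $\delta_{\pm} \to 1$, and the boundary values all follow directly from Proposition \ref{prop-RHP}, and no contraction or Fredholm argument is needed for this scalar problem.
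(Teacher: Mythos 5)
Your proof is correct and follows the same overall route as the paper: reduce the multiplicative scalar problem to an additive one via the logarithm, check that $\log(1+\overline{r}_+r_-)$ lies in $L^1(\mathbb{R})\cap L^2(\mathbb{R})$, define $\delta$ by exponentiating the Cauchy operator, and recover the jump from the Plemelj identity $\mathcal{P}^+-\mathcal{P}^-=I$. The one genuine difference is in the integrability step, where your argument is actually more elementary than the paper's: the paper invokes the pointwise decay $\langle z\rangle |r(\lambda)|\leq C$ (which requires Propositions \ref{r-regularity} and \ref{r-boundness}) to bound $|\log(1\pm|r(\lambda)|^2)|$ by $\pm\log(1\pm C^2\langle z\rangle^{-2})$ separately on $\mathbb{R}^{\pm}$, whereas you observe directly that $w=\overline{r}_+r_-\in L^1\cap L^2\cap L^{\infty}$ (by Cauchy--Schwarz and the embedding $H^1\hookrightarrow L^{\infty}$) and that $1+w$ is confined to a compact subinterval of $(0,\infty)$ by (\ref{r1-r2-other}) and (\ref{smallness-defocusing}), so the Lipschitz bound $|\log(1+w)|\leq C|w|$ transfers the integrability at once. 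This buys a shorter argument that bypasses the weighted-decay machinery, at the cost of not exhibiting the explicit $\langle z\rangle^{-2}$ decay rate of the logarithm (which the paper does not use later either). Your added Liouville-type uniqueness argument goes beyond what the paper proves --- the proposition only asserts uniqueness of functions \emph{of the form} (\ref{delta}), which is immediate --- and while your sketch is standard, note that gluing $g=\hat{\delta}/\delta$ across $\mathbb{R}$ from a.e.\ coincidence of non-tangential boundary values strictly requires a Morera-type continuation argument; since the statement does not demand this stronger uniqueness, the informality there is harmless.
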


\begin{proof}
First, we prove that $\log(1+\overline{r}_+r_-) \in L^1(\mathbb{R})$. Indeed,
since $r_{\pm} \in L^{2,1}_z(\mathbb{R}) \cap L^{\infty}(\mathbb{R})$,
we have $\overline{r}_+ r_- \in L^1(\mathbb{R})$. Furthermore,
it follows from the representation (\ref{r-definition}) as well as from
Propositions \ref{r-regularity} and \ref{r-boundness} that
$$
\langle z\rangle |r(\lambda)| \leq |r(\lambda)| + \frac{1}{2} |\lambda| |r_-(z)| \leq C, \quad z \in \mathbb{R},
$$
where $C$ is a positive constant. Therefore,
$$
\log(1+|r(\lambda)|^2)\leq \log(1+C^2\langle z\rangle^{-2}), \quad z \in \mathbb{R}^+, \quad \lambda \in \mathbb{R},
$$
so that $\log(1+\overline{r}_+r_-) \in L^1(\mathbb{R}^+)$. On the other hand,
it follows from the inequality (\ref{smallness-defocusing}) that
$$
|\log(1-|r(\lambda)|^2)| \leq -\log(1-C^2\langle z\rangle^{-2}), \quad z \in \mathbb{R}^-, \quad \lambda \in \mathbb{R},
$$
so that $\log(1+\overline{r}_+r_-) \in L^1(\mathbb{R}^-)$.

Thus, we have $\log(1+\overline{r}_+r_-) \in L^1(\mathbb{R})$. It also follows from the above estimates that
$\log(1+\overline{r}_+r_-) \in L^{\infty}(\mathbb{R})$. By H\"{o}lder inequality, we hence obtain $\log(1+\overline{r}_+r_-) \in L^2(\mathbb{R})$.
By Proposition \ref{prop-RHP} with $p = 2$, the expression (\ref{delta}) defines unique analytic functions in $\mathbb{C}^{\pm}$,
which recover the limits (\ref{delta-limits}) and the limits at infinity: $\lim_{|z| \to \infty} \delta_{\pm}(z) = 1$. Finally, since
$\mathcal{P}^+-\mathcal{P}^-=I$, we obtain
$$
\delta_+(z) \delta_-^{-1}(z) = e^{\log(1+\overline{r}_+(z) r_-(z))} = 1+\overline{r}_+(z)r_-(z), \quad z \in \mathbb{R},
$$
so that $\delta_{\pm}$ given by (\ref{delta}) satisfy the scalar Riemann--Hilbert problem (\ref{scalar-RH}).
\end{proof}

\begin{prop}
\label{r-delta-property}
Let $r_{\pm}\in H^1(\mathbb{R})\cap L^{2,1}(\mathbb{R})$ such that the inequality (\ref{smallness-defocusing})
is satisfied.  Then, $\delta_+ \delta_- r_{\pm} \in H^1(\mathbb{R})\cap L^{2,1}(\mathbb{R})$.
\end{prop}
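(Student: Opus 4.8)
The plan is to exploit the structural fact that the product $\bar r_+ r_-$ is \emph{real-valued} on $\mathbb{R}$. Indeed, by the relation (\ref{r1-r2}) we have $r_-(z) = 4z\, r_+(z)$, so $\bar r_+(z) r_-(z) = 4z\,|r_+(z)|^2 \in \mathbb{R}$ for every $z \in \mathbb{R}$, which is also manifest from (\ref{r1-r2-other}). Consequently the function $g(z) := \log(1 + \bar r_+(z) r_-(z))$ is real-valued, and by the boundary values (\ref{delta-limits}) together with the Sokhotski--Plemelj relation (\ref{plemelj}), which gives $\mathcal{P}^+ + \mathcal{P}^- = -i\mathcal{H}$, the boundary function of the product satisfies $\delta_+(z)\delta_-(z) = e^{(\mathcal{P}^+ + \mathcal{P}^-)g(z)} = e^{-i \mathcal{H}(g)(z)}$ on $\mathbb{R}$. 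Since $g$ is real, $\mathcal{H}(g)$ is real, and therefore $|\delta_+(z)\delta_-(z)| = 1$ for almost every $z \in \mathbb{R}$. This single observation immediately delivers the weighted $L^2$ part: because multiplication by a unimodular function is an isometry on both $L^2(\mathbb{R})$ and $L^{2,1}(\mathbb{R})$, we get $\|\delta_+\delta_- r_\pm\|_{L^{2,1}} = \|r_\pm\|_{L^{2,1}}$ and $\|\delta_+\delta_- r_\pm\|_{L^2} = \|r_\pm\|_{L^2}$, both finite by hypothesis.

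It then remains to control the derivative $\partial_z(\delta_+\delta_- r_\pm)$ in $L^2$, and for this I would first check that $g \in H^1(\mathbb{R})$. From the proof of Proposition \ref{delta-analytic} one already has $g \in L^1(\mathbb{R}) \cap L^\infty(\mathbb{R}) \subset L^2(\mathbb{R})$. For the derivative, write $g'(z) = \dfrac{\partial_z(\bar r_+ r_-)}{1 + \bar r_+ r_-}$. The numerator $\partial_z(\bar r_+ r_-) = (\partial_z \bar r_+) r_- + \bar r_+ (\partial_z r_-)$ belongs to $L^2(\mathbb{R})$, since $r_\pm \in H^1(\mathbb{R}) \hookrightarrow L^\infty(\mathbb{R})$ while $\partial_z r_\pm \in L^2(\mathbb{R})$; note that one should keep $r_-$ here rather than substituting $r_-=4z r_+$, so as to avoid a spurious factor of $z$. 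The denominator is bounded away from zero: by (\ref{r1-r2-other}) and the constraint (\ref{smallness-defocusing}), $1 + \bar r_+ r_- \geq 1$ on $\mathbb{R}^+$ and $1 + \bar r_+ r_- = 1 - |r(\lambda)|^2 \geq c_0^2 > 0$ on $\mathbb{R}^-$, so $(1 + \bar r_+ r_-)^{-1} \in L^\infty(\mathbb{R})$. Hence $g' \in L^2(\mathbb{R})$ and $g \in H^1(\mathbb{R})$.

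With $g \in H^1(\mathbb{R})$ the Hilbert transform commutes with differentiation, $(\mathcal{H} g)' = \mathcal{H}(g')$, and $\mathcal{H}(g') \in L^2(\mathbb{R})$ by the $L^2$-boundedness of $\mathcal{H}$ (Riesz's theorem, as used in the proof of Proposition \ref{prop-RHP}). Differentiating the boundary representation $\delta_+\delta_- = e^{-i\mathcal{H}(g)}$ gives $\partial_z(\delta_+\delta_-) = -i\,\mathcal{H}(g')\, e^{-i\mathcal{H}(g)}$, so by the product rule
\begin{equation*}
\partial_z\left(\delta_+\delta_- r_\pm\right) = -i\,\mathcal{H}(g')\, e^{-i\mathcal{H}(g)}\, r_\pm + e^{-i\mathcal{H}(g)}\, \partial_z r_\pm .
\end{equation*}
Using $|e^{-i\mathcal{H}(g)}| = 1$, the first term is bounded in $L^2$ by $\|\mathcal{H}(g')\|_{L^2}\|r_\pm\|_{L^\infty}$, which is finite since $r_\pm \in H^1 \hookrightarrow L^\infty$, and the second term has $L^2$ norm equal to $\|\partial_z r_\pm\|_{L^2}$. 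Thus $\partial_z(\delta_+\delta_- r_\pm) \in L^2(\mathbb{R})$, which together with the $L^2$ bound above yields $\delta_+\delta_- r_\pm \in H^1(\mathbb{R})$.

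I expect the only delicate point to be the justification that $|\delta_+\delta_-| = 1$, which hinges entirely on the reality of $\bar r_+ r_-$. Once this is in hand the unimodularity trivializes the $L^{2,1}$ and $L^2$ bounds and reduces the $H^1$ estimate to showing $g \in H^1$, where the single nontrivial ingredient is the uniform lower bound on $1 + \bar r_+ r_-$ supplied by (\ref{smallness-defocusing}).
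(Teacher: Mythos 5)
Your proposal is correct and follows essentially the same route as the paper's proof: unimodularity of $\delta_+\delta_-$ via the identity $\mathcal{P}^+ + \mathcal{P}^- = -i\mathcal{H}$ and the reality of $\log(1+\overline{r}_+ r_-)$, which disposes of the $L^{2,1}$ bound, followed by the $H^1$ estimate obtained from the $L^2$-boundedness of the Hilbert transform applied to $\partial_z \log(1+\overline{r}_+ r_-) = \frac{\partial_z(\overline{r}_+ r_-)}{1+\overline{r}_+ r_-}$, with the denominator controlled by (\ref{smallness-defocusing}). Your write-up is in fact slightly more careful than the paper's, since you explicitly verify the reality of $\overline{r}_+ r_-$ via (\ref{r1-r2}) and justify the commutation $(\mathcal{H}g)' = \mathcal{H}(g')$, both of which the paper leaves implicit.
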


\begin{proof}
We first note that $\mathcal{P}^+ + \mathcal{P}^- = -i\mathcal{H}$ due to the projection
formulas (\ref{plemelj}), where $\mathcal{H}$ is the Hilbert transform. Therefore,
we write
$$
\delta_+ \delta_- = e^{-i \mathcal{H}\log(1+\overline{r}_+r_-)}.
$$
Since $\log(1+\overline{r}_+r_-) \in L^2(\mathbb{R})$, we have
$\mathcal{H}\log(1+\overline{r}_+r_-) \in L^2(\mathbb{R})$ being a real-valued function.
Therefore, $|\delta_+(z) \delta_-(z)| = 1$ for almost every $z \in \mathbb{R}$.
Then, $\delta_+ \delta_- r_{\pm} \in L^{2,1}(\mathbb{R})$ follows from $r_{\pm} \in L^{2,1}(\mathbb{R})$.

It remains to show that $\partial_z \delta_+ \delta_- r_{\pm} \in L^2(\mathbb{R})$. To do so,
we shall prove that $\partial_z \mathcal{H}\log(1+\overline{r}_+r_-) \in L^2(\mathbb{R})$.
Due to Parseval's identity and the fact $\| \mathcal{H} f \|_{L^2} = \| f \|_{L^2}$ for every
$f \in L^2(\mathbb{R})$, we obtain
\begin{eqnarray*}
\|\partial_z \mathcal{H}\log(1+\overline{r}_+r_-)\|_{L^2} = \|\partial_z \log(1+\overline{r}_+r_-)\|_{L^2}.
\end{eqnarray*}
The right-hand side is bounded since
$\partial_z \log(1+\overline{r}_+r_-)=\frac{\partial_z(\overline{r}_+r_-)}{1+\overline{r}_+r_-} \in L^2(\mathbb{R})$
under the conditions of the proposition. The assertion $\partial_z \delta_+ \delta_- r_{\pm} \in L^2(\mathbb{R})$ is proved.
\end{proof}

Next, we factorize the scattering matrix $R(x;z)$ in an equivalent form:
\begin{align*}
\left[\begin{matrix} \delta_-(z) & 0 \\  0 & \delta_-^{-1}(z) \end{matrix}\right]
\left[ I + R(x;z) \right] \left[\begin{matrix} \delta_+^{-1}(z) & 0 \\  0 & \delta_+(z) \end{matrix}\right]
= \left[\begin{matrix}1 & \delta_-(z) \delta_+(z) \overline{r}_+(z) e^{-2izx} \\
\overline{\delta}_+(z) \overline{\delta}_-(z) r_-(z) e^{2izx} &  1 + \overline{r}_+(z) r_-(z) \end{matrix}\right],
\end{align*}
where we have used $\delta_-^{-1}\delta_+^{-1}=\overline{\delta_-\delta_+}$. Let us now define new jump matrix
$$
\tilde{R}_{\delta}(x;z) := \left[ \begin{matrix} 0 & \overline{r}_{+,\delta}(z) e^{-2 i x z} \\ r_{-,\delta}(z) e^{2 i x z} &
\overline{r}_{+,\delta}(z)  r_{-,\delta}(z)  \end{matrix} \right],
$$
associated with new scattering data
$$
r_{\pm,\delta}(z) := \overline{\delta}_+(z) \overline{\delta}_-(z) r_{\pm}(z).
$$
By Proposition \ref{r-delta-property}, we have $r_{\pm,\delta} \in H^1(\mathbb{R})\cap L^{2,1}(\mathbb{R})$
similarly to the scattering data $r_{\pm}$.

By using the functions $M_{\pm}(x;z)$ and $\delta_{\pm}(z)$, we define functions
\begin{equation}
\label{M-delta}
M_{\pm, \delta}(x;z) := M_{\pm}(x;z) \left[\begin{matrix} \delta_{\pm}^{-1}(z) & 0 \\  0 & \delta_{\pm}(z) \end{matrix}\right].
\end{equation}
By Proposition \ref{delta-analytic},
the new functions $M_{\pm, \delta}(x;\cdot)$ are analytic in $\mathbb{C}^{\pm}$
and have the same limit $I$ as $|z|\rightarrow \infty$.
On the real axis, the new functions satisfy the jump condition
associated with the jump matrix $\tilde{R}_{\delta}(x;z)$.
All together, the new Riemann--Hilbert problem
\begin{equation}
\label{jump-tilde}
\left\{ \begin{array}{l}
M_{+,\delta}(x;z) - M_{-,\delta}(x;z) = M_{-,\delta}(x;z) \tilde{R}_{\delta}(x;z), \quad z \in \mathbb{R}, \\
\lim_{|z| \to \infty} M_{\pm, \delta}(x;z) = I, \end{array} \right.
\end{equation}
follows from the previous Riemann--Hilbert problem (\ref{jump}).
By Corollary \ref{RH-general} and analysis preceding Lemma \ref{inverse-fredholm-cor}, the Riemann--Hilbert problem (\ref{jump-tilde})
admits a unique solution, which is given by the Cauchy operators in the form:
\begin{equation}\label{RH-1-tilde}
M_{\pm,\delta}(x;z) = I + \mathcal{C} \left( M_{-,\delta}(x;\cdot) \tilde{R}_{\delta}(x;\cdot)\right)(z), \quad z\in \mathbb{C}^{\pm}.
\end{equation}

Let us denote the vector columns of $M_{\pm, \delta}$ by $M_{\pm, \delta} = [\mu_{\pm,\delta},\eta_{\pm, \delta}]$.
What is nice in the construction of $M_{\pm, \delta}$ that
$$
\lim_{|z| \to \infty} z \mu_{\pm,\delta}^{(2)}(x;z) =  \lim_{|z| \to \infty} z \mu_{\pm}^{(2)}(x;z) \quad
\mbox{\rm and} \quad \lim_{|z| \to \infty} z \eta_{\pm,\delta}^{(1)}(x;z) =
\lim_{|z| \to \infty} z \eta_{\pm}^{(1)}(x;z).
$$

Since $r_{\pm, \delta} \in H^1(\mathbb{R}) \cap L^{2,1}(\mathbb{R})$,
we have $\tilde{R}_{\delta}(x;\cdot) \in L^1(\mathbb{R}) \cap L^2(\mathbb{R})$ for every $x \in \mathbb{R}$,
so that the asymptotic limit (\ref{limit-Cauchy-operator}) in Proposition \ref{prop-RHP}
is justified for the integral representation (\ref{RH-1-tilde}).
As a result, the reconstruction formulas (\ref{inverse-1b}) and (\ref{inverse-2b}) can be rewritten in the explicit form:
\begin{eqnarray}
e^{\frac{1}{2i} \int_{+\infty}^x |u(y)|^2 dy}  \partial_x \left(\bar{u}(x) e^{\frac{1}{2i} \int_{+\infty}^x |u(y)|^2 dy} \right)
= -\frac{1}{\pi} \int_{\mathbb{R}} r_-(z) e^{2 i z x} \eta_{-,\delta}^{(2)}(x;z) dz \label{inverse-3b}
\end{eqnarray}
and
\begin{eqnarray}
\nonumber
u(x) e^{i \int_{+\infty}^x |u(y)|^2 dy} & = & \frac{2}{\pi i}
\int_{\mathbb{R}} \bar{r}_{+,\delta}(z) e^{-2 i z x} \left[  \mu_{-,\delta}^{(1)}(x;z) + r_{-,\delta}(z) e^{2 i z x} \eta_{-,\delta}^{(1)}(x;z) \right] dz \\
& = & \frac{2}{\pi i} \int_{\mathbb{R}}  \bar{r}_{+,\delta}(z) e^{-2 i z x} \mu_{+,\delta}^{(1)} (x;z) dz,\label{inverse-4b}
\end{eqnarray}
where we have used the first equation of the Riemann--Hilbert problem (\ref{jump-tilde}) for the second equality in (\ref{inverse-4b}).

The reconstruction formulas (\ref{inverse-3b}) and (\ref{inverse-4b}) can be studied similarly to the analysis in
the previous subsection. First, we obtain the system of integral
equations for vectors $\mu_{+,\delta}$ and $\eta_{-,\delta}$ from projections of the solution
representation (\ref{RH-1-tilde}) to the real line:
\begin{eqnarray}
\mu_{+,\delta}(x;z)  & = & e_1 + \mathcal{P}^+ \left( r_{-,\delta} e^{2 i z x} \eta_{-,\delta}(x;\cdot) \right)(z), \label{RH-6-tilde} \\
\eta_{-,\delta}(x;z) & = & e_2 + \mathcal{P}^- \left( \bar{r}_{+,\delta} e^{-2 i z x} \mu_{+,\delta}(x;\cdot) \right)(z). \label{RH-6a-tilde}
\end{eqnarray}
The integral equations above can be written as
\begin{equation}\label{projection-int-negative}
G_{\delta} - \mathcal{P}^-(G_{\delta} R_{\delta}) = F_{\delta},
\end{equation}
where
$$
G_{\delta}(x;z) := \left[ \mu_{+,\delta}(x;z)-e_1, \eta_{-,\delta}(x;z) - e_2 \right]
\left[\begin{matrix} 1 & 0 \\-r_{-,\delta}(z) e^{2izx} &1 \end{matrix}\right]
$$
and
$$
F_{\delta}(x;z) := \left[ e_2\mathcal{P}^+(r_{-,\delta}(z) e^{2izx}), \; e_1\mathcal{P}^-(\overline{r}_{+,\delta}(z) e^{-2izx})\right].
$$
The estimates of Proposition \ref{prop-Fourier-2}, Lemma \ref{cor-Fourier-2}, Lemma \ref{mu-estimate},
and Corollary \ref{cor-mu-estimate}  apply to the system of integral equations (\ref{RH-6-tilde}) and (\ref{RH-6a-tilde})
with the only change: $x_0 \in \mathbb{R}^+$ is replaced by $x_0 \in \mathbb{R}^-$ because the operators $\mathcal{P}^+$ and
$\mathcal{P}^-$ swap their places in comparison with the system (\ref{RH-7}).
As a result, we extend the statements of Lemma \ref{mu-estimate} and Corollary \ref{cor-mu-estimate}
to the negative half-line. This construction yields existence and Lipschitz continuity of the mapping 
\begin{equation}
\label{mapping-r-u-minus}
H^1(\mathbb{R})  \cap L^{2,1}(\mathbb{R}) \ni (r_-,r_+) \mapsto u \in H^2(\mathbb{R}^-) \cap H^{1,1}(\mathbb{R}^-).
\end{equation}

\begin{lem} \label{mu-estimate-minus}
Let $r_{\pm}\in H^1(\mathbb{R})\cap L^{2,1}(\mathbb{R})$ such that the inequality (\ref{smallness-defocusing})
is satisfied. Then, $u \in H^2(\mathbb{R}^-) \cap H^{1,1}(\mathbb{R}^-)$ satisfies the bound
\begin{equation} \label{mu-estimate-1-minus}
  \| u \|_{H^2(\mathbb{R}^-) \cap H^{1,1}(\mathbb{R}^-)} \leq C
  \left( \|r_{+,\delta}\|_{H^1 \cap L^{2,1}} + \|r_{-,\delta}\|_{H^1 \cap L^{2,1}}\right),
\end{equation}
where $C$ is a positive constant  that depends on $\| r_{\pm,\delta} \|_{H^1 \cap L^{2,1}}$.
\end{lem}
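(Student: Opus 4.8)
The plan is to mirror the proof of Lemma \ref{mu-estimate} line by line, with the conjugated Riemann--Hilbert problem (\ref{jump-tilde}) replacing (\ref{jump}) and the negative half-line replacing the positive one. First I would record the structural facts that make the transcription legitimate: by Proposition \ref{r-delta-property} the conjugated data satisfy $r_{\pm,\delta} \in H^1(\mathbb{R}) \cap L^{2,1}(\mathbb{R})$ and $|\delta_+(z)\delta_-(z)| = 1$ on $\mathbb{R}$, so the jump matrix $\tilde{R}_\delta$ shares the qualitative form of $R$ and the solvability and invertibility results (Corollary \ref{RH-general} and Lemma \ref{inverse-fredholm}) apply to the system (\ref{projection-int-negative}) without change. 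The reconstruction then proceeds from formulas (\ref{inverse-3b}) and (\ref{inverse-4b}), in which the columns $\mu_{+,\delta}$ and $\eta_{-,\delta}$ now play exactly the roles that $\mu_-$ and $\eta_+$ played in the positive half-line analysis.

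The single genuinely new ingredient is the negative half-line analogue of the Fourier estimates of Proposition \ref{prop-Fourier-2}. In the integral equations (\ref{RH-6-tilde})--(\ref{RH-6a-tilde}) the operator $\mathcal{P}^+$ now acts on $r_{-,\delta} e^{2izx}$ and $\mathcal{P}^-$ on $\bar{r}_{+,\delta} e^{-2izx}$, which is the opposite pairing to (\ref{projection-int-type-4})--(\ref{projection-int-type-5}). Consequently the residue computation (\ref{residue-computation}) produces the characteristic function of $\{k > -2x\}$ rather than of $\{k > 2x\}$, so the Fourier tail concentrates as $x \to -\infty$ instead of $x \to +\infty$. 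I would therefore establish, for every $x_0 \in \mathbb{R}^-$,
\[
\sup_{x \in (-\infty,x_0)} \left\| \langle x \rangle \, \mathcal{P}^+\!\left( r_{-,\delta}(z) e^{2izx} \right) \right\|_{L^2_z} \leq \| r_{-,\delta} \|_{H^1}, \qquad
\sup_{x \in (-\infty,x_0)} \left\| \langle x \rangle \, \mathcal{P}^-\!\left( \bar{r}_{+,\delta}(z) e^{-2izx} \right) \right\|_{L^2_z} \leq \| r_{+,\delta} \|_{H^1},
\]
together with the corresponding $L^\infty_z$ and weighted $L^2_z$ bounds, exactly as in (\ref{Fourier-2})--(\ref{Fourier-2-3a}), with $\mathbb{R}^+$ replaced by $\mathbb{R}^-$.

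With these estimates in hand the remaining steps are a term-by-term repetition of Lemma \ref{cor-Fourier-2} and Lemma \ref{mu-estimate}. Applying Lemma \ref{inverse-fredholm} to (\ref{projection-int-negative}) controls the small components $\eta_{-,\delta}^{(1)}$ and $\mu_{+,\delta}^{(2)}$, and their $x$-derivatives, in weighted $L^2_z$ on $(-\infty,x_0)$. Substituting into the reconstruction formulas (\ref{inverse-3b}) and (\ref{inverse-4b}), I would split each integral into a leading Fourier term, controlled by the $H^1$ norm of the relevant scattering datum through its Fourier transform in $L^{2,1}$, plus a remainder estimated by Cauchy--Schwarz against the weighted bounds above; this yields control of $u$ and of $\partial_x u + \frac{i}{2}|u|^2 u$ in $L^{2,1}(\mathbb{R}^-)$, and differentiating once more in $x$ (as in the $I'(x)$ computation) supplies the $H^2(\mathbb{R}^-)$ contribution. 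Since the phase factor has unit modulus we have $|u| = |v|$, and the embedding $H^1(\mathbb{R}) \hookrightarrow L^6(\mathbb{R})$ absorbs the exponential nonlinearity, producing the bound (\ref{mu-estimate-1-minus}).

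The main obstacle is entirely concentrated in the swapped Fourier estimates: I expect the only step needing real care (rather than bookkeeping) to be the verification that the sign change in the residue computation (\ref{residue-computation}) genuinely relocates the tail concentration to $x \to -\infty$, so that the weight $\langle x \rangle$ is absorbed uniformly on $(-\infty,x_0)$. Once this is checked, every subsequent estimate is obtained by transcribing the positive half-line argument with $\mathcal{P}^+$ and $\mathcal{P}^-$ interchanged and $x_0 \in \mathbb{R}^+$ replaced by $x_0 \in \mathbb{R}^-$.
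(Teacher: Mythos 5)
Your proposal matches the paper's own argument: the paper likewise constructs the conjugated problem (\ref{jump-tilde}) with data $r_{\pm,\delta}$ (using Propositions \ref{delta-analytic} and \ref{r-delta-property} and the preserved product $\bar{r}_{+,\delta} r_{-,\delta} = \bar{r}_+ r_-$, so that (\ref{smallness-defocusing}) and Proposition \ref{positivity} still apply), and then observes that Proposition \ref{prop-Fourier-2}, Lemma \ref{cor-Fourier-2}, and Lemma \ref{mu-estimate} transcribe to (\ref{RH-6-tilde})--(\ref{RH-6a-tilde}) with the single change that $\mathcal{P}^+$ and $\mathcal{P}^-$ swap places and $x_0 \in \mathbb{R}^+$ is replaced by $x_0 \in \mathbb{R}^-$ --- exactly the relocation of the Fourier tail to $x \to -\infty$ that you identify as the one genuinely new ingredient. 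Your mirrored residue computation and weighted bounds on $(-\infty,x_0)$ are correct, so the proposal is sound and essentially identical in route to the paper's proof.
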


\begin{cor}
\label{cor-mu-estimate-minus}
Let $r_{\pm}, \tilde{r}_{\pm} \in H^1(\mathbb{R}) \cap L^{2,1}(\mathbb{R}) $ satisfy $\| r_{\pm} \|_{H^{1}\cap L^{2,1}},
\| \tilde{r}_{\pm} \|_{H^{1}\cap L^{2,1}} \leq \rho$
for some $\rho > 0$. Denote the corresponding potentials by $u$ and $\tilde{u}$ respectively.
Then, there is a positive $\rho$-dependent constant $C(\rho)$ such that
\begin{equation} \label{mu-estimate-1-lipschitz-minus}
  \| u - \tilde{u} \|_{H^2(\mathbb{R}^-) \cap H^{1,1}(\mathbb{R}^-)} \leq C(\rho) \left( \|r_+ - \tilde{r}_+\|_{H^1 \cap L^{2,1}}
  + \|r_- - \tilde{r}_- \|_{H^1 \cap L^{2,1}}\right).
\end{equation}
\end{cor}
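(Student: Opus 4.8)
The plan is to reduce the negative half-line estimate to the machinery already developed for the positive half-line, applied this time to the $\delta$-transformed Riemann--Hilbert problem (\ref{jump-tilde}). The reconstruction formulas (\ref{inverse-3b}) and (\ref{inverse-4b}) have exactly the same structure as the positive-half-line formulas (\ref{inverse-3}) and (\ref{inverse-4}), with $(r_+,r_-)$ replaced by the transformed data $(r_{+,\delta},r_{-,\delta})$ and the vector columns $\mu_-,\eta_+$ replaced by $\mu_{+,\delta},\eta_{-,\delta}$. Likewise, the integral system (\ref{RH-6-tilde})--(\ref{RH-6a-tilde}) is obtained from (\ref{projection-int-type-4})--(\ref{projection-int-type-5}) by swapping the roles of $\mathcal{P}^+$ and $\mathcal{P}^-$ (hence of the two half-lines). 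Since Proposition \ref{r-delta-property} guarantees $r_{\pm,\delta}\in H^1(\mathbb{R})\cap L^{2,1}(\mathbb{R})$ whenever $r_{\pm}\in H^1(\mathbb{R})\cap L^{2,1}(\mathbb{R})$, the Fourier estimates of Proposition \ref{prop-Fourier-2}, the bounds of Lemma \ref{cor-Fourier-2}, and the reconstruction estimates of Lemma \ref{mu-estimate} transcribe verbatim, with $x_0\in\mathbb{R}^+$ replaced by $x_0\in\mathbb{R}^-$. This is precisely Lemma \ref{mu-estimate-minus}, and the same transcription of Corollary \ref{cor-mu-estimate} shows that the map $(r_{+,\delta},r_{-,\delta})\mapsto u|_{\mathbb{R}^-}\in H^2(\mathbb{R}^-)\cap H^{1,1}(\mathbb{R}^-)$ is Lipschitz continuous on balls of fixed radius.

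With this in hand, I would realize the desired map (\ref{mapping-r-u-minus}) as a composition
$$
(r_+,r_-)\ \longmapsto\ (r_{+,\delta},r_{-,\delta})\ \longmapsto\ u|_{\mathbb{R}^-},
$$
and prove (\ref{mu-estimate-1-lipschitz-minus}) by establishing Lipschitz continuity of each factor on the ball of radius $\rho$, so that their composition is again Lipschitz. The second factor is the negative-half-line analogue of Corollary \ref{cor-mu-estimate} just described: one introduces the gauged quantities $v(x)=u(x)e^{i\int_{+\infty}^x|u(y)|^2dy}$ and its $\partial_x$-companion, controls them in $H^1(\mathbb{R}^-)\cap L^{2,1}(\mathbb{R}^-)$ through (\ref{inverse-3b})--(\ref{inverse-4b}), and converts this into Lipschitz control of $u$ itself through the elementary identity $u-\tilde u=(v-\tilde v)e^{-i\int_{+\infty}^x|v|^2dy}+\tilde v\big(e^{-i\int_{+\infty}^x|v|^2dy}-e^{-i\int_{+\infty}^x|\tilde v|^2dy}\big)$, exactly as in the proof of Corollary \ref{cor-mu-estimate} (using $|v|=|u|$). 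Only the first factor involves anything genuinely new.

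The main obstacle, therefore, is to show that $(r_+,r_-)\mapsto(r_{+,\delta},r_{-,\delta})$ is Lipschitz continuous into $H^1(\mathbb{R})\cap L^{2,1}(\mathbb{R})$. Writing $r_{\pm,\delta}=\overline{\delta_+\delta_-}\,r_\pm$ and splitting
$$
r_{\pm,\delta}-\tilde r_{\pm,\delta}=\overline{\delta_+\delta_-}\,(r_\pm-\tilde r_\pm)+\big(\overline{\delta_+\delta_-}-\overline{\tilde\delta_+\tilde\delta_-}\big)\,\tilde r_\pm,
$$
the first term is handled by the computations in the proof of Proposition \ref{r-delta-property}, which give $|\delta_+\delta_-|=1$ and $\partial_z(\delta_+\delta_-)\in L^2(\mathbb{R})$, so that multiplication by $\overline{\delta_+\delta_-}$ is a bounded operator on $H^1\cap L^{2,1}$ (the $L^\infty$ bound controls the $L^2$ and $L^{2,1}$ parts, while the $L^2$ derivative times the $L^\infty$-embedded factor $r_\pm-\tilde r_\pm$ controls the $H^1$ part). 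For the second term I would use the representation $\delta_+\delta_-=e^{-i\mathcal{H}\log(1+\overline{r}_+r_-)}$, the pointwise bound $|e^{i\alpha}-e^{i\beta}|\le|\alpha-\beta|$ for real $\alpha,\beta$, the boundedness of the Hilbert transform on $L^2$ (together with Parseval to pass a $z$-derivative through $\mathcal{H}$, as in Proposition \ref{r-delta-property}), and the crucial lower bound (\ref{smallness-defocusing}): it keeps $1+\overline{r}_+r_-$ bounded away from zero and hence makes $\log(1+\overline{r}_+r_-)$ a Lipschitz function of the bilinear quantity $\overline{r}_+r_-$, whose difference $\overline{r}_+r_--\overline{\tilde r}_+\tilde r_-$ is estimated in $H^1\cap L^{2,1}$ by $C(\rho)\,(\|r_+-\tilde r_+\|_{H^1\cap L^{2,1}}+\|r_--\tilde r_-\|_{H^1\cap L^{2,1}})$ via the Banach-algebra property of $H^1(\mathbb{R})$ and the $\rho$-bound on all norms. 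Composing the two Lipschitz factors then yields the bound (\ref{mu-estimate-1-lipschitz-minus}) and completes the proof; the delicate step is precisely the $\delta$-factor estimate, since it is the only place where the auxiliary scalar problem (\ref{scalar-RH}) and the positivity constraint (\ref{smallness-defocusing}) must be used quantitatively.
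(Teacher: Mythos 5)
Your proposal is correct and takes essentially the same route as the paper: the paper proves this corollary precisely by transcribing Proposition \ref{prop-Fourier-2}, Lemma \ref{cor-Fourier-2}, Lemma \ref{mu-estimate}, and Corollary \ref{cor-mu-estimate} to the $\delta$-transformed system (\ref{RH-6-tilde})--(\ref{RH-6a-tilde}), with $x_0 \in \mathbb{R}^+$ replaced by $x_0 \in \mathbb{R}^-$ because $\mathcal{P}^+$ and $\mathcal{P}^-$ swap places. Your explicit verification that $(r_+,r_-) \mapsto (r_{+,\delta},r_{-,\delta})$ is Lipschitz on balls of $H^1(\mathbb{R}) \cap L^{2,1}(\mathbb{R})$ --- via $|\delta_+\delta_-|=1$, the lower bound (\ref{smallness-defocusing}), and the Banach-algebra property of $H^1(\mathbb{R})$ --- makes precise a step the paper leaves implicit; just note that for the $L^{2,1}$ part of the product estimate you need the phase difference $\mathcal{H}\log(1+\overline{r}_+ r_-) - \mathcal{H}\log(1+\overline{\tilde{r}}_+ \tilde{r}_-)$ in $L^{\infty}(\mathbb{R})$, which follows from your $H^1$ control of it together with the Sobolev embedding, not from the $L^2$ bound of the Hilbert transform alone.
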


\begin{remark}
Since Corollaries \ref{cor-mu-estimate} and \ref{cor-mu-estimate-minus} yield Lipschitz continuity of the mappings (\ref{mapping-r-u}) 
and (\ref{mapping-r-u-minus}) for every $r_{\pm}$, $\tilde{r}_{\pm}$ in a ball of a fixed (but possibly large) radius $\rho$,
the mappings (\ref{mapping-r-u}) and (\ref{mapping-r-u-minus}) are one-to-one for every $r_{\pm}$ in the ball.  \label{remark-inverse}
\end{remark}

\section{Proof of Theorem \ref{main}}

Thanks to the local well-posedness theory in \cite{TF1,TF2} and
the weighted estimates in \cite{H-O-1,H-O-2}, there exists a local solution
$u(t,\cdot)\in H^2(\mathbb{R})\cap H^{1,1}(\mathbb{R})$ to the Cauchy problem (\ref{dnls})
with an initial data $u_0\in H^2(\mathbb{R})\cap H^{1,1}(\mathbb{R})$ for $t\in [0, T]$ for some finite $T>0$.

For every $t \in [0,T]$, we define fundamental solutions
$$
\psi(t,x;\lambda) := e^{-i2\lambda^4t-i\lambda^2x}\varphi_{\pm}(t,x;\lambda)
$$
and
$$
\psi(t,x;\lambda) := e^{i2\lambda^4t+i\lambda^2x}\phi_{\pm}(t,x;\lambda)
$$
to the Kaup--Newell spectral problem (\ref{laxeq1}) and the time-evolution problem (\ref{laxeq2})
associated with the potential $u(t,x)$ that belongs to $C([0,T],H^2(\mathbb{R})\cap H^{1,1}(\mathbb{R}))$.
By Corollaries \ref{Jost-original} and \ref{lemma-Jost-original},
the bounded Jost functions $\varphi_{\pm}(t,x;\lambda)$ and $\psi_{\pm}(t,x;\lambda)$
have the same analytic property in $\lambda$ plane
and satisfy the same boundary conditions
$$
\left\{ \begin{array}{l} \varphi_{\pm}(t,x;\lambda) \rightarrow e_1 \\
\phi_{\pm}(t,x;\lambda) \rightarrow e_2 \end{array} \right. \quad \mbox{\rm as} \quad
x \rightarrow \pm\infty
$$
for every $t\in [0,T]$. From linear independence of two solutions
to the Kaup--Newell spectral problem (\ref{laxeq1}), the bounded Jost functions
satisfy the scattering relation
\begin{equation}\label{time-zero-combination}
\varphi_-(t,x;\lambda) = a(\lambda) \varphi_+(t,x;\lambda) + b(\lambda) e^{2i\lambda^2x + 4 i \lambda^4 t} \phi_+(t,x;\lambda), \quad
x \in \mathbb{R}, \quad \lambda \in \mathbb{R} \cup i \mathbb{R},
\end{equation}
where the scattering coefficients $a(\lambda)$ and $b(\lambda)$ are independent of $(t,x)$
due to the fact that the matrices of the linear system (\ref{laxeq1}) and (\ref{laxeq2}) have zero trace.
Indeed, in this case, the Wronskian determinants are independent of $(t,x)$, so that we have
\begin{eqnarray*}
a(\lambda) & = & W(\varphi_-(t,x;\lambda) e^{-i2\lambda^4t-i\lambda^2 x},\phi_+(t,x;\lambda)e^{i2\lambda^4t+i\lambda^2x}) =
W(\varphi_-(0,0;\lambda),\phi_+(0,0;\lambda)),  \\
b(\lambda) & = & W(\varphi_+(t,x;\lambda)e^{-i2\lambda^4t-i\lambda^2x},\varphi_-(t,x;\lambda)e^{-i2\lambda^4t-i\lambda^2x})
= W(\varphi_+(0,0;\lambda),\varphi_-(0,0;\lambda)).
\end{eqnarray*}
By Lemma \ref{regularity-a-b} and assumptions on zeros of $a$ in the $\lambda$ plane,
we can define the time-dependent scattering data
\begin{equation}
\label{r-definition-time}
r_+(t;z) = -\frac{b(\lambda) e^{4i \lambda^4 t}}{2 i \lambda a(\lambda)}, \quad
r_-(t;z) = \frac{2i \lambda b(\lambda)e^{4i \lambda^4 t}}{a(\lambda)}, \quad z \in \mathbb{R},
\end{equation}
so that the scattering relation (\ref{time-zero-combination}) becomes equivalent to
the first scattering relation in (\ref{linear}). Thus, we define
\begin{equation}
r_{\pm}(t;z) = r_{\pm}(0;z) e^{4 i z^2 t},
\label{time-evolution}
\end{equation}
where $r_{\pm}(0;\cdot)$ are initial spectral data found from the initial condition
$u(0,\cdot)$ and the direct scattering transform in Section 2. By Lemma \ref{regularity-a-b} and Corollary \ref{cor-regularity-a-b},
under the condition that $u_0 \in H^2(\mathbb{R}) \cap H^{1,1}(\mathbb{R})$ admits no resonances of the linear
equation (\ref{laxeq1}), the scattering data $r_{\pm}(0;\cdot)$ is defined in $H^1(\mathbb{R}) \cap L^{2,1}(\mathbb{R})$
and is a Lipschitz continuous function of $u_0$.

Now the time evolution (\ref{time-evolution}) implies that
$r_{\pm}(t;\cdot)$ remains in $H^1(\mathbb{R}) \cap L^{2,1}(\mathbb{R})$ for every $t \in [0,T]$.
Indeed, we have
$$
\| r_{\pm}(t;\cdot) \|_{L^{2,1}} = \| r_{\pm}(0;\cdot) \|_{L^{2,1}} \quad
\mbox{\rm and} \quad
\| \partial_z r_{\pm}(t;\cdot) + 4 i t z r_{\pm}(t;\cdot) \|_{L^2} = \| \partial_z r(0;\cdot) \|_{L^2}.
$$
Hence, $r(t;\cdot) \in H^1(\mathbb{R}) \cap L^{2,1}(\mathbb{R})$ for every $t \in [0,T]$.
Moreover, the constraint (\ref{smallness-defocusing}) and the
relation (\ref{r1-r2}) remain valid for every $t \in [0,T]$.

The potential $u(t,\cdot)$ is recovered from the scattering data $r_{\pm}(t;\cdot)$
with the inverse scattering transform in Section 4.
By Lemmas \ref{mu-estimate}, \ref{mu-estimate-minus} and Corollaries \ref{cor-mu-estimate}, \ref{cor-mu-estimate-minus},
the potential $u(t,\cdot)$ is defined in $H^2(\mathbb{R}) \cap H^{1,1}(\mathbb{R})$ for every $t \in [0,T]$
and is a Lipschitz continuous function of $r(t;\cdot)$. Thus, for every $t\in [0, T)$
we have proved that
\begin{eqnarray}
\nonumber
\|u(t,\cdot)\|_{H^2 \cap H^{1,1}} & \leq & C_1 \left( \| r_+(t;\cdot) \|_{H^1 \cap L^{2,1}} + \| r_-(t;\cdot) \|_{H^1 \cap L^{2,1}} \right) \\
\nonumber
& \leq & C_2 \left( \| r_+(0;\cdot) \|_{H^1 \cap L^{2,1}} + \| r_-(0;\cdot) \|_{H^1 \cap L^{2,1}} \right) \\
& \leq & C_3 \| u_0 \|_{H^2 \cap H^{1,1}},
\label{bound-on-u}
\end{eqnarray}
where the positive constants $C_1$, $C_2$, and $C_3$ depends on $\| r_{\pm}(t;\cdot) \|_{H^1 \cap L^{2,1}}$,
$(T,\| r_{\pm}(0;\cdot) \|_{H^1 \cap L^{2,1}})$, and $(T,\| u_0 \|_{H^2 \cap H^{1,1}})$ respectively.
Moreover, the map $H^2(\mathbb{R}) \cap H^{1,1}(\mathbb{R}) \ni u_0 \mapsto u \in C([0,T],H^2(\mathbb{R}) \cap H^{1,1}(\mathbb{R}))$
is Lipschitz continuous.

Since $\| r(t;\cdot) \|_{H^1}$ may grow at most linearly in $t$ and constants $C_1,C_2,C_3$ in (\ref{bound-on-u})
depends polynomially on their respective norms, we have
\begin{eqnarray}
\|u(t,\cdot)\|_{H^2 \cap H^{1,1}} \leq C(T) \| u_0 \|_{H^2 \cap H^{1,1}}, \quad t \in [0,T],
\label{bound-on-u}
\end{eqnarray}
where the positive constant $C(T)$ (that also depends on $\| u_0 \|_{H^2 \cap H^{1,1}}$)
may grow at most polynomially in $T$ but it remains finite for every $T > 0$.
From here, we derive a contradiction on the assumption that the local solution $u \in C([0,T],H^2(\mathbb{R})\cap H^{1,1}(\mathbb{R}))$
blows up in a finite time. Indeed, if there exists a maximal existence tim $T_{\rm max} > 0$ such that
$\lim_{t \uparrow T_{\rm max}} \| u(t;\cdot) \|_{H^2 \cap H^{1,1}} = \infty$,
then the bound (\ref{bound-on-u}) is violated as $t \uparrow T$, which is impossible. Therefore,
the local solution $u \in C([0,T],H^2(\mathbb{R})\cap H^{1,1}(\mathbb{R}))$ can be continued globally in
time for every $T > 0$. This final argument yields the proof of Theorem \ref{main}.

Figure \ref{IST} illustrates the proof of Theorem \ref{main} and summarizes the main ingredients of our results.

\begin{figure}[htbp] 
   \centering
   \begin{tikzpicture}
  \node at (-4,3) (u0) {$u_0(x)$};
  \node at (4,3) (r0) {$r_{\pm}(z)$};
  \node at ( 4,1.5) (r) {$r_{\pm}(z)e^{i4z^2t}$};
  \node at (-4,1.5) (u)  {$u(t,x)$};


  \draw[dashed] (5,3.3) to (3,3.3);
  \draw[dashed] (3,3.3) to (3,1);
  \draw[dashed] (3,1) to (5,1);
  \draw[dashed] (5,1) to (5, 3.3);

 \draw[->] (u0) to node[above]{injective and Lipschitz} (r0);
  \draw[->] (r0) to node{} (r);
  \draw[->] (r) to node[below] {injective and Lipschitz} (u);
  \draw[->, dashed] (u0) to node{} (u);
\end{tikzpicture}
   \caption{The scheme behind the proof of Theorem \ref{main}.}
   \label{IST}
\end{figure}
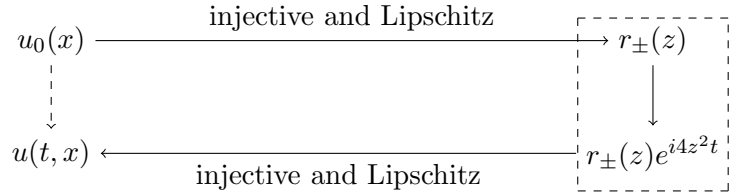

\vspace{0.5cm}

\noindent{\bf Acknowledgement.} The authors are indebted to C. Sulem for bringing this
problem to their attention and for useful discussion. We also thank P. Miller for pointing
out some errors in the earlier draft. D.P. is supported by the NSERC Discovery grant.
Y.S. is supported by the McMaster graduate scholarship.

\newpage

\end{document}